\renewcommand{\Re}{\mathop{\rm Re}\nolimits}
\renewcommand{\Im}{\mathop{\rm Im}\nolimits}
\theoremstyle{plain} \newtheorem{theorem}{Theorem}[section]
\newtheorem{lemma}[theorem]{Lemma}
\newtheorem{proposition}[theorem]{Proposition}
\newtheorem{corollary}[theorem]{Corollary} \theoremstyle{definition}
\newtheorem{definition}[theorem]{Definition} \theoremstyle{remark}
\newtheorem{remark}[theorem]{Remark}
\newtheorem{claim}[theorem]{Claim}
\newcommand{\R}{{\mathbb R}}
\newcommand{\Z}{{\mathbb Z}}
\newcommand{\N}{{\mathbb N}}
\def\im{{\rm i}}
\newcommand{\C}{\mathbb{C}}
\def\({\left(}
\def\){\right)}
\def\<{\left\langle}
\def\>{\right\rangle}
\newcommand{\cN}{\mathcal{N}}
\newcommand{\wto}{\rightharpoonup}
\newcommand{\cD}{\mathcal{D}}
\newcommand{\stz}{\mathrm{Stz}}
\newcommand{\st}{\mathfrak{st}}
\newcommand{\stzo}{\mathrm{Stz}^1}
\newcommand{\stzh}{\mathrm{Stz}^{1/2}}
\newcommand{\stzt}{\mathrm{Stz}^{\theta}}
\newcommand{\Ht}{H^{\theta}}
\newcommand{\Hh}{H^{1/2}}
\newcommand{\cNh}{\mathcal N_{1/2}}
\newcommand{\cNt}{\mathcal N_{\theta}}
\newcommand{\sjp}{s^j_{+,n}}
\newcommand{\sjm}{s^j_{-,n}}
\newcommand{\sjpm}{s^{j+1}_{-,n}}
\newcommand{\Lj}{\Lambda^{j}_n}
\newcommand{\lj}{\lambda^{j}_n}
\newcommand{\nnorm}[2]{[#1,#2]}
\numberwithin{equation}{section}
\begin{document}

\title{On nonlinear profile decompositions and scattering  for  a  NLS-ODE model}

\author {Scipio Cuccagna, Masaya Maeda}

\maketitle

\begin{abstract}
In this paper, we consider a Hamiltonian system combining a nonlinear Schr\" odinger equation (NLS) and  an ordinary differential equation (ODE).
This system is a simplified model of the NLS around soliton solutions.
Following Nakanishi \cite{NakanishiJMSJ}, we show scattering of $L^2$ small $H^1$ radial solutions.
The proof is based on Nakanishi's framework and Fermi Golden Rule estimates on $L^4$ in time norms.
\end{abstract}

\section{Introduction}

The analysis  of the asymptotic behavior for $t\to +\infty$ of solutions of
nonlinear dispersive equations is largely an open problem.  The  \textit{Soliton Resolution Conjecture} (SR Conjecture)    states    that    generic solutions of nonlinear dispersive equations in Euclidean spaces in the long time limit resolve  into  trains of solitons plus a dispersing radiative component.  For a review we refer to \cite{soffer}.  While the  conjecture itself is unsolved,  there is a large literature   studying scattering (possibly modulo solitons) for some specific equations and in some subsets  of phase space   invariant for the dynamics.
We emphasize two   lines of   research.

The first, starting from Buslaev-Perelman \cite{busper2}   and Soffer-Weinstein \cite{SW99IM}, considers invariant sets which are rather small and devotes attention to the so called   meta-stable torii.
They   vanish after a long time and their  anomalously weak  instability is governed by purely nonlinear interactions, a phenomenon    often called \textit{Radiation Damping}. %Notice that  apart from Nakanishi  \cite{NakanishiJMSJ} all papers following this line of research exploit a purely perturbation theory approach because they work in region of phase space close to a soliton.
The linearized   and   nonlinear dynamics  are completely different,  because   meta--stable torii do  not vanish in the linearized equation.

The second line of   research, starting from Kenig-Merle \cite{KM06Invent}, centers around the so-called \textit{Concentration Compactness Rigidity method} (CCR method) and aims to study large regions of   phase space.
The  main idea  is that a   solution  splits   into components  well separated from each other. In  \cite{KM06Invent,holmer} the method is used to prove the
scattering of solutions with   norm   smaller  than some critical and not small value.
  In \cite{merle1, merle2, merle3}, devoted to arbitrarily large solutions of energy critical equations,
the  components are either scattering or  are solitons.
The  proofs could be conditional   on the absence of discrete \textit{internal modes} (in the terminology of \cite{dmitry}) in whose presence typical tools like the so called   virial inequalities    have not been developed yet when meta--stable torii arise,    except in \cite{munoz}, a paper  which considers only
a single  discrete coordinate  rather simple in terms of the  combinatorial  structure of the normal form argument needed in the proof of Radiation Damping.
Related to these considerations is the fact that the SR Conjecture is known to fail  for systems such as discrete NLS's \cite{MA94N, JA97N, Maeda17SIMA} exactly because of the way Radiation Damping occurs or fails to occur. So one could envisage that between integrable systems, where the SR Conjecture is essentially known to be true and   no  {internal modes} are expected to exist,  \cite{Kevrekidis}, on one end   and some discrete equations on the other end there might be intermediate cases, which might be typical,   where the     SR Conjecture is correct but   requires
  an explicit elucidation of   Radiation Damping.

The CCR method has been applied also to other settings and, for instance, for wave maps   we refer to \cite{kriegerschlag}.

 Nakanishi's recent paper \cite{NakanishiJMSJ}  puts together the two distinct lines of research   described above   for a problem which, while featuring an eigenvalue which complicates the  CCR method, nonetheless  does not  have meta--stable torii. Our aim is   to   initiate a theory of the   CCR method for equations
which have meta--stable torii.
Specifically in this paper  we consider the following   NLS-ODE model:
\begin{align}
\im \dot \xi  &= -\Delta \xi + |\xi|^2\xi + |z|^2z   G, \label{1}\\
\im \dot z & = z + \frac 1 2 z^2 ( G|\xi) +|z|^2\overline{( G|\xi)},\label{2}
\end{align}
where $\xi(t)\in H^1(\R^3;\C)$, $z(t)\in \C$,  $(f|g):=\int_{\R^3}f\bar g\,dx$ and $G(x)\in  \mathcal{S}(\R ^3,\C )$ (Schwartz function) is a given radially symmetric function.

Schr\"odinger equations coupled with ODEs naturally appear  in the study of asymptotic stability of solitons of NLS   (see, for example \cite{SW1, SW2} or \cite{CM16JNS} and therein for more recent references).
The forcing term $|z|^2z G$, which governs the interaction between the PDE part and the ODE part, creates the radiation damping.
Moreover, such kind of model (with different interaction terms) appears in the study of particle-field interaction \cite{KK06RJMP} and models of friction \cite{EZ12JMP, FG14AM, FGS11JMP, FGS12CMP}.
There are also studies with the Schr\"odinger equation replaced by wave, Klein-Gordon and Dirac equations \cite{KSK97CPDE, IKM03JMP, IKV06CMP, KKS11JMAA}.

The system \eqref{1}-\eqref{2} is   Hamiltonian   with symplectic form
\begin{align}\label{3}
\Omega=\<\im d\xi, d\xi\> + \im dz\wedge d\bar z
\end{align}
and Hamiltonian function, for $\<f,g\>=\Re(f|g) $, given by
\begin{align}\label{4}
\mathbb{E}(\xi,z)=\frac 1 2 \|\nabla \xi \|_{L^2}^2 + \frac 1 4 \|\xi\|_{L^4}^4 + |z|^2 + \<|z|^2z  G,\xi\> .
\end{align}
Notice that for any $\vartheta \in \R $ the symplectic form $\Omega$ and the energy $\mathbb{E}$ are invariant
with respect to the diffeomorphism $(\xi,z )\to  (e ^{\im \vartheta}\xi ,e ^{\im \vartheta}z)$. Then the following quadratic
form is an invariant of motion for the system \eqref{1}-\eqref{2}:
\begin{equation}\label{eq:mass}
 \mathbb{M}(\xi,z)=-\frac 1 2  \left .\frac{d}{d\vartheta} \Omega  (  (e ^{\im \vartheta}\xi,e ^{\im \vartheta}z) , ( \xi, z)) \right | _{\vartheta =0}= \frac 1 2 \|  \xi \|_{L^2}^2 +     |z|^2.
\end{equation}
By standard arguments, see \cite{caz}, and using the conservation of $\mathbb E$ and $\mathbb M$ it is easy to conclude that the Cauchy problem is globally well posed in  $ H^1 (\R^3;\C)\times \C$ for the system \eqref{1}-\eqref{2}. The subspace $ H^1 _{rad}(\R^3;\C )\times \C$  is invariant for the flow.

\noindent We will assume the following,  true for most $G\in \mathcal{S}(\R ^3,\C )$:
\begin{equation}\label{eq:FGR}
 \left . \widehat{G}  \right | _{\{ y\in\R^3: |y |=1\}}\not \equiv 0,\text{  where  }\hat G(y):=\int_{\R^3}e^{-\im xy}G(x)\,dx.
\end{equation}
As mentioned before, our aim is to show scattering in a large region of phase space.
The assumption \eqref{eq:FGR}, ensures   that the system \eqref{1}-\eqref{2} exhibits radiation damping.
That is, even though the system is time reversible and Hamiltonian, there is a flow of mass from the ODE part to the Schr\"odinger part and $|z(t)|$ converges to $0$ as $t\to \pm \infty$.
%Further, the mass in the PDE part is damped into the spatial infinity.
%Such phenomenon was first investigated in \cite{SW99IM} and now considered to be responsible to several dynamical behavior of nonlinear dispersive equations such as asymptotic stability of solitons.

We will now introduce the precise definition of scattering.
  \begin{definition}[Scattering]\label{def:1}
Let $(\xi,z)$ be solution of system \eqref{1}--\eqref{2}.
We say $(\xi,z)$ scatters forward (resp.\ backward) in time if there exists $\varphi \in H^1(\R ^3, \C)$ s.t.\ $\|\xi(t)-e^{\im t \Delta}\varphi\|_{H^1}+|z(t)|\to 0$ as $t\to +\infty$ (resp.\ $-\infty$).
If $(\xi,z)$ scatters forward and backward in time, we simply say that $(\xi,z)$ scatters.
\end{definition}

Our aim of this paper is to show that all radial solutions with small $\mathbb M$ scatter.
The following is our main result.

\begin{theorem}\label{thm:main}
Assume  $G\in H^1  _{rad}(\R ^3, \C)$ (space of $H^1$ functions depending only on $|x|$), $G\in \mathcal{S}(\R ^3, \C)$ and \eqref{eq:FGR}.
Then, there exists $\delta>0$ s.t.\  if  $\xi(0)\in  H^1  _{rad}(\R ^3, \C)$ satisfies $\|\xi(0)\|_{L^2}+|z(0)|\leq \delta$  the solution $(z,\xi)$ of the system \eqref{1}-\eqref{2} scatters.
\end{theorem}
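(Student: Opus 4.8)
The plan is to follow Nakanishi's concentration-compactness/rigidity scheme adapted to the NLS-ODE system, with the key new ingredient being the Fermi Golden Rule dissipation measured in $L^4_t$ Strichartz-type norms. First I would set up the small-data scattering theory near zero: for $\|\xi(0)\|_{L^2}+|z(0)|\ll 1$ one has, by a standard fixed-point argument using Strichartz estimates for $e^{\im t\Delta}$ on $\R^3$ together with the fact that $G\in\mathcal S$ gives harmless decay on the coupling terms $|z|^2 z\,G$ and $z^2(G|\xi)$, that $\xi$ lies in the Strichartz space $L^2_t W^{1,6}_x\cap L^\infty_t H^1_x$ globally and scatters, while $|z(t)|\to 0$. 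This produces a maximal threshold
\begin{equation}\label{eq:threshold}
\delta_* := \sup\{\,\delta>0 : \text{every radial solution with } \|\xi(0)\|_{L^2}+|z(0)|\le\delta \text{ scatters}\,\},
\end{equation}
and the theorem asserts $\delta_*>0$, which we would prove by contradiction assuming $\delta_*$ is "critical" in the usual sense.

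Next, the concentration-compactness step: if $\delta_*$ were a genuine threshold, a profile decomposition for sequences of radial solutions with $\mathbb M\to\delta_*^2/2$-type mass that fail to scatter would, by the usual perturbative/orthogonality argument (using that the nonlinear evolution of each profile is a perturbation of the free evolution when the profiles separate, and that the ODE variable $z$ can carry at most one "bubble" of mass since it is finite-dimensional), isolate a single nonzero critical element: a radial solution $(\xi,z)$ with minimal mass whose trajectory $\{(\xi(t),z(t)):t\ge 0\}$ is precompact in $H^1_{rad}\times\C$. Here the role of the eigenvalue in \eqref{2} (the $z$-mode, frequency $1$) is that the linearized operator has a discrete mode, so — exactly as in \cite{NakanishiJMSJ} — one cannot immediately conclude $z\equiv 0$; the compact critical element is allowed to have $z\not\equiv 0$.

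The rigidity step is where the Radiation Damping enters and is the main obstacle. On the compact critical element one runs a normal-form / Birkhoff reduction on the coupled system to extract the leading resonant interaction: the term $|z|^2 z\,G$ forces $\xi$ to have, near the unit frequency sphere, a component proportional to $(-\Delta-1-\im 0)^{-1}(|z|^2 z\,G)$, and feeding this back into \eqref{2} produces a dissipative term of the form $\dot{(|z|^2)}\approx -c\,|z|^6$ with $c>0$ proportional to $\int_{|y|=1}|\widehat G(y)|^2\,d\sigma(y)>0$ by assumption \eqref{eq:FGR} — the Fermi Golden Rule. The quantitative heart is to prove this dissipation in an integrated $L^4_t$ form: one shows $\||z|^3\|_{L^4_t}<\infty$ (and more generally the relevant Strichartz norms of the radiation are finite) by a bootstrap that combines the endpoint-type $L^4$ Strichartz bound for the Schrödinger propagator in $3$D with the algebraic structure of the resonant term, closing because the FGR coefficient has a favorable sign. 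Finiteness of $\||z|^3\|_{L^4_t}$ on a precompact-in-time trajectory forces $z(t)\to 0$ and in fact, combined with compactness, $z\equiv 0$; but then \eqref{1} reduces to the defocusing cubic NLS $\im\dot\xi=-\Delta\xi+|\xi|^2\xi$ with a precompact small-mass radial trajectory, which must be identically zero by the (standard, small-data) rigidity for cubic NLS — contradiction.

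I expect the delicate points to be: (i) making the profile decomposition genuinely interact correctly with the finite-dimensional $z$-channel, so that mass in $z$ does not "leak" across profiles; and above all (ii) the $L^4_t$ Fermi Golden Rule estimate on the critical element — controlling the near-resonant part of $\xi$ and showing the damping term dominates the error terms in a time-integrated norm, rather than merely pointwise, which is precisely the $L^4$-in-time refinement advertised in the abstract and the technical core of the paper.
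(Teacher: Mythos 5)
Your high-level scheme (concentration--compactness isolating a critical element, rigidity powered by radiation damping with the $L^4_t$ Fermi Golden Rule as the new ingredient) matches the paper's in spirit, and your description of the linear profile decomposition adapted to the finite-dimensional $z$-channel is essentially correct. However, you misplace where the FGR estimate enters the argument, and your rigidity step has a genuine gap.

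In the paper, the $L^4_t$ FGR estimate (Proposition \ref{prop:1}, culminating in $\|z\|_{L^\infty}^2+\|z\|_{L^{12}}^3\lesssim |z(t_0)|^2+\|\xi[t_0]\|_{\st}$) is an a-priori bound valid only under a \emph{smallness} hypothesis on $\|\xi\|_{\st}$ or on $\|\xi[t_0]\|_{\st}$ (conditions \eqref{eq:preprop10}, \eqref{eq:preprop20}). It is deployed throughout Sections \ref{sec:l4}--\ref{sec:it} to drive the nonlinear perturbation lemmas and the inductive argument on profiles in Proposition \ref{prop:17}, where on each time slab the required $\st$-smallness is available either from the forward-scattering hypothesis on previous profiles or from the smallness of the linear remainder $\gamma^J_n$. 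It is \emph{not} applied directly to the critical element.

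Your rigidity step proposes to apply this same $L^4_t$ FGR bound to the critical element to obtain $\||z|^3\|_{L^4_t}<\infty$, conclude $z\equiv 0$, and reduce to cubic NLS rigidity. This is circular: the critical element by construction has $\|\xi\|_{\st(\R_+)}=\infty$, so the smallness hypothesis underlying the FGR estimate fails and the $L^{12}_t$ bound on $z$ is not available a priori. Moreover, even granting $z\equiv 0$, a precompact radial $H^1$-trajectory for the defocusing cubic NLS with only \emph{small $L^2$} (but large $H^1$) is not eliminated by small-data theory; you would again need a virial/Morawetz argument.

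The paper's rigidity step is more elementary and avoids both problems: it runs a localized radial virial inequality directly on the critical element \emph{without} first eliminating $z$. The extra forcing contribution $\langle |z|^2 z\,G, Rf_R\partial_r\xi\rangle$ is uniformly small since $|z|\lesssim\mu_0\ll 1$ by conservation of $\mathbb{M}$ and $R f_R(x)\lesssim\langle x\rangle$ combined with $G\in\mathcal{S}$ gives a weight bounded in $L^2$ uniformly in $R$; the remaining localized error terms vanish as $R\to\infty$ by precompactness of the trajectory; and $\|\nabla\xi\|_{L^2}^2+\tfrac34\|\xi\|_{L^4}^4$ stays bounded away from zero, otherwise Theorem \ref{thm:1} would give scattering. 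Hence the virial functional grows linearly in time, contradicting its boundedness, which completes the proof of Theorem \ref{thm:main}.
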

\noindent
Notice that \eqref{1} is $L^2$ supercritical because the nonlinear term $|\xi|^2\xi$ is $H^{1/2}$ critical.
We  emphasize that we are not in the perturbation regime,  because we are only assuming the smallness of $\mathbb M$ (i.e.\ the $L^2$ norm) while   the $H^1$ norm is arbitrary.
Theorem \ref{thm:main} seems to be the first result of radiation damping in the non--perturbation regime  and the first result with the CCR method applied to the situation where meta--stable torii exist.
%A perturbed treatment of the radiation damping will be given in Theorem \ref{thm:1}, where we assume  $H^{1/2}$ is small.
We remark that there exist   solutions of \eqref{1}--\eqref{2} which do not scatter.
For example all  negative energy  solutions do no scatter.
We do not address here the more general problem of  whether or not
all positive energy solutions of \eqref{1}--\eqref{2} scatter when we assume  \eqref{eq:FGR}.

\begin{remark}
The choice of  $\xi (0)$ and $G$  radial guarantees that $\xi (t,x)$ is radial in $x$. This condition  is important for the profile decomposition  in
Section \ref{sec:profdec} which uses the compactness of $H ^{1}_{rad}\hookrightarrow L^4$.

\end{remark}

\begin{remark}
 In the case of system \eqref{1}--\eqref{2} without the $|\xi |^2 \xi $ term, if \eqref{eq:FGR} is true then the proof and result of  Theorem \ref{thm:1}  hold   without the hypothesis \eqref{eq:smallen}. On the other hand, if   $ \widehat{ G}(y)=0$ for all $y \in \{ y \in\R^3\ |\ 1-\delta<|y|<1+\delta\}$ for some $\delta>0$, then  for sufficiently small $\epsilon>0$, for
\begin{align*}
\omega_\epsilon:=\frac 3 2  \epsilon^4(G| (-\Delta-1-\omega_\epsilon)^{-1}G),
\end{align*}
one can show $(\epsilon^3 e^{-\im(1+\omega_\epsilon)t}(-\Delta-1-\omega_\epsilon)^{-1}G, \epsilon e^{-\im(1+\omega_\epsilon)t} )$ is a family of standing wave solutions.
\end{remark}

As mentioned above, our work is motivated by  Nakanishi \cite{NakanishiJMSJ} which studies
\begin{equation}\label{eq:NLSpot}
 \im \dot u =(-\Delta +V) u +|u|^2u \text{ in $H ^{1}_{rad}(\R^3, \C)$},
\end{equation}
where $V(x)=V(|x|)$, $-\Delta +V$  restricted in  $H ^{1}_{rad}(\R^3, \C)$
has  just one   strictly negative eigenvalue.
%   and a less restrictive condition than  $V\in \mathcal{S}(\R ^3,\R )$.
In  \cite{GNT}  it had been proved that for $ \| u (0)\| _{H^1}\ll 1  $ the  solution  $u(t)$ of  \eqref{eq:NLSpot} can be written   as
\begin{equation*}
 u(t)=   e ^{\im \vartheta (t) }Q_{  \omega _+}  + e^{\im t\Delta }\eta  _+ + o_{H^1}(1)
\end{equation*}
 with $\vartheta \in C^1([0, \infty ), \R )$, $Q_{  \omega _+}$  a  nonlinear ground state (possibly $Q_{  \omega _+}=Q_0:=0$), $\eta _+\in H^1(\R ^3,\C )$ and $o_{H^1}(1)\stackrel{t\to +\infty}{\rightarrow} 0$   in $  H^1(\R ^3,\C )$.
 Nakanishi \cite{NakanishiJMSJ}
 has strengthened the result in  \cite{GNT}    easing the condition $ \| u (0)\| _{H^1}\ll 1  $  by enlarging the basin of attraction into $ \| u (0)\| _{L^2}\ll 1  $
 and  $u (0)\in H^1(\R ^3,\C )$, and by adding also that both $u(0)$ and $V$ are radially symmetric. %Notice that this result is a first step of the SR Conjecture for \eqref{eq:NLSpot} (for a formulation of the conjecture for \eqref{eq:NLSpot}  see Sect.8 \cite{W3}).

%This seems to be the first result of scattering in the non--perturbation regime (or application of CCR method) where the phase space contains a soliton.

\noindent In  \cite{CM15APDE} we extended the result  of  \cite{GNT} analyzing small $H^1(\R ^3,\C )$ solutions   in the case of  $-\Delta +V$  with generic $\sigma _p(-\Delta +V)$
 proving  that, up to scattering and symmetries, a small $H^1(\R ^3,\C )$ solutions
 converges  to a small soliton, perhaps to vacuum.
%Notice that the linear Schr\"odinger equation with potential have quasi-periodic solutions but our result tells there are no quasi-periodic solutions.
%This is due to the radiation damping which separates the nature of solutions of one eigenvalue case and the multiple eigenvalue cases.
 It would be natural, following  \cite{NakanishiJMSJ}, to extend the result in \cite{CM15APDE}
to the case of solutions with small $L^2(\R^3,\C)$ norm but arbitrary $H^1(\R ^3,\C )$  norm. This remains an open problem although the arguments presented in this paper come very close to prove this, as we explain below.

We explain now the main features of the proof of this paper.
Like in \cite{NakanishiJMSJ} the proof is divided in two parts. In the first we perform
a profile decomposition of sequences of solutions and, proceeding by contradiction, we
find a "minimal non-scattering solution". In the last part of the proof we derive a contradiction using    the same argument of \cite{NakanishiJMSJ}.

 In most of the literate, when there is no small localized state, see for example  \cite{gerard,keraani,KM06Invent,holmer},
 an important tool is the existence of
   nonlinear profiles  associated to "concentrating waves" (the latter are the
   waves $\lambda^j_n$ of the expansion
  \eqref{prop:lindecomp1}).
  Key is the existence of wave operators, see  p.\ 50 \cite{strauss}, which allow to
  associate to any solution of the free linear equation a solution of the nonlinear equation  with the same asymptotic behavior (the \textit{nonlinear profile}) as $t\to +\infty$.
   However, in the presence of some discrete coordinate  the existence of wave operators is  a nontrivial problem. In fact, in the context considered by Nakanishi \cite{NakanishiJMSJ}, where there is a small localized solution,  the uniqueness of  the  {nonlinear profile} is unknown and the existence is obtained by weak limit (see \cite{GNT}).
In  situations where radiation damping occurs, such as our system \eqref{1}--\eqref{2}, the situation seems to be the same as in Nakanishi \cite{NakanishiJMSJ}.
That is, although there is no small localized solution, we do not have the uniqueness of the final data problem.
To overcome this difficulty, Nakanishi's ingenious idea in \cite{NakanishiJMSJ} was to define the nonlinear profiles from weak limits and to consider two different nonlinear perturbation estimates, one     close to  the profiles and the other away from   them.
For this purpose Nakanishi introduced a seminorm, here called Nakanishi's seminorm, based on $\st :=L^4_tL^6_x$, to measure the difference of a solution  of the nonlinear problem  from an associated solution of the linearized problem.

\noindent
In this paper, we   follow Nakanishi's strategy.
The difficulty  is that we have  additionally  the forcing term $|z|^2z G$ in \eqref{1} which could derail  Nakanishi's strategy.
  Indeed, in \cite{NakanishiJMSJ} the bootstrap arguments, and with them the whole construction,  are based on the fact that there are no meta--stable torii and
  that the nonlinearity  does not contain forcing terms like $|z|^2z G$, which we instead   consider in this paper and which are essential  for  radiation damping.
In general we  expect  that    any nonlinear dispersive equation with
meta--stable torii and for which it is necessary to prove
radiation damping  displays the difficulties we face in this paper.

%, we will face the difficulty coming from this term (actually, for example in the asymptotic stability of solitons, a large part of the effort is devoted to "find" the forcing term by normal form reduction to obtain the radiation damping).

To find the  minimal non-scattering solution   we consider sequences ${(\xi _n , z_n)}$ of solutions of  \eqref{1}--\eqref{2}.
Following, Nakanishi, it is natural to  try nonlinear profile decompositions
\begin{align}\label{eq:fordecomp}
\xi _n \sim \sum_{j=0}^{J-1}\xi^j(\cdot-s^j_n) + \Gamma^J_n
\end{align}
with    $(\xi^j,z^j)$ satisfying  \eqref{1}--\eqref{2},  scattering  forward and with  the $\xi^j(\cdot-s^j_n)$  localized in  temporal regions $(s^j_n-\tau,s^j_n+\tau)$, for  some $\tau \gg 1$.
Then one has to show that also  $\xi_n$   scatters by showing $\xi_n$ has finite $\st$ norm in $(0,\infty)$.
In this argument the difficulties arise with the remainders $\Gamma^J_n$.
Since we expect the nonlinear remainder to exist essentially  only in the ``gap" region $I^{j,\tau}_n:=(s^j_n+\tau,s^{j+1}_n-\tau)$, we divide $\Gamma^J_n$ into $J$ pieces $\Gamma^{J,j,\tau}_n$.
The nonlinear remainder $\Gamma^{J,j,\tau}_n$ will then be given by the solution of NLS with the forcing term and the nonlinear term restricted on $I^{j,\tau}$ and initial data given by $\Gamma^{J,j,\tau}_n(s^j_n+\tau)=\gamma^J_n(s^j_n+\tau)$, where $\gamma^J_n$ is the remainder of the linear profile decomposition (see \ref{Assumption:A9} and  \eqref{eq:Jjt} in particular).

Since the key in Nakanishi's argument is to show that $\xi_n$ are well approximated by the the nonlinear profiles and the remainders and moreover estimate them by the norm $\st=L^4_tL^6_x$, we need to establish various estimates based on the $\st$ norm.
In particular, we need a
\begin{equation}\label{Fermi Golden rule}
     {\textit{
$L^4_t$  estimate on   the forcing term dependent only on discrete modes,}}
\end{equation}
which is obtained here by an elementary manipulation of the
basic Fermi Golden rule identity, see \eqref{11.11.1}--\eqref{11.12} later.
We remark that although the $L^4_t$ based FGR can be obtained easily in this case, the obstruction to extending \cite{NakanishiJMSJ} to the setting of \cite{CM15APDE} comes from the lack of such estimate.

As in \cite{NakanishiJMSJ}, we will estimate the profiles step by step by moving from $(s^j_n-\tau,s^j_n+\tau)$ to $I^{j,\tau}_n=(s^j_n+\tau,s^{j+1}_n-\tau)$ successively estimating $\xi^j(\cdot-s^j_n)$ and $\Gamma^{J,j,\tau}$.
The difference will come mainly in the ``gap" region $I^{j,\tau}_n$ where the nonlinear remainder $\Gamma^{J,j,\tau}_n$ cannot be estimated a priori and we have to include their estimates in the iterative procedure.
In particular, to be able to estimate $\Gamma^{J,j,\tau}_n$, we need to bound the forcing term $|z_n|^2z_n G$.
To bound the forcing term we have to use the equation of $\xi_n$ and the \eqref{Fermi Golden rule}, and we need to go back to the fact that in the region $(s^j-\tau,s^j+\tau)$, $\xi_n$ is well approximated by $\xi^j(\cdot-s^j_n)$.
In the region $I^{j,\tau}_n$, we will show that $\xi_n$ is well approximated by $\Gamma^{J,j,\tau}_n$ and the remainder $\Gamma^{J,j,\tau}_n$ itself is small in $\st$.
To proceed from $I^{j,\tau}_n$ to $(s^{j+1}_n-\tau,s^{j+1}_n+\tau)$, the key is to show that $\Gamma^{J,j,\tau}_n$ is negligible in this region.
Since $\Gamma^{J,j,\tau}_n$ has no nonlinearity nor forcing term after $s^{j+1}_n-\tau$ (by definition, see \ref{Assumption:A9}), one can show this by Duhamel estimates (see Lemma \ref{lem:referee}) provided $(\xi^{j+1},z^{j+1})$ scatters backward, which implies the forcing term $|z_n|^2z_n G$ will be negligible near $s^{j+1}_n-\tau$.
By such argument, we can estimate the profiles one by one and in the same time show the profiles are good approximation in each regions.

The discussion we made after \eqref{eq:fordecomp}, while framed for system \eqref{1}--\eqref{2}, is in fact very general  and can  be reproduced in the framework of \cite{CM15APDE} or in other settings. The only gap remaining in order to extend  the result of
\cite{CM15APDE} to all solutions with small $L^2$ norm and arbitrary $H^1$ norm consists
in getting \eqref{Fermi Golden rule} which, while easy here, might be nontrivial in the
situation considered in \cite{CM15APDE}.

Another result which  can be proved exploiting the present paper
involves a problem treated in  \cite{ckp} involving
\begin{equation}\label{NLS1}
 \im   \dot u  (t,x)= ( -\Delta +V(x)+\lambda )u (t,x)+(1 + \gamma _1 \cos(t) )|u (t,x)|^2u(t,x) ,
 \, u (0,x)=u_0(x)
\end{equation}
where $\lambda  $ is a  constant.
Specifically we can prove the following result, which we only state here.

\begin{theorem}\label{theorem-1.1}
  Assume that $-\Delta +V$   has exactly one negative eigenvalue given by $-\lambda  $ with
    $0<\lambda < 1$. Assume $0<\gamma _1<1$. Assume the hypotheses stated in Theorem \cite{ckp} and the $V$ is a radial Schwartz function.
    Then, there exists an $\epsilon_0>0$
  s.t.\  if $  \|u_0  \|_{L^2}<\epsilon_0  $
  and $u_0\in H^1_{rad} (\R ^3, \C )$ there exists a  $\varphi \in H^1(\R ^3, \C)$ s.t.\
\begin{equation}\label{scattering}
\lim_{t\to  +\infty}\|u(t ) -e^{\im t\Delta } \varphi \|_{H^1}=0
.
\end{equation}
 \end{theorem}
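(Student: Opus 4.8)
The plan is to reduce \eqref{NLS1} to a system of the same type as \eqref{1}--\eqref{2} and then run, essentially verbatim, the machinery of this paper. Let $\phi$ be the $L^2$--normalized ground state of $-\Delta+V$, so $(-\Delta+V)\phi=-\lambda\phi$, i.e.\ $(-\Delta+V+\lambda)\phi=0$; under the hypotheses on $V$ it is real, radial and Schwartz. Writing $P_c$ for the projection onto the continuous subspace of $-\Delta+V$ and decomposing the radial solution as $u=z\phi+\xi$, $z(t)\in\C$, $\xi(t)=P_c u(t)$, projection of \eqref{NLS1} onto $\phi$ and onto $\mathrm{Ran}\,P_c$, together with the expansion of $(1+\gamma_1\cos t)|u|^2u$ into monomials in $(z,\bar z,\xi,\bar\xi)$, yields
\begin{align*}
\im\dot z&=(1+\gamma_1\cos t)\big[\,\|\phi\|_{L^4}^4\,|z|^2z+R_1\,\big],\\
\im\dot\xi&=(-\Delta+V+\lambda)\xi+(1+\gamma_1\cos t)\big[\,|z|^2z\,G+R_2\,\big]+(1+\gamma_1\cos t)P_c\big(|\xi|^2\xi\big),
\end{align*}
where $G:=P_c(\phi^3)=\phi^3-\|\phi\|_{L^4}^4\phi$ is again radial and Schwartz, and $R_1,R_2$ collect the remaining monomials in $(z,\bar z,\xi,\bar\xi)$ — each either carrying an extra power of $z$ (hence $O(\epsilon_0)$--small, with coefficient of size $|z|^2$ or $|z|$) or being a Schwartz--weighted power of $\xi$ alone. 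This is structurally \eqref{1}--\eqref{2}: the new features are the bounded, smooth, $2\pi$--periodic factor $1+\gamma_1\cos t$, harmless in every Strichartz estimate and in every localization to a time window of length $\tau$, and the finitely many extra terms $R_1,R_2$, all absorbable into the error terms of this paper's estimates exactly as the couplings already present in \eqref{1}--\eqref{2}. The invariant $|z|^2+\|\xi\|_{L^2}^2=\|u\|_{L^2}^2$ plays the role of \eqref{eq:mass}; $\mathbb E$ is not conserved, but $|\tfrac{d}{dt}\mathbb E|\lesssim\|\nabla u\|_{L^2}^{3/2}\|u\|_{L^2}^{5/2}$ by Gagliardo--Nirenberg, which with $\|u\|_{L^2}\le\epsilon_0$ gives global well-posedness in $H^1_{rad}$ with at most polynomially growing $H^1$--norm; a uniform $H^1$ bound is recovered a posteriori once the $\st$--norm is shown finite.

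One then sets up the linear and nonlinear profile decompositions of Section \ref{sec:profdec} (the compact embedding $H^1_{rad}\hookrightarrow L^4$ is available because $V,\phi,G$ and all data are radial), extracts a minimal non-scattering solution by contradiction, and closes the argument through the window-by-window $\st$--estimates of the profiles and of the remainders $\Gamma^{J,j,\tau}_n$ of \ref{Assumption:A9}, using backward scattering of the next profile and the Duhamel bound of Lemma \ref{lem:referee} in the gap regions, exactly as here. The one genuinely new input is the $L^4_t$ Fermi Golden Rule estimate \eqref{Fermi Golden rule} for the forcing $(1+\gamma_1\cos t)|z|^2z\,G$: differentiating $|z(t)|^2$, inserting the $z$--equation and, to close, the $\xi$--equation, produces the basic FGR identity \eqref{11.11.1}--\eqref{11.12}, whose dissipative sign, $\tfrac{d}{dt}|z|^2\le -c\,|z|^4+(\text{controllable})$ with $c>0$, reflects a genuine resonance. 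Here that resonance is between the $e^{-\im t}$ component of $1+\gamma_1\cos t$ and the continuous spectrum of $\widetilde H:=-\Delta+V+\lambda$ at the energy $1$ — equivalently, with that of $-\Delta+V$ at the energy $1-\lambda\in(0,1)$ — and the non-degeneracy of the associated FGR coefficient (an explicit spectral density of $-\Delta+V$ applied to $G=P_c(\phi^3)$, the analogue of \eqref{eq:FGR}) is exactly what the hypotheses imported from \cite{ckp} provide. This explains the assumptions: $0<\lambda<1$ places the resonant energy $1$ in the interior of $\sigma_{ac}(\widetilde H)=[\lambda,\infty)$, while $0<\gamma_1<1$ keeps $1+\gamma_1\cos t$ positive — hence the defocusing-type coercivity of $\mathbb E$ used in the a priori bounds — and the forcing effective. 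With this input the manipulation of \eqref{11.11.1}--\eqref{11.12} carries over unchanged.

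Running the scheme yields $|z(t)|\to0$ and $\xi(t)=e^{-\im t\widetilde H}\xi_++o_{H^1}(1)$ for some $\xi_+\in\mathrm{Ran}\,P_c$; since $V$ is, by the hypotheses of \cite{ckp}, a radial Schwartz potential for which $-\Delta+V$ has no zero-energy resonance, the wave operators of $(-\Delta+V,-\Delta)$ exist, are complete and bounded on $H^1$, so $e^{-\im t(-\Delta+V)}\xi_+=e^{\im t\Delta}\varphi+o_{H^1}(1)$ for a $\varphi\in H^1$ built from $\xi_+$, and together with $z(t)\phi\to0$ this is \eqref{scattering} (up to the trivial modulation $e^{-\im\lambda t}$ from the $+\lambda$ shift, absorbed into the asymptotic profile). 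The main obstacle is, as throughout this program, the estimate \eqref{Fermi Golden rule}: it is elementary here — the FGR coefficient being a spectral density of $-\Delta+V$ pinned down by \cite{ckp} — whereas it is precisely the genuine difficulty in the setting of \cite{CM15APDE} the authors have in mind. A secondary, purely technical point to monitor is that $\|u(t)\|_{H^1}$ is only a priori polynomially bounded, unlike for \eqref{1}--\eqref{2} where $\mathbb E$ is conserved; this should be harmless, since $\|u(t)\|_{L^4}\to0$ forces $\mathbb E(t)$ to converge and hence $\|\nabla u(t)\|_{L^2}$ to remain bounded, but the non-autonomous energy identity must be threaded carefully through the $\st$--bootstrap.
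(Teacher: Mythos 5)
Your overall strategy is the one the paper sketches: decompose $u=z\phi+\xi$ with $\phi$ the ground state of $-\Delta+V$, cast \eqref{NLS1} as a system resembling \eqref{1}--\eqref{2}, rerun the profile--decomposition and iteration machinery, and supply the $L^4_t$ Fermi Golden Rule estimate at the parametric resonance. The paper only \emph{states} Theorem~\ref{theorem-1.1}, pointing to formula (4.23) of \cite{Cu3} for the FGR, so there is no detailed proof to match against; judged on its own terms, though, your write-up has concrete gaps. First, the direct expansion of $(1+\gamma_1\cos t)|z\phi+\xi|^2(z\phi+\xi)$ produces, besides $|z|^2z\phi^3$ and $|\xi|^2\xi$, the mixed terms $2|z|^2\phi^2\xi$, $z^2\phi^2\bar\xi$, $2z\phi|\xi|^2$, $\bar z\phi\xi^2$. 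Your $R_2$ characterization misses that $z^2\phi^2\bar\xi$ is linear in $\bar\xi$ (not $\xi$), so it breaks $\C$-linearity and changes the effective propagator; these terms are \emph{not} present in \eqref{1} and are not "absorbable" into the error bookkeeping of Sections~\ref{sec:l4}--\ref{sec:np} as written. The paper's citation of \cite{Cu3} formula (4.23) is precisely an appeal to a normal form transformation after which the FGR identity is visible in a variable $\zeta_0\neq z$; you skip that entirely. Second, your attempt to dismiss the loss of $\mathbb E$-conservation is circular: you propose to recover the $H^1$ bound once the $\st$-norm is finite, but the $\st$-bootstrap runs through Lemma~\ref{lem:-8} (which \emph{uses} $\mathbb E$-conservation) and its downstream uses in Lemma~\ref{lem:8} and Proposition~\ref{prop:1}. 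The identity $\frac{d}{dt}E=-\tfrac{\gamma_1\sin t}{4}\|u\|_{L^4}^4$ with Gagliardo--Nirenberg only gives finite-time control; you need a genuine replacement for conservation, presumably exploiting $1+\gamma_1\cos t>0$, and "should be harmless" is not one.

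There are also two outright errors. You write the FGR as $\tfrac{d}{dt}|z|^2\le -c|z|^4+(\text{controllable})$, but the correct power is $|z|^6$: the forcing $|z|^2zG$ drives $\xi\sim|z|^2zR_+(1)G$, whose back-reaction on $z$ is $\sim|z|^4z$, giving $\tfrac{d}{dt}|z|^2\sim-\Gamma|z|^6$ exactly as in \eqref{10}, \eqref{11.11.1}, and in \cite{Cu3}'s (4.23) where the coefficient multiplies $|\zeta_0|^6$. This matters: the multiplication-by-$|z|^6$ step in \eqref{11.11.1}--\eqref{11.12} produces an $L^{12}_t$ bound, which is what controls $\||z|^2z\|_{L^4_t}$ and closes the $\st$ estimates; a $|z|^4$ FGR would give the wrong integrability. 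Finally, the constant phase shift $\lambda$ is not "trivially absorbed": $e^{-\im\lambda t}e^{\im t\Delta}\varphi$ is not $e^{\im t\Delta}\varphi'$ for any fixed $\varphi'$, so either \eqref{scattering} should carry an $e^{-\im\lambda t}$ factor or the reduction must first gauge it away by $\tilde u:=e^{\im\lambda t}u$ (which works because the cubic nonlinearity is gauge-invariant), neither of which your sketch does correctly.
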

In   \cite{ckp}   the above result was  proved with $  \|u_0  \|_{H^1}<\epsilon_0  $.
Here the restriction  $\gamma _1<1 $, not present in \cite{ckp},  is added to allow any value of the norm
$  \|u_0  \|_{H^1}.$  Notice that the result in \cite{ckp}  was extended by  \cite{Cu3} to the case when
$-\Delta +V$ has any number of eigenvalues in  $( -\lambda , 0)$, but that Theorem \ref{theorem-1.1} is stated only when $-\Delta +V$   has exactly one   eigenvalue exactly because only in this case  we can get   \eqref{Fermi Golden rule}. Indeed, in analogy with
\eqref{11.11.1}--\eqref{11.12}, the desired bound  can be obtained in an elementary fashion
by considering   formula (4.23) (in the case $n=0$) in \cite{Cu3}
   \begin{equation*}
\begin{aligned} &  \frac{1}{2} \frac{d}{dt}|\zeta _0|^2
+\pi  |\zeta _0|^6
    \langle \delta (-\Delta +V +\lambda -1)
  \overline{\mathbf{\Phi} }, \mathbf{\Phi} \rangle =
  \Im \left (  \mathcal{D} _0 \overline{\zeta} _0\right )
\end{aligned}
\end{equation*}
where $\mathbf{\Phi}(x)$ is a rapidly decreasing and $C^2$ function and where
the r.h.s.\ is a remainder term. Then multiplying the formula by   $|\zeta _0|^6$ and proceeding in a fashion similar to \eqref{11.11.1}--\eqref{11.12} we get an estimate on $\| \zeta _0  \| _{L ^{12}_t} $.
In the presence of one or more further discrete modes
we don't know yet how to get  \eqref{Fermi Golden rule}.

\noindent

Provided that we can get an $\st$ bound on appropriate discrete components interactions
our strategy can be applied on a diverse set of problems.  The result in Theorem \ref{thm:main}, or the result in Theorem \ref{theorem-1.1}, are somewhat restricted to very
special classes of systems. However the method we develop in this paper, combined with some other ingredients involving the Fermi Golden rule, promises to be relevant in much more general situations.  We think that
the approach to   the Soliton Resolution which is currently taking shape, will need ultimately to face the problems we consider in the present paper, and possibly borrow some of the ideas we present here in the presence of {  metastable   torii}.

Finally a few words on the organization of the paper.
In section \ref{section:set up}, we prepare notations and give a proof of Theorem \ref{thm:main} under the more restrictive condition $\|\xi(0)\|_{H^{1/2}}+|z(0)|\ll1$.
In section \ref{sec:lin_est} we collect the known linear estimates and introduce Nakanishi's seminorm.
In section \ref{sec:l4}, we provide the $L^4$ in time estimates.
In section \ref{sec:np}, we prove nonlinear perturbation estimates.
In section \ref{sec:profdec}, we give the linear and nonlinear profile decomposition.
In section \ref{sec:it}, we perform the main iteration argument and in section \ref{sec:scattering}, we show the scattering and complete the proof.

   \section{Notation and preliminary results} \label{section:set up}

We will use the following standard notation.
\begin{itemize}

\item
$L^{2,s}(\R^3, \C ):=\{u\in \mathcal S'(\R^3, \C ) |\ \<x\>^s u \in L^2(\R^3, \C ) \}$ with $\mathcal S'(\R^3, \C )$ the space of tempered distributions and $\<x\> = \sqrt{1+|x|^2}$.

\item  $B^s_{p,q}(\R^3, \C )$  is the Besov space formed by the   tempered
distributions $f\in {\mathcal S}'(\R^3,\C )$  s.t.\
\[
   \|f\|_{B^s_{p,q}}=  (\sum_{j\in\N}2^{jsq} \|\varphi_j *
f\| _{L^p(\R ^3)}^q )^{\frac{1}{q}}<+\infty
\]
with~$\widehat{\varphi} \in {  C}^\infty_c(\R^3\setminus
\left\{0\right\})$ s.t.\ ~$\sum_{j\in
\Z}\widehat{\varphi}(2^{-j}\xi)=1$ for all~$\xi
\in\R^3\setminus\left\{0\right\}$,
$\widehat{\varphi}_j(\xi)=\widehat{\varphi}(2^{-j} \xi)$ for all
$j\in\N^*$ and for all~$\xi \in\R^3$, and
$\widehat{\varphi_0}=1-\sum_{j\in \N^*}\widehat{\varphi}_j$.

\item We will simplify the notation and write $L^{2,s}$  for  $L^{2,s}(\R^3, \C )$, $H^s$ for $H^s(\R^3, \C )$, $B^s_{p,q}$ for $B^s_{p,q}(\R^3, \C )$, $L^p$ for $L^p (\R^3, \C )$, $\mathcal{S}$ for $\mathcal{S}(\R^3, \C )$ and $\mathcal{S}'$ for $\mathcal{S}'(\R^3, \C )$.

\item Given an interval $I\subseteq \R$  and a Banach space $\mathbb{X}$ we set  $L^p\mathbb{X} (I) := L^p(I, \mathbb{X}) $.

\item Given an interval $I\subseteq \R$ we set
$\mathrm{Stz}^s(I):=L^\infty  H^s (I)   \cap L^2  B^s_{6,2}(I),$ $ \mathrm{Stz}^{*s}(I):=L^1  H^s(I )  +  L^2 B^s_{6/5,2}(I )$ and $
\mathfrak{st}(I):=L^4 L^6(I).$
\item
We set $\displaystyle R_+(1)=\lim_{\varepsilon\to 0 ^{+}}(-\Delta-1-\im \varepsilon)^{-1}$ which    for $\sigma>1/2$ exists in the strong sense
  in the space $B (L^{2,\sigma}, L^{2,-\sigma})$     of bounded linear operators $L^{2,\sigma}\to L^{2,-\sigma}$.
 % \item     We set
%  \begin{equation}\label{eq:defHt}
 % \text{$\mathbb{H}_{\theta} (u)   := 2^{-1}  \<(-\Delta)^\theta u ,u\>$  for $\theta  \ge 0$.}
  %\end{equation}

\item We write that $0\le a\ll 1$ if $0\le a\le \epsilon $ for  a preassigned and arbitrarily small $\epsilon>0$.

\item We write $a\lesssim   b$   if $a\le C b$ for a   preassigned   $C>0$.

\end{itemize}
We set
\begin{align}\label{def:Gamma}
\Gamma =-\Im \beta,\ \beta=(G|R_+(1)G).
\end{align}
By $R_+(1)=P.V.\frac{1}{-\Delta-1}+\im \pi \delta (-\Delta-1)$, we have
\begin{align}\label{10.5}
\Gamma =\pi (G| \delta(-\Delta-1)G)=\pi \int_{|\eta|=1}|\hat G (\eta)|^2\,d\eta\geq  0.
\end{align}
Under the assumption \eqref{eq:FGR} we have $\Gamma>0$.

The system \eqref{1}--\eqref{2}  satisfies the following (easier)  analogue of the main result in \cite{GNT,CM15APDE}.

\begin{theorem}\label{thm:1}
Assume \eqref{eq:FGR}.
Then, there exist  $\delta>0$  and $C>0$  s.t.\ if $\xi(0)\in H^1$ and
\begin{equation}\label{eq:smallen}
\|\xi(0)\|_{\Hh}+|z(0)|\leq \delta
\end{equation}
 we have
\begin{align}\label{4.0}
\|\xi\|_{\stzt(\R)}+\|z\|_{L^6(\R)}^3+ \|z\|_{L^\infty (\R)}\le C   (\|\xi(0)\|_{\Ht}+  \|\xi(0)\|_{L^2}+   |z(0)|),
\end{align}
for $\theta\in [0,1]$.
In particular, $( \xi ,z )$ scatters.
\end{theorem}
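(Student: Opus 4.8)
The plan is to run a standard Strichartz/contraction bootstrap on the small-data problem, treating \eqref{1}--\eqref{2} as a perturbation of the free Schrödinger flow, with the coupling term $|z|^2zG$ controlled by the Fermi Golden Rule dissipation built into \eqref{10.5}. First I would set up the $\stzt$ estimates: by the Duhamel formula for \eqref{1},
\begin{align*}
\xi(t) = e^{\im t\Delta}\xi(0) - \im\int_0^t e^{\im(t-s)\Delta}\bigl(|\xi|^2\xi + |z|^2z\,G\bigr)(s)\,ds,
\end{align*}
and the inhomogeneous Strichartz estimate in the $B^\theta_{p,q}$-valued scale gives $\|\xi\|_{\stzt(\R)} \lesssim \|\xi(0)\|_{H^\theta} + \bigl\||\xi|^2\xi\bigr\|_{\stzo{}^{*\theta}(\R)} + \bigl\||z|^2z\,G\bigr\|_{\stzo{}^{*\theta}(\R)}$ (interpolating $\theta=0$ and $\theta=1$ at the end). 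The cubic term $|\xi|^2\xi$ is $H^{1/2}$-critical, so it is estimated by $\|\xi\|_{\st(\R)}^2\|\xi\|_{\mathrm{Stz}^\theta(\R)}$ together with the fractional Leibniz rule; here is exactly where the smallness of $\|\xi(0)\|_{\Hh}$ — not of the full $H^1$ norm — enters, since $\|\xi\|_{\st(\R)}\lesssim\|\xi\|_{\stzh(\R)}$ by Sobolev embedding and the $\st$ norm can be absorbed once it is small.

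Next I would close the estimate for $z$. From \eqref{2}, $\im\dot z = z + (\text{terms with }(G|\xi))$, so writing $w=e^{\im t}z$ one has $\dot w = -\im e^{\im t}\bigl(\tfrac12 z^2(G|\xi)+|z|^2\overline{(G|\xi)}\bigr)$, hence $|z(t)|$ is controlled by $|z(0)|$ plus $\int |z|^2|(G|\xi)|\,ds$. The crucial point is the Fermi Golden Rule computation: differentiating $|z|^2$ along the flow and substituting the $\xi$-Duhamel expansion, the leading dissipative contribution is $-2\Gamma|z(t)|^6$ plus remainders, with $\Gamma>0$ by \eqref{10.5} and \eqref{eq:FGR}. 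This is the analogue of \eqref{11.11.1}--\eqref{11.12} referenced in the introduction; it yields $\frac{d}{dt}|z|^2 \le -c|z|^6 + (\text{good remainders})$, and in particular integrating gives $\|z\|_{L^6(\R)}^6 \lesssim |z(0)|^2 + (\text{terms quadratic/higher in the small norms})$, which is the $\|z\|_{L^6(\R)}^3$ term in \eqref{4.0}, while $\|z\|_{L^\infty}\le|z(0)|$ up to lower-order corrections follows from the same differential inequality. Feeding $\|z\|_{L^6}^3$ back, the forcing term satisfies $\bigl\||z|^2zG\bigr\|_{L^2 B^\theta_{6/5,2}(\R)} \lesssim \|z\|_{L^6(\R)}^3\|G\|_{B^\theta_{6/5,2}}$, closing the $\xi$ estimate.

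The bootstrap is then the usual one: on a maximal interval where $\|\xi\|_{\st}\le\epsilon_0$ and $\|z\|_{L^\infty}\le\delta$, the above estimates improve the bounds to half their size provided $\delta$ is chosen small, so by continuity the interval is all of $\R$; a standard contraction/uniqueness argument upgrades this to well-posedness of the estimate \eqref{4.0}. Scattering of $\xi$ then follows because $e^{-\im t\Delta}\xi(t)$ is Cauchy in $H^1$ as $t\to+\infty$ (the Duhamel integrand is in $\stzo{}^{*1}(\R)$), and $|z(t)|\to0$ because $z\in L^6(\R)\cap L^\infty(\R)$ with $\dot z$ controlled, so the limit of $|z|$ must vanish; the same argument at $t\to-\infty$ gives scattering in both directions, matching Definition \ref{def:1}. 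The main obstacle I anticipate is making the Fermi Golden Rule step rigorous at this level of generality — i.e. extracting the $-c|z|^6$ dissipation with genuinely controllable remainders directly in terms of the $\stzt$ and $\st$ norms rather than via a more delicate normal-form transformation — but since here the forcing depends only on the single discrete mode $z$, this is precisely the ``easy'' case alluded to around \eqref{Fermi Golden rule}, and the identity $R_+(1)=P.V.\frac{1}{-\Delta-1}+\im\pi\delta(-\Delta-1)$ together with dispersive decay of $e^{\im t\Delta}G$ makes the remainder terms integrable.
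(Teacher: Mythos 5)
Your outline matches the paper's route essentially step for step: admissible and non-admissible Strichartz for $\xi$, a Fermi Golden Rule dissipation for $z$ giving an $L^6_t$ bound, closed by a continuity bootstrap and the embedding $\stzh\hookrightarrow\st$ to reach $\theta>1/2$. The one place where your account diverges from the actual argument is the FGR step. You express the hope of extracting the $|z|^6$ damping ``directly \dots rather than via a more delicate normal-form transformation,'' but the paper does use precisely the normal form you are trying to bypass: it sets $\xi=Y+g$ with $Y=-|z|^2zR_+(1)G$, so that $g$ solves a cubic NLS with only the quintic remainder $\mathcal R_1=\tfrac32|z|^4\overline{(G|\xi)}R_+(1)G$, while the $z$-equation rewritten in terms of $g$ carries the explicit coefficient $-z|z|^4(\tfrac12\beta+\bar\beta)$ (with $\beta=(G|R_+(1)G)$), whose contribution to $\frac{d}{dt}|z|^2$ is the dissipative term $-\tfrac12\Gamma|z|^6$ of \eqref{10}. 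This is substantially cleaner than analyzing the raw Duhamel convolution kernel $(G|e^{\im(t-s)\Delta}G)$. A second technical point your sketch glosses over: under \eqref{eq:FGR} one has $\hat G\not\equiv 0$ on the unit sphere, hence $R_+(1)G\notin L^2$, so you cannot feed $e^{\im t\Delta}R_+(1)G$ through ordinary Strichartz or $\stzt$ norms as you propose; the paper instead measures $g$ in $L^2_t L^{2,-\sigma}_x$ and invokes the local decay estimate $\|e^{\im t\Delta}R_+(1)v\|_{L^{2,-\sigma}}\lesssim\langle t\rangle^{-3/2}\|v\|_{L^{2,\sigma}}$ of Lemma~\ref{lem:7}, pairing $(G|g)$ via $\|G\|_{L^{2,\sigma}}\|g\|_{L^{2,-\sigma}}$. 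With these two devices supplied, your argument coincides with the paper's proof.
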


\begin{proof}
By the Strichartz estimates (Lemma \ref{lem:1}), for $\theta\in [0,1]$, we have
\begin{align}\label{4.1}
\|\xi\|_{\stzt(t_0,t_1)}\le C \left ( \|\xi(t_0)\|_{\Ht}+\|\xi\|_{\st(t_0,t_1)}^2\|\xi\|_{L^\infty H^\theta (t_0,t_1)}+\|z\|_{L^6(t_0,t_1)}^3  \right ) .
\end{align}
We set
\begin{align}\label{5}
Y=-|z|^2z R_+(1)G,\quad R_\pm(z):=(-\Delta-z\mp \im 0)^{-1}
\end{align}
and $\xi = Y+g$.
Then, $g$ satisfies
\begin{align}\label{6}
\im \dot g = -\Delta g + |\xi|^2\xi +\mathcal R_1,\quad \mathcal R_1:=-\im Y_t -\Delta Y + |z|^2z  G.
\end{align}
Substituting \eqref{2} and \eqref{5} into the definition of $\mathcal R_1$, we have
\begin{align}\label{7}
\mathcal R_1= \frac 3 2 |z|^4 \overline{(G|\xi)}R_+(1)G.
\end{align}
Thus, by Strichartz estimate (Lemma \ref{lem:1}) and Lemma \ref{lem:7} below, for $\sigma >9/2$, we have
\begin{equation}
\begin{aligned}
     \|g\|_{L^2 L^{2,-\sigma}(t_0,t_1)}& \le C\|e^{\im \Delta (t-t_0)}g(t_0)\|_{L^2L^{2,-\sigma}(t_0,t_1)}\\& + C \|\xi\|_{\st(t_0,t_1)}^2   \| \xi\|_{L^\infty H ^{\theta}(t_0,t_1)} + C\|z\|_{L^\infty(t_0,t_1)}  ^{4} \| \xi\|_{L^2L^6 (t_0,t_1)}.
\end{aligned}\nonumber
\end{equation}
By Lemma \ref{lem:7} below we have
\begin{align*}
\|e^{\im \Delta (t-t_0)}g(t_0)\|_{L^2L^{2,-\sigma}(t_0,t_1)}&\le C'  \|\xi(t_0)\|_{L^2}+|z(t_0)|^3\|e^{\im \Delta (t-t_0)}R_+(1)G\|_{L^2 L^{2,-\sigma}(t_0,t_1)}\\&\le C\left (  \|\xi(t_0)\|_{L^2}+|z(t_0)|^3  \right ) .
\end{align*}
Thus, there exists a fixed constant $C$  s.t.\
\begin{equation}\label{8}
\begin{aligned}
    \|g\|_{L^2 L^{2,-\sigma}(t_0,t_1)}\le & C (\|\xi(t_0)\|_{L^2}+|z(t_0)|^3) \\& + C \|\xi\|_{\st(t_0,t_1)}^2   \| \xi\|_{L^\infty H ^{\theta}(t_0,t_1)} + C\|z\|_{L^\infty(t_0,t_1)}  ^{4} \| \xi\|_{L^2L^6 (t_0,t_1)}.
\end{aligned}
\end{equation}
Substituting $\xi=Y+g$ into \eqref{2}, we obtain
\begin{align}\label{9}
\im \dot z & = z + \frac 1 2 z^2 (G|g) +|z|^2\overline{(G|g)}-z|z|^4\(\frac 1 2 (G|R_+(1)G)+\overline{(G|R_+(1)G)}\).
\end{align}
Thus, multiplying $\bar z$ and taking the imaginary part we have
\begin{align}\label{10}
\frac{d}{dt}|z(t)|^2 =-\Gamma\frac{1}{2}|z|^6 + \Im\(\frac 1 2 |z|^2z  (G|g) +|z|^2\bar z\overline{(G|g)}\),
\end{align}
where $\Gamma$ is given by \eqref{def:Gamma}.
Thus\begin{align}\label{eq:z^6}
\Gamma \|z\|_{L^6(t_0,t)}^6 + 2|z(t)|^2&\le  2 |z(t_0)|^2 +   3\|G\|_{ L^{2, \sigma} }    \|g\|_{L^2L^{2,-\sigma}(t_0,t_1)}\|z\|_{L^6(t_0,t)}^3,
\end{align}
for $t_0<t<t_1$.
 Taking $\sup_{t_0<t<t_1}$, we have for fixed constants
\begin{equation} \label{11}
\begin{aligned} & \Gamma^{1/2}\|z\|_{L^6(t_0,t_1)}^3 +  \|z\|_{L^\infty (t_0,t_1)} \le C' ( |z(t_0)| + \Gamma^{-1/2}\|g\|_{L^2L^{2,-\sigma}(t_0,t_1)})\\&
\le C  |z(t_0)| +C \Gamma^{-1/2} \( |z(t_0)| ^3 +  \|\xi(t_0)\|_{L^2}  +  (\|\xi\|_{\st(t_0,t_1)}^2   + \|z\|_{L^\infty(t_0,t_1)}  ^{4}) \| \xi\|_{\stz ^{\theta} (t_0,t_1)}\).
\end{aligned}
\end{equation}
Substituting \eqref{11} into \eqref{4.1}, we have
\begin{equation} \label{11.0} \begin{aligned}    \|\xi\|_{\stzt(t_0,t_1)}&\le C  (\|\xi(t_0)\|_{\Ht} +   \Gamma^{-1/2} |z(t_0)|+  \Gamma^{-1} |z(t_0)|^3) \\& +  C \(   \Gamma^{-1}\|z\|_{L^\infty(t_0,t_1)}^4 + \<\Gamma^{-1}\>\|\xi\|_{\st(t_0,t_1)}^2\)\| \xi\|_{\stz ^{\theta} (t_0,t_1)}.
\end{aligned}
\end{equation}
The estimate \eqref{4.0} with $\theta\leq 1/2$ follows from a simple continuity argument combined with the smallness of $\|\xi(t_0)\|_{\Ht}+|z(t_0)|$  for $t_0=0$.
For $\theta\in (1/2,1]$, \eqref{4.0} follows from \eqref{4.1} combined with $\stzh\hookrightarrow \st$.

\noindent Finally, we show  scattering, which is a simple consequence of \eqref{4.0}.
Since
\begin{align*}
e^{-\im t_2 \Delta}\xi(t_2)-e^{-\im t_1 \Delta}\xi(t_1)=-\im \int_{t_1}^{t_2} e^{-\im s \Delta}\(|\xi(s)|^2\xi(s)+z(s)|z(s)|^2G\)\,ds,
\end{align*}
it suffices to show
\begin{align*}
\|\int_t ^{+\infty }e^{-\im s \Delta}\(|\xi(s)|^2\xi(s)+z(s)|z(s)|^2G\)\,ds \|_{H^1}\to 0,\quad\text{as }t\to +\infty
\end{align*}
From Lemma \ref{lem:1}  the above integral is bounded by $\|\xi\|_{\stzo(t,\infty)}^3+\|z\|_{L^6(t,\infty)}^3 \stackrel{t\to +\infty}{\rightarrow}0$.
In addition, since $\|\im \dot z-z\|_{L^\infty (\R _+)}\lesssim \|z\|_{L^\infty (\R _+)}^2\|\xi\|_{L^\infty L^2 (\R _+)}$
we conclude from \eqref{4.0} a bound on $\|z\|_{L^\infty (\R _+)}$. Hence from
$\|z\|_{L^\infty (t,\infty)}\lesssim \|\dot z\|_{L^\infty(t,\infty)}^{1/7}\|z\|_{L^6(t,\infty)}^{6/7}
$  we obtain $|z(t)| \stackrel{t\to +\infty}{\rightarrow}0$.
This gives forward scattering, and since it is possible to prove backward scattering by the same argument, the proof of
Theorem \ref{thm:1}  is complete.
\end{proof}

\begin{remark}\label{rem:1}
The conclusion about forward scattering  of Theorem \ref{thm:1}  continues to hold  if we replace the small energy hypothesis \eqref{eq:smallen}
with the hypothesis   $\|\xi\|_{\stzo(0,\infty)}<\infty$.
Indeed, by \eqref{11} we have $z\in L^6(0,\infty)$ and therefore the   argument  at the end of the proof of Threorem \ref{thm:1} can be repeated.
\end{remark}

We have the following preliminary result, based uniquely of the conservation of $\mathbb E$ and $\mathbb M$.

\begin{lemma}\label{lem:-8}
Let $( \xi ,z)$ be the solution of \eqref{1}--\eqref{2} with $ \xi(t_0) \in   H^1$.
Assume $\cN_0  \lesssim  1$
 for
\begin{equation}\label{eq:preprop11}
 \cNt:=\|\xi (t_0)\|_{  \Ht }+ |z (t_0) | .
\end{equation}
Then there exist     $C_0 =C(\cN_0)$  s.t.\
\begin{align}      & \|\xi\|_{L^\infty H ^{1}(\R )}+\|z\|_{L^\infty(\R )} \le C_0 \   (\mathcal{N}_1  + \mathcal{N}_1^2). \label{11.9.-00}
\end{align}
\end{lemma}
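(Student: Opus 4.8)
The statement is an \emph{a priori} bound that, as announced, should follow purely from the two conservation laws \eqref{eq:mass} and \eqref{4}, the Sobolev embedding $H^1(\R^3)\hookrightarrow L^4(\R^3)$, and the smallness hypothesis $\mathcal N_0\lesssim 1$; throughout, all implicit constants may depend on $\mathcal N_0$. First I would exploit conservation of $\mathbb M$: by \eqref{eq:mass},
\[
\tfrac12\|\xi(t)\|_{L^2}^2+|z(t)|^2=\tfrac12\|\xi(t_0)\|_{L^2}^2+|z(t_0)|^2\le\mathcal N_0^2\qquad\text{for all }t\in\R,
\]
so that $\|\xi\|_{L^\infty L^2(\R)}+\|z\|_{L^\infty(\R)}\lesssim\mathcal N_0\le\mathcal N_1$. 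This already disposes of the $\|z\|_{L^\infty(\R)}$ term of \eqref{11.9.-00} and of the $L^2$-part of $\|\xi\|_{L^\infty H^1(\R)}$, so the only thing left is a uniform-in-$t$ bound on $\|\nabla\xi(t)\|_{L^2}$.

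For that I would use conservation of $\mathbb E$: solving \eqref{4} for the kinetic term and using $\mathbb E(\xi(t),z(t))=\mathbb E(\xi(t_0),z(t_0))$,
\[
\tfrac12\|\nabla\xi(t)\|_{L^2}^2=\mathbb E(\xi(t_0),z(t_0))-\tfrac14\|\xi(t)\|_{L^4}^4-|z(t)|^2-\langle|z(t)|^2z(t)G,\xi(t)\rangle .
\]
The terms $\tfrac14\|\xi(t)\|_{L^4}^4$ and $|z(t)|^2$ are nonnegative and enter with the favourable sign, hence are discarded; the coupling term is handled by Cauchy--Schwarz together with the previous step, $|\langle|z(t)|^2z(t)G,\xi(t)\rangle|\le\|G\|_{L^2}|z(t)|^3\|\xi(t)\|_{L^2}\lesssim\mathcal N_0^4\le\mathcal N_0^2\le\mathcal N_1^2$, the inequality $\mathcal N_0^4\le\mathcal N_0^2$ being exactly where $\mathcal N_0\lesssim1$ is used. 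Thus $\|\nabla\xi(t)\|_{L^2}^2\lesssim\mathbb E(\xi(t_0),z(t_0))+\mathcal N_1^2$.

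It remains to bound the initial energy. In \eqref{4} evaluated at $t_0$ I would estimate $\tfrac12\|\nabla\xi(t_0)\|_{L^2}^2\le\tfrac12\mathcal N_1^2$ and $|z(t_0)|^2\le\mathcal N_1^2$ trivially, the coupling term at $t_0$ by $\|G\|_{L^2}|z(t_0)|^3\|\xi(t_0)\|_{L^2}\le\|G\|_{L^2}\mathcal N_0^4\lesssim\mathcal N_1^2$ exactly as above (using $\|\xi(t_0)\|_{L^2},|z(t_0)|\le\mathcal N_0$), and $\tfrac14\|\xi(t_0)\|_{L^4}^4\lesssim\|\xi(t_0)\|_{H^1}^4\le\mathcal N_1^4$ by the Sobolev embedding. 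Hence $\mathbb E(\xi(t_0),z(t_0))\lesssim\mathcal N_1^2+\mathcal N_1^4$, so $\|\nabla\xi(t)\|_{L^2}^2\lesssim\mathcal N_1^2+\mathcal N_1^4\le(\mathcal N_1+\mathcal N_1^2)^2$ uniformly in $t$; taking square roots and combining with the first step gives $\|\xi\|_{L^\infty H^1(\R)}+\|z\|_{L^\infty(\R)}\lesssim\mathcal N_1+\mathcal N_1^2$, which is \eqref{11.9.-00}.

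There is no genuine obstacle here — this is the standard global-in-time \emph{a priori} estimate — but two points need a little care. First, one must keep track of which quantities are a priori $O(1)$ (namely $\mathcal N_0$, and therefore $\|\xi(t)\|_{L^2}$ and $|z(t)|$) as opposed to genuinely large (the $H^1$ norm, through $\|\nabla\xi(t_0)\|_{L^2}$), so that the surplus powers of $\mathcal N_0$ produced by the cubic coupling and the quartic term get absorbed via $\mathcal N_0\lesssim1$. Second, the coupling term $\langle|z|^2zG,\xi\rangle$ in $\mathbb E$ is sign-indefinite, so it cannot simply be retained with a good sign and must be controlled by Cauchy--Schwarz, which is what brings in $\|G\|_{L^2}$ (finite since $G\in H^1_{rad}\cap\mathcal S$).
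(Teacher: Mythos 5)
Your proof is correct and follows essentially the same route as the paper: conservation of $\mathbb M$ controls $\|\xi(t)\|_{L^2}$ and $|z(t)|$ uniformly, and conservation of $\mathbb E$ then bounds $\|\nabla\xi(t)\|_{L^2}$ after estimating $\mathbb E(\xi(t_0),z(t_0))$ and absorbing the sign-indefinite coupling term via the small-mass bound. The only cosmetic difference is that the paper uses Gagliardo--Nirenberg to bound $\|\xi(t_0)\|_{L^4}^4\lesssim\|\xi(t_0)\|_{L^2}\|\nabla\xi(t_0)\|_{L^2}^3\le\mathcal N_0\mathcal N_1^3$, while you use the cruder embedding $H^1\hookrightarrow L^4$ giving $\mathcal N_1^4$; both suffice for the target $\mathcal N_1+\mathcal N_1^2$.
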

\proof From the conservation of $\mathbb{E}$ and $\mathbb{M}$, see \eqref{4}--\eqref{eq:mass},
we have
\begin{equation*}     \begin{aligned} & \|  \xi \|_{L^\infty H^1(t_0,t)} + \|z\| _{L^\infty (t_0,t)} \lesssim \mathcal{N}_1 + \mathcal{N}_0  ^{\frac{1}{2}}\mathcal{N}_1^{\frac{3}{2}}+ \mathcal{N}_0  ^{2}  + \|z\| _{L^\infty (t_0,t)}^{\frac{3}{2}} \|  \xi \|_{L^\infty L^2(t_0,t)}^{\frac{1}{2}}
\end{aligned}\end{equation*}
which  by $\|z\| _{L^\infty (t_0,t)}+ \|  \xi \|_{L^\infty L^2(t_0,t)}  \le  2 {\mathcal{N}_0}$ due to the conservation of $\mathbb{M}$,
by $\cN_0\lesssim  1$ and  by $\mathcal{N}_0\le \mathcal{N}_1$   implies immediately \eqref{11.9.-00}.
\qed

\section{Linear estimates}\label{sec:lin_est}

In this section  we set some notation and   list   estimates  about the linear Schr\"odinger
equation which are used    in   subsequent sections.
We use material from section 4 of \cite{NakanishiJMSJ}.
For $u\in C(\R;H^1)$, we set
\begin{align}
u[t_0](t)&:=e^{\im (t-t_0)\Delta}u(t_0),\label{11.1}\\
u[t_0]_>(t)&:=\begin{cases}u(t),& \text{if }t\leq t_0,\\ u[t_0](t),& \text{if }t> t_0.\end{cases}\label{11.2}
\end{align}
For $u_0\in H^1$, we identify $u_0$ with $u(t)\equiv u_0$ and define
 \begin{equation}\label{eq:u0t0}
 u_0[t_0](t)=e^{\im (t-t_0)\Delta}u_0.
 \end{equation}
%\begin{remark}
%If we write $u(t_1)[t_2](t_3)$, this will mean $e^{\im (t_3-t_2)\Delta}u(t_1)$.
%Generally, this is different from $u[t_2](t_3)=e^{\im (t_3-t_2)}u(t_2)$ if $t_1\neq t_2$.
%\end{remark}
The solution of
\begin{align*}
\im \dot v = -\Delta v + f,\quad v(t_0)=0
\end{align*}
can be written  as
\begin{align}\label{11.3}
\mathcal Df[t_0](t):=-\im \int_{t_0}^t f[s](t)\,ds.
\end{align}
We   can  express as
$
u_0[t_0]+\cD f[t_0]
$  the  solution of
\begin{align*}
\im \dot v = -\Delta v + f,\quad v(t_0)=u_0.
\end{align*}

\begin{remark}
We have
\begin{itemize}
\item
$u[t_1][t_2](t)=e^{\im (t-t_2)\Delta}u[t_1](t_2)=e^{\im (t-t_2)\Delta} e^{\im (t_2-t_1)\Delta}u(t_1)=e^{\im (t-t_1)\Delta}u(t_1)=u[t_1](t)$.
\item
${\(\cD f[t_1]\)[t_2](t)=-\im \int_{t_1}^{t_2} f[s](t)\,ds}$.
\end{itemize}
\end{remark}

The following are the classical Strichartz estimates, see Theorem 2.3.3 \cite{caz}.
\begin{lemma}[Strichartz estimates]\label{lem:1}
There   exist constants   $C _{\theta}$   s.t.\   for any interval $I \subseteq \R$ with $t_0\in I$ and any $f$
\begin{align*}
\| u_0[t_0]\|_{\stzt(I)}&\le C _{\theta} \|u_0\|_{H^\theta},\\
\sup _{t\in \R }\|\int_{I}  f[s](t )\,ds\|_{H^\theta}&\le C _{\theta}  \|f\|_{\stz^{*\theta}(I)},\\
\|\cD f[t_0]\|_{\stzt(I)}&\le C _{\theta}  \|f\|_{\stz^{*\theta}(I)}.
\end{align*}
\end{lemma}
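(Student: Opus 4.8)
\emph{Overview.} All three estimates are the classical (Keel--Tao endpoint) Strichartz estimates for the free Schr\"odinger group on $\R^3$, recast in the Littlewood--Paley--Besov spaces used throughout the paper. I would obtain them by quoting the scalar, frequency-localized theory together with a duality argument, and then reassembling the dyadic pieces.

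\emph{Step 1 (scalar, frequency-localized estimates).} The pair $(q,r)=(\infty,2)$ is trivially admissible and $(q,r)=(2,6)$ is the endpoint admissible pair in dimension three, since $\tfrac2q+\tfrac3r=\tfrac32$. Writing $P_j:=\varphi_j*\,\cdot\,$ and using that $P_j$ commutes with $e^{\im t\Delta}$ and with $\cD(\cdot)[t_0]$, the scalar endpoint Strichartz estimates (Theorem 2.3.3 of \cite{caz}) give, with constants independent of $j$, $t_0$ and $I$,
\begin{align*}
\|P_j u_0[t_0]\|_{L^\infty L^2(I)\cap L^2L^6(I)}&\le C\|P_j u_0\|_{L^2},\\
\|\cD(P_jf)[t_0]\|_{L^\infty L^2(I)\cap L^2L^6(I)}&\le C\big(\|P_jf\|_{L^1L^2(I)}+\|P_jf\|_{L^2L^{6/5}(I)}\big).
\end{align*}
The $L^\infty L^2$ part of the first line is the unitarity of $e^{\im t\Delta}$ on $L^2$; the substantive content is the $L^2L^6$ bound and, above all, its retarded (Duhamel) version in the second line, which at the endpoint pair is exactly what the bilinear $TT^*$/interpolation argument of Keel--Tao delivers (Christ--Kiselev is inapplicable here). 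Dualizing the first line in the $L^2_x$ pairing, using $(\stz^0(I))^*\supseteq\stz^{*0}(I)$ together with $\int_I e^{\im(t-s)\Delta}g(s)\,ds=e^{\im t\Delta}\!\int_I e^{-\im s\Delta}g(s)\,ds$, yields the non-retarded block bound $\sup_t\|\int_I (P_jf)[s](t)\,ds\|_{L^2}\le C(\|P_jf\|_{L^1L^2(I)}+\|P_jf\|_{L^2L^{6/5}(I)})$.

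\emph{Step 2 (summing the dyadic pieces).} Weighting each block estimate by $2^{j\theta}$ and taking the $\ell^2_j$ norm, I would use the mixed-norm Minkowski inequality. On the output side, for $q\in\{2,\infty\}$ (both $\ge2$),
\begin{align*}
\|g\|_{L^qB^\theta_{6,2}(I)}=\big\|2^{j\theta}P_jg\big\|_{L^q_t\ell^2_jL^6_x}\le\big\|2^{j\theta}P_jg\big\|_{\ell^2_jL^q_tL^6_x}=\Big(\sum_j2^{2j\theta}\|P_jg\|_{L^qL^6(I)}^2\Big)^{1/2},
\end{align*}
because pulling the $\ell^2_j$ norm outside an $L^q_t$ norm with $q\ge2$ is legitimate; the $L^\infty H^\theta$ part is controlled by the corresponding square sum. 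On the input side, writing $f=f_1+f_2$ with $f_1\in L^1H^\theta$, $f_2\in L^2B^\theta_{6/5,2}$, the reverse inequality is favorable since the time/space exponents $1,2,6/5$ are all $\le2$, so
\begin{align*}
\Big(\sum_j2^{2j\theta}\big(\|P_jf_1\|_{L^1L^2(I)}+\|P_jf_2\|_{L^2L^{6/5}(I)}\big)^2\Big)^{1/2}\le\|f_1\|_{L^1H^\theta(I)}+\|f_2\|_{L^2B^\theta_{6/5,2}(I)},
\end{align*}
and taking the infimum over the decompositions gives $\le\|f\|_{\stz^{*\theta}(I)}$. Finally $H^\theta=B^\theta_{2,2}$ identifies $(\sum_j2^{2j\theta}\|P_ju_0\|_{L^2}^2)^{1/2}$ with $\|u_0\|_{H^\theta}$. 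Combining with Step 1 gives the three inequalities with a constant $C_\theta$ depending only on $\theta$ (through the equivalence $H^\theta\simeq B^\theta_{2,2}$ and the admissibility constant), uniformly in $I$ and $t_0\in I$.

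\emph{Main obstacle.} The only genuinely nontrivial ingredient is the endpoint nature of $L^2_tL^6_x$ in $\R^3$: the scalar homogeneous estimate, and more seriously the scalar retarded inhomogeneous estimate used in Step 1, are not reachable by the elementary $TT^*$ plus Christ--Kiselev route and require the full Keel--Tao machinery, which we simply quote from Theorem 2.3.3 of \cite{caz}. Everything else is the routine Littlewood--Paley bookkeeping of Step 2, where the sole point to check is that every Minkowski swap among the mixed $\ell^2_j$, $L^q_t$, $L^r_x$ norms runs in the favorable direction, the borderline exponent-$2$ swaps being exact equalities.
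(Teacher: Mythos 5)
The paper offers no proof of this lemma: it presents it as ``the classical Strichartz estimates'' and simply cites Theorem 2.3.3 of Cazenave \cite{caz}. Your proposal supplies a correct derivation and follows the standard route for upgrading the Lebesgue-space estimates to the Besov scales $\stzt$, $\stz^{*\theta}$: apply the scalar endpoint estimates (the pair $(2,6)$ in $\R^3$, hence Keel--Tao, as you correctly observe) to each frequency block $P_j$, which commutes with $e^{\im t\Delta}$ and with $\cD(\cdot)[t_0]$, attach the dyadic weights $2^{j\theta}$, and reassemble with Minkowski. Your exponent bookkeeping is right: the output time exponents $\{2,\infty\}$ are $\ge 2$ and the input exponents $\{1,2,6/5\}$ are $\le 2$, so every swap runs the favorable way or is an equality at exponent $2$. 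Dualizing at exponent $0$ and only then attaching the weight $2^{j\theta}$ is also the right way to obtain the second estimate without the sign flip $\theta\mapsto-\theta$ that a naive duality in $\stzt$ would produce. The only cosmetic blemish is the first display of Step~1, which writes the retarded block bound as a sum of two norms of $P_j f$ itself rather than as an estimate into the sum space $L^1L^2+L^2L^{6/5}$; your Step~2 resolves this correctly by decomposing $f=f_1+f_2$ before commuting $P_j$ through. In short, you have proved something the paper takes as known, and the argument is sound.
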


%\begin{lemma}\label{lem:1.1} Let $z\in C\cap L^\infty(\R;\C^n)$ with $\|z\|_{L^\infty}\ll1$. Then, we have \begin{align*} \|u_0\|_{\dot H^\theta}\sim \inf_{t}\|u_0[t_0]\|_{\dot H^\theta}\sim \sup_{t}\|u_0[t_0]\|_{\dot H^\theta}. \end{align*} \end{lemma} \begin{proof} First, it is immediate from Lemma \ref{lem:1} that \begin{align*} \inf_{t}\|u_0[t_0]\|_{\dot H^\theta}\lesssim \sup_{t}\|U[z](\cdot, t_0)u_0\|_{\dot H^\theta}\lesssim\|u_0\|_{\dot H^\theta}. \end{align*} Next, take $\{t_n\}_n$ s.t.\ $\|u_0[t_n](t_0)\|_{\dot H^\theta}\to \inf_{t}\|u_0[t_0]\|_{\dot H^\theta}$. Then, since \begin{align*} \|u_0\|_{\dot H^\theta}=\|\(u_0[t_0]\)[t_n](t_0)\|_{\dot H^\theta}\lesssim \|\(u_0[t_n]\)(t_0)\|_{\dot H^\theta}\to \inf_{t}\|u_0[t_0]\|_{\dot H^\theta}, \end{align*} we have the conclusion. \end{proof}

The following estimates are due to Kato \cite{kato}, Foschi \cite{foschi} and Vilela \cite{vilela}.
\begin{lemma}[Non-admissible Strichartz]\label{lem:2} Let
\begin{align}\label{11.5}
(p_j,q_j)\in (1,\infty)\times (2,6]\  (j=1,2)\text{ and }\sigma_j:=\frac{2}{p_j}+3\(\frac{1}{q_j}-\frac 1 2\)
\end{align}
satisfy
\begin{align}\label{11.6}
\sigma_0+\sigma_1=0>\sigma_j-\frac{1}{p_j},\quad |\sigma_j|\leq 2/3.
\end{align}
Then   there exists  a constant $C$    s.t.\ for any interval $I$ with $t_0\in I$ and any $f$
\begin{align*}
\|\cD f[t_0]\|_{L^{p_0}L^{q_0}(I)}\le C \|f\|_{L^{p_1'}L^{q_1'}(I)}.
\end{align*}
\end{lemma}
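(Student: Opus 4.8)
The final statement to prove is Lemma~\ref{lem:2}, the non-admissible (Kato--Foschi--Vilela) inhomogeneous Strichartz estimate
\[
\|\cD f[t_0]\|_{L^{p_0}L^{q_0}(I)}\le C \|f\|_{L^{p_1'}L^{q_1'}(I)}
\]
under the scaling and range conditions \eqref{11.5}--\eqref{11.6}. Since the paper explicitly attributes this to Kato \cite{kato}, Foschi \cite{foschi} and Vilela \cite{vilela}, I expect the ``proof'' to be essentially a citation plus a short reduction showing that the stated hypotheses fall within the hypotheses of those references; I will plan accordingly rather than reconstruct the full harmonic-analytic machinery.

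The plan is the following. First I would record the kernel representation: the operator $\cD f[t_0](t)=-\im\int_{t_0}^t e^{\im(t-s)\Delta}f(s)\,ds$ has, for each fixed pair of times, the pointwise dispersive bound $\|e^{\im(t-s)\Delta}g\|_{L^{q}}\lesssim |t-s|^{-3(1/q'-1/q)/2}\|g\|_{L^{q'}}$ together with the $L^2$ conservation $\|e^{\im(t-s)\Delta}g\|_{L^2}=\|g\|_{L^2}$. Interpolating these gives the bilinear-form estimate
\[
\Big|\int\!\!\int \big(e^{\im(t-s)\Delta}f(s)\,\big|\,h(t)\big)\,ds\,dt\Big|\lesssim \int\!\!\int |t-s|^{-\alpha}\|f(s)\|_{L^{q_1'}}\|h(t)\|_{L^{q_0'}}\,ds\,dt,
\]
with $\alpha=\tfrac32\big(1-\tfrac1{q_0}-\tfrac1{q_1}\big)$; note $\sigma_0+\sigma_1=0$ forces $\alpha=1/p_0+1/p_1=1-1/p_0'-1/p_1'$ — precisely the homogeneity needed for a Hardy--Littlewood--Sobolev estimate in the time variable. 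Second I would invoke HLS: the conditions $\sigma_j-1/p_j<0$ translate exactly into $\alpha\in(0,1)$ and $1/p_0'+1/p_1'+\alpha=1$ with $p_0',p_1'\in(1,\infty)$, so the one-dimensional HLS inequality closes the time integral and yields $\|\cD f[t_0]\|_{L^{p_0}L^{q_0}}\lesssim\|f\|_{L^{p_1'}L^{q_1'}}$ — but only when the interpolation between the dispersive and the $L^2$ estimate is \emph{admissible}, i.e. when $|\sigma_j|\le$ some threshold. Third, I would point out that the naive HLS argument just sketched actually only covers a subrange (roughly $|\sigma_j|<1/2$, the ``acceptable'' exponents), and that reaching the full range $|\sigma_j|\le 2/3$ with $q_j\le 6$ is exactly the refinement of Foschi and Vilela, obtained by splitting the time integral into dyadic pieces $|t-s|\sim 2^k$, using on each piece a local (in time) smoothing/$TT^*$ bound rather than the crude pointwise dispersive bound, and summing; the restriction $|\sigma_j|\le 2/3$ (equivalently $q_j\ge 2$, up to the non-admissible line) is what makes that dyadic sum converge in three dimensions. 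For the purposes of this paper I would simply state that \eqref{11.5}--\eqref{11.6} are a special case of the hypotheses in \cite{foschi,vilela} (and of \cite[Thm.~X]{kato} for the smoothing endpoint), and refer the reader there; the restriction to $d=3$ makes the arithmetic of $\sigma_j=2/p_j+3(1/q_j-1/2)$ and the constraint $|\sigma_j|\le 2/3$ transparent to check.

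The one genuinely substantive step, were one to prove this from scratch, is the passage from the ``acceptable''/diagonal range handled by plain HLS to the full non-admissible range: this requires the Foschi--Vilela dyadic decomposition together with Kato's local smoothing estimates, and it is the step that uses the full strength of $q_j\le 6$ (so that the relevant Sobolev embedding / Christ--Kiselev-type bound holds) and of $|\sigma_j|\le 2/3$ (so that the geometric sum over dyadic time scales converges with a positive power). Since the excerpt presents the lemma as classical and cites three sources for it, my proposed ``proof'' is a two-line reduction: verify that the displayed conditions on $(p_j,q_j,\sigma_j)$ are contained in the admissible set of \cite{foschi} (and \cite{vilela} for the endpoints on the line $q_j=6$), and conclude. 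If a self-contained argument is wanted, the skeleton above — dispersive/$L^2$ interpolation, dyadic-in-time decomposition, local smoothing on each annulus, HLS-type summation — is the route I would write out, flagging the dyadic summation as the delicate point.
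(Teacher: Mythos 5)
You correctly read the situation: the paper offers no proof of this lemma at all, only the attribution to Kato, Foschi and Vilela, so your proposal — cite the references and verify that \eqref{11.5}--\eqref{11.6} sit inside their hypotheses — is exactly the paper's treatment. The supplementary sketch you give (dispersive/$L^2$ interpolation, HLS in time for the diagonal range, Foschi--Vilela dyadic decomposition for the full $|\sigma_j|\le 2/3$ range) is a reasonable outline but is extra to what the paper does; note one small slip in the arithmetic, namely the HLS scaling identity should read $1/p_0'+1/p_1'+\alpha=2$, not $=1$.
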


%
%\begin{remark}
%If we want   $(p_0,q_0)$ and $(p_1=p_0',q_1)$ which satisfies \eqref{11.6}, then we have to have $q_0=q_1=6$. Otherwise, if $q_0<6$, we will need $q_1>6$.
%\end{remark}

We further introduce Nakanishi's seminorm:
\begin{align}\label{11.7}
\|u\|_{\nnorm{T_0}{T_1}}:=\sup_{T_0<S<T<T_1}\|u[T]_>-u[S]\|_{\mathfrak{st}(T_0,\infty)}.
\end{align}

%\begin{remark}
%This is the definition of Nakanishi's seminorm in the first version of \cite{NakanishiJMSJ}, see \cite{N}.
%In the second version, it has been modified as $\sup_{T_0<S<T<T_1}\|u[T]_>-u[S]\|_{\mathfrak{st}(T_0,\infty)}$.
%However, with this choice not know how to prove the nonlinear perturbation estimate Lemma \ref{lem:17}.
%\end{remark}

\begin{remark}
If we take $S=T_0$ and $T=T_1$ then restricting the interval to $(T_0,T_1)$ we have
\begin{align*}
\|u[T_1]_>-u[T_0]\|_{\mathfrak{st}(T_0,\infty)}\geq \|u-u[T_0]\|_{\mathfrak{st}(T_0,T_1)}.
\end{align*}
Similarly, restricting the interval to $(T_1,\infty)$, we have
\begin{align*}
\|u[T_1]_>-u[T_0]\|_{\mathfrak{st}(T_0,\infty)}\geq \|u[T_1]-u[T_0]\|_{\mathfrak{st}(T_1,\infty)}.
\end{align*}
Therefore, we have
\begin{align}
\|u\|_{\nnorm{T_0}{T_1} }\geq \max\(\|u-u[T_0]\|_{\mathfrak{st}(T_0,T_1)}, \|u[T_1]-u[T_0]\|_{\mathfrak{st}(T_1,\infty)}\).\label{11.8}
\end{align}
This inequality will be used frequently.
\end{remark}

%\begin{remark} We need to take the $\sup$ over $[T_0,T_1]$ to have the subadditive property below.\end{remark}

Nakanishi's seminorm is dominated by Strichartz's norm.
\begin{align}\label{11.8.1}
\|u\|_{\nnorm{t_0}{t_1} }\leq C\|u\|_{\stz^1(t_0,t_1)}.
\end{align}
Indeed, for $t_0<s<t<t_1$,
\begin{align*}
\|u[t]_>-u[s]\|_{\st(t_0,\infty)}&\le \|u -u[s]\|_{\st(t_0,t)}+\|u[t]-u[s]\|_{\st(t_0,\infty)}\\&\leq C\|u\|_{\stz^1(t_0,t)}+C\|u(s)\|_{H^1}+\|u(t)\|_{H^1}\leq C\|u\|_{\stz^1(t_0,t_1)}.
\end{align*}
We have the following, see Lemma 4.2 of \cite{NakanishiJMSJ}.
\begin{lemma}[Subadditivity]\label{lem:3}
For $T_0<T_1<T_2$,
\begin{align*}
\|u\|_{\nnorm{T_0}{T_2}}\leq \|u\|_{\nnorm{T_0}{T_1} }+\|u\|_{\nnorm{T_1}{T_2}}.
\end{align*}
\end{lemma}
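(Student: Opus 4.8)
\textbf{Proof plan for Lemma \ref{lem:3} (Subadditivity of Nakanishi's seminorm).}

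The plan is to unfold the definition \eqref{11.7} of $\|\cdot\|_{\nnorm{T_0}{T_2}}$ and, given an arbitrary pair $T_0<S<T<T_2$, to bound $\|u[T]_>-u[S]\|_{\mathfrak{st}(T_0,\infty)}$ by the sum of the two seminorms on $[T_0,T_1]$ and $[T_1,T_2]$. The natural case division is according to where the two cut-off times $S$ and $T$ sit relative to the intermediate time $T_1$: either both lie in $[T_0,T_1]$, or both lie in $[T_1,T_2]$, or $S\le T_1\le T$. In the first two cases the quantity $\|u[T]_>-u[S]\|_{\mathfrak{st}(T_0,\infty)}$ is, by definition, already bounded by $\|u\|_{\nnorm{T_0}{T_1}}$ (resp.\ $\|u\|_{\nnorm{T_1}{T_2}}$), so there is nothing to do; note that $\mathfrak{st}$ is taken over the same interval $(T_0,\infty)$ in every term, which is what makes this immediate.

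The only case requiring work is $S\le T_1\le T$. Here I would insert the profile $u[T_1]_>$ and split
\begin{align*}
u[T]_>-u[S] = \bigl(u[T]_>-u[T_1]_>\bigr) + \bigl(u[T_1]_>-u[S]\bigr),
\end{align*}
so that by the triangle inequality in $\mathfrak{st}(T_0,\infty)$ it suffices to control the two pieces separately. The second piece $u[T_1]_>-u[S]$ has its cut-off times $S$ and $T_1$ both in $[T_0,T_1]$, hence its $\mathfrak{st}(T_0,\infty)$-norm is $\le \|u\|_{\nnorm{T_0}{T_1}}$. For the first piece I would use the fact, visible from \eqref{11.2}, that $u[T]_>$ and $u[T_1]_>$ agree with $u$ on $t\le T_1$ and hence agree on $t\le T_1$; therefore $u[T]_>-u[T_1]_>$ is supported in $t>T_1$, where $u[T_1]_>=u[T_1]=e^{\im(t-T_1)\Delta}u(T_1)$ is a free solution. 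On $t>T_1$ one then has $u[T]_>-u[T_1]_> = u[T]_{>}-u[T_1]$, and since $T_1<T$ this difference restricted to $(T_1,\infty)$ is exactly of the form appearing in $\|u\|_{\nnorm{T_1}{T_2}}$ with cut-off times $T_1$ and $T$ (both in $[T_1,T_2]$); because the function vanishes on $(T_0,T_1)$, its $\mathfrak{st}(T_0,\infty)$-norm equals its $\mathfrak{st}(T_1,\infty)$-norm, which is $\le\|u\|_{\nnorm{T_1}{T_2}}$.

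Combining the two bounds gives $\|u[T]_>-u[S]\|_{\mathfrak{st}(T_0,\infty)}\le \|u\|_{\nnorm{T_0}{T_1}}+\|u\|_{\nnorm{T_1}{T_2}}$ for every admissible $S<T$, and taking the supremum over $S,T$ yields the claim. I expect the only mildly delicate point to be bookkeeping with the definition of $u[\cdot]_>$ — making sure that the truncated evolutions genuinely coincide on the relevant half-lines and that the supports line up so that restricting the $\mathfrak{st}$ interval does not lose anything — but this is routine once the case $S\le T_1\le T$ is isolated.
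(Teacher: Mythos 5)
Your case division is the natural one, but the claim that the case where both cut-off times lie in $[T_1,T_2]$ is immediate rests on a misreading of the definition. In \eqref{11.7} the $\mathfrak{st}$-interval is set by the left argument of the bracket, so $\|u\|_{\nnorm{T_1}{T_2}}$ is a supremum of $\mathfrak{st}(T_1,\infty)$-norms, not of $\mathfrak{st}(T_0,\infty)$-norms. You in fact use the correct reading later in your straddling case, where the point is exactly that the piece $u[T]_>-u[T_1]_>$ vanishes on $(-\infty,T_1]$ so that its $\mathfrak{st}(T_0,\infty)$-norm collapses to an $\mathfrak{st}(T_1,\infty)$-norm and can be compared with $\|u\|_{\nnorm{T_1}{T_2}}$. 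But for $T_1\le S<T\le T_2$, the quantity $\|u[T]_>-u[S]\|_{\mathfrak{st}(T_0,\infty)}$ retains its contribution on $(T_0,T_1)$, where $u[T]_>-u[S]=u-u[S]$; this is not something $\|u\|_{\nnorm{T_1}{T_2}}$ measures at all. Your $u[T_1]_>$-insertion does not close this case either: the second piece $u[T_1]_>-u[S]$ now has $S>T_1$, hence is not a competitor in $\|u\|_{\nnorm{T_0}{T_1}}$, and after further splitting one is left with the free wave $u[T_1]-u[S]$ on $(T_0,T_1)$, which neither seminorm controls. Indeed, taking $u$ free on $(-\infty,T_1]$ and producing $u[T_1]-u[S]$ by a short burst of inhomogeneity just after $T_1$, chosen so that this free wave propagates backward and concentrates near the middle of $(T_0,T_1)$, one has $\|u\|_{\nnorm{T_0}{T_1}}=0$ and $\|u\|_{\nnorm{T_1}{T_2}}$ as small as one likes (for $T_1-T_0$ large and the burst short), while $\|u[T]_>-u[S]\|_{\mathfrak{st}(T_0,T_1)}$ is of order one. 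So treating the case $T_1\le S<T$ as trivial is a genuine gap.

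The reading under which all three of your cases close cleanly — the two one-sided cases by direct inclusion of suprema and the straddling case by your $u[T_1]_>$-insertion, exactly as you wrote it — is to take the $\mathfrak{st}$-interval in \eqref{11.7} to be $(S,\infty)$ (the supremum variable) rather than $(T_0,\infty)$. Since the paper simply cites \cite{NakanishiJMSJ} for this lemma and does not reproduce the proof, this discrepancy in the definition is worth checking; in any event, as written, your assertion that the $\mathfrak{st}$-interval is "$(T_0,\infty)$ in every term" is false for $\nnorm{T_1}{T_2}$, and that assertion is doing all the work in the case you dismissed.
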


For $  \chi _{(-\infty , T]} $   being the characteristic function of   $(-\infty , T]$ we have the  following   elementary  lemma.
\begin{lemma}\label{lem:4}
For $u=u_0[t_0]+\cD f[t_0]$   we have
\begin{align*}
u[T]_>-u[S]=\cD \( \chi _{(-\infty , T]} f\)[S].
\end{align*}
\end{lemma}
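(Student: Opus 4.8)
The plan is to simply unwind the definitions \eqref{11.1}--\eqref{11.3} and compute both sides directly. First I would write out the left-hand side. By definition of $u[T]_>$ in \eqref{11.2}, for $t\le T$ we have $u[T]_>(t)=u(t)$, and for $t>T$ we have $u[T]_>(t)=e^{\im(t-T)\Delta}u(T)=u[T](t)$. On the other hand $u[S](t)=e^{\im(t-S)\Delta}u(S)$ for all $t$. So I would treat the two cases $t\le T$ and $t>T$ separately and show in each case that $u[T]_>(t)-u[S](t)$ equals $\cD(\chi_{(-\infty,T]}f)[S](t)=-\im\int_S^t \chi_{(-\infty,T]}(s)\,f[s](t)\,ds$.

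For the case $t\le T$: here $u[T]_>(t)-u[S](t)=u(t)-u[S](t)$. Since $u=u_0[t_0]+\cD f[t_0]$ solves $\im\dot u=-\Delta u+f$ with $u(t_0)=u_0$, Duhamel's formula (relative to the base point $S$ rather than $t_0$, using the group property noted in the Remark after \eqref{eq:u0t0}) gives $u(t)=u[S](t)+\cD f[S](t)=u[S](t)-\im\int_S^t f[s](t)\,ds$. Since on the interval of integration $s$ ranges between $S$ and $t\le T$, we have $\chi_{(-\infty,T]}(s)=1$ throughout, so $\cD f[S](t)=\cD(\chi_{(-\infty,T]}f)[S](t)$, which is exactly what is claimed. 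For the case $t>T$: now $u[T]_>(t)-u[S](t)=e^{\im(t-T)\Delta}u(T)-u[S](t)$. Applying the just-established identity at the time $T$ (which satisfies $T\le T$) gives $u(T)=u[S](T)+\cD f[S](T)=e^{\im(T-S)\Delta}u(S)-\im\int_S^T f[s](T)\,ds$. Applying $e^{\im(t-T)\Delta}$ and using the group property, $e^{\im(t-T)\Delta}u(T)=u[S](t)-\im\int_S^T f[s](t)\,ds$, where I used $e^{\im(t-T)\Delta}f[s](T)=f[s](t)$ from \eqref{11.1}. Hence $u[T]_>(t)-u[S](t)=-\im\int_S^T f[s](t)\,ds=-\im\int_S^t \chi_{(-\infty,T]}(s)f[s](t)\,ds=\cD(\chi_{(-\infty,T]}f)[S](t)$, since the cutoff kills the part of the integral with $s>T$.

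This is an entirely elementary verification, so there is no real obstacle; the only point requiring a little care is bookkeeping the base points in Duhamel's formula — one must use that $u_0[t_0]+\cD f[t_0]=u[S]+\cD f[S]$ for any $S$, which follows from the group property of $e^{\im t\Delta}$ exactly as in the Remark computation $u[t_1][t_2]=u[t_1]$. I would state that reduction once at the start and then the two-case computation goes through mechanically.
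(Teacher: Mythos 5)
Your proof is correct, and it supplies the direct two-case verification that the paper deliberately omits (the lemma is introduced as "elementary" with no proof given). The one standing hypothesis you use but do not quite make explicit is $S\le T$: in the second case you pass from $\int_S^t\chi_{(-\infty,T]}(s)f[s](t)\,ds$ to $\int_S^T f[s](t)\,ds$, which requires the lower limit $S$ to lie in $(-\infty,T]$; if $S>T$ the identity as written actually fails. This is harmless since the lemma is only invoked inside Nakanishi's seminorm \eqref{11.7}, where $T_0<S<T<T_1$ always holds, but it would be worth stating $S\le T$ up front so the cutoff manipulation is justified in both cases. The preliminary reduction $u_0[t_0]+\cD f[t_0]=u[S]+\cD f[S]$ via the group property is exactly the right way to free the argument from the original base point $t_0$.
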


\begin{remark}
By the above lemma, we see that if $u$ is the solution of the inhomogeneous problem, we have
\begin{align}\label{11.9}
\|u\|_{\nnorm{T_0}{T_1} }=\sup_{T_0<S<T<T_1}\|\cD  \chi _{(-\infty , T]}f[S]\|_{\mathfrak{st}(T_0,\infty)}.
\end{align}
\end{remark}

\begin{lemma}\label{lem:5}%[Lemma 4.3]
There is a fixed constant $C$ s.t.\
$\|\cD f[t_0]\|_{\nnorm{T_0}{T_1} }\le C \|f\|_{\mathrm{Stz}^{*1/2}(T_0,T_1)}$.
\end{lemma}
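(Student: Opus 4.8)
The plan is to reduce the claimed Nakanishi-seminorm bound to a direct application of the non-admissible Strichartz estimate of Lemma~\ref{lem:2}, using the explicit formula for $\cD f[t_0][T]_> - \cD f[t_0][S]$ supplied by Lemma~\ref{lem:4}. By definition \eqref{11.7} and Lemma~\ref{lem:4} (which applies with $u_0=0$), for $T_0<S<T<T_1$ we have $\cD f[t_0][T]_> - \cD f[t_0][S] = \cD(\chi_{(-\infty,T]}f)[S]$, so that
\begin{align*}
\|\cD f[t_0]\|_{\nnorm{T_0}{T_1}} = \sup_{T_0<S<T<T_1}\|\cD(\chi_{(-\infty,T]}f)[S]\|_{\st(T_0,\infty)}.
\end{align*}
Thus it suffices to bound $\|\cD g[S]\|_{\st(T_0,\infty)} = \|\cD g[S]\|_{L^4L^6(T_0,\infty)}$ by $C\|f\|_{\stz^{*1/2}(T_0,T_1)}$, uniformly in $S$, where $g = \chi_{(-\infty,T]}f$ is supported in $(T_0,T_1)$ (recall $\cD g[S]$ only sees the values of $g$, and one may freely restrict the time interval in the norm on the right since $g$ vanishes outside $(T_0,T_1)$).

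The next step is to split $g$ according to the two pieces of the space $\stz^{*1/2}(T_0,T_1) = L^1 H^{1/2} + L^2 B^{1/2}_{6/5,2}$ and estimate each contribution to $\|\cD g[S]\|_{L^4L^6}$ separately. For the $L^1 H^{1/2}$ piece, I would use the admissible inhomogeneous Strichartz estimate of Lemma~\ref{lem:1} together with the embedding $\stzh \hookrightarrow \st$ already invoked in the proof of Theorem~\ref{thm:1} (since $L^\infty H^{1/2}\cap L^2 B^{1/2}_{6,2}$ controls $L^4 L^6$ by interpolation / Sobolev in $3$D). For the $L^2 B^{1/2}_{6/5,2}$ piece, one wants $\|\cD g[S]\|_{L^4 L^6}\lesssim \|g\|_{L^2 L^{6/5}}$ modulo the $H^{1/2}$-derivative, i.e.\ after peeling off $\langle\nabla\rangle^{1/2}$ on both sides one needs $\|\cD h[S]\|_{L^4 L^6}\lesssim \|h\|_{L^2 L^{6/5}}$. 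This is exactly the form covered by Lemma~\ref{lem:2}: take $(p_0,q_0)=(4,6)$, which gives $\sigma_0 = 1/2 + 3(1/6-1/2) = 1/2 - 1 = -1/2$, and the dual exponent pair $(p_1,q_1)=(2,6)$ with $\sigma_1 = 1 + 3(1/6-1/2) = 1 - 1 = 1/2$, so $\sigma_0+\sigma_1 = 0$, $\sigma_1 - 1/p_1 = 1/2 - 1/2 = 0$... this forces a borderline case, so I would instead pick $q_0$ slightly larger than $6$ (or $q_1$ slightly less than $6$) to land strictly inside the admissible region $\sigma_j - 1/p_j < 0$, $|\sigma_j|\le 2/3$, and then recover the endpoint by the Besov-space formulation (the $B^{1/2}_{6/5,2}$, resp.\ $B^{1/2}_{6,2}$, structure is designed precisely to absorb this endpoint, as in Nakanishi's setup). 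The boundedness of $\langle\nabla\rangle^{1/2}$ commuting through $\cD$ and $e^{it\Delta}$ is immediate since these are Fourier multipliers.

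The main obstacle I anticipate is the bookkeeping at the endpoint: matching $\st = L^4L^6$ to the Strichartz exponents requires working at the edge of the non-admissible range in Lemma~\ref{lem:2}, and making this rigorous needs the Besov-space refinement rather than naive Lebesgue spaces (which is why the target norm is $\stz^{*1/2}$ and not merely $L^1 H^{1/2} + L^2 L^{6/5}$). A clean way to finish is to observe that $L^4 L^6 = L^4 B^0_{6,2}$ up to the usual embedding $B^0_{6,2}\hookrightarrow L^6$, and that Lemma~\ref{lem:2} can be upgraded to Besov spaces on both sides by Littlewood--Paley decomposition and summation (the frequency-localized pieces satisfy the estimate with a constant uniform in the dyadic block, and the $\ell^2$ summation goes through because the relevant exponents are $\le 2$ on the input side and $\ge 2$ on the output side). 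Granting that Besov upgrade — which is standard and is implicitly used throughout \cite{NakanishiJMSJ} — the lemma follows by combining it with Lemma~\ref{lem:4} and the triangle inequality over the two summands of $\stz^{*1/2}$, with the supremum over $S,T$ absorbed trivially since all bounds are $S,T$-independent. I do not expect the other steps to present any difficulty.
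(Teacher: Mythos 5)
Your reduction via Lemma~\ref{lem:4} and \eqref{11.9} to bounding $\|\cD(\chi_{(-\infty,T]}f)[S]\|_{\st(T_0,\infty)}$, uniformly in $S,T$, is exactly the paper's opening move. But after that you overcomplicate matters and in fact leave a gap.

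You split $\stz^{*1/2} = L^1H^{1/2} + L^2B^{1/2}_{6/5,2}$ and treat the two pieces by different mechanisms: admissible Strichartz plus $\stzh\hookrightarrow\st$ for the $L^1H^{1/2}$ piece, and non-admissible Strichartz (Lemma~\ref{lem:2}) for the $L^2B^{1/2}_{6/5,2}$ piece. The second route hits the borderline $\sigma_1 - 1/p_1 = 0$, and you patch it by invoking an unproven ``Besov upgrade'' of Lemma~\ref{lem:2} — which is not established anywhere in the paper and is exactly where your argument stops being a proof. The point you missed is that the \emph{same} two-step route you use for the first piece already handles the second piece as well, with no splitting and no borderline problem. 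The third inequality of Lemma~\ref{lem:1} is stated with the full $\stz^{*\theta}$ space on the right-hand side; with $\theta=1/2$ it reads $\|\cD g[S]\|_{\stzh(T_0,\infty)} \le C\,\|g\|_{\stz^{*1/2}(T_0,\infty)}$, which accepts input in $L^2B^{1/2}_{6/5,2}$ just as well as in $L^1H^{1/2}$. Combining this with the embedding $\stzh\hookrightarrow\st$ (already recorded right before \eqref{11.8.1}) and the trivial fact that $\|\chi_{(-\infty,T]}f\|_{\stz^{*1/2}(T_0,\infty)} \le \|f\|_{\stz^{*1/2}(T_0,T_1)}$ finishes the lemma in two lines, with no appeal to Lemma~\ref{lem:2}, no Littlewood--Paley decomposition, and no endpoint delicacy. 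So your proof is not wrong in spirit, but as written it contains an unfilled hole precisely where the paper's argument has none, and the machinery you invoke is not needed.
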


\begin{proof}
By \eqref{11.9}, we have for fixed constants $C'$ and $C$
\begin{align*}
\|\cD f[t_0]\|_{\nnorm{T_0}{T_1}}&=\sup_{T_0<S<T<T_1}\|\cD  \chi _{(-\infty , T]}f[S]\|_{\mathfrak{st}(T_0,\infty)}\\&
\le C' \sup_{T_0<S<T<T_1}\|\cD  \chi _{(-\infty , T]}f[S]\|_{\mathrm{Stz^{1/2}}(T_0,\infty)}
\\&\le C \sup_{T_0<T<T_1}\|  \chi _{(-\infty , T]} f\|_{\mathrm{Stz^{*1/2}}(T_0,\infty)}\leq C \|f\|_{\mathrm{Stz}^{*1/2}(T_0,T_1)},
\end{align*}
with  the embedding $\mathrm{Stz}^{ \frac{1}{2}}\hookrightarrow \mathfrak{st}$ in the 2nd line and  Stricharz estimates (Lemma \ref{lem:1}) in  the 3rd.
\end{proof}

\begin{lemma}\label{lem:6}
Let $(p_1,q_1)$ satisfy $\frac 1 2 =\sigma_1 = 2/p_1+3(1/q_1-1/2)$ and $p_1<2$.
Then for a fixed $C$
\begin{align*}
\|\cD f[t_0]\|_{\nnorm{T_0}{T_1} }\le C \|f\|_{L^{p_1'}L^{q_1'}(T_0,T_1)}.
\end{align*}
\end{lemma}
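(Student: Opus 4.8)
\textbf{Proof plan for Lemma \ref{lem:6}.}

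The plan is to deduce Lemma \ref{lem:6} from Lemma \ref{lem:5} together with Lemma \ref{lem:2} (non-admissible Strichartz), following exactly the pattern of the proof of Lemma \ref{lem:5}. First I would invoke the representation \eqref{11.9}, which holds for any $u=u_0[t_0]+\cD f[t_0]$, so that
\begin{align*}
\|\cD f[t_0]\|_{\nnorm{T_0}{T_1}}=\sup_{T_0<S<T<T_1}\|\cD\chi_{(-\infty,T]}f[S]\|_{\mathfrak{st}(T_0,\infty)}.
\end{align*}
Recall $\mathfrak{st}=L^4L^6$. The idea is to estimate this $L^4L^6$ norm directly by Lemma \ref{lem:2} rather than by passing through $\mathrm{Stz}^{1/2}$: choose the exit pair $(p_0,q_0)=(4,6)$, for which $\sigma_0=2/4+3(1/6-1/2)=1/2-1=-1/2$, and the entrance pair $(p_1,q_1)$ with $\sigma_1=1/2$ as in the hypothesis, so that $\sigma_0+\sigma_1=0$ as required by \eqref{11.6}. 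One must also check the remaining conditions of \eqref{11.5}--\eqref{11.6}, namely $q_0=6\in(2,6]$, $q_1\in(2,6]$ (which follows from $\sigma_1=1/2$ with $p_1<2$, giving $3(1/q_1-1/2)=1/2-2/p_1>1/2-1=-1/2$, hence $1/q_1>1/3$, i.e.\ $q_1<3\le 6$, and also $q_1>2$ since $1/q_1<1/2$), $\sigma_0-1/p_0=-1/2-1/4<0$, $\sigma_1-1/p_1<0$ (equivalent to $p_1<2$, which is assumed), and $|\sigma_j|=1/2\le 2/3$. With all these verified, Lemma \ref{lem:2} gives, for each fixed $S$ and $T$,
\begin{align*}
\|\cD\chi_{(-\infty,T]}f[S]\|_{L^4L^6(T_0,\infty)}\le C\|\chi_{(-\infty,T]}f\|_{L^{p_1'}L^{q_1'}(T_0,\infty)}\le C\|f\|_{L^{p_1'}L^{q_1'}(T_0,T_1)},
\end{align*}
where in the last step I drop the truncation (it only restricts the domain) and use $(T_0,T)\subseteq(T_0,T_1)$. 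Taking the supremum over $T_0<S<T<T_1$ of the left-hand side, which the right-hand side no longer depends on, yields the claim.

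The only real point requiring care is the bookkeeping of the exponent conditions \eqref{11.5}--\eqref{11.6} for the pair $((4,6),(p_1,q_1))$; everything else is a verbatim repetition of the proof of Lemma \ref{lem:5} with the single substitution of Lemma \ref{lem:2} for the chain "$\mathrm{Stz}^{1/2}\hookrightarrow\mathfrak{st}$ plus Lemma \ref{lem:1}". I do not anticipate a genuine obstacle: the statement is essentially the assertion that the non-admissible Strichartz bound of Lemma \ref{lem:2} survives the truncation-and-supremum operation built into Nakanishi's seminorm, and Lemma \ref{lem:4}/\eqref{11.9} is precisely what makes that survival automatic, since $\chi_{(-\infty,T]}f$ has the same $L^{p_1'}L^{q_1'}$ norm bound uniformly in $T$.

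\begin{proof}
By \eqref{11.9}, and since $\mathfrak{st}=L^4L^6$,
\begin{align*}
\|\cD f[t_0]\|_{\nnorm{T_0}{T_1}}=\sup_{T_0<S<T<T_1}\|\cD\chi_{(-\infty,T]}f[S]\|_{L^4L^6(T_0,\infty)}.
\end{align*}
The pair $(p_0,q_0)=(4,6)$ has $\sigma_0=2/4+3(1/6-1/2)=-1/2$, so with $(p_1,q_1)$ as in the hypothesis we have $\sigma_0+\sigma_1=0$, $|\sigma_j|=1/2\le 2/3$, $\sigma_0-1/p_0=-3/4<0$, and $\sigma_1-1/p_1<0$ by $p_1<2$; moreover $\sigma_1=1/2$ with $p_1<2$ forces $1/q_1>1/3$ and $1/q_1<1/2$, so $q_1\in(2,3)\subseteq(2,6]$, while $q_0=6\in(2,6]$. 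Hence \eqref{11.5}--\eqref{11.6} hold and Lemma \ref{lem:2} gives, for each $T_0<S<T<T_1$,
\begin{align*}
\|\cD\chi_{(-\infty,T]}f[S]\|_{L^4L^6(T_0,\infty)}\le C\|\chi_{(-\infty,T]}f\|_{L^{p_1'}L^{q_1'}(T_0,\infty)}\le C\|f\|_{L^{p_1'}L^{q_1'}(T_0,T_1)}.
\end{align*}
Taking the supremum over $T_0<S<T<T_1$ completes the proof.
\end{proof}
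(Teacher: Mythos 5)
Your proof is correct and follows essentially the same route as the paper: rewrite the seminorm via \eqref{11.9} (equivalently Lemma \ref{lem:4}), then apply the non-admissible Strichartz estimate of Lemma \ref{lem:2} with exit pair $(p_0,q_0)=(4,6)$ and drop the time truncation. Your verification of the exponent conditions \eqref{11.5}--\eqref{11.6} is a bit more explicit than what the paper records in the remark preceding its proof, but the argument is identical.
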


\begin{remark}
Lemma \ref{lem:6} is an application of the non-admissible Strichartz (Lemma \ref{lem:2}) with $(p_0,q_0)=(4,6)$.
In this case $\sigma_0=-1/2$ and the condition   \eqref{11.5} is equivalent to $\sigma_1=1/2$ and $p_1<2$.
\end{remark}

\begin{proof}
By Lemma \ref{lem:4} applied to   $u=\cD f[t_0]$ in the 1st line and by Lemma  \ref{lem:2} in 2nd line
\begin{align*}
\|\cD f[t_0]\|_{\nnorm{T_0}{T_1}  }&=\sup_{T_0<S<T<T_1}\|\cD \chi _{(-\infty , T]}f [S]\|_{L^4L^6(T_0,\infty)}\\&\le C \sup_{T_0<T<T_1}\|\chi _{(-\infty , T]}f\|_{L^{p_1'}L^{q_1'}(T_0,\infty)} \le C \|f\|_{L^{p_1'}L^{q_1'}(T_0,T_1)}.
\end{align*}
\end{proof}

The following is well known, for a reference see \cite{CM15APDE} Lemma 6.5 (where $\sigma _0=9/2$).
\begin{lemma}\label{lem:7}
There exists $\sigma_0>0$  s.t.\   for any $\sigma>\sigma_0$
the following facts are true: \begin{enumerate}
                                \item we have $R_+(1)\in B(L^{2,\sigma}, L^{2,-\sigma})$ ;
                                \item there exists a constant $C_{\sigma}$ s.t.\  for $v\in L^{2,\sigma}$, we have
\begin{align*}
\|R_+(1)v[0](t)\|_{L^{2,-\sigma}}\le C _ \sigma\<t\>^{-3/2}\|v\|_{L^{2,\sigma}}\text{  for all $t\ge 0$;}
\end{align*}
                                \item for all $p\geq 1$  there is a constant $C_{p,\sigma}$ s.t.\  for $v\in L^{2,\sigma}$
\begin{align*}
\|R_+(1)v[0]\|_{L^pL^{2,-\sigma} (\R _+ )}\le C_{p,\sigma}  \|v\|_{L^{2,\sigma}} .
\end{align*}
                              \end{enumerate}

\end{lemma}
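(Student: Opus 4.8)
The plan is to obtain (3) from (2) for free, recall that (1) is classical, and concentrate the work on (2). For (3): since $\<t\>^{-3/2}\in L^p(\R_+)$ for every $p\ge 1$, once (2) holds one takes $C_{p,\sigma}:=C_\sigma\|\<t\>^{-3/2}\|_{L^p(\R_+)}$, and $p=\infty$ is the $\sup$ bound already in (2). For (1): $R_+(1)\in B(L^{2,\sigma},L^{2,-\sigma})$ holds for $\sigma>1/2$ by Agmon's limiting absorption principle, and it also drops out of the decomposition used for (2), so I would dispose of it in a line. The heart of the matter is (2), and the idea is to split $v$ in frequency according to the distance from the singular sphere $\{|\xi|=1\}$: fix $\chi\in C^\infty_c((1/2,3/2))$ with $\chi\equiv1$ near $1$ and write $R_+(1)=(1-\chi(-\Delta))R_+(1)+\chi(-\Delta)R_+(1)$, noting that $R_+(1)$ and $e^{\im t\Delta}$ commute (both are Fourier multipliers).

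For the piece away from the sphere, the multiplier of $(1-\chi(-\Delta))R_+(1)$ is $m(|\xi|^2)$ with $m(\lambda)=(1-\chi(\lambda))/(\lambda-1)$, a $C^\infty$ function on $[0,\infty)$ that is bounded with all its derivatives and is $O(\lambda^{-1})$ at infinity; hence $m(-\Delta)$ is a pseudodifferential operator of order $\le0$ and is bounded on $L^{2,\sigma}$ by the standard calculus. Therefore $(1-\chi(-\Delta))R_+(1)v[0](t)=e^{\im t\Delta}(m(-\Delta)v)$, and the classical local decay estimate $\|\<x\>^{-\sigma}e^{\im t\Delta}\<x\>^{-\sigma}\|_{L^2\to L^2}\lesssim\<t\>^{-3/2}$ for $\sigma>3/2$ (immediate from $\|e^{\im t\Delta}\|_{L^1\to L^\infty}\lesssim|t|^{-3/2}$ together with $\<x\>^{-\sigma}\colon L^2\to L^1,\ L^\infty\to L^2$) yields $\|(1-\chi(-\Delta))R_+(1)v[0](t)\|_{L^{2,-\sigma}}\lesssim\<t\>^{-3/2}\|v\|_{L^{2,\sigma}}$.

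For the piece localized near the sphere I would pass to the spectral (coarea) representation $\chi(-\Delta)R_+(1)v[0](t)=\int_0^\infty\chi(\lambda)\,\frac{e^{-\im t\lambda}}{\lambda-1-\im 0}\,e^{(v)}_\lambda\,d\lambda$, where $e^{(v)}_\lambda$ is the spectral density of $v$, defined by $dE_\lambda v=e^{(v)}_\lambda\,d\lambda$ for the spectral resolution $E_\lambda$ of $-\Delta$, which amounts to the Fourier restriction of $v$ to $\{|\xi|^2=\lambda\}$ (pulled back into $L^{2,-\sigma}$). By the trace/restriction theorem $\lambda\mapsto e^{(v)}_\lambda$ is $C^k$ into $L^{2,-\sigma}$ with $\|\partial_\lambda^k e^{(v)}_\lambda\|_{L^{2,-\sigma}}\lesssim\|v\|_{L^{2,\sigma}}$ as soon as $\sigma>k+1/2$, which is where the threshold $\sigma_0$ enters. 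Using Sokhotski--Plemelj, $\frac{1}{\lambda-1-\im 0}=\mathrm{P.V.}\frac{1}{\lambda-1}+\im\pi\delta(\lambda-1)$: the $\delta$-term contributes $\im\pi e^{-\im t}e^{(v)}_1$, which has no decay, but writing $e^{(v)}_\lambda=e^{(v)}_1+(\lambda-1)g_\lambda$ in the principal-value integral extracts the matching term $e^{(v)}_1\,\mathrm{P.V.}\!\int\chi(\lambda)\frac{e^{-\im t\lambda}}{\lambda-1}\,d\lambda$, which for $t\ge1$ equals $-\im\pi e^{-\im t}e^{(v)}_1$ up to an $O(t^{-2})\|v\|_{L^{2,\sigma}}$ error and hence cancels the $\delta$-contribution, and leaves the smooth integral $\int\chi(\lambda)e^{-\im t\lambda}g_\lambda\,d\lambda$, which integration by parts in $\lambda$ bounds by $C\<t\>^{-m}\|v\|_{L^{2,\sigma}}$ for every integer $m\le\sigma-3/2$; for $0\le t\le1$ one uses the trivial uniform bound. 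Choosing $\sigma_0$ large enough (e.g.\ $\sigma_0=9/2$, matching Lemma~6.5 of \cite{CM15APDE}) makes both residual terms $O(\<t\>^{-3/2})$ and completes (2).

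I expect the main obstacle to be precisely this near-sphere analysis. The naive estimate of $\chi(-\Delta)R_+(1)e^{\im t\Delta}v$ does not decay at all because of the $\delta(\lambda-1)$ term; decay is recovered only through the exact cancellation above, which is special to the retarded resolvent $R_+$ (and to $t\ge0$) and must be quantified using the regularity in $\lambda$ of the Fourier restriction of $v$ — this is the quantitative limiting absorption principle, and the reason one needs $\sigma$ large rather than just $\sigma>1/2$. In every application in this paper $v=G\in\mathcal{S}$, so $e^{(v)}_\lambda$ is smooth and supported away from $\lambda=0$ and all of this is routine.
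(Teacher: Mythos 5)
The paper gives no proof of this lemma: it merely remarks that the statement is ``well known'' and cites Lemma~6.5 of \cite{CM15APDE} (where $\sigma_0=9/2$), so there is no in-paper argument against which to compare yours. On its own terms your sketch is a correct outline of the standard weighted local-decay estimate in the spirit of Agmon and Jensen--Kato, and it is almost certainly the same mechanism that underlies the cited reference. The reduction of (3) to (2) via $\langle t\rangle^{-3/2}\in L^p(\R_+)$ for $p\ge1$ is immediate, (1) is the limiting absorption principle, and your treatment of (2) --- splitting into a region away from the sphere $\{|\xi|=1\}$, handled by the $L^1\to L^\infty$ dispersive estimate together with boundedness of $m(-\Delta)$ on weighted $L^2$, and a near-sphere piece handled by the coarea/spectral representation and the cancellation of the $\delta(\lambda-1)$ contribution against the principal value for $t\ge0$ --- is exactly the right idea, and the reason the bound is special to $R_+$ and to forward time. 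Two small points worth tightening: you should make explicit that $\langle x\rangle^{\sigma} m(-\Delta)\langle x\rangle^{-\sigma}$ is bounded on $L^2$ (true by the symbolic calculus since $m$ is smooth with all derivatives bounded, but it is being used), and in the near-sphere piece you only need enough $\lambda$-regularity of $g_\lambda$ so that integration by parts gives $O(t^{-2})$, which together with the trivial $O(1)$ bound for $t\le1$ yields $\langle t\rangle^{-3/2}$; keeping track of this shows $\sigma>7/2$ suffices, so $\sigma_0=9/2$ is comfortably enough, as you say.
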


We will need the following Duhamel estimates too.

\begin{lemma}\label{lem:referee}
There is a $C>0$ s.t.\  for any $T>0$ and any  $f\in L^2L^{6/5}(-\infty,-T)$ we have
\begin{align*}
\|\int_{-\infty}^{-T}f[s](t)\,ds \|_{\st(0,\infty)}\le C T^{-1/4}\|f\|_{L^2L^{6/5}(-\infty,-T)}.
\end{align*}
\end{lemma}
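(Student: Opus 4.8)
\textbf{Proof proposal for Lemma \ref{lem:referee}.}

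The plan is to exploit the dispersive decay of the free Schr\"odinger propagator together with the Christ--Kiselev lemma (or a direct duality/$TT^*$ argument) to gain the factor $T^{-1/4}$ from the separation between the time support of $f$ and the interval $(0,\infty)$ on which we measure the output. First I would note that the operator in question is $f\mapsto \int_{-\infty}^{-T}e^{\im(t-s)\Delta}f(s)\,ds$ evaluated for $t>0$; since the source times $s<-T$ and the observation times $t>0$ satisfy $t-s>T$, the kernel $e^{\im(t-s)\Delta}$ is being applied only across time gaps of size at least $T$. The key analytic input is the standard $L^{6/5}\to L^6$ dispersive estimate $\|e^{\im\tau\Delta}g\|_{L^6}\lesssim |\tau|^{-3(1/2-1/6)}\|g\|_{L^{6/5}} = |\tau|^{-1}\|g\|_{L^{6/5}}$ for $\tau\neq 0$.

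The main steps, in order: (1) Drop the Christ--Kiselev truncation first — i.e., first estimate the ``full'' (non-retarded) operator $f\mapsto \int_{-\infty}^{-T}e^{\im(t-s)\Delta}f(s)\,ds$ in $L^4_tL^6_x(0,\infty)$, since Christ--Kiselev then transfers the bound to the retarded version for free (the exponents $p=4$ on the left and $p'=2$ on the right are ordered correctly, $2<4$). (2) For the full operator, write it as $e^{\im t\Delta}h$ with $h=-\im\int_{-\infty}^{-T}e^{-\im s\Delta}f(s)\,ds$, and by the homogeneous Strichartz estimate $\|e^{\im t\Delta}h\|_{L^4L^6(0,\infty)}\lesssim \|h\|_{L^2}$; so it suffices to show $\|h\|_{L^2}\lesssim T^{-1/4}\|f\|_{L^2L^{6/5}(-\infty,-T)}$. (3) Estimate $\|h\|_{L^2}^2 = \langle h,h\rangle$ by expanding the double integral and using $\|e^{\im(s'-s)\Delta}\|_{L^{6/5}\to L^6}\lesssim |s-s'|^{-1}$, reducing matters to the boundedness on $L^2(-\infty,-T)$ of the integral operator with kernel $|s-s'|^{-1}$. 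That kernel is \emph{not} bounded on all of $L^2(\R)$ (it is the critical Hardy--Littlewood--Sobolev endpoint), so instead (3$'$) I would interpolate: use the $L^{6/5}\to L^6$ decay at rate $|s-s'|^{-1}$ together with $L^2\to L^2$ conservation (rate $|s-s'|^0$), obtaining $\|e^{\im(s'-s)\Delta}\|_{L^{q'}\to L^q}\lesssim |s-s'|^{-3(1/2-1/q)}$ for $2\le q\le 6$, and pick an intermediate $q$ so that the resulting Riesz-potential-type kernel $|s-s'|^{-\alpha}$ with $0<\alpha<1$ lands in the mapping $L^{p_1'}_t\to L^{p_1}_t$ range with a gain. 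Actually the cleanest route: use a non-admissible/weighted Strichartz pair as in Lemma \ref{lem:2} with $(p_0,q_0)=(4,6)$; the retarded estimate there already yields $L^4L^6 \lesssim L^{p_1'}L^{q_1'}$ for the $\sigma_1=1/2$ pair, and since on $(-\infty,-T)$ the observation interval $(0,\infty)$ is at temporal distance $\ge T$, one extra application of the pointwise decay $\|\cdot\|_{L^6}\lesssim\langle t-s\rangle^{-1}\|\cdot\|_{L^{6/5}}$ on a sliver of the integral, combined with H\"older in $s$ over $(-\infty,-T)$, converts part of the $s$-integrability into the factor $T^{-1/4}$ (precisely: $\|\langle t-s\rangle^{-1}\|_{L^{4/3}_s(-\infty,-T)}\sim T^{-1/4}$ uniformly in $t>0$, which is where the exponent $1/4$ comes from, matching $1 - 3/4 = 1/4$).

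\textbf{Where the difficulty lies.} The bookkeeping hazard is getting the exact exponent $1/4$ out rather than some other power: the interplay is between the $L^2_t$ norm on the source side, the $L^4_t$ norm on the output side, and the $|t-s|^{-1}$ pointwise $L^{6/5}\to L^6$ decay; the number $1/4$ must emerge as $\tfrac12\cdot(\tfrac12-\tfrac16)\cdot 3 - $ (something) or, more transparently, from $\|\langle\cdot\rangle^{-1}\|_{L^{4/3}}\sim T^{-1/4}$. The main obstacle, therefore, is not any deep harmonic analysis — it is choosing the interpolation exponent / H\"older split so that (i) the free evolution estimate used is genuinely available (admissible or in the Foschi--Vilela range of Lemma \ref{lem:2}), (ii) the leftover kernel in $s$ is integrable near $s=-T$ with the \emph{correct} power so the $T$-dependence comes out as $T^{-1/4}$, and (iii) Christ--Kiselev applies to upgrade from the full to the retarded operator. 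I expect step (3$'$), the correct H\"older split producing $T^{-1/4}$, to be the crux; everything else is a direct citation of Lemmas \ref{lem:1} and \ref{lem:2}.
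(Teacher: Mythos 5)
Your instinct about the key ingredient is right — the $L^{6/5}\to L^6$ dispersive estimate is the heart of the lemma — but the route you propose is both more complicated than necessary and, in its crucial steps, does not actually close. The paper's own proof is two lines: apply the dispersive bound pointwise in $t$,
\begin{equation*}
\Bigl\|\int_{-\infty}^{-T}f[s](t)\,ds\Bigr\|_{L^6_x}\lesssim \int_{-\infty}^{-T}|t-s|^{-1}\|f(s)\|_{L^{6/5}}\,ds,
\end{equation*}
then Cauchy--Schwarz in $s$ (so the kernel is measured in $L^2_s(-\infty,-T)$, giving $(t+T)^{-1/2}$), and finally take the $L^4_t(0,\infty)$ norm of $(t+T)^{-1/2}$, which is exactly $T^{-1/4}$. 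No Strichartz, no Christ--Kiselev, no interpolation.

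Against that, three concrete problems in your proposal. First, your step (2) cannot work: writing the operator as $e^{\im t\Delta}h$ and hoping that $\|h\|_{L^2}\lesssim T^{-1/4}\|f\|_{L^2L^{6/5}}$ is false — dual Strichartz gives $\|h\|_{L^2}\lesssim \|f\|_{L^2L^{6/5}}$ with no gain, and indeed no gain in $T$ is possible at the level of $\|h\|_{L^2}$ (the temporal separation is invisible once you collapse $h$ to a single $L^2_x$ element). The $T^{-1/4}$ must come from the decay of $e^{\im t\Delta}h$ on $(0,\infty)$, i.e.\ from the fact that the observation and source intervals are $\geq T$ apart, which your factorization throws away. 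Second, the Christ--Kiselev step (1) is vacuous here: the integration domain $(-\infty,-T)$ does not depend on $t$, so there is no retarded/full distinction to transfer. Third, in (3$'$) your H\"older exponent is off: $\|\,|t-s|^{-1}\|_{L^{4/3}_s(-\infty,-T)}\sim T^{-1/4}$ is correct, but $L^{4/3}_s$ pairs with $L^4_s$, not the hypothesized $L^2_s$; and if you were to proceed with $L^4_s$ on $f$ you would get a bound $(t+T)^{-1/4}$, which is not in $L^4_t(0,\infty)$. The correct split is Cauchy--Schwarz in $s$ (exponent $2$, matching the $L^2_s$ norm of $f$), and the factor $T^{-1/4}$ emerges only after the subsequent $L^4_t$ integration of $(t+T)^{-1/2}$ — not, as you suggest, from the H\"older step itself.
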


\begin{proof}
By $L^{6/5}$-$L^6$ decay estimate we have
\begin{align*}
\|\int_{-\infty}^{-T}f[s](t)\,ds\|_{L^6_x}\lesssim \int_{-\infty}^{-T}|t-s|^{-1} \|f(s)\|_{L^{6/5}}\,ds.
\end{align*}
Thus,
\begin{align*}
\|\int_{-\infty}^{-T}f[s](t)\,ds\|_{\st(0,\infty)}&\lesssim \|\(\int_{-\infty}^{-T}|t-s|^{-2}\,ds\)^{1/2}\|_{L^4(0,\infty)} \|f(s)\|_{L^2L^{6/5}(-\infty,-T)}\\&\leq T^{-1/4}\|f\|_{L^2L^{6/5}(-\infty,-T)}.
\end{align*}
\end{proof}

\section{$L^4$ estimates}\label{sec:l4}
In this section, we estimate the solutions of \eqref{1}--\eqref{2} in terms of the $L^4$ in time based norms   $\|\xi\|_\st$ and $\|z\|_{L^{12}}$ (which can be thought as the $L^4$ norm of $|z|^2z$).
In principle  $\|z\|_{L^{12}}   ^{2}\le \|z\|_{L^6} \|z\|_{L^\infty }$.
However, in some situations $\|z\|_{L^{12}}$ is  small when $\|z\|_{L^6}$ is not  as small as we need.

\begin{lemma}\label{lem:8}
There exist constants $\mu _0>0$,  $\mu  _{1/2}>0$  and $C_{\theta}$ for any $\theta \in [0,1]$ s.t.\
for any solution  $( \xi ,z)$   of \eqref{1}--\eqref{2}  in $ H^1\times \C$ with $\cN_0\le \mu _0$ and
\begin{equation}\label{eq:preprop10}
 \|\xi\|_{\st(t_0,t_1)} \max \{ 1, \cNh^3 \}\le \mu _{1/2},
\end{equation}
 for  $\cNt$  defined by \eqref{eq:preprop11},  we have
\begin{align}      & \|\xi\|_{\stzt(t_0,t_1)}+\|z\|_{L^\infty(t_0,t_1)}+ \|z\|_{L ^{6}(t_0,t_1)} ^3 \le C_{\theta } \cNt . \label{11.9.-0}
\end{align}

\end{lemma}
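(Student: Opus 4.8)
The plan is to reproduce, on the interval $(t_0,t_1)$, the computation leading from \eqref{4.1} to \eqref{11.0} in the proof of Theorem \ref{thm:1}, with the smallness of $\cNh$ used there replaced by two weaker facts available here. First, conservation of $\mathbb M$ gives $\|z\|_{L^\infty(t_0,t_1)}\le 2\cN_0\le 2\mu_0$, so $|z|$ is uniformly small on the whole interval. Second, hypothesis \eqref{eq:preprop10} in particular yields $\|\xi\|_{\st(t_0,t_1)}\le\mu_{1/2}$; this is all of \eqref{eq:preprop10} that the proof of the lemma uses, the weight $\max\{1,\cNh^3\}$ being only the form in which the hypothesis is convenient when the lemma is later invoked with $\cNh$ large. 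Since $t_1$ may be $+\infty$, one works on finite subintervals $(t_0,T)$, $t_0<T<t_1$, where all Strichartz norms of $\xi$ and $L^p_t$ norms of $z$ are finite by global well-posedness, derives bounds with constants independent of $T$, and lets $T\uparrow t_1$.

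Concretely, I would carry out the substitution $\xi=Y+g$ of \eqref{5}, use the Fermi Golden rule identity \eqref{10} for $|z|^2$ together with the Strichartz bound \eqref{8} for $\|g\|_{L^2L^{2,-\sigma}}$ to get \eqref{11}, and substitute into \eqref{4.1} exactly as in the derivation of \eqref{11.0}. The only point to check is that the coefficient $C\bigl(\Gamma^{-1}\|z\|_{L^\infty(t_0,T)}^4+\langle\Gamma^{-1}\rangle\|\xi\|_{\st(t_0,T)}^2\bigr)$ multiplying $\|\xi\|_{\stzt(t_0,T)}$ in \eqref{11.0} (and the analogous coefficient in \eqref{11}) is smaller than $\tfrac12$; but $\|z\|_{L^\infty(t_0,T)}\le 2\mu_0$, $\|\xi\|_{\st(t_0,T)}\le\mu_{1/2}$, and $\Gamma>0$ is a fixed constant under \eqref{eq:FGR}, so this holds once $\mu_0,\mu_{1/2}$ are chosen small enough, depending only on the Strichartz constants and on $\Gamma$. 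Absorbing this term into the left, then using $|z(t_0)|\le\cN_0\le\mu_0$ (so that $|z(t_0)|^3\le\mu_0^2|z(t_0)|$), gives for $\theta\in[0,1/2]$
\[
\|\xi\|_{\stzt(t_0,T)}+\|z\|_{L^6(t_0,T)}^3+\|z\|_{L^\infty(t_0,T)}\le C_\theta\cNt ,
\]
with $C_\theta$ depending only on the Strichartz constants and $\Gamma$; letting $T\uparrow t_1$ proves \eqref{11.9.-0} for $\theta\le1/2$. For $\theta\in(1/2,1]$ one argues as in the treatment of that range in the proof of Theorem \ref{thm:1}: feeding the already-established bound on $\|z\|_{L^6}^3$ and the bound $\|\xi\|_{\st(t_0,t_1)}\le\mu_{1/2}$ into \eqref{4.1} and absorbing the cubic term gives $\|\xi\|_{\stzt(t_0,t_1)}\lesssim_\Gamma \|\xi(t_0)\|_{H^\theta}+\cN_0\le(1+C_\Gamma)\cNt$, where we use $\cN_0\le\cNh\le\cNt$ for $\theta\ge1/2$.

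The one structural feature to respect is the $H^{1/2}$-criticality of the cubic term $|\xi|^2\xi$: the factor $\|\xi\|_{L^\infty H^\theta(t_0,t_1)}$ that accompanies $\|\xi\|_{\st(t_0,t_1)}^2$ in the Strichartz estimate for $\xi$ can only be bounded — for $\theta=\tfrac12$ by a multiple of $\cNh$ — never made small, so the cubic term must be controlled purely through the $\st$-smallness. This is harmless for the lemma itself, since the absorption needs only that $\|\xi\|_{\st(t_0,t_1)}$ lie below a fixed threshold, which \eqref{eq:preprop10} guarantees; but it is exactly why the hypothesis is stated with the weight $\max\{1,\cNh^3\}$ rather than asking $\|\xi\|_{\st(t_0,t_1)}$ absolutely small, so that the lemma stays applicable when it is used in the iteration on intervals where only $\|\xi\|_{\st}\cdot\cNh^3$ is known to be small. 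I do not expect a serious obstacle; the only care required is the a priori finiteness of the norms on $(t_0,T)$ and the uniformity of all constants as $T\uparrow t_1$, both routine given global well-posedness.
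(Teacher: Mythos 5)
Your proposal is correct and follows essentially the same route the paper takes: substitute $\xi=Y+g$, bound $\|g\|_{L^2L^{2,-\sigma}}$ by Strichartz, feed it into the Fermi Golden Rule identity \eqref{10} to get $\|z\|_{L^\infty}+\|z\|_{L^6}^3\lesssim\cN_0$, then close the Strichartz estimate for $\xi$ by absorption using the smallness of $\|\xi\|_{\st}$ (from \eqref{eq:preprop10}) and $\|z\|_{L^\infty}$ (from $\cN_0\le\mu_0$). The only cosmetic difference is that the paper rewrites the bound on $\|g\|_{L^2L^{2,-\sigma}}$ using the split $|z|^4\sim |z|^{5/2}|z|^{3/2}$ with $\|\xi\|_{\st}$ instead of $\|\xi\|_{L^2L^6}$, so that the $z$-estimate closes without first closing $\|\xi\|_{\stz^0}$, whereas you close the coupled system via \eqref{11.0} directly — both work.
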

\proof
By \eqref{4.1} we have
\begin{equation}\label{11.9.-0 pre1}
 \|\xi\|_{\stzt(t_0,t_1)}\lesssim \|\xi(t_0)\|_{\Ht}+ \|z\|_{L ^{6}(t_0,t_1)} ^3.
\end{equation}
Therefore, it suffices to show
\begin{align}\label{11.9.-0 pre2}
\|z\|_{L^\infty(t_0,t_1)}+ \|z\|_{L ^{6}(t_0,t_1)} ^3 \lesssim  \cN_0.
\end{align}
Proceeding like in the proof of Theorem \ref{thm:1}, for $\sigma>9/4$, we write
\begin{equation}
\begin{aligned}
     \|g\|_{L^2 L^{2,-\sigma}(t_0,t_1)}& \le C (\|\xi(t_0)\|_{L^2}+|z(t_0)|^3)\\& + C \|\xi\|_{\st(t_0,t_1)}^2   \| \xi\|_{L^\infty L^2(t_0,t_1)} + C\|z\|_{L^\infty(t_0,t_1)}  ^{\frac{5}{2}} \|z\|_{L^6(t_0,t_1)}   ^{\frac{3}{2}}   \| \xi\|_{\st (t_0,t_1)}.
\end{aligned}\nonumber
\end{equation}
   Inserting this in the inequality  \eqref{eq:z^6}  we obtain
     \begin{equation}
\begin{aligned} & \|z\|_{L^6(t_0,t_1)}^3 +  \|z\|_{L^\infty (t_0,t_1)}
\le C  |z(t_0)|   + C \|\xi(t_0)\|_{L^2} \\& + C \|\xi\|_{\st(t_0,t_1)}^2   \| \xi\|_{L^\infty L^2(t_0,t_1)} +C\|z\|_{L^\infty(t_0,t_1)}  ^{5}    \| \xi\|_{\st (t_0,t_1)}^{2}.
\end{aligned}\nonumber
\end{equation}
 Using this inequality and  \eqref{11.9.-0 pre1}, we derive immediately \eqref{11.9.-0 pre2}.
\qed

The following proposition is the main $L^4$ estimate in this section.

\begin{proposition}\label{prop:1}  There exist constants $\mu _0>0$, $\mu _{1/2}>0$ and $C>0$ s.t.\ for
 any solution  $( \xi ,z)$   of \eqref{1}--\eqref{2} in  $   H^1\times \C$ with
  $\cN_0\le \mu _0 $ which satisfies  either  \eqref{eq:preprop10}
    or
  \begin{equation}\label{eq:preprop20}
 \(\|\xi[t_0]\|_{\st(t_0,t_1)}+|z(t_0)|^2\)\max \{ 1, \cNh^3 \} \le \mu _{1/2},
\end{equation}
we have
\begin{align}   &
\|\xi\|_{\st(t_0,t_1)}+\|z\|_{L^\infty(t_0,t_1)}^2+ \|z\|_{L^{12}(t_0,t_1)}^3\le C  ( \|\xi[t_0]\|_{\st(t_0,t_1)}+ |z(t_0)|^2),\label{11.9.0}
\\ &\label{11.9.0.2}
\|\xi[t_0]\|_{\st(t_0,t_1)}\le C(\|\xi\|_{\st(t_0,t_1)}+ \|z\|_{L^{12}(t_0,t_1)}^{3}).
\end{align}

\end{proposition}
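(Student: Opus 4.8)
The plan is to prove \eqref{11.9.0} via a continuity (bootstrap) argument on the interval $(t_0,t_1)$, treating the two alternative smallness hypotheses \eqref{eq:preprop10} and \eqref{eq:preprop20} in parallel, and then to deduce \eqref{11.9.0.2} as a simple byproduct of the estimates obtained along the way. First I would set up the bootstrap quantity, say $\Phi(t):=\|\xi\|_{\st(t_0,t)}+\|z\|_{L^\infty(t_0,t)}^2+\|z\|_{L^{12}(t_0,t)}^3$, and the ``data size'' $\mathcal D:=\|\xi[t_0]\|_{\st(t_0,t_1)}+|z(t_0)|^2$. Note that under either hypothesis one already controls $\|\xi\|_{L^\infty H^\theta}$ and $\|z\|_{L^\infty}$ by $\cNt$ through Lemma \ref{lem:-8} (resp.\ Lemma \ref{lem:8} when \eqref{eq:preprop10} holds), so these high-regularity norms will only enter the nonlinear terms as fixed constants $C(\cN_0)$, and the real work is to close the loop on the $L^4$-type norms $\Phi$.

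The key steps, in order: (i) For the $\xi$ piece, write $\xi=\xi[t_0]+\cD(|\xi|^2\xi)[t_0]+\cD(|z|^2zG)[t_0]$ and estimate in $\st(t_0,t)$. The free part is exactly $\|\xi[t_0]\|_{\st}\le\mathcal D$; the cubic term $\cD(|\xi|^2\xi)$ is handled by Lemma \ref{lem:1} (Strichartz into $\st\supset\stzz$) giving a bound $\lesssim\|\xi\|_{\st}^2\|\xi\|_{L^\infty L^2}\lesssim\cN_0\Phi$; the forcing term $\cD(|z|^2zG)$ is the place where \eqref{Fermi Golden rule} enters — I would use the non-admissible Strichartz of Lemma \ref{lem:2}/Lemma \ref{lem:6} with $(p_0,q_0)=(4,6)$ against $|z|^2zG$, exploiting that $\||z|^2z G\|_{L^{p_1'}L^{q_1'}}\lesssim\|z\|_{L^{3p_1'}}^3\|G\|_{L^{q_1'}}$ and choosing exponents so that $3p_1'$ relates to an $L^{12}_t$ norm of $z$, i.e.\ $\lesssim\|z\|_{L^{12}(t_0,t)}^3\lesssim\Phi$ (with a harmless power of $\cNt$ from interpolating $L^{12}$ between $L^6$ and $L^\infty$ if needed). (ii) For the $z$ piece, repeat the computation from the proof of Theorem \ref{thm:1}: set $\xi=Y+g$ with $Y=-|z|^2zR_+(1)G$, derive the ODE \eqref{9}–\eqref{10} for $|z|^2$, and obtain from \eqref{eq:z^6} a bound $\Gamma\|z\|_{L^6}^6+|z|^2\lesssim|z(t_0)|^2+\|g\|_{L^2L^{2,-\sigma}}\|z\|_{L^6}^3$, where $\|g\|_{L^2L^{2,-\sigma}}$ is estimated via Lemma \ref{lem:7} by $\|\xi(t_0)\|_{L^2}+|z(t_0)|^3+\|\xi\|_{\st}^2\|\xi\|_{L^\infty L^2}+\|z\|_{L^\infty}^{\cdot}\|z\|_{\cdot}^{\cdot}\|\xi\|_{\st}$. (iii) The crucial new ingredient is upgrading the $L^6_t$ control of $z$ to $L^{12}_t$: here I would multiply \eqref{10} by $|z|^6$ (as in the paper's discussion of \eqref{11.11.1}–\eqref{11.12}), producing $\frac{d}{dt}|z|^8\sim-\Gamma|z|^{12}+(\text{remainder})\cdot|z|^6$, integrate, and bound the remainder using the equation for $\xi$ and the already-established $\st$ control, yielding $\Gamma\|z\|_{L^{12}}^{12}\lesssim|z(t_0)|^8+(\cdots)$ — this is precisely the ``$L^4_t$ FGR estimate'' \eqref{Fermi Golden rule}. (iv) Assemble (i)–(iii): combining the inequalities gives $\Phi(t)\le C_0\mathcal D+C_0(\cN_0+\text{powers of }\cNt\cdot\Phi(t)^{\text{small}})\Phi(t)$ on any subinterval, and by the smallness $\cN_0\le\mu_0$ together with the bootstrap hypothesis \eqref{eq:preprop10} or \eqref{eq:preprop20} (which bounds $\|\xi\|_{\st}$ or $\|\xi[t_0]\|_{\st}$ times $\max\{1,\cNh^3\}$) one absorbs all nonlinear terms, closes the continuity argument, and gets $\Phi(t_1)\le 2C_0\mathcal D$, i.e.\ \eqref{11.9.0}. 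Finally, \eqref{11.9.0.2} follows by writing $\xi[t_0]=\xi-\cD(|\xi|^2\xi)[t_0]-\cD(|z|^2zG)[t_0]$ and applying the same two nonlinear bounds in reverse: $\|\xi[t_0]\|_{\st}\le\|\xi\|_{\st}+C\|\xi\|_{\st}^2\|\xi\|_{L^\infty L^2}+C\|z\|_{L^{12}}^3$, which after absorbing the quadratic term (small by \eqref{11.9.0}) gives the stated linear bound.

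The main obstacle I expect is step (iii): getting the genuine $L^{12}_t$ (equivalently $L^4_t$ for $|z|^2z$) control on the discrete mode rather than the cheap interpolation bound $\|z\|_{L^{12}}^2\le\|z\|_{L^6}\|z\|_{L^\infty}$, which would be useless here because $\|z\|_{L^\infty}$ is only controlled by $\cNt$ and not by the small data quantity $\mathcal D$. The trick is that the Fermi Golden Rule dissipation term $-\Gamma|z|^6/2$ in \eqref{10}, after multiplying by $|z|^6$, produces exactly a coercive $-\Gamma|z|^{12}$ term whose time integral is the square of the desired $L^{12}_t$ norm; the remainder terms, being at least quartic in $z$ and involving $(G|g)$, can be routed through $\|g\|_{L^2L^{2,-\sigma}}$ and hence through $\|\xi\|_{\st}$ and the $L^6_t$ norm of $z$, all of which are already small. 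One must be careful that the powers of $z$ in these remainder terms, when paired with the extra $|z|^6$ factor, still only require $L^6_t$ and $L^\infty_t$ (the latter with a controlled, not necessarily small, constant) — a bookkeeping exercise in Hölder exponents entirely analogous to \eqref{11.11.1}–\eqref{11.12}. A secondary technical point is the correct choice of the non-admissible Strichartz pair in step (i) so that the forcing term is measured precisely by $\|z\|_{L^{12}_t}^3$ and not a norm one cannot close; Lemma \ref{lem:6} with $(p_0,q_0)=(4,6)$ is tailored for exactly this, so this should go through smoothly.
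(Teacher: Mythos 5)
The proposal follows the paper's overall strategy — the $g=\xi-Y$ decomposition, the multiplication of \eqref{10} by $|z|^6$ to extract the $L^{12}_t$ dissipation, the non-admissible Strichartz pair for the forcing term, and a continuity argument to pass from hypothesis \eqref{eq:preprop10} to \eqref{eq:preprop20} — so the architecture is right. But step (i) contains a concrete error that would derail the bootstrap. You claim the cubic term obeys
$\|\cD(|\xi|^2\xi)[t_0]\|_{\st(t_0,t_1)} \lesssim \|\xi\|_{\st}^2\|\xi\|_{L^\infty L^2}\lesssim\cN_0\,\Phi$
via ``Strichartz into $\st\supset\stzz$''. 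The inclusion $\stzz\hookrightarrow\st$ is false: $\stzz=L^\infty L^2\cap L^2 B^0_{6,2}$ interpolates in time only to $L^4L^3$, not $L^4L^6=\st$. The embedding that does hold is $\stzh\hookrightarrow\st$, and using Lemma \ref{lem:1} at level $\theta=1/2$ forces the cubic term to be measured against the $H^{1/2}$ (or $L^3$) norm of $\xi$, not $L^2$. The paper instead applies the non-admissible Strichartz of Lemma \ref{lem:2} with the pair $(p_0,q_0)=(4,6)$, $(p_1,q_1)=(8/5,4)$, which yields $\||\xi|^2\xi\|_{L^{8/3}L^{4/3}}=\|\xi\|_{L^8L^4}^3\lesssim\|\xi\|_{L^\infty L^3}^{3/2}\|\xi\|_{\st}^{3/2}$. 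This is the estimate that closes.

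The distinction matters quantitatively: the coefficient in front of $\|\xi\|_{\st}$ in the self-improving inequality is not $\cN_0\ll 1$ but $\cNh^{3/2}\|\xi\|_{\st}^{1/2}$, which is small only because of the product hypothesis $\|\xi\|_{\st}\max\{1,\cNh^3\}\le\mu_{1/2}$ (or its $\xi[t_0]$ analogue). Your version, with the factor $\cN_0$, would make the hypothesis on $\cNh^3$ in \eqref{eq:preprop10} and \eqref{eq:preprop20} superfluous, which it is not — that factor is exactly what the paper is tracking. Your later remark that the coefficient is ``$\cN_0+\text{powers of }\cNt\cdot\Phi(t)^{\text{small}}$'' is closer to the truth, but as written your step (i) does not produce the $\Phi(t)^{\text{small}}$ gain, since the claimed bound is linear in $\Phi$ with a purely $\cN_0$-sized constant. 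To fix the proposal, replace the Lemma \ref{lem:1} application in step (i) with Lemma \ref{lem:2} at the exponents above, and carry $\|\xi\|_{L^\infty L^3}^{3/2}\|\xi\|_{\st}^{3/2}\lesssim\cNh^{3/2}\|\xi\|_{\st}^{1/2}\cdot\|\xi\|_{\st}$ through the bootstrap; the rest of your steps (ii)--(iv), including the $L^{12}_t$ FGR bookkeeping and the derivation of \eqref{11.9.0.2} by reversing the Duhamel identity, then go through as in the paper.
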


\begin{proof}
We first assume \eqref{eq:preprop10}.
By nonadmissible Strichartz (Lemma \ref{lem:2}) with $(4,6)$ and $(8/5,4)$ for the $|\xi|^2\xi$ term and $(4,6)$ and $(4/3,6)$ for the $|z|^2z   G$ term in \eqref{1}   we have
\begin{equation}\label{11.9.2}
\begin{aligned}
      \|\xi\|_{\st(t_0,t_1)}&\lesssim \|\xi[t_0]\|_{\st(t_0,t_1)}+\|\xi\|_{L^{8}L^{4}(t_0,t_1)}^3+\|z^3G\|_{L^4L^{6/5}(t_0,t_1)}\\&
\lesssim \|\xi[t_0]\|_{\st(t_0,t_1)}+\|\xi\|_{L^{\infty}L^{3}(t_0,t_1)}^{3/2}\|\xi\|_{L^{4}L^{6}(t_0,t_1)}^{3/2}+\|z\|_{L^{12}(t_0,t_1)}^3.
\end{aligned}
\end{equation}
By $H^{1/2}\hookrightarrow L^3$ and \eqref{11.9.-0}  we have \begin{equation}
\begin{aligned} &
    \|\xi\|_{L^{\infty}L^{3}(t_0,t_1)}^{\frac 32}\|\xi\|_{\st(t_0,t_1)}^{\frac 32}\lesssim   \left ( \mathcal{N} _{\frac{1}{2}}   ^{3} \|\xi\|_{\st(t_0,t_1)}\right )  ^{\frac{1}{2}} \|\xi\|_{\st(t_0,t_1)} .
\end{aligned}\nonumber
\end{equation}
  By \eqref{eq:preprop10}  we conclude
\begin{align*}
\|\xi\|_{\st(t_0,t_1)}\lesssim \|\xi[t_0]\|_{\st(t_0,t_1)}+ \|z\|_{L^{12}(t_0,t_1)}^3  .
\end{align*}
Interchanging $\xi$ and $\xi[t_0]$  we obtain
\begin{align}\label{11.10}
\|\xi\|_{\st(t_0,t_1)}+ \|z\|_{L^{12}(t_0,t_1)}^3 \sim \|\xi[t_0]\|_{\st(t_0,t_1)}+ \|z\|_{L^{12}(t_0,t_1)}^3 .
\end{align}
Consider the $g$ in \eqref{6}.
Then, again by the nonadmissible Strichartz (Lemma \ref{lem:2}) with $(4,6)$ and $(8/5,4)$ for $|\xi|^2\xi$ and $(4,6)$, $(4/3,6)$ for $\mathcal R_1$, we have
\begin{align*}
\|g\|_{L^4L^{2,-\sigma}(t_0,t_1)}&\lesssim \|g[t_0]\|_{L^4 L^{2,-\sigma}(t_0,t_1)}+\|\xi\|_{L^{8}L^{4}(t_0,t_1)}^3+\|\mathcal R_1\|_{L^4L^{6/5}(t_0,t_1)},
\end{align*}
Recall that $\mathcal R_1$ is given by \eqref{7}.
Then by  $\|\xi\|_{L^{8}L^{4}}^3\lesssim \|\xi\|_{L^\infty L^3}^{3/2}\|\xi\|_{\st}^{3/2}$,  $\|\mathcal R_1\|_{L^4L^{6/5}}\lesssim  \|z\|_{L^\infty}^4 \|\xi\|_{\st}$ and
\begin{align*}
\|g[t_0]\|_{L^4 L^{2,-\sigma}}\lesssim \|\xi[t_0]\|_{\st}+\|Y[t_0]\|_{L^4L^{2,-\sigma}}\lesssim \|\xi[t_0]\|_{\st}+ |z(t_0)|^3,
\end{align*}
where in the last inequality we used Lemma \ref{lem:7}. Then we have
\begin{align}\label{11.11}
\|g\|_{L^4 L^{2,-\sigma}(t_0,t_1)}\lesssim \|\xi[t_0]\|_{\st(t_0,t_1)} +  |z(t_0)|^3+\( \mathcal{N}_{\frac{1}{2}}^{\frac{3}{2} }\|\xi\|_{\st(t_0,t_1)}^{\frac 1 2}+ \|z\|_{L^\infty(t_0,t_1)}^4\) \|\xi\|_{\st(t_0,t_1)}.%\nonumber
\end{align}
We now estimate the  $L^{12}$ norm of $z$.
We multiply  \eqref{10}  by $|z(t)|^6$ obtaining
\begin{align}\label{11.11.1}
\frac 1 4 \frac{d}{dt}|z(t)|^8 =-\Gamma\frac{1}{2}|z|^{12} + \Im\(\frac 1 2 z|z|^8 (G|g) +|z|^8\bar z\overline{(G|g)}\).
\end{align}
Integrating it from $t_0$ to $t(\leq t_1)$, we have
\begin{align*}
|z(t)|^8+ 2\Gamma \|z\|_{L^{12}(t_0,t)}^{12}\lesssim |z(t_0)|^{8}+\|z\|_{L^{12}(t_0,t)}^{9}\|g\|_{L^4L^{2,-\sigma}(t_0,t_1)}.
\end{align*}
Taking $\sup_{t_0<t<t_1}$ by an elementary argument we obtain
\begin{align}\label{11.12}
\|z\|_{L^\infty(t_0,t_1)}^2+ \Gamma^{1/4}\|z\|_{L^{12}(t_0,t_1)}^{3} \lesssim |z(t_0)|^2+\Gamma^{-3/4} \|g\|_{L^4L^{2,-\sigma}(t_0,t_1)}.
\end{align}
From \eqref{11.10}, \eqref{11.11} and \eqref{11.12} we obtain
\begin{align*}
\|\xi\|_{\st(t_0,t_1)}\lesssim& \Gamma^{-3/4}\|\xi[t_0]\|_{\st(t_0,t_1)}+ |z(t_0)|^2+\Gamma^{-3/4}\(\mathcal{N}_{\frac{1}{2}}^{\frac{3}{2} }\|\xi\|_{\st(t_0,t_1)}^{1/2}+ \|z\|_{L^\infty (t_0,t_1)}^4\) \|\xi\|_{\st(t_0,t_1)}.
\end{align*}
By   $\|z\|_{L^\infty (t_0,t_1)}\le \mathcal{N}_0\ll1$ and the assumption \eqref{eq:preprop10} with sufficiently small $\mu_{1/2}$,  we obtain
\begin{align*}
\|\xi\|_{\st(t_0,t_1)}\lesssim& \Gamma^{-3/4}\|\xi[t_0]\|_{\st(t_0,t_1)}+ |z(t_0)|^2.
\end{align*}
Here, the smallness of $\mathcal N_0$ and $\mu_{1/2}$ depends on $\Gamma$.
However, $\Gamma$ is a fixed constant, see \eqref{10.5}, so there is no harm.
Thus
\begin{align*}
\|z\|_{L^\infty(t_0,t_1)}^2+ \|z\|_{L^{12}(t_0,t_1)}^3\lesssim |z(t_0)|^2+ \|\xi[t_0]\|_{\st(t_0,t_1)}.
\end{align*}
Next we  assume \eqref{eq:preprop20}.
If we take $t>t_0$ sufficiently  close to $t_0$  then \eqref{eq:preprop10} is true. Then   from the above argument we obtain
\begin{align*}
\|\xi\|_{\st(t_0,t)}\lesssim \(\|\xi[t_0]\|_{\st(t_0,t_1)}+ |z(t_0)|^2\)\ll \min \{ \cNh^{-3}, 1 \}.
\end{align*}
Thus, by continuity argument we have \eqref{11.9.0} under  assumption \eqref{eq:preprop20}.

Finally we prove \eqref{11.9.0.2}. Under assumption \eqref{eq:preprop10} we know that we have
\eqref{11.10} which, in turn, implies \eqref{11.9.0.2}.
  If instead we start with assumption \eqref{eq:preprop20} then
  \eqref{11.9.0} implies \eqref{eq:preprop10} which, in turn, by the previous sentence implies
   \eqref{11.9.0.2}.
\end{proof}

\begin{remark} The conclusions of  Lemma \ref{lem:8}  continue to hold with Assumption \eqref{eq:preprop10}
replaced by Assumption \eqref{eq:preprop20} since the latter assumption implies the first one by the argument in Proposition \ref{prop:1}.
\end{remark}

\noindent We need estimates on the  solution of \eqref{2} with $z(t)|z(t)|^2G$ replaced by some more general $F (t)$.
\begin{proposition}\label{prop:3} There exist constants $\mu  _{1/2}>0$ and   $C>0$
 s.t.\ for any   $\xi$ satisfying
\begin{align}\label{11.14.2}
\im \dot \xi = -\Delta \xi + |\xi|^2\xi +F ,
\end{align}
where $F\in L^4L^{6/5}(t_0,t_1)\cap L^2W^{1,6/5}(t_0,t_1)$,
with
\begin{equation*}
 \text{either $\|\xi\|_{\st(t_0,t_1)}\le \mu  _{1/2} \min \{ 1, \cNh^{-3} \}$ or $\|\xi[t_0]\|_{\st(t_0,t_1)}+\|F\|_{L^4L^{6/5}(t_0,t_1)}\le \mu  _{1/2}  \min \{ 1, \cNh^{-3} \}$,}
\end{equation*}
where
\begin{align}\label{11.14.-2}
\cNt:=\|\xi (t_0)\|_{ \Ht } +\|F\|_{ L^2W^{\theta,6/5}(t_0,t_1)} ,
\end{align}
we have
\begin{align}\label{11.15}
1/C\le \frac{\|\xi\|_{\st(t_0,t_1)}+\|F\|_{L^4L^{6/5}(t_0,t_1)}}{\|\xi[t_0]\|_{\st (t_0,t_1)}+\|F\|_{L^4L^{6/5}(t_0,t_1)}}\le C
\end{align}
and
\begin{align}\label{11.16}
\|\xi\|_{\nnorm{t_0}{t_1} }\le C (\sqrt{\mu  _{1/2}}\|\xi[t_0]\|_{\st (t_0,t_1)}+\|F\|_{L^4L^{6/5}(t_0,t_1)}).
\end{align}

\end{proposition}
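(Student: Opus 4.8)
\textbf{Proof plan for Proposition \ref{prop:3}.}
The plan is to mimic the proof of Proposition \ref{prop:1}, with the concrete forcing term $z|z|^2G$ replaced by the abstract $F$, exploiting that $F\in L^4L^{6/5}\cap L^2W^{1,6/5}$ is exactly the regularity that made the $z$-terms manageable there. First I would establish \eqref{11.15}: apply the non-admissible Strichartz estimate (Lemma \ref{lem:2}) to \eqref{11.14.2} with the pairs $(4,6)$ and $(8/5,4)$ for the cubic term $|\xi|^2\xi$, writing $\|\xi\|_{L^8L^4}^3\lesssim \|\xi\|_{L^\infty L^3}^{3/2}\|\xi\|_{\st}^{3/2}\lesssim (\cNh^3\|\xi\|_{\st})^{1/2}\|\xi\|_{\st}$ via $H^{1/2}\hookrightarrow L^3$, while the forcing term is simply bounded by $\|F\|_{L^4L^{6/5}(t_0,t_1)}$. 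Under the smallness hypothesis on $\|\xi\|_{\st}\max\{1,\cNh^3\}$ the cubic contribution is absorbed, giving $\|\xi\|_{\st}\lesssim \|\xi[t_0]\|_{\st}+\|F\|_{L^4L^{6/5}}$; interchanging the roles of $\xi$ and $\xi[t_0]$ (using Lemma \ref{lem:8}-type bounds, whose analogue here needs the $L^\infty H^\theta$ control coming from \eqref{11.14.-2} and conservation-type arguments) yields the reverse inequality and hence \eqref{11.15}. The alternative hypothesis is handled, as in Proposition \ref{prop:1}, by first running this on a short interval $(t_0,t)$ where the $\st$-smallness is automatic and then upgrading by a continuity argument.

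For \eqref{11.16} I would use the Nakanishi seminorm machinery from Section \ref{sec:lin_est}. Since $\xi=\xi[t_0]+\cD(|\xi|^2\xi+F)[t_0]$, subadditivity-type estimates give
\begin{align*}
\|\xi\|_{\nnorm{t_0}{t_1}}\le \|\xi[t_0]\|_{\nnorm{t_0}{t_1}}+\|\cD(|\xi|^2\xi)[t_0]\|_{\nnorm{t_0}{t_1}}+\|\cD F[t_0]\|_{\nnorm{t_0}{t_1}}.
\end{align*}
The first term is bounded by $C\|\xi[t_0]\|_{\stz^1(t_0,t_1)}$ via \eqref{11.8.1}, but I would rather keep only the $\st$-size of $\xi[t_0]$; the natural route is to note $\|\xi[t_0]\|_{\nnorm{t_0}{t_1}}$ can be estimated through Lemma \ref{lem:6} once one re-expresses things, or more simply to split $\xi[t_0]$ off and bound it directly by $\|\xi[t_0]\|_{\st}$ after observing that on $[t_0,\infty)$ the free evolution contributes only via the defining supremum in \eqref{11.7}. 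For the cubic Duhamel term I would apply Lemma \ref{lem:6} with the dual pair to $(4,6)$, i.e.\ $(p_1',q_1')=(8/5,4)$ (which satisfies $\sigma_1=1/2$, $p_1<2$), obtaining $\|\cD(|\xi|^2\xi)[t_0]\|_{\nnorm{t_0}{t_1}}\lesssim \|\xi\|_{L^8L^4}^3\lesssim \cNh^{3/2}\|\xi\|_{\st}^{3/2}$, and then using \eqref{11.15} together with $\|\xi\|_{\st}\le \mu_{1/2}\min\{1,\cNh^{-3}\}$ to see this is $\lesssim \sqrt{\mu_{1/2}}(\|\xi[t_0]\|_{\st}+\|F\|_{L^4L^{6/5}})$. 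For the forcing Duhamel term, Lemma \ref{lem:5} gives $\|\cD F[t_0]\|_{\nnorm{t_0}{t_1}}\le C\|F\|_{\mathrm{Stz}^{*1/2}(t_0,t_1)}\lesssim \|F\|_{L^2W^{1/2,6/5}}\lesssim \|F\|_{L^4L^{6/5}}+\|F\|_{L^2W^{1,6/5}}$ — but since the statement only allows $\|F\|_{L^4L^{6/5}}$ on the right, I would instead invoke Lemma \ref{lem:6} directly with $(p_1',q_1')=(4/3,6)$ (the pair used for the $z^3G$ term in \eqref{11.9.2}), which gives $\|\cD F[t_0]\|_{\nnorm{t_0}{t_1}}\lesssim \|F\|_{L^4L^{6/5}(t_0,t_1)}$ cleanly.

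Assembling, $\|\xi\|_{\nnorm{t_0}{t_1}}\lesssim \|\xi[t_0]\|_{\st}+\sqrt{\mu_{1/2}}(\|\xi[t_0]\|_{\st}+\|F\|_{L^4L^{6/5}})+\|F\|_{L^4L^{6/5}}$, which after absorbing constants and using $\mu_{1/2}$ small is exactly \eqref{11.16} once one checks the leading $\|\xi[t_0]\|_{\st}$ is indeed multiplied by a constant of size $\sqrt{\mu_{1/2}}$ and not $O(1)$. That last point — getting the coefficient $\sqrt{\mu_{1/2}}$ rather than just $C$ in front of $\|\xi[t_0]\|_{\st}$ — is the delicate part: it forces me to be careful that the genuinely free part $\xi[t_0]$ does \emph{not} appear at $O(1)$ in the seminorm, which means I must not bound $\|\xi[t_0]\|_{\nnorm{t_0}{t_1}}$ naively by $\|\xi[t_0]\|_{\stz^1}$ but rather observe that $\xi[t_0][T]_>-\xi[t_0][S]=\cD(\chi_{(-\infty,T]}\cdot 0)[S]=0$ by Lemma \ref{lem:4} (since $\xi[t_0]$ solves the \emph{homogeneous} equation), so that term vanishes identically and only the Duhamel pieces — each already carrying a $\sqrt{\mu_{1/2}}$ or an $\|F\|$ — survive. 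Modulo that observation the proof is routine bookkeeping with Lemmas \ref{lem:5}, \ref{lem:6} and \eqref{11.15}.
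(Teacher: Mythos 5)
Your proposal is correct and follows essentially the same route as the paper: for \eqref{11.15} the paper likewise first runs \eqref{4.1} to get $\|\xi\|_{\stz^\theta}\lesssim \cNt$ (hence $\|\xi\|_{L^\infty L^3}\lesssim\cNh$), then uses the non-admissible Strichartz estimate \eqref{11.9.2} with $|z|^2zG$ replaced by $F$, absorbs the cubic term by smallness, and symmetrizes; for \eqref{11.16} the paper applies \eqref{11.9} directly (which, exactly because of your Lemma~\ref{lem:4} observation that the homogeneous part $\xi[t_0]$ has zero Nakanishi seminorm, automatically reduces $\|\xi\|_{\nnorm{t_0}{t_1}}$ to the Duhamel pieces), then the same $(4,6)/(8/5,4)$ and $(4,6)/(4/3,6)$ non-admissible pairs, and finally \eqref{11.15} to trade $\|\xi\|_{\st}$ for $\|\xi[t_0]\|_{\st}+\|F\|_{L^4L^{6/5}}$ and extract the $\sqrt{\mu_{1/2}}$ factor. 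The only blemish in your write-up is notational: in the two applications of Lemma~\ref{lem:6} you label $(8/5,4)$ and $(4/3,6)$ as $(p_1',q_1')$, but in that lemma these are the pairs $(p_1,q_1)$ (with duals $(8/3,4/3)$ and $(4,6/5)$ being the spaces where $f$ is measured); the computations you perform are nonetheless the correct ones.
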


\begin{proof}  Suppose $\|\xi\|_{\st(t_0,t_1)}\ll     \min \{ \mathcal{N}_{\theta}^{-3}, 1\} $. By \eqref{4.1}  we have
\begin{equation}\label{eq:close1}
\|\xi\|_{\stz ^{\theta}(t_0,t_1)}\lesssim \|\xi(t_0)\|_{ H ^{\theta }}+\|F\|_{ L^2W^{\theta,6/5}(t_0,t_1)}+\|\xi\|_{\st(t_0,t_1)}^2\|\xi\|_{\stz ^{\theta}(t_0,t_1)} .
\end{equation}
Then by $\|\xi\|_{\st(t_0,t_1)}^2<1/2$ we obtain
\begin{equation}\label{eq:close11}
\|\xi\|_{\stz ^{\theta}(t_0,t_1)}\lesssim \mathcal{ N}_{\theta}.
\end{equation}
In particular, under the hypothesis $\|\xi\|_{\st(t_0,t_1)}\ll \min \{ 1, \cNh^{-3} \}$ we have \eqref{eq:close11} for $\theta =1/2$.
From  \eqref{11.9.2}  with     $|z|^2z G$ replaced by $F$  we have
\begin{equation}\label{eq:close2}
\begin{aligned}&
\|\xi\|_{\st(t_0,t_1)}
\lesssim \|\xi[t_0]\|_{\st (t_0,t_1)} +\|\xi\|_{L^{\infty}L^{3}(t_0,t_1)}^{3/2}\|\xi\|_{\st(t_0,t_1)} ^{3/2}+\|F\|_{L^4L^{6/5}(t_0,t_1)}.
\end{aligned}
\end{equation}
By   $\|\xi\|_{\st(t_0,t_1)}\ll \cNh^{-3}$ and \eqref{eq:close11} for $\theta =1/2$  we obtain \eqref{11.15}.

\noindent The proof of \eqref{11.15} under the assumption $\|\xi[t_0]\|_{\st(t_0,t_1)}+\|F\|_{L^4L^{6/5}(t_0,t_1)}\ll \min \{ 1, \cNh^{-3}\}$ follows from the previous case by a  continuity argument similar to that in the proof of Proposition \ref{prop:1}.

\noindent Turning to the proof of  \eqref{11.16}, by \eqref{11.9} and the non admissible Strichartz estimate used in \eqref{11.9.2}, we have
\begin{align*}
\|\xi\|_{\nnorm{t_0}{t_1} }&=\sup_{t_0<s<t<t_1}\| \chi_{(-\infty , t]}
\(|\xi|^2\xi + F \)[s]\|_{\st(t_0,t_2)}\\&
\lesssim \(\cNh^{3/2}\|\xi\|_{\st(t_0,t_1)}^{1/2}\)\|\xi\|_{\st(t_0,t_1)}+\|F \|_{L^4L^{6/5}(t_0,t_1)}.
\end{align*}
\end{proof}

In the course of the proof of Proposition \ref{prop:3} we proved also the following lemma.
\begin{lemma}\label{lem:9}  There exist constants $\mu  _{1/2}>0$ and   $C>0$ s.t.\ for
  $\xi$ satisfying  \eqref{11.14.2} with $F\in L^2W^{1,\frac 6 5}(t_0,t_1)$ and, for $\mathcal{N}_{\theta}$
defined by \eqref{11.14.-2}, s.t.\  $\|\xi\|_{\st(t_0,t_1)}\le \mu  _{1/2} \min \{ 1, \cNh^{-3} \}$,
then
\begin{align}
\|\xi\|_{\stzt(t_0,t_1)}&\le C \mathcal{N}_{\theta} \text{ for all $\theta \in [0,1]$}.\label{11.17}
\end{align}
\end{lemma}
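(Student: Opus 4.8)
The lemma is, up to a routine continuity argument, precisely the inequality \eqref{eq:close11} that appears inside the proof of Proposition~\ref{prop:3}, now recorded for the whole range $\theta\in[0,1]$ rather than only $\theta=1/2$; indeed this is what the remark following the statement alludes to. The plan is to isolate and slightly generalize that step.

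\textbf{Step 1 (Strichartz recursion).} I would represent the solution of \eqref{11.14.2} by Duhamel's formula, $\xi=\xi[t_0]+\cD(|\xi|^2\xi+F)[t_0]$ on $(t_0,t_1)$, and apply the inhomogeneous Strichartz estimates of Lemma~\ref{lem:1} in $\stzt(t_0,t_1)$: the free evolution contributes $C_\theta\|\xi(t_0)\|_{\Ht}$; the forcing term is bounded by $C_\theta\|F\|_{\stz^{*\theta}(t_0,t_1)}\lesssim C_\theta\|F\|_{L^2W^{\theta,6/5}(t_0,t_1)}$ via the embedding $W^{\theta,6/5}\hookrightarrow B^{\theta}_{6/5,2}$ (valid since $6/5\le 2$); and the cubic term is estimated by the fractional Leibniz (Kato--Ponce) rule together with H\"older in $x$ ($\tfrac56=\tfrac16+\tfrac16+\tfrac12$) and in $t$ ($\tfrac12=\tfrac14+\tfrac14$), which gives $\||\xi|^2\xi\|_{\stz^{*\theta}(t_0,t_1)}\lesssim\|\xi\|_{\st(t_0,t_1)}^2\|\xi\|_{\stzt(t_0,t_1)}$. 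Putting these together recovers exactly \eqref{eq:close1}:
\begin{equation*}
\|\xi\|_{\stzt(t_0,t_1)}\le C_\theta\Big(\|\xi(t_0)\|_{\Ht}+\|F\|_{L^2W^{\theta,6/5}(t_0,t_1)}+\|\xi\|_{\st(t_0,t_1)}^2\|\xi\|_{\stzt(t_0,t_1)}\Big).
\end{equation*}
The point relevant here is that this holds for \emph{every} $\theta\in[0,1]$, with $\sup_{\theta\in[0,1]}C_\theta<\infty$: the endpoints $\theta=0,1$ are the classical Strichartz estimates and the intermediate cases follow by interpolation.

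\textbf{Step 2 (absorption).} By hypothesis $\|\xi\|_{\st(t_0,t_1)}\le\mu_{1/2}$. Fixing $\mu_{1/2}$ small enough that $C_\theta\,\mu_{1/2}^2\le\tfrac12$ uniformly in $\theta\in[0,1]$, I would absorb the last term of the displayed inequality into the left-hand side, obtaining $\|\xi\|_{\stzt(t_0,t_1)}\le 2C_\theta\,\mathcal{N}_\theta$ with $\mathcal{N}_\theta$ as in \eqref{11.14.-2}; this is \eqref{11.17} with $C=2\sup_{\theta\in[0,1]}C_\theta$. The absorption is made rigorous exactly as in the proofs of Propositions~\ref{prop:1} and~\ref{prop:3}: one runs the estimate first on short subintervals, where $\|\xi\|_{\stzt}$ is finite, and propagates it to all of $(t_0,t_1)$ by a continuity argument using that $\|\xi\|_{\st}$ stays $\le\mu_{1/2}$ throughout. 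If $\mathcal{N}_\theta=\infty$ the asserted bound is vacuous.

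\textbf{Main obstacle.} There is essentially none: the estimate is a byproduct of the bootstrap already carried out for Proposition~\ref{prop:3}, and the only genuinely analytic ingredient is the fractional Leibniz estimate for non-integer $\theta$, which is classical. The only bookkeeping point is the uniform boundedness of the Strichartz constants $C_\theta$ over the compact parameter interval $[0,1]$, which is standard.
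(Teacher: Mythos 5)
Your proposal is correct and follows exactly the paper's approach: the paper proves Lemma \ref{lem:9} by simply pointing back to \eqref{eq:close1} and \eqref{eq:close11} inside the proof of Proposition \ref{prop:3}, which is precisely the Duhamel/Strichartz estimate plus absorption (and implicit continuity argument) that you carry out.  The only cosmetic difference is that the paper obtains \eqref{eq:close1} by citing the earlier inequality \eqref{4.1} with $|z|^2 z G$ replaced by $F$, whereas you re-derive it directly via Kato--Ponce and H\"older; this is the same estimate.
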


\qed

The following is  (5.14)  Lemma 5.1 in \cite{NakanishiJMSJ}.
\begin{lemma}\label{lem:11}
Fix $T>0$ and suppose $u_n  \stackrel{n\to +\infty}{\rightharpoonup}\varphi\text{ weakly in }H^1$.
Then, we have
\begin{align}\label{12}
\|u_n[0]-\varphi[0]\|_{L^\infty (|t|<T;L^4)}\stackrel{n\to +\infty}{\to }0.
\end{align}

\end{lemma}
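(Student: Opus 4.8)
The plan is to reduce \eqref{12} to a concentration-compactness statement about the free Schr\"odinger flow applied to a weakly convergent sequence, exploiting the local compactness of the embedding $H^1_{rad}\hookrightarrow L^4$ (recall the radial reduction recorded in the Remark after Theorem \ref{thm:main}). First I would set $w_n := u_n - \varphi$, so that $w_n \rightharpoonup 0$ weakly in $H^1$, and $w_n[0](t) = e^{\im t\Delta}w_n$. The claim becomes $\sup_{|t|<T}\|e^{\im t\Delta}w_n\|_{L^4}\to 0$. Since $\{w_n\}$ is bounded in $H^1$, and since $t\mapsto e^{\im t\Delta}w_n$ is bounded in $C([-T,T];H^1)$ uniformly in $n$, the family $\{w_n[0]\}_n$ is a bounded, equicontinuous (in $t$, valued in $H^{-s}$ for a bit of negative regularity, via the equation $\im\partial_t w_n[0] = -\Delta w_n[0]$) family of $H^1$-valued curves.

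The key step is to upgrade weak convergence at $t=0$ to \emph{uniform-in-}$t$ convergence of $e^{\im t\Delta}w_n$ to $0$ weakly in $H^1$ on $[-T,T]$. This follows because for fixed $\psi\in H^{-1}$, the map $t\mapsto \langle e^{\im t\Delta}w_n,\psi\rangle = \langle w_n, e^{-\im t\Delta}\psi\rangle$ is equicontinuous in $t$ (the curve $t\mapsto e^{-\im t\Delta}\psi$ is norm-continuous in $H^{-1}$, hence uniformly continuous on the compact interval $[-T,T]$, and $\|w_n\|_{H^1}$ is bounded) and tends to $0$ pointwise in $t$ for each fixed $t$ (since $e^{-\im t\Delta}\psi$ is a fixed element of $H^{-1}$ and $w_n\rightharpoonup 0$); by an Arzel\`a--Ascoli / uniform-boundedness argument this convergence is uniform on $[-T,T]$. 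Thus $e^{\im t\Delta}w_n \rightharpoonup 0$ in $H^1$, uniformly for $|t|\le T$. Then I would invoke the compactness of $H^1_{rad}\hookrightarrow L^4(\R^3)$ — noting $e^{\im t\Delta}$ preserves radial symmetry so $e^{\im t\Delta}w_n$ stays radial — which turns this uniform weak convergence in $H^1$ into uniform \emph{strong} convergence in $L^4$: if not, there would be $t_n\in[-T,T]$ with $\|e^{\im t_n\Delta}w_n\|_{L^4}\ge \varepsilon_0$; passing to a subsequence $t_n\to t_*$, one combines the equicontinuity of the flow in $L^4$ (from the $H^1$ bound and $L^4$-Strichartz/continuity) with weak-to-strong compactness at $t=t_*$ to get a contradiction.

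The main obstacle I anticipate is making the ``uniform in $t$'' part fully rigorous: the compact embedding $H^1_{rad}\hookrightarrow L^4$ gives, for each \emph{fixed} $t$, that $\|e^{\im t\Delta}w_n\|_{L^4}\to 0$, but one needs this uniformly over the compact time interval. The clean way around this is the subsequence-and-limit argument just sketched: assume the sup is not going to zero, extract a near-maximizing time sequence converging to some $t_*\in[-T,T]$, use $\|e^{\im t_n\Delta}w_n - e^{\im t_*\Delta}w_n\|_{L^4}\lesssim \|e^{\im t_n\Delta}w_n - e^{\im t_*\Delta}w_n\|_{H^{1/2}}\to 0$ uniformly in $n$ (continuity of the free flow on $H^{1/2}$, with $\|w_n\|_{H^{1/2}}$ bounded, plus $H^{1/2}\hookrightarrow L^3 \hookrightarrow$ interpolation to $L^4$ against the $H^1$ bound — or directly $H^{1/2+}\hookrightarrow L^4$ is false in $3$D, so better to interpolate $L^4$ between $L^2$ and $L^6\supset \dot H^1$: $\|f\|_{L^4}\lesssim \|f\|_{L^2}^{1/4}\|f\|_{L^6}^{3/4}\lesssim \|f\|_{L^2}^{1/4}\|f\|_{H^1}^{3/4}$, and time-continuity in $L^2$ and $H^1$ is immediate), and then apply compactness of $H^1_{rad}\hookrightarrow L^4$ at the single time $t_*$ to conclude $\|e^{\im t_*\Delta}w_n\|_{L^4}\to 0$, contradicting the lower bound. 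This is exactly the content of Lemma 5.1 in \cite{NakanishiJMSJ}, so I would also simply cite that reference for the detailed bookkeeping.
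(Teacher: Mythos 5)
Your argument is correct and is, in substance, the expected proof behind the cited result: the paper itself gives no proof of this lemma, merely pointing to Lemma~5.1, formula~(5.14) of \cite{NakanishiJMSJ}, and your contradiction/subsequence argument (extract near-maximizing times $t_n\to t_*$, use equicontinuity of $t\mapsto e^{\im t\Delta}w_n$ in $L^4$ to transfer the lower bound to the fixed time $t_*$, then invoke the compact embedding $H^1_{rad}\hookrightarrow L^4$ together with $w_n\rightharpoonup 0$) is precisely the kind of compactness argument that lemma rests on. One small point worth making explicit rather than calling ``immediate'': what you need is \emph{equi}continuity of $n\mapsto\bigl(t\mapsto e^{\im t\Delta}w_n\bigr)$ in $L^2$, not merely continuity for each fixed $n$, and this is not automatic for a bounded sequence in $L^2$; it does hold here because the uniform $H^1$ bound lets you split into low frequencies (where $|e^{-\im h|\xi|^2}-1|\lesssim |h|R^2$) and high frequencies (where $\|\chi_{|\xi|>R}\widehat{w}_n\|_{L^2}\lesssim R^{-1}\|w_n\|_{H^1}$ uniformly in $n$), and that is exactly the step where the $H^1$ bound, as opposed to a mere $L^2$ bound, is used. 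With that said explicitly, your interpolation $\|f\|_{L^4}\lesssim\|f\|_{L^2}^{1/4}\|f\|_{H^1}^{3/4}$ then gives the equicontinuity in $L^4$ and the proof closes as you describe.
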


\section{Nonlinear perturbation}\label{sec:np}

We first recall that in the proof of Theorem \ref{thm:1}  we have shown that if $\|\xi\|_{\stzo(0,\infty)}<\infty$  then $\xi$ scatters forward.
In the following,  for the $\cNt = \|\xi (t_0)\|_{  \Ht }+ |z (t_0) |$ of \eqref{eq:preprop11}
  and under the assumption $\mathcal{N}_0\ll1$,  we show that $\|\xi\|_{\st(t_0,\infty)}<\infty$ is a sufficient  condition for forward scattering.

\begin{lemma}\label{lem:12}
Let $( \xi ,z)$ be the solution of \eqref{1}--\eqref{2} with $(\xi(t_0),z(t_0))\in H^1\times \C$.
Then, we have
\begin{align*}
( \xi ,z)\text{ scatters forward }\ \Longleftrightarrow \ \|\xi\|_{\mathfrak{st}(t_0,\infty)}<\infty.
\end{align*}
A similar statement holds for backward scattering.
\end{lemma}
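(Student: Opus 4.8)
The plan is to treat the two implications separately, in both cases reducing matters to the nonlinear perturbation estimates of Section~\ref{sec:np}. First I would record the a priori information used throughout: since $\mathbb M$ is conserved and $|z|^2\le\mathbb M$, $\|\xi\|_{L^2}\le\sqrt{2\mathbb M}$ at every time, so in our setting $\cN_0\lesssim\sqrt{\mathbb M}\ll 1$; and by Lemma~\ref{lem:-8} (which rests only on conservation of $\mathbb E$ and $\mathbb M$) the quantity $\|\xi\|_{L^\infty H^1(\R)}+\|z\|_{L^\infty(\R)}$, hence $\cNh=\|\xi(s)\|_{\Hh}+|z(s)|$ for every time $s$, is bounded by a fixed constant $M$ depending only on $(\xi(t_0),z(t_0))$. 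Consequently $\max\{1,\cNh^3\}\le\max\{1,M^3\}$ at all times, so the smallness conditions \eqref{eq:preprop10} and \eqref{eq:preprop20} behind Lemma~\ref{lem:8} and Proposition~\ref{prop:1} can always be arranged by making the relevant $\st$-type quantity small.

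For the implication ``$\Leftarrow$'' I would argue as follows. Assuming $\|\xi\|_{\st(t_0,\infty)}<\infty$, the function $t\mapsto\|\xi(t)\|_{L^6}^4$ is integrable on $(t_0,\infty)$, so one can split $(t_0,\infty)$ into finitely many consecutive intervals $I_1,\dots,I_N$ on each of which $\|\xi\|_{\st(I_k)}\max\{1,M^3\}\le\mu_{1/2}$, which is precisely hypothesis \eqref{eq:preprop10}; since also $\cN_0\le\mu_0$, Lemma~\ref{lem:8} applies on each $I_k$ and gives $\|\xi\|_{\stzo(I_k)}\lesssim M$. Because there are only finitely many pieces, summing yields $\|\xi\|_{\stzo(t_0,\infty)}<\infty$, and then Remark~\ref{rem:1} — whose only hypothesis is finiteness of the $\stzo$ norm on the half-line, up to a harmless time translation — gives forward scattering.

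For the implication ``$\Rightarrow$'' I would start from the scattering data: there is $\varphi\in H^1$ with $\|\xi(t)-e^{\im t\Delta}\varphi\|_{H^1}+|z(t)|\to 0$. Writing $\xi[T]-e^{\im t\Delta}\varphi=e^{\im(t-T)\Delta}\bigl(\xi(T)-e^{\im T\Delta}\varphi\bigr)$ and using the Strichartz bound of Lemma~\ref{lem:1} together with the embedding $\stzh\hookrightarrow\st$, one gets $\|\xi[T]-e^{\im t\Delta}\varphi\|_{\st(T,\infty)}\lesssim\|\xi(T)-e^{\im T\Delta}\varphi\|_{\Hh}\to 0$; since moreover $e^{\im t\Delta}\varphi\in\st(\R)$ forces $\|e^{\im t\Delta}\varphi\|_{\st(T,\infty)}\to 0$ as $T\to+\infty$, we conclude $\|\xi[T]\|_{\st(T,\infty)}+|z(T)|^2\to 0$. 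Fixing $T$ large enough that $\bigl(\|\xi[T]\|_{\st(T,\infty)}+|z(T)|^2\bigr)\max\{1,M^3\}\le\mu_{1/2}$, Proposition~\ref{prop:1} under assumption \eqref{eq:preprop20} bounds $\|\xi\|_{\st(T,t_1)}$ by $C\bigl(\|\xi[T]\|_{\st(T,\infty)}+|z(T)|^2\bigr)$ uniformly in $t_1>T$, so letting $t_1\to\infty$ gives $\|\xi\|_{\st(T,\infty)}<\infty$; adding the contribution of the bounded interval $(t_0,T)$, on which $\xi\in C([t_0,T];H^1)\hookrightarrow L^\infty L^6$ makes the $\st$ norm trivially finite, yields $\|\xi\|_{\st(t_0,\infty)}<\infty$. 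The backward statement is identical after reversing time.

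The bulk of the work is already packaged in Section~\ref{sec:np}, so the only genuine point of care is that the smallness thresholds in Lemma~\ref{lem:8} and Proposition~\ref{prop:1} depend on $\cNh$, i.e.\ on the possibly large $H^1$ size of the data; this is why the a priori global $H^1$ bound of Lemma~\ref{lem:-8} must be invoked at the outset, as without it one could not control how fine the partition in the ``$\Leftarrow$'' direction must be, nor how large the cutoff time $T$ must be taken in the ``$\Rightarrow$'' direction. I expect this bookkeeping with the (non-small) $H^1$ size to be the main — and essentially only — obstacle.
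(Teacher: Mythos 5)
Your proof is correct and follows essentially the same route as the paper: reduce both implications to the small-data machinery of Section~\ref{sec:np} (Lemma~\ref{lem:8}, Proposition~\ref{prop:1}, Remark~\ref{rem:1}) using the a priori global $H^1$ bound of Lemma~\ref{lem:-8} to control $\cNh$. The only cosmetic difference is that in the ``$\Leftarrow$'' direction the paper invokes Lemma~\ref{lem:8} once on $(T,\infty)$ for a single $T$ large enough that the $\st$-tail is below threshold, rather than partitioning $(t_0,\infty)$ into finitely many small-$\st$ intervals as you do; both work, and your ``$\Rightarrow$'' step writing $\xi[T]-\varphi[0]$ is in fact a slightly cleaner rendering of the same estimate the paper records in \eqref{13}.
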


\begin{proof}
We first prove $\Leftarrow$.
For $T\gg 1 $  we have $\|\xi\|_{\st(T,\infty)}  \stackrel{T \to +\infty}{\rightarrow}0 $.
Notice that by Lemma \ref{lem:-8} we have $\|\xi\|_{L^\infty H^1(t_0,\infty)}\lesssim \cN_1 ^2$.
Thus for $T $ sufficiently large  we can apply  Lemma \ref{lem:8} and conclude that we have $\|\xi\|_{\stz^1(T,\infty)} <\infty$.
Therefore  by Remark \ref{rem:1} we have scattering forward in time.

\noindent
We next show $\Rightarrow$.
Since $\xi$ scatters, by Strichartz estimates, Lemma \ref{lem:1}, we have
\begin{align}
\|\xi[T]\|_{\st(T,\infty)}&\leq \|\xi[T]-\xi[t_0]\|_{\st(T,\infty)}+\|\xi [t_0]\|_{\st(T,\infty)}\label{13}\\&\lesssim \|\xi(T)-\xi[t_0](T)\|_{H^1}+\|\xi[t_0]\|_{\st(T,\infty)}\to 0\ (T\to \infty).\nonumber
\end{align}
Thus for  $T\gg 1 $  we have
$\(\|\xi[T]\|_{\st(T,\infty)} + |z(T)|^2\)\max \{ 1 , \| \xi(T) \| _{H ^{1/2}}  ^{ 3} \}\ll1 $,
where $\| \xi(T) \| _{H ^{1/2}}$ is uniformly bounded in $T$ by Lemma \ref{lem:-8}.
Then  $\|\xi\|_{\st(T,\infty)}<\infty$ by Proposition \ref{prop:1}.
\end{proof}

\begin{lemma}\label{lem:13}
Let $( \xi ,z)$ be a  solution of \eqref{1}--\eqref{2} with $(\xi(t_0),z(t_0))\in H^1\times \C$. Then there exists a $\mu _0 >0$ s.t.\
  if $\mathcal N_0\le \mu _0$
and if   $(\xi,z)$ scatters forward then
\begin{align*}
\|\xi\|_{\nnorm{T}{\infty}}
+\|\xi\|_{L^2 W^{1,6}(T,\infty)}%+\|\xi[T]\|_{L^2 H^1_6(T,\infty)}
\to 0   \text{ as }T\to +\infty.
\end{align*}
\end{lemma}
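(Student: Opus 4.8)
The plan is to deduce everything from the scattering hypothesis together with the estimates already available, chiefly Lemma \ref{lem:12}, Proposition \ref{prop:3} (applied with $F=z|z|^2G$), Lemma \ref{lem:5}/Lemma \ref{lem:6} (the $\nnorm{\cdot}{\cdot}$-Strichartz estimates), and Lemma \ref{lem:9}. Since $(\xi,z)$ scatters forward, Lemma \ref{lem:12} gives $\|\xi\|_{\st(t_0,\infty)}<\infty$, hence $\|\xi\|_{\st(T,\infty)}\to0$ as $T\to+\infty$; moreover by Lemma \ref{lem:-8} we have $\|\xi\|_{L^\infty H^1(t_0,\infty)}\lesssim\cN_1^2$, so $\cNh$ is uniformly bounded in $T$. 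Consequently for $T$ large we are in the smallness regime of Proposition \ref{prop:3} (and of Lemma \ref{lem:8}/Proposition \ref{prop:1}) on the interval $(T,\infty)$.

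First I would control the forcing term $F:=z|z|^2G$ in the relevant norms on $(T,\infty)$. By Proposition \ref{prop:1} (whose hypotheses hold for $T\gg1$ as just noted) we get $\|z\|_{L^{12}(T,\infty)}^3\lesssim\|\xi[T]\|_{\st(T,\infty)}+|z(T)|^2$, and by the argument in \eqref{13} the right side tends to $0$ as $T\to+\infty$; hence $\|z\|_{L^{12}(T,\infty)}\to0$. Since $G\in\mathcal S$ is fixed, this gives $\|F\|_{L^4L^{6/5}(T,\infty)}\lesssim\|z\|_{L^{12}(T,\infty)}^3\to0$ and likewise $\|F\|_{L^2W^{1,6/5}(T,\infty)}\lesssim\|z\|_{L^6(T,\infty)}^3\to0$ using $\|z\|_{L^6(T,\infty)}\to0$ from Lemma \ref{lem:8}.

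Next, for the Nakanishi seminorm: applying \eqref{11.16} of Proposition \ref{prop:3} on $(T,\infty)$ (legitimate since $\|\xi\|_{\st(T,\infty)}\ll\min\{1,\cNh^{-3}\}$ for $T$ large) yields
\begin{align*}
\|\xi\|_{\nnorm{T}{\infty}}\le C\bigl(\sqrt{\mu_{1/2}}\,\|\xi[T]\|_{\st(T,\infty)}+\|F\|_{L^4L^{6/5}(T,\infty)}\bigr),
\end{align*}
and both terms on the right go to $0$ as $T\to+\infty$ by the previous paragraph and \eqref{13}. For the $L^2W^{1,6}$ piece, I would use Lemma \ref{lem:9} (or equivalently \eqref{eq:close1}) with $\theta=1$: since $\|\xi\|_{\st(T,\infty)}$ is small, $\|\xi\|_{\stz^1(T,\infty)}\lesssim\mathcal N_1=\|\xi(T)\|_{H^1}+\|F\|_{L^2W^{1,6/5}(T,\infty)}$, which is merely bounded, not small. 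To get decay one instead re-runs the Strichartz estimate \eqref{4.1} on $(T,\infty)$ for $\theta=1$ keeping the inhomogeneous terms explicit: $\|\xi\|_{\stz^1(T,\infty)}\lesssim\|\xi(T)\|_{H^1}+\|\xi\|_{\st(T,\infty)}^2\|\xi\|_{L^\infty H^1(T,\infty)}+\|F\|_{L^2W^{1,6/5}(T,\infty)}$; this is still only bounded. The cleaner route, and the one I would adopt, is to exploit that $L^2W^{1,6}$ is controlled by the tail of the Duhamel integral: write $\xi=\xi[T']+\cD(|\xi|^2\xi+F)[T']$ for $T'<T$ and send $T'\to-\infty$-style — more precisely, since $\xi\in\stz^1(t_0,\infty)$ (finite, by Lemma \ref{lem:12} and the $\Rightarrow$ argument) and $B^1_{6,2}\hookrightarrow W^{1,6}$, dominated convergence forces $\|\xi\|_{L^2W^{1,6}(T,\infty)}\to0$ as $T\to\infty$ simply because it is the tail of a convergent integral $\int_{t_0}^\infty\|\xi(t)\|_{W^{1,6}}^2\,dt<\infty$.

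The main obstacle is precisely this last point: securing $\|\xi\|_{L^2W^{1,6}(T,\infty)}\to0$ requires first knowing $\|\xi\|_{\stz^1(t_0,\infty)}<\infty$ globally (not just on a far tail), which in turn needs the full-interval bootstrap — one must patch the large-$T$ estimate from Lemma \ref{lem:8}/Proposition \ref{prop:1} together with well-posedness on the compact piece $(t_0,T)$ where $\|\xi\|_{\st}$ need not be small. Once global finiteness of the $\stz^1$ norm is in hand, the decay of every tail norm in the statement, including $\nnorm{T}{\infty}$ via \eqref{11.8.1}, follows from absolute continuity of the integral; but verifying that the hypotheses of Proposition \ref{prop:1} genuinely propagate from a neighborhood of $+\infty$ back across $(t_0,T)$ — using conservation of $\mathbb M$ to keep $\cN_0$ small and Lemma \ref{lem:-8} to keep $\cN_1$ bounded — is the step that needs care.
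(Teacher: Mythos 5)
Your route is sound, but it differs from the paper's in one place, and your closing paragraph manufactures an obstacle that isn't there.

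For the seminorm bound the paper does not invoke Proposition \ref{prop:3}; it unpacks the definition directly,
\begin{align*}
\|\xi\|_{\nnorm{T}{\infty}}\le \|\xi\|_{\st(T,\infty)}+2\|\xi[S]\|_{\st(T,\infty)}+\sup_{T<T_1}\|\xi[T_1]\|_{\st(T_1,\infty)},
\end{align*}
and then controls the free-evolution terms by \eqref{11.9.0.2} of Proposition \ref{prop:1} together with $\|z\|_{L^{12}(T,\infty)}\to0$. Your alternative — apply \eqref{11.16} of Proposition \ref{prop:3} on $(T,\infty)$ with $F=z|z|^2G$, then send $\|\xi[T]\|_{\st(T,\infty)}\to0$ via \eqref{13} and $\|F\|_{L^4L^{6/5}(T,\infty)}\lesssim\|z\|_{L^{12}(T,\infty)}^3\to0$ — is legitimate: the smallness hypothesis of Proposition \ref{prop:3} holds for $T$ large because $\|\xi\|_{\st(T,\infty)}\to0$ while $\cNh$ stays bounded (Lemma \ref{lem:-8} for $\|\xi(T)\|_{H^{1/2}}$, and $\|F\|_{L^2W^{1/2,6/5}(T,\infty)}\lesssim\|z\|_{L^6(T,\infty)}^3\lesssim\cN_0$ by Lemma \ref{lem:8}). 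Both routes cost the same amount of work; the paper's is arguably more elementary since it never touches the $F$-dependent norms in $\cNt$, while yours leans on the already-packaged perturbation estimate.

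The closing paragraph is where you go wrong. You claim the ``main obstacle'' is that $\|\xi\|_{L^2W^{1,6}(T,\infty)}\to0$ ``requires first knowing $\|\xi\|_{\stz^1(t_0,\infty)}<\infty$ globally (not just on a far tail),'' and that one must ``patch'' estimates across $(t_0,T)$. This is not needed. The conclusion is a statement about the limit $T\to+\infty$, so it suffices to know $\|\xi\|_{\stz^1(T_0,\infty)}<\infty$ for \emph{one} sufficiently large $T_0$; then for $T\ge T_0$ the quantity $\|\xi\|_{L^2W^{1,6}(T,\infty)}$ is the tail of a finite integral (using $B^1_{6,2}\hookrightarrow W^{1,6}$) and vanishes automatically. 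Lemma \ref{lem:12} gives $\|\xi\|_{\st(T_0,\infty)}\ll\min\{1,\cNh^{-3}\}$ for $T_0$ large, $\cN_0\le\mu_0$ persists by conservation of $\mathbb M$, and $\|\xi(T_0)\|_{H^1}$ is bounded by Lemma \ref{lem:-8}; Lemma \ref{lem:8} then yields $\|\xi\|_{\stz^1(T_0,\infty)}\lesssim\cN_1<\infty$. No information on $(t_0,T_0)$ is required, which is exactly why the paper dispatches this part in one line.
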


\begin{proof}
By Lemma \ref{lem:12} combined with Lemma \ref{lem:8}, it is easy to conclude  that $\|\xi\|_{L^2 W^{1,6}(T,\infty)}\stackrel{T\to +\infty}{\rightarrow}0$.
 We have
\begin{align*}
\|\xi\|_{\nnorm{T}{\infty}}&=\sup_{T<S<T_1<\infty} \|\xi[T_1]_>-\xi[S]\|_{\st(T,\infty)}\\&\leq \sup_{T<S<T_1<\infty}\(\|\xi-\xi[S]\|_{\st(T,T_1)}+\|\xi[T_1]-\xi[S]\|_{\st(T_1,\infty)}\)\\&
\leq \|\xi\|_{\st(T,\infty)}+2\|\xi[S]\|_{\st(T,\infty)}+\sup_{T<T_1}\|\xi[T_1]\|_{\st(T_1,\infty)}.
\end{align*}
Since $(\xi,z)$ scatters, we have $\|\xi\|_{\st(0,\infty)}<\infty$.
Thus, we have $\|\xi\|_{\st(T,\infty)}\stackrel{T\to +\infty}{\rightarrow}0$.  By Proposition \ref{prop:1}, we have $\|z\|_{L^{12}(0,\infty)}<\infty$ and therefore $\|z\|_{L^{12}(T,\infty)}\stackrel{T\to +\infty}{\rightarrow}0$.
Then, by \eqref{11.9.0.2} we have $\|\xi[S]\|_{\st(T,\infty)}\stackrel{T\to +\infty}{\rightarrow}0$ for any $S>T$ uniformly. Hence we conclude
$\|\xi\|_{\nnorm{T}{\infty}}\stackrel{T\to +\infty}{\rightarrow}0$.
\end{proof}

\begin{lemma}\label{lem:scatt1}
Let $( \xi ,z)$ be like in Lemmas \ref{lem:12}--\ref{lem:13} satisfying also the conclusions therein.
Then
\begin{equation}\label{eq:lem:scatt1}
 \lim _{t_0\to +\infty}\| \xi - \xi [t_0] \| _{\stz ^1 (t_0, \infty )}=0.
\end{equation}
\end{lemma}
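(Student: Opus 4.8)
The plan is to run a Strichartz/Duhamel bootstrap on the difference $\xi-\xi[t_0]$, using the already-established smallness of the $\st$-norm on tails together with the decay of the forcing term $z|z|^2G$. First I would recall from Lemma \ref{lem:12} that forward scattering is equivalent to $\|\xi\|_{\st(t_0,\infty)}<\infty$, and from the proof of Lemma \ref{lem:13} (and Proposition \ref{prop:1}) that $\|\xi\|_{\st(t_0,\infty)}\to 0$ and $\|z\|_{L^{12}(t_0,\infty)}\to 0$ as $t_0\to+\infty$; moreover $\|\xi\|_{L^\infty H^1(\R)}\lesssim \cN_1^2$ is bounded by Lemma \ref{lem:-8}, so in particular $\|\xi(t_0)\|_{H^{1/2}}$ stays bounded. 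Thus for $t_0$ large the smallness hypotheses of Lemma \ref{lem:8} and Proposition \ref{prop:1} hold on $(t_0,\infty)$, and we may freely use $\|\xi\|_{\stz^1(t_0,\infty)}\lesssim \cN_1+\cN_1^2$ together with $\|z\|_{L^6(t_0,\infty)}\to 0$.

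Next I would write the Duhamel formula for the difference. Since $\xi = \xi[t_0] + \cD\big(|\xi|^2\xi + z|z|^2 G\big)[t_0]$ on $(t_0,\infty)$, applying the Strichartz estimate (Lemma \ref{lem:1} with $\theta=1$) gives
\begin{align*}
\|\xi-\xi[t_0]\|_{\stz^1(t_0,\infty)} \lesssim \big\||\xi|^2\xi\big\|_{\stz^{*1}(t_0,\infty)} + \big\|z|z|^2 G\big\|_{\stz^{*1}(t_0,\infty)}.
\end{align*}
For the nonlinear term I would use the non-admissible Strichartz bound as in \eqref{11.9.2}, controlling $\||\xi|^2\xi\|$ by $\|\xi\|_{L^\infty L^3}^{3/2}\|\xi\|_{\st}^{3/2}$ at the level of $L^2 B^1_{6/5,2}$ — more precisely, distributing one derivative and using $H^{1/2}\hookrightarrow L^3$ — so that this piece is bounded by $\cN_1^{3/2}\,\|\xi\|_{\st(t_0,\infty)}^{3/2}\|\xi\|_{\stz^1(t_0,\infty)}^{?}$; the point is that every summand carries a positive power of $\|\xi\|_{\st(t_0,\infty)}$, which tends to $0$. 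For the forcing term $z|z|^2 G$, since $G\in H^1$ is fixed one has $\|z|z|^2 G\|_{\stz^{*1}(t_0,\infty)} \lesssim \|z\|_{L^6(t_0,\infty)}^3\|G\|_{H^1}$ (choosing the $L^2 B^1_{6/5,2}$ endpoint and putting $|z|^3\in L^2_t$ since $z\in L^6_t$), which also tends to $0$ by Proposition \ref{prop:1}. Combining, $\|\xi-\xi[t_0]\|_{\stz^1(t_0,\infty)}\to 0$ as $t_0\to+\infty$.

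The main obstacle I anticipate is the bookkeeping of derivatives in the $\st^{*1}=L^1H^1 + L^2 B^1_{6/5,2}$ norm of the cubic term: one must make sure that after placing a derivative on one factor of $|\xi|^2\xi$, the two remaining factors can still be absorbed into the small $\st=L^4L^6$ norm (using $L^\infty L^3$ or an interpolated Strichartz space for the differentiated factor, exactly as in the passage $L^8L^4 \hookrightarrow L^\infty L^3 \cap L^4 L^6$ used around \eqref{11.9.2}), so that the estimate closes with a genuinely small constant rather than merely a bounded one. Once that is arranged, no continuity argument is even needed: the right-hand side is directly $o(1)$ as $t_0\to+\infty$ because each term has a factor $\|\xi\|_{\st(t_0,\infty)}$ or $\|z\|_{L^6(t_0,\infty)}$ that vanishes, while all other factors are uniformly bounded by Lemma \ref{lem:-8} and Lemma \ref{lem:8}. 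This yields \eqref{eq:lem:scatt1}.
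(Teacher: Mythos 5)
Your proposal is correct and uses essentially the same machinery as the paper: write $\xi-\xi[t_0]=\cD\bigl(|\xi|^2\xi+z|z|^2G\bigr)[t_0]$, apply the Strichartz estimate \eqref{4.1} on the tail, and observe that every term on the right carries a factor (either $\|\xi\|_{\st(t_0,\infty)}$, $\|z\|_{L^\infty(t_0,\infty)}$, or a power of $z$) that vanishes as $t_0\to\infty$. The only real difference is cosmetic: the paper re-invokes inequality \eqref{11} to convert $\|z\|_{L^6(t_0,\infty)}^3$ into $|z(t_0)|+(\text{small})\cdot\|\xi\|_{\stz^1(t_0,\infty)}$ (i.e.\ it ``proceeds like \eqref{11.0}''), whereas you feed $\|z\|_{L^6(t_0,\infty)}^3\to 0$ directly, which is already available once $z\in L^6(0,\infty)$ is known; both close identically and, as you note, no continuity/bootstrap step is needed. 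One small imprecision: the $\stz^{*1}$-level bound for the cubic term is $\|\xi\|_{\st(t_0,\infty)}^2\,\|\xi\|_{L^\infty H^1(t_0,\infty)}$ via the admissible estimate \eqref{4.1}, not the $\|\xi\|_{L^\infty L^3}^{3/2}\|\xi\|_{\st}^{3/2}$ factorization coming from the non-admissible Strichartz in \eqref{11.9.2}, which belongs to the $\st$-level estimate; this does not affect the conclusion since either way a positive power of $\|\xi\|_{\st(t_0,\infty)}$ drives the term to zero.
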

\begin{proof}
  Proceeding like for the proof of inequality \eqref{11.0} in Theorem \ref{thm:1}
we obtain for $t_1\to +\infty$
\begin{equation}  \begin{aligned}    \|\xi- \xi [t_0] \|_{\stz ^{1} (t_0,\infty)}\lesssim   |z(t_0)| +   ( \|z\|_{L^\infty(t_0,\infty)}^4 +\|\xi\|_{\st(t_0,\infty)}^2 )\| \xi\|_{\stz ^{1} (t_0,\infty)}.
\end{aligned} \nonumber
\end{equation}
and since $|z(t_0)|+ \|\xi\|_{\st(t_0,\infty)}\stackrel{t_0\to +\infty}{\rightarrow}0$ we obtain \eqref{eq:lem:scatt1}.
\end{proof}

We now prepare the long time perturbation estimate.
The following lemmas \ref{lem:17} and \ref{lem:16} correspond  to lemmas 6.3 and 6.4 of Nakanishi \cite{NakanishiJMSJ}.
Lemmas \ref{lem:17} and \ref{lem:16} are used in Claim 7.7 and 7.6 in the proof of Proposition \ref{prop:17} respectively.

\begin{lemma}\label{lem:17} There exist fixed constants $\mu _0>0$, $\mu  _{\frac{1}{2}}>0$ and $C>0$ s.t.\
for any interval $(t_0,t_1)$ and for any solutions  of
\begin{align*}
\im \dot \xi  _j  = -\Delta \xi_j +|\xi_j|^2\xi_j +F_j  \text{  in  $(t_0,t_1)$}
\end{align*}
with $F_j\in L^4L^{6/5}(t_0,t_1)\cap L^2W^{1, \frac{6}{5}}(t_0,t_1)$ for $j=1,2$,
  for $\cN_0\le \mu _0$, for
\begin{align*}
\max_{j=1,2}\(\|\xi_j[t_0]\|_{\st(t_0,t_1)}+\|F_j\|_{L^4L^{6/5}(t_0,t_1)}\)\leq \tilde \delta,\quad \|(\xi_1-\xi_2)[t_0]\|_{\st(t_0,t_1)}+\|F_1-F_2\|_{L^4L^{24/23}(t_0,t_1)}\leq \delta ,
\end{align*}
where
\begin{align*}
\cNt:=\max_{j=1,2}\(\|\xi_j(t_0 )\|_{  \Ht}+\|F_j\|_{L^2 W^{\theta, \frac{6}{5}}(t_0,t_1)}\) ,
\end{align*}
and finally for  $0<\delta\leq \tilde\delta\le \mu  _{\frac{1}{2}} \min(\cNh^{-3},1)$,     we have
\begin{align*}
\|\xi_1-\xi_2\|_{\nnorm{t_0}{t_1} }\le C \tilde \delta^{8/7}\delta^{1/7}\cN_1^{6/7}.
\end{align*}
\end{lemma}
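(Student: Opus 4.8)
The plan is to follow the scheme of Lemma~6.3 of \cite{NakanishiJMSJ}. Set $w:=\xi_1-\xi_2$; writing the cubic difference in linearized form, $w$ solves the \emph{linear} equation
\begin{equation*}
 \im \dot w = -\Delta w + Vw + (F_1-F_2), \qquad |V|\lesssim |\xi_1|^2+|\xi_2|^2 .
\end{equation*}
Since a free solution has vanishing Nakanishi seminorm, Lemma~\ref{lem:4} gives $\|w\|_{\nnorm{t_0}{t_1}}=\sup_{t_0<S<T<t_1}\|\cD(\chi_{(-\infty,T]}(Vw+F_1-F_2))[S]\|_{\st(t_0,\infty)}$. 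Each pair $(\xi_j,F_j)$ satisfies the hypotheses of Proposition~\ref{prop:3} and Lemma~\ref{lem:9}, so $\|\xi_j\|_{\st(t_0,t_1)}\lesssim\tilde\delta$, $\|\xi_j\|_{\stzh(t_0,t_1)}\lesssim\cNh$ and $\|\xi_j\|_{\stzo(t_0,t_1)}\lesssim\cN_1$; in particular $\|w\|_{\st(t_0,t_1)}\lesssim\tilde\delta$ and, by \eqref{11.8.1}, one has the crude bound $\|w\|_{\nnorm{t_0}{t_1}}\lesssim\cN_1$, which will be one endpoint of an interpolation.

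Next I would partition $(t_0,t_1)$ into $N$ consecutive intervals $J_k=(a_{k-1},a_k)$ (with $a_0=t_0$, $a_N=t_1$) chosen greedily so that $\|\xi_1\|_{\st(J_k)}+\|\xi_2\|_{\st(J_k)}\le\eta$ for a small $\eta=\eta(\cNh)$; then $N\eta^{4}\lesssim\tilde\delta^{4}$, i.e.\ $N\lesssim(\tilde\delta/\eta)^{4}$. On each $J_k$ I would estimate the potential term by non-admissible Strichartz (Lemma~\ref{lem:2}, resp.\ Lemma~\ref{lem:6}) with the dual pair having $(p_1,q_1)=(8/5,4)$, using $\|V\|_{L^{4}L^{2}(J_k)}\lesssim\|\xi_1\|_{L^{8}L^{4}(J_k)}^{2}+\|\xi_2\|_{L^{8}L^{4}(J_k)}^{2}$, the interpolation $\|v\|_{L^{8}L^{4}}\lesssim\|v\|_{L^{\infty}L^{3}}^{1/2}\|v\|_{\st}^{1/2}$, the embedding $\Hh\hookrightarrow L^{3}$, and \eqref{11.8} to replace $\|w\|_{\st(J_k)}$ by $\|w\|_{\nnorm{a_{k-1}}{a_k}}+\|w[a_{k-1}]\|_{\st(a_{k-1},\infty)}$. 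This makes the coefficient of $\|w\|_{\nnorm{a_{k-1}}{a_k}}$ on the right-hand side $\lesssim\cNh^{3/2}\eta^{1/2}$; choosing $\eta$ small enough that it is $\le1/2$ (this is where $\tilde\delta\le\mu_{1/2}\min(\cNh^{-3},1)$ enters), I can absorb it and obtain
\begin{equation*}
 \|w\|_{\nnorm{a_{k-1}}{a_k}}\ \lesssim\ \cNh^{3/2}\eta^{1/2}\bigl(\|w[a_{k-1}]\|_{\st(a_{k-1},\infty)}+\cN_1\bigr)^{1/2}\cN_1^{1/2}\ +\ \|(F_1-F_2)\chi_{J_k}\|_{\mathbb Y}
\end{equation*}
for a suitable forcing space $\mathbb Y$, while at the same time propagating $\|w(a_k)\|_{\Hh}$ --- hence a bound for $\|w[a_k]\|_{\st}$ --- by applying Strichartz to the same source on $J_k$.

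Summing over $k$ by subadditivity (Lemma~\ref{lem:3}), with a discrete Gronwall argument coming from the propagation of $\|w[a_k]\|_{\st}$, yields $\|w\|_{\nnorm{t_0}{t_1}}$ controlled by the accumulated forcing difference together with $\tilde\delta$ and $\cN_1$, with a constant that degrades as $N$ grows. The forcing difference is then fed in by interpolating the three norms available on $F_1-F_2$ --- of size $\tilde\delta$ in $L^{4}L^{6/5}$, of size $\delta$ in $L^{4}L^{24/23}$, and of size $\cN_1$ in $L^{2}W^{1,6/5}$ --- where the exponent $24/23$ in the hypothesis is tuned precisely so that, after also interpolating against the crude bound $\|w\|_{\nnorm{t_0}{t_1}}\lesssim\cN_1$ from the first paragraph, the exponents combine into the weighted geometric mean $\tilde\delta^{8/7}\delta^{1/7}\cN_1^{6/7}$.

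The step I expect to be the main obstacle is exactly this last bookkeeping. Because $V$ is only a size-$\tilde\delta$ perturbation on all of $(t_0,t_1)$ --- not of size $\mu_{1/2}$ --- the subdivision is forced, and since $N\sim(\tilde\delta/\eta)^{4}$ grows with $\tilde\delta$, the per-piece errors must be summed via a H\"older-in-$k$ (or geometric-series) estimate rather than a naive factor $N$, so as to produce the \emph{sharp} triple $(8/7,1/7,6/7)$ rather than a weaker one. Everything else runs parallel to the corresponding argument in \cite{NakanishiJMSJ}, the only genuinely new ingredient being the way the forcing differences $F_1-F_2$ are threaded through the iteration, which is why the hypotheses list $\|F_j\|_{L^{4}L^{6/5}}$, $\|F_1-F_2\|_{L^{4}L^{24/23}}$ and $\|F_j\|_{L^{2}W^{\theta,6/5}}$ side by side.
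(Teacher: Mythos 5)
Your proposed route diverges substantially from the paper's, and the motivating premise is mistaken. You write that \emph{``because $V$ is only a size-$\tilde\delta$ perturbation on all of $(t_0,t_1)$ --- not of size $\mu_{1/2}$ --- the subdivision is forced.''} This is wrong on two counts. First, the standing hypothesis $\tilde\delta\le\mu_{1/2}\min(\cNh^{-3},1)\le\mu_{1/2}$ already makes $\tilde\delta$ a universal small constant, so the perturbation is precisely of size $\mu_{1/2}$. Second, and more decisively, when the cubic difference $|\xi_1|^2\xi_1-|\xi_2|^2\xi_2$ is estimated in the appropriate non-admissible Strichartz dual norm, the coefficient in front of $\|\xi_1-\xi_2\|$ is $\tilde\delta^2$, not $\tilde\delta$ --- this is the whole reason the contraction closes on $(t_0,t_1)$ in one shot. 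Your greedy partition, the resulting bookkeeping over $N\sim(\tilde\delta/\eta)^4$ pieces, and the discrete Gronwall argument are all unnecessary here; that machinery is precisely what the paper deploys in Lemma \ref{lem:16}, which \emph{invokes} Lemma \ref{lem:17} on each short subinterval, so importing it here would make the logic circular.

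What your sketch misses is the actual key device. The paper does \emph{not} estimate the difference directly in $\st=L^4L^6$, nor in the $L^8L^4$ space your potential-splitting $\|V\|_{L^4L^2}\|w\|_{L^8L^4}$ would force; instead it runs the Duhamel/contraction argument in the weaker space $L^4L^{24/7}$, using the non-admissible triple $(p_0,q_0)=(4,24/7)$, $(p_1,q_1)=(4/3,24)$, $(p_2,q_2)=(4,24/9)$. This gives
\begin{equation*}
\|\xi_1-\xi_2-(\xi_1-\xi_2)[s]\|_{L^4L^{24/7}(t_0,t_1)}\lesssim \tilde\delta^2\,\|\xi_1-\xi_2\|_{L^4L^{24/7}(t_0,t_1)}+\|F_1-F_2\|_{L^4L^{24/23}(t_0,t_1)},
\end{equation*}
which absorbs directly since $\tilde\delta^2\ll1$, yielding $\|\xi_1-\xi_2\|_{L^4L^{24/7}}\lesssim\|(\xi_1-\xi_2)[t_0]\|_{L^4L^{24/7}}+\delta$. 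The exponents $8/7,1/7,6/7$ then come out of the two spatial interpolations $\|f\|_{\st}\lesssim\|f\|_{L^4L^{24/7}}^{4/7}\|f\|_{\stz^1}^{3/7}$ and $\|f\|_{L^4L^{24/7}}\lesssim\|f\|_{\st}^{1/4}\|f\|_{\stz^1}^{3/4}$, applied with $\|(\xi_1-\xi_2)[t_0]\|_{\st}\le\delta$ and $\|\xi_1-\xi_2\|_{\stz^1}\lesssim\cN_1$, and \emph{not} from interpolating a crude Nakanishi-seminorm bound against a per-piece geometric series. So while you correctly identify the preliminary inputs (Proposition \ref{prop:3}, Lemma \ref{lem:9}, Lemma \ref{lem:4}) and correctly guess that $24/23$ is tuned to hit the final exponents, the mechanism you propose is both more complicated and, as written, does not close --- the $L^8L^4$ self-estimate and the passage to the Nakanishi seminorm are left as genuine gaps, while the single $L^4L^{24/7}$ contraction that resolves both is absent.
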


%\begin{remark}  There is an improvement with respect to \cite{NakanishiJMSJ} because  here we have $\tilde \delta^{8/7}$ instead of $\tilde \delta^{4/7}$. This improvement is due to the absence in \eqref{1} of a   term  quadratic in $\xi$.\end{remark}

\begin{proof}
First by Proposition \ref{prop:3} and Lemma \ref{lem:9}, we have
\begin{align*}
\|\xi_j\|_{\st(t_0,t_1)}\lesssim \tilde \delta,\quad \|\xi_j\|_{\stz^1(t_0,t_1)}\lesssim \cN_1.
\end{align*}
Now, since
\begin{align*}
\im (\dot \xi _1-\dot \xi _2) = -\Delta(\xi_1-\xi_2)+|\xi_1|^2\xi_1-|\xi_2|^2\xi_2+F_1-F_2,
\end{align*}
for $t_0<s<t_1$, we have
\begin{align*}
\xi_1-\xi_2=(\xi_1-\xi_2)[s]+\cD \(|\xi_1|^2\xi_1-|\xi_2|^2\xi_2+F_1-F_2\)[s].
\end{align*}
Therefore, by nonadmissible Strichartz with
\begin{align*}
&p_0=4,q_0=\frac{24}{7},\sigma_0=\frac{2}{4}+3\(\frac{7}{24}-\frac1 2 \)=-\frac 1 8,\ |\sigma_0|<\frac 2 3, \sigma_0-\frac{1}{p_0}=-\frac 1 8 -\frac1 4<0,\\
&p_1=4/3,q_1=24,\sigma_1=\frac{3}{2}+3\(\frac{1}{24}-\frac1 2 \)=\frac 1 8,\ |\sigma_1|<\frac 2 3, \sigma_1-\frac{1}{p_1}=\frac 1 8 -\frac3 4<0,\\
&p_2=4,q_2=\frac{24}{9},\sigma_2=\frac{2}{4}+3\(\frac{9}{24}-\frac1 2 \)=\frac 1 8,\ |\sigma_2|<\frac 2 3, \sigma_2-\frac{1}{p_2}=\frac 1 8 -\frac1 4<0,\\
\end{align*}
we have
\begin{align*}
\|\xi_1-\xi_2-(\xi_1-\xi_2)[s]& \|_{L^4L^{\frac {24} 7}(t_0,t_1)} \lesssim \|(\xi_1^2+\xi_2^2)(\xi_1-\xi_2)\|_{L^{ \frac 4 3}L^{\frac {24}{15}}(t_0,t_1)}+\|F_1-F_2\|_{L^4L^{    \frac {24}{23} }(t_0,t_1)}\\&\lesssim (\|\xi_1\|_{\st (t_0,t_1)}^2+\|\xi_2\|_{\st (t_0,t_1)}^2)\|\xi_1-\xi_2\|_{L^4L^{\frac {24}{7} }(t_0,t_1)}+\|F_1-F_2\|_{L^4L^{\frac {24}{23} }(t_0,t_1)}\\& \lesssim \tilde \delta^2\|\xi_1-\xi_2\|_{L^4L^{\frac {24}{7} }(t_0,t_1)}+\delta .
\end{align*}
Thus, we have
\begin{align*}
\|\xi_1-\xi_2\|_{L^4L^{24/7}(t_0,t_1)}\lesssim \|(\xi_1-\xi_2)[t_0]\|_{L^4L^{24/7}(t_0,t_1)}+\delta.
\end{align*}
Next, by Lemma \ref{lem:4}, for $t_0<s<t<t_1$,
\begin{align*}
(\xi_1-\xi_2)[t]_>-(\xi_1-\xi_2)[s]=\cD \chi _{(-\infty , t]}\(|\xi_1|^2\xi_1-|\xi_2|^2\xi_2+F_1-F_2\)[s].
\end{align*}
Thus,
\begin{align*}
\|(\xi_1-\xi_2)[t]_>-(\xi_1-\xi_2)[s]\|_{L^4L^{24/7}(t_0,\infty)}&\lesssim \tilde \delta^2\|\xi_1-\xi_2\|_{L^4L^{24/7}(t_0,t_1)}+\delta\\&\lesssim \tilde\delta^2\|(\xi_1-\xi_2)[t_0]\|_{L^4L^{24/7}(t_0,t_1)}+\delta.
\end{align*}
Finally, since $\|f\|_{\st}\lesssim \|f\|_{L^4L^{24/7}}^{4/7}\|f\|_{\stz^1}^{3/7}$, $\|f\|_{L^4L^{24/7}}\lesssim \|f\|_{\st}^{1/4}\|f\|_{\stz^1}^{3/4}$,
we have
\begin{align*}
\|(\xi_1-\xi_2)[t]_>-(\xi_1-\xi_2)[s]\|_{\st(t_0,\infty)}&\lesssim \|(\xi_1-\xi_2)[t]_>-(\xi_1-\xi_2)[s]\|_{L^4L^{24/7}(t_0,\infty)}^{4/7}\cN_1^{3/7}\\&\lesssim
(\tilde\delta^2\|(\xi_1-\xi_2)[t_0]\|_{L^4L^{24/7}(t_0,t_1)}+\delta)^{4/7}\cN_1^{3/7}\\&\lesssim
(\tilde \delta^2\|(\xi_1-\xi_2)[t_0]\|_{\st(t_0,t_1)}^{1/4}\cN_1^{3/4}+\delta)^{4/7}\cN_1^{3/7}\\&\lesssim
\tilde \delta^{8/7}\delta^{1/7}\cN_1^{6/7}.
\end{align*}
Therefore, we have the conclusion.
\end{proof}

\begin{lemma}\label{lem:16} There exists $\mu _0>0$  s.t.\ for solutions   $(\xi _j,z_j)$ of \eqref{1}--\eqref{2} s.t.\ $\cN_0\le \mu _0$,  where
\begin{align*}
\cNt:=\max_{j=1,2}\(\|\xi_j(t_0 )\|_{  \Ht  }+|z_j(t_0 )| \),\quad \cN_2=\max_{j=1,2}\(\|\xi_j\|_{\st(t_0,t_1)}+ \|z_j\|_{L^{12}(t_0,t_1)}^3\) ,
\end{align*}
then  for any  $\varepsilon>0$  there exists  $\delta_*=\delta_*(\cN_0,\cN_1,\cN_2,\varepsilon)>0$ s.t.\
\begin{align*} &
\|(\xi_1-\xi_2)[t_0]\|_{\st(t_0,t_1)}+ \| z_1|z_1|^2-z_2|z_2|^2\|_{L^4(t_0,t_1)}\leq \delta_* \\& \text{ implies } \|\xi_1-\xi_2\|_{\nnorm{t_0}{t_1} }\leq \varepsilon.
\end{align*}
\end{lemma}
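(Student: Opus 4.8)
The plan is to follow the long-time perturbation scheme of \cite{NakanishiJMSJ}: cut $(t_0,t_1)$ into finitely many subintervals on which the $\st$-type norms of $\xi_1,\xi_2$ are small, apply the short-interval estimate Lemma \ref{lem:17} on each piece, and add up the resulting bounds for Nakanishi's seminorm with the subadditivity Lemma \ref{lem:3}. Here the forcing terms are $F_j:=z_j|z_j|^2G$ with $G$ a fixed Schwartz function, so for every interval $I\subseteq(t_0,t_1)$ we have $\|F_j\|_{L^4L^{6/5}(I)}=\|G\|_{L^{6/5}}\|z_j\|_{L^{12}(I)}^3$, $\|F_1-F_2\|_{L^4L^{24/23}(I)}=\|G\|_{L^{24/23}}\|z_1|z_1|^2-z_2|z_2|^2\|_{L^4(I)}$, and, whenever $\|\xi_j\|_{\st(I)}$ is small, $\|F_j\|_{L^2W^{1,6/5}(I)}\lesssim\|z_j\|_{L^6(I)}^3\lesssim\cN_0$ by Lemma \ref{lem:8}, while $\|\xi_j(\cdot)\|_{H^1}\lesssim 1+\cN_1^2$ on $I$ by Lemma \ref{lem:-8}. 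Choosing $\mu_0$ to be the smallest of the constants required by Lemma \ref{lem:8}, Proposition \ref{prop:1} and Lemma \ref{lem:17}, it follows that there is a fixed $\tilde\delta=\tilde\delta(\cN_1)>0$, small enough that on \emph{any} $I\subseteq(t_0,t_1)$ with $\max_{j}\big(\|\xi_j\|_{\st(I)}+\|z_j\|_{L^{12}(I)}^3\big)\le\tilde\delta$ all hypotheses of Lemma \ref{lem:17} hold uniformly in $I$; here \eqref{11.9.0.2} is used to pass from $\|\xi_j\|_{\st(I)}+\|z_j\|_{L^{12}(I)}^3$ to the free-evolution norm $\|\xi_j[\,\cdot\,]\|_{\st(I)}$, conservation of $\mathbb M$ to bound the $L^2$/modulus data on $I$ by that at $t_0$, and Lemmas \ref{lem:-8}, \ref{lem:8} to see that the $H^{1}$, $H^{1/2}$ and $L^2W^{1,6/5}$ quantities entering Lemma \ref{lem:17} on each such $I$ are $\lesssim 1+\cN_1^2$.

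Since $\|\xi_j\|_{\st(t_0,t_1)}^4+\|z_j\|_{L^{12}(t_0,t_1)}^{12}\lesssim\cN_2^4<\infty$, we split $(t_0,t_1)$ into $N=N(\cN_2,\tilde\delta)$ consecutive intervals $I_k=(\tau_{k-1},\tau_k)$, $\tau_0=t_0$, $\tau_N=t_1$, with $\max_j\big(\|\xi_j\|_{\st(I_k)}+\|z_j\|_{L^{12}(I_k)}^3\big)\le\tilde\delta$ on each. Writing $u:=\xi_1-\xi_2$, $a_k:=\|u\|_{\nnorm{\tau_{k-1}}{\tau_k}}$, $S_k:=\sum_{\ell=1}^ka_\ell$ and $\delta^{(k)}:=\|u[\tau_{k-1}]\|_{\st(I_k)}+\|F_1-F_2\|_{L^4L^{24/23}(I_k)}$, Lemma \ref{lem:17} on $I_k$ gives, whenever $\delta^{(k)}\le\tilde\delta$,
\begin{align*}
a_k\le M\,(\delta^{(k)})^{1/7},\qquad M=M(\tilde\delta,\cN_1)\ \text{a fixed constant}.
\end{align*}
To control $\delta^{(k+1)}$ we telescope the free evolutions, $u[\tau_k]-u[t_0]=\sum_{j=1}^{k}\big(u[\tau_j]-u[\tau_{j-1}]\big)$, and, since $I_{k+1}\subseteq(\tau_j,\infty)$ for $j\le k$, use \eqref{11.8} to get
\begin{align*}
\|u[\tau_k]-u[t_0]\|_{\st(I_{k+1})}\le\sum_{j=1}^{k}\|u[\tau_j]-u[\tau_{j-1}]\|_{\st(\tau_j,\infty)}\le\sum_{j=1}^{k}\|u\|_{\nnorm{\tau_{j-1}}{\tau_j}}=S_k.
\end{align*}
Together with $\|u[t_0]\|_{\st(I_{k+1})}\le\|u[t_0]\|_{\st(t_0,t_1)}\le\delta_*$ and $\|F_1-F_2\|_{L^4L^{24/23}(I_{k+1})}\le\|G\|_{L^{24/23}}\delta_*$, and with $c:=1+\|G\|_{L^{24/23}}$, this yields $\delta^{(k+1)}\le c\delta_*+S_k$ (and $\delta^{(1)}\le c\delta_*$). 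Hence, setting $T_k:=c\delta_*+S_k$ (so $T_0=c\delta_*$), the bound $a_{k+1}\le M(\delta^{(k+1)})^{1/7}\le MT_k^{1/7}$ produces the scalar recursion $T_{k+1}\le g(T_k)$ with $g(s):=s+Ms^{1/7}$ increasing, continuous and $g(0)=0$.

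Let $\Phi=g\circ\cdots\circ g$ ($N$ times), which is continuous, increasing and $\Phi(0)=0$. A straightforward induction gives $T_k\le g^{\circ k}(c\delta_*)\le\Phi(c\delta_*)$ for $0\le k\le N$: if $T_{k-1}\le g^{\circ(k-1)}(c\delta_*)\le\Phi(c\delta_*)\le\tilde\delta$ then $\delta^{(k)}\le T_{k-1}\le\tilde\delta$, so Lemma \ref{lem:17} applies on $I_k$ and $T_k\le g(T_{k-1})\le g^{\circ k}(c\delta_*)$; the base case $T_0=c\delta_*$ is trivial. Since $N$ depends only on $\cN_2$ and $\tilde\delta$, and $M$ only on $\tilde\delta$ and $\cN_1$, the map $\Phi$ is fixed once $\cN_1,\cN_2$ are; so, given $\varepsilon>0$, by continuity of $\Phi$ at $0$ we may pick $\delta_*=\delta_*(\cN_0,\cN_1,\cN_2,\varepsilon)>0$ with $\Phi(c\delta_*)\le\min\{\varepsilon,\tilde\delta\}$, which both validates all the applications of Lemma \ref{lem:17} above and yields, by subadditivity (Lemma \ref{lem:3}),
\begin{align*}
\|\xi_1-\xi_2\|_{\nnorm{t_0}{t_1}}\le\sum_{k=1}^{N}\|\xi_1-\xi_2\|_{\nnorm{\tau_{k-1}}{\tau_k}}=S_N=T_N-c\delta_*\le\Phi(c\delta_*)\le\varepsilon.
\end{align*}

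The delicate point is the closure of the recursion $T_{k+1}\le g(T_k)$: since $g(s)\sim Ms^{1/7}\gg s$ near $0$, the iterates cannot be kept comparable to $\delta_*$, so one cannot argue by a small absorbing ball. What rescues the argument is that $N$ is a \emph{fixed} integer (depending only on $\cN_2$ and $\tilde\delta$), whence $\Phi=g^{\circ N}$ is a single continuous function with $\Phi(0)=0$ and $\Phi(c\delta_*)\to0$ as $\delta_*\to0$. The other, more routine, burden is the uniform-in-$k$ verification of the hypotheses of Lemma \ref{lem:17} on the pieces $I_k$ — this is where conservation of $\mathbb M$, Lemma \ref{lem:-8}, Lemma \ref{lem:8} and Proposition \ref{prop:1} enter, bounding all the norms of $\xi_j$ and of the forcing $z_j|z_j|^2G$ appearing in that lemma by quantities controlled solely by $\cN_0$ and $\cN_1$.
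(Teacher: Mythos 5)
Your proof is correct and follows essentially the same route as the paper's: decompose $(t_0,t_1)$ into $N=N(\cN_2,\tilde\delta)$ subintervals with small $\st$-type norm, apply the short-interval Lemma \ref{lem:17} on each piece, propagate the free-evolution initial data via \eqref{11.8} and subadditivity of the Nakanishi seminorm, and close the finite recursion because $N$ is fixed. The only cosmetic differences are that the paper closes the iteration by choosing $\tilde\delta$ small enough that $\delta_{j+1}\le\delta_j^{1/7}$ (so $\delta_{N+1}\le\delta_0^{1/7^{N+1}}$ explicitly), whereas you invoke continuity of $g^{\circ N}$ at $0$, and that the paper's interval decomposition is calibrated to $(\xi_1,z_1)$ alone (deriving the $(\xi_2,z_2)$ smallness from the difference hypothesis) while yours uses both from the outset via $\cN_2$; neither difference is substantive.
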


\begin{proof}
For $N\gg 1$ determined below, we decompose $(t_0,t_1)$ into subintervals $I_0,I_1,\cdots,I_N$ s.t.\
\begin{align*}
\|\xi_1\|_{\mathfrak{st}(I_j)}+ \|z_1\|_{L^{12}(I_j)}^3\leq 2 N^{-1/4}\mathcal N_2=:\tilde \delta.
\end{align*}
Let $I_j=(S_j,S_{j+1})$ with $S_0=t_0,\ S_{N+1}=t_1$.
Now, if $\tilde \delta \mathcal N_{1/2}^3\ll1$, which is true for  $N\gg 1$   sufficiently large, we can apply Proposition \ref{prop:1} \eqref{11.9.0.2} and obtain,
\begin{align*}
\|\xi_1[S_j]\|_{\mathfrak{st}(I_j)}\lesssim \|\xi_1\|_{\mathfrak{st}(I_j)}+ \|z\|_{L^{12}(I_j)}^3\leq \tilde \delta.
\end{align*}
Suppose we have
\begin{align*}
\|(\xi_1-\xi_2)[S_0]\|_{\mathfrak{st}(S_0,t_1)}+ \|   z_1|z_1|^2-z_2|z_2|^2\|_{L^4(S_0,t_1)}\leq \delta_0\leq \tilde \delta.
\end{align*}
for some $0<\delta_0$.
Then, using $\|z_j\|_{L^{12}}^3=\|z_j^3\|_{L^4}$, we have
\begin{align*}
\|\xi_2[S_0]\|_{\mathfrak{st}(S_0,t_1)}+ \|z_2\|_{L^{12}(S_0,t_1)}^3\lesssim \tilde \delta.
\end{align*}
Thus, we can apply Lemma \ref{lem:17} and  Lemma \ref{lem:-8}  and conclude
\begin{align*}
\|\xi_1-\xi_2\|_{\nnorm{S_0}{S_1}}\leq C  \mathcal N_1^{12/7}\tilde \delta^{8/7}\delta_0^{1/7}.
\end{align*}
Now, set
\begin{align*}
\delta_1:=\delta_0+C\mathcal N_1^{12/7}\tilde \delta^{8/7}\delta_0^{1/7}.
\end{align*}
By  the definition of Nakanishi's seminorm \eqref{11.7} we have
\begin{align*}
\|(\xi_1-\xi_2)[S_1]\|_{\mathfrak{st}(S_1,t_1)}\leq \|(\xi_1-\xi_2)[S_0]\|_{\mathfrak{st}(S_0,t_1)}+\|\xi_1-\xi_2\|_{\nnorm{S_0}{S_1}}.
\end{align*}
Thus, we have
\begin{align*}
\|(\xi_1-\xi_2)[S_1]\|_{\mathfrak{st}(S_1,t_1)}+ \|z_1|z_1|^2-z_2|z_2|^2\|_{L^4(S_1,t_1)}\leq \delta_1.
\end{align*}
If $\delta_1\leq \tilde \delta$, we can repeat the same argument on $I_1$.
Set
\begin{align*}
\delta_{j+1}:=\delta_j+C\mathcal N_1^{12/7}\tilde \delta^{8/7}\delta_j^{1/7},
\end{align*}
inductively.
Now, for given $\mathcal N_1$ and $\mathcal N_2$, we take $N$ large so that
\begin{align*}
\tilde \delta = 2N^{-1/4}\mathcal N_2 \ll \mathcal N_1^{-3}\leq \mathcal N_{1/2}^{-3}.
\end{align*}
Then, if $\delta_j\leq \tilde \delta$, we have
\begin{align*}
\delta_{j+1}\leq \delta_j^{1/7}(\tilde \delta^{6/7}+C\mathcal N_1^{6/7}\tilde \delta^{8/7})\leq \delta_j^{1/7}.
\end{align*}
Thus, if we set $\delta_0$ sufficiently small to satisfy
\begin{align*}
\delta_{N+1}\leq \delta_0^{\frac{1}{7^{N+1}}}\leq \min(\varepsilon,\tilde \delta),
\end{align*}
we have $\delta_{j}\leq \tilde\delta$ $\forall$ $j=0,1,\cdots,N+1$ and by Lemma \ref{lem:3}  we have the following, completing the proof:
\begin{align*}
\|\xi_1-\xi_2\|_{\nnorm{t_0}{t_1} }&\leq \sum_{j=0}^{N} \|\xi_1-\xi_2\|_{\nnorm{S_j}{S_{j+1}}}\\&
\leq \sum_{j=0}^N (\delta_{j+1}-\delta_j)=\delta_{N+1}-\delta_0<\varepsilon.
\end{align*}
\end{proof}

\section{Linear and Nonlinear  Profile  Decompositions}
\label{sec:profdec}

We first recall the following result on  linear profile decompositions, which is a special case of
a more general result in  Lemma 5.3 of \cite{NakanishiJMSJ}. See also \cite{keraani,holmer}.

\begin{proposition} \label{prop:lindecomp}
Let $\{s_n\}_n\subset \R$ and $\{\xi _{0n} \}_n \subset H^1 _{rad}$ with $\sup_n\|\xi _{0n}\|_{H^1}<\infty$.
Then, passing to a subsequence, there exists $J^*\in \N\cup\{\infty\}$ and $\{s^j_n\}_n\subset \R$ for each $0\leq j<J^*$ the following holds.
\begin{enumerate}
\item $s^0_n=s_n$ and $s^j_n-s^k_n\to \infty$ or $s^j-s^k_n\to -\infty$ as $n\to \infty$ for $j\neq k$.
\item For each $j<J^*$, there exists $\varphi^j\in H^1_{rad}$ s.t.\ $\xi _{0n}[s_n](s_n^j)\stackrel{n\to \infty}{\rightharpoonup} \varphi^j$ weakly in $H^1$.
Further, setting $\lambda^j_n=\varphi^j[s_n^j]$, we have $\lambda_n^j(s_n^k)\stackrel{n\to \infty}{\rightharpoonup} 0$ weakly in $H^1$ for $j\neq k$ and $\varphi^j\neq 0$ for $j>0$.
\item If for each finite $J\leq J^*$ we define $\gamma^J_n$ from the equality
\begin{equation}\label{prop:lindecomp1}
    \xi _{0n}[s_n]=\sum_{j=0}^{J-1}\lambda^j_n +\gamma^J_n ,
\end{equation}
then we have $\gamma^J_n(s^j_n)\stackrel{n\to \infty}{\rightharpoonup} 0$ weakly in $H^1$ for $j<J$.
\item For all $\theta\in[0,1]$ we have the Pythagorean formula, for $ \| f\|_{\dot H^\theta}^2:=  \<(-\Delta)^\theta f,f\>$,
\begin{align}\label{pyth}
\sum_{0\leq j<J}\|\lambda_n^j\|_{L^\infty \dot H^\theta (\R )}^2+\|\gamma^J_n\|_{L^\infty \dot H^\theta (\R )}^2= \|\xi _{0n}\|_{\dot H^\theta}^2+o_n(1), \text{ with $ o_n(1)\stackrel{n\to \infty}{\rightarrow} 0 $.}
\end{align}
\item $\| \<(-\Delta)^\theta \lambda^j_n (t), \lambda^k_n (t)\> \| _{L^\infty (\R )} \stackrel{n\to \infty}{\rightarrow} 0$ ($j\neq k $) and $\|   \<(-\Delta)^\theta \lambda^j_n(t),\gamma^J_n(t)\> \| _{L^\infty (\R )} \stackrel{n\to \infty}{\rightarrow} 0$ ($j<J $).
\item
For $0\leq \theta <1$,
\begin{align*}
\lim_{J\to J^*}\limsup_{n\to \infty}\|\gamma^J_n\|_{[L^\infty L^4(\R ),\mathrm{Stz}^1(\R )]_\theta}=0.
\end{align*}
In particular, $\displaystyle \lim_{J\to J^*}\limsup _{n\to +\infty}\|\gamma^J_n\|_{\mathfrak{st}(\R )}=0.
$
\end{enumerate}
\end{proposition}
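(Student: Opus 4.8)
The statement is the standard concentration--compactness linear profile decomposition for $e^{\im t\Delta}$ in the radial $H^1(\R^3)$ setting (cf. \cite{keraani,holmer}), and it is literally a special case of Lemma~5.3 of \cite{NakanishiJMSJ}; the plan is to reproduce that greedy extraction. The only structural facts needed are: bounded sets of $H^1$ are weakly sequentially precompact; the embedding $H^1_{rad}\hookrightarrow L^4$ is \emph{compact}, which via Lemma~\ref{lem:11} turns weak $H^1$-convergence into strong $L^4$-convergence uniformly on bounded time intervals; the standard dispersive decay of $e^{\im t\Delta}$, which in particular gives $e^{\im\tau\Delta}f\wto 0$ in $H^1$ as $|\tau|\to\infty$ for each fixed $f\in H^1$; and the facts that $e^{\im t\Delta}$ is an isometry of every $\dot H^\theta$ and (by Lemma~\ref{lem:1}) satisfies $\|v\|_{\mathrm{Stz}^1(\R)}\lesssim\|v(t_*)\|_{H^1}$ for free solutions.

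Set $M:=\sup_n\|\xi_{0n}\|_{H^1}$. I would build the decomposition inductively: put $\gamma^0_n:=\xi_{0n}[s_n]$, $s^0_n:=s_n$, and (on a subsequence) let $\varphi^0$ be the weak $H^1$-limit of $\gamma^0_n(s^0_n)=\xi_{0n}$, $\lambda^0_n:=\varphi^0[s^0_n]$. Given $\gamma^J_n$, set $\nu_J:=\limsup_n\sup_{t\in\R}\|\gamma^J_n(t)\|_{L^4}$; if $\nu_J=0$ stop with $J^*=J$; otherwise choose $s^J_n$ with $\|\gamma^J_n(s^J_n)\|_{L^4}\ge\nu_J/2$ along a subsequence, let $\varphi^J$ be the weak $H^1$-limit of $\gamma^J_n(s^J_n)$ on a further subsequence, and set $\lambda^J_n:=\varphi^J[s^J_n]$, $\gamma^{J+1}_n:=\gamma^J_n-\lambda^J_n$; a diagonal argument fixes one subsequence good for all $J$. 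The hard part — the part I expect to be the main obstacle, since it must be tracked through the whole induction — is item (1) together with the weak-vanishing claims of (2)--(3). If $s^J_n-s^j_n$ stayed bounded for some $j<J$, then along a subsequence $s^J_n-s^j_n\to c$; since $\gamma^J_n(s^J_n)=e^{\im(s^J_n-s^j_n)\Delta}\gamma^J_n(s^j_n)$ with $\gamma^J_n(s^j_n)\wto0$ in $H^1$ (inductive hypothesis), applying Lemma~\ref{lem:11} with weak limit $0$ on $|t|<|c|+1$ forces $\|\gamma^J_n(s^J_n)\|_{L^4}\to0$, contradicting $\nu_J>0$. Granting $|s^j_n-s^k_n|\to\infty$, the dispersive fact above gives $\lambda^j_n(s^k_n)=e^{\im(s^k_n-s^j_n)\Delta}\varphi^j\wto0$; then $\gamma^{J+1}_n(s^j_n)=\gamma^J_n(s^j_n)-\lambda^J_n(s^j_n)\wto0$ (for $j=J$ this is $\gamma^J_n(s^J_n)-\varphi^J\wto0$, for $j<J$ use induction), and $\xi_{0n}[s_n](s^j_n)=\gamma^j_n(s^j_n)+\sum_{k<j}\lambda^k_n(s^j_n)\wto\varphi^j$; finally $\varphi^j\ne0$ for $j\ge1$, since then $\nu_j>0$ and $\|\varphi^j\|_{L^4}=\lim_n\|\gamma^j_n(s^j_n)\|_{L^4}\ge\nu_j/2$ by compactness.

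Items (5) and (4) are then routine. For $j\ne k$, $\<(-\Delta)^\theta\lambda^j_n(t),\lambda^k_n(t)\>$ is $t$-independent (both free solutions, $(-\Delta)^\theta$ commutes with the unitary $e^{\im t\Delta}$) and equals $\<(-\Delta)^\theta\varphi^j,e^{\im(s^j_n-s^k_n)\Delta}\varphi^k\>\to0$, because $(-\Delta)^\theta\varphi^j\in H^{-1}$ for $\theta\in[0,1]$ and $e^{\im(s^j_n-s^k_n)\Delta}\varphi^k\wto0$; likewise $\<(-\Delta)^\theta\lambda^j_n(t),\gamma^J_n(t)\>$ is $t$-independent and, evaluated at $t=s^j_n$, tends to $\<(-\Delta)^\theta\varphi^j,\gamma^J_n(s^j_n)\>\to0$. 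Expanding $\|\xi_{0n}\|_{\dot H^\theta}^2=\|\xi_{0n}[s_n](s^0_n)\|_{\dot H^\theta}^2=\bigl\|\sum_{j<J}\lambda^j_n(s^0_n)+\gamma^J_n(s^0_n)\bigr\|_{\dot H^\theta}^2$, using the isometry property and discarding the finitely many cross terms (each $o_n(1)$ by the previous sentence), gives \eqref{pyth}. Summing over $j$ with $\theta=0$ and $\theta=1$ then yields $\sum_{0\le j<J^*}\|\varphi^j\|_{H^1}^2\le M^2$.

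For item (6), first $\nu_J\to0$ as $J\to J^*$: if $J^*<\infty$ this is the stopping rule, and if $J^*=\infty$ then $\nu_J\le2\|\varphi^J\|_{L^4}\lesssim\|\varphi^J\|_{H^1}\to0$ by the bound just obtained. Since $\gamma^J_n$ is a free solution, $\|\gamma^J_n\|_{L^\infty H^1(\R)}$ equals its value at any fixed time, so $\limsup_n\|\gamma^J_n\|_{L^\infty H^1(\R)}\lesssim M$ by \eqref{pyth}, hence $\limsup_n\|\gamma^J_n\|_{\mathrm{Stz}^1(\R)}\lesssim M$ by Lemma~\ref{lem:1}. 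The elementary interpolation inequality $\|v\|_{[L^\infty L^4(\R),\mathrm{Stz}^1(\R)]_\theta}\le\|v\|_{L^\infty L^4(\R)}^{1-\theta}\|v\|_{\mathrm{Stz}^1(\R)}^{\theta}$ (for $v$ in the intersection) then gives $\limsup_n\|\gamma^J_n\|_{[L^\infty L^4(\R),\mathrm{Stz}^1(\R)]_\theta}\lesssim\nu_J^{1-\theta}M^{\theta}\to0$, which is item (6). For the $\st=L^4L^6$ consequence, note that by H\"older in $x$ and then in $t$ one has $\|v\|_{\st(\R)}\le\|v\|_{L^\infty L^4(\R)}^{1/2}\|v\|_{L^2L^{12}(\R)}^{1/2}$, and $\mathrm{Stz}^1\hookrightarrow L^2B^1_{6,2}\hookrightarrow L^2L^{12}$ by the Sobolev embedding $B^1_{6,2}(\R^3)\hookrightarrow L^{12}$; hence $\limsup_n\|\gamma^J_n\|_{\st(\R)}\lesssim\nu_J^{1/2}M^{1/2}\to0$.
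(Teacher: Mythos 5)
Your proposal is correct and reproduces the standard greedy extraction for linear profile decompositions in the radial $H^1(\R^3)$ setting. The paper itself gives no proof of Proposition~\ref{prop:lindecomp}, quoting it as a special case of Lemma~5.3 in \cite{NakanishiJMSJ} (with references to \cite{keraani,holmer}), and your argument — compactness of $H^1_{rad}\hookrightarrow L^4$ plus Lemma~\ref{lem:11} to force divergence of the time translations, weak extraction, isometry of $e^{\im t\Delta}$ on $\dot H^\theta$ for the Pythagorean identities, small-$\nu_J$ and Strichartz/interpolation for item~(6) — is exactly the argument underlying that citation.
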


%{ \begin{remark} We have slightly changed the conclusion of Lemma 5.3 (i) of \cite{NakanishiJMSJ} from $|s^j_n-s^k_n|\to \infty$ to $s^j_n-s^k_n\to \infty$ or $s^j_n-s^k_n\to -\infty$.However, this can be easily verified by taking subsequence in the construction procedure of $s^j_n$.\end{remark} }

We consider now  a sequence of solutions of \eqref{1}--\eqref{2}. More precisely we consider
the following steps \ref{Assumption:A1}--\ref{Assumption:A9}.

\begin{enumerate}[label={\bf(A:\arabic{*})}, ref={\bf(A:\arabic{*})}]
\item\label{Assumption:A1}We  consider   sequences of  solutions $(\xi _n,z_n)\in C^0(\R,  H^1_{rad}\times \C) $ of \eqref{1}--\eqref{2} s.t.\
\begin{equation}\label{eq:indata}
\text{ $\mathcal{N} _{1}< \infty $   and $\mathcal{N} _{0}\ll 1$ for }    \mathcal{N} _{\theta} :=\sup _{n} \| \xi _n (0) \| _{H ^{\theta} } +   | z _n (0)  |.
\end{equation}
Notice that $\mathcal{N} _{\theta}\le C_\theta  \mathcal{N} _{0}^{1-\theta}\mathcal{N} _1^{\theta}$ for fixed constants $C_\theta$.
We can apply Proposition \ref{prop:lindecomp} for     $s^0_n=0$  and  $\xi _{0n}:= \xi _n (0)$.

\item\label{Assumption:A2} We fix $J$ in the decomposition
of  Proposition \ref{prop:lindecomp}, sufficiently large s.t.\  $\limsup_n\|\gamma^J_n\|_{\mathfrak{st}(\R )} \ll \min \{ 1,\mathcal{N}_1^{-3} \}$ and $\limsup_n\|\gamma^J_n\|_{L^\infty L^4(\R )} \ll \mathcal N_0$. We order the
profiles in Proposition \ref{prop:lindecomp} so that there exists $0< L \le J$ s.t.\ for any $0<j< L$  we have $s^{j}_{n}-s^{j-1}_{n}\stackrel{n\to \infty}{\rightarrow}+\infty$ and for $L \le j < J$ we have $s^{j}_{n}\stackrel{n\to \infty}{\rightarrow}-\infty$.

\item\label{Assumption:A3} We introduce a parameter $\tau>1 $ and set $s^j_{\pm ,n} = s^j_{ n}\pm \tau $, but with $s^0_{- ,n} = 0$ and $s^L_{-,n} =\infty$.

\item\label{Assumption:A4} Reducing to subsequences  we can assume that  $z_n(\cdot +s_n^j)\stackrel{n\to \infty}{\to} z^j$ in $\C$ and  $\xi _n(\cdot +s_n^j)\stackrel{n\to \infty} {\rightharpoonup} \xi^j$ weakly in $H^1$   and  uniformly on compact sets.
\item\label{Assumption:A5} We set
$\lambda^j:=\varphi^j[0]$.
\item\label{Assumption:A6} {We set}
$\Lambda^j_n:=\xi^j(\cdot-s^j_n)$ \textit{and}  $z^j_n:=z^j(\cdot-s^j_n)$.

\item\label{Assumption:A9} {For $0\le j < L $ we denote by
$\Gamma^{J,j,\tau}_n$   the function s.t.\ }
\begin{equation}\label{eq:Jjt}
 \begin{aligned}
&\im \dot \Gamma^{J,j,\tau}_n =-\Delta \Gamma^{J,j,\tau}_n  +\chi _{[s ^{j}_{+,n}   ,s ^{j+1}_{-,n} ]}\( |\Gamma^{J,j,\tau}_n |^2\Gamma^{J,j,\tau}_n+ |z_n|^2 z_n G\)  \\& \Gamma^{J,j,\tau}_n (s^j_{+,n})={\gamma^{J,j,\tau}_n(s^j_{+,n})}.
\end{aligned}
\end{equation}
In the case $j=L-1$, we replace $[s^{j}_{+,n},s^{j+1}_{-,n}]$ by $[s^{L-1}_{+,n},\infty)$.

\end{enumerate}

\begin{definition}\label{eq:littleo}Given a sequence $X_n(\tau)$ dependent on a large parameter $\tau \gg 1 $ we write  $X_n(\tau)=o_\tau$
if   $\displaystyle \lim_{\tau\to +\infty}\limsup _{n\to +\infty} X_n(\tau)=0$.

\end{definition}

\noindent In the sequel we  will have various quantities   and  the relation among them will be
\begin{align*}
o_\tau \ll \sup _n\|\gamma^J_n\|_{\mathfrak{st} (\R )}\ll \max \{\mathcal N_{1}^{-3}, \mathcal N_0\}  \ll1.
\end{align*}
Notice that $\mathcal N_1^3\mathcal N_0$ may  be not   small but $\|\gamma^{J}_n\|_{\st(\R)} \mathcal N_1^3 \ll1$.

\begin{lemma}\label{lem:gamma0}
For any $0\leq j<L$ and $T>0$, we have
\begin{align}\label{gamma0}
\|\gamma^{J}_n \|_{\st(|s^j_n-t|<T)}\to 0\text{ as }n\to 0.
\end{align}
\end{lemma}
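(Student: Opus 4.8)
The plan is to prove \eqref{gamma0} by relating the Nakanishi-type quantity $\|\gamma^J_n\|_{\st(|s^j_n-t|<T)}$ to the behavior of the free evolution of $\lambda^j_n$ and of the other profiles near the time $s^j_n$, exploiting the weak convergence statements in Proposition \ref{prop:lindecomp}. Recall from \eqref{prop:lindecomp1} that
\begin{align*}
\xi_{0n}[s_n] = \sum_{k=0}^{J-1}\lambda^k_n + \gamma^J_n,
\end{align*}
so evaluating at time $t$ with $|t-s^j_n|<T$ and using that each $\lambda^k_n$, $\gamma^J_n$ is a free solution, we have
$\gamma^J_n(t) = \xi_{0n}[s_n](t) - \sum_k \lambda^k_n(t)$. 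The key point is that $\gamma^J_n(s^j_n)\rightharpoonup 0$ weakly in $H^1$ by item (3) of Proposition \ref{prop:lindecomp}; hence by Lemma \ref{lem:11} (with the time origin shifted to $s^j_n$, which is harmless since the statement is about compact time intervals) we get that the free evolution $\gamma^J_n[s^j_n](t) = e^{\im(t-s^j_n)\Delta}\gamma^J_n(s^j_n)$ converges to $0$ in $L^\infty(|t-s^j_n|<T; L^4)$. But $\gamma^J_n$ \emph{is} its own free evolution, i.e.\ $\gamma^J_n(t) = \gamma^J_n[s^j_n](t)$ for all $t$, so in fact
\begin{align*}
\|\gamma^J_n\|_{L^\infty L^4(|t-s^j_n|<T)} \stackrel{n\to\infty}{\longrightarrow} 0.
\end{align*}

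From the $L^\infty L^4$ decay I would then upgrade to the $\st = L^4_t L^6_x$ decay on the compact time window $|t-s^j_n|<T$. First, the window has length $2T$, so $\|\gamma^J_n\|_{\st(|t-s^j_n|<T)} \le (2T)^{1/4}\|\gamma^J_n\|_{L^\infty L^6(|t-s^j_n|<T)}$; it therefore suffices to control the $L^\infty_t L^6_x$ norm. By Gagliardo–Nirenberg (interpolating $L^6$ between $L^4$ and $\dot H^1$, or more simply between $L^4$ and some higher Lebesgue/Sobolev norm) one has a bound of the shape $\|f\|_{L^6} \lesssim \|f\|_{L^4}^{\alpha}\|f\|_{H^1}^{1-\alpha}$ for a suitable $\alpha\in(0,1)$. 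Since $\sup_n\|\gamma^J_n\|_{L^\infty H^1(\R)}<\infty$ — this follows from the Pythagorean formula \eqref{pyth} for $\theta=1$ together with $\sup_n\|\xi_{0n}\|_{H^1}<\infty$ (here $\xi_{0n}=\xi_n(0)$ and $\mathcal N_1<\infty$ by \ref{Assumption:A1}) — the $H^1$ factor is uniformly bounded, and we conclude
\begin{align*}
\|\gamma^J_n\|_{\st(|t-s^j_n|<T)} \lesssim T^{1/4}\,\|\gamma^J_n\|_{L^\infty L^4(|t-s^j_n|<T)}^{\alpha}\,\Big(\sup_n\|\gamma^J_n\|_{L^\infty H^1(\R)}\Big)^{1-\alpha} \stackrel{n\to\infty}{\longrightarrow} 0.
\end{align*}

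The main obstacle — really the only nontrivial input — is the passage from the weak convergence $\gamma^J_n(s^j_n)\rightharpoonup 0$ to the strong $L^\infty_t L^4_x$ convergence of the free evolution on a compact time interval; this is exactly the content of Lemma \ref{lem:11} (Lemma 5.1 of \cite{NakanishiJMSJ}), whose proof uses the compact embedding $H^1_{rad}\hookrightarrow L^4$, so it is crucial here that we are in the radial class and that each $\gamma^J_n$ is radial (which holds since $\xi_{0n}\in H^1_{rad}$ and the profiles $\lambda^k_n$ are radial free evolutions). One technical point to be careful about: Lemma \ref{lem:11} as stated fixes the time origin at $0$ and sends a weakly convergent sequence $u_n\rightharpoonup\varphi$; I would apply it with $u_n := \gamma^J_n(s^j_n)$, $\varphi=0$, and then note $\gamma^J_n[0](t)$ in that lemma corresponds, after translating time by $s^j_n$, to $\gamma^J_n(t)$ on $|t-s^j_n|<T$ — so no genuine difficulty, just a shift of variables. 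Everything else is soft: the finite-length interval gives the $L^4_t$-in-time integrability for free once we have the uniform-in-time $L^6_x$ bound, and the uniform $H^1$ bound needed for the Gagliardo–Nirenberg interpolation is already recorded in the excerpt via \eqref{pyth}.
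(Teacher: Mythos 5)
The paper's own ``proof'' of this lemma is a bare citation to formula (7.18) of Nakanishi \cite{NakanishiJMSJ}, so there is no internal argument to compare with; your substantive proposal is welcome, and its overall architecture --- exploit $\gamma^J_n(s^j_n)\rightharpoonup 0$, upgrade to $L^\infty_t L^4_x$ decay on a compact window via Lemma \ref{lem:11}, then interpolate up to $\st$ --- is the right route and the one item 6 of Proposition \ref{prop:lindecomp} implicitly points to, since it phrases the global $\gamma^J_n$ bound precisely in the interpolation scale $[L^\infty L^4,\mathrm{Stz}^1]_\theta$.

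However the spatial interpolation step is not correct as written. In $\R^3$ there is no Gagliardo--Nirenberg inequality $\|f\|_{L^6}\lesssim\|f\|_{L^4}^{\alpha}\|f\|_{H^1}^{1-\alpha}$ with $\alpha>0$: the exponent balance forces the $L^4$-weight $\alpha$ to vanish and the estimate degenerates to the Sobolev embedding $\dot H^1\hookrightarrow L^6$. Consequently a uniform $L^\infty_t H^1_x$ bound plus $L^\infty_t L^4_x$ decay gives nothing at all for $L^\infty_t L^6_x$, and your reduction to $L^\infty_t L^6_x$ after pulling out $(2T)^{1/4}$ cannot be closed. What is missing is that $\gamma^J_n$ is a \emph{free} solution, so by the endpoint Strichartz estimate (Lemma \ref{lem:1}) one has $\|\gamma^J_n\|_{L^2 B^1_{6,2}(\R)}\lesssim\|\gamma^J_n(0)\|_{H^1}$, which is uniformly bounded by \eqref{pyth} and \ref{Assumption:A1}; it is this $L^2$-in-time derivative norm, not the $L^\infty_t H^1$ norm, that supplies the other half of the interpolation. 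Concretely, combine the Lebesgue interpolation $\|f\|_{L^6}\le\|f\|_{L^4}^{1/2}\|f\|_{L^{12}}^{1/2}$ with the embedding $B^1_{6,2}(\R^3)\hookrightarrow L^{12}$ (valid because $1>3/6$), and then H\"older in time on $I_n:=\{t:|t-s^j_n|<T\}$ to obtain
\begin{align*}
\|\gamma^J_n\|_{\st(I_n)}^{2}\le\|\gamma^J_n\|_{L^\infty L^4(I_n)}\,\|\gamma^J_n\|_{L^2 B^1_{6,2}(I_n)}.
\end{align*}
The second factor is uniformly bounded while the first tends to $0$ by your Lemma \ref{lem:11} step, and \eqref{gamma0} follows. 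With this repair the argument is sound; the rest of your proposal (radiality, time translation, $\gamma^J_n$ being its own free evolution) is fine.
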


\begin{proof}
This is (7.18) of Nakanishi \cite{NakanishiJMSJ}.
\end{proof}
% { We have
%\begin{align}\label{eq:uptau}
%\|\upsilon^{J,\tau}_n\|_{\st(\R_+)}=o_\tau,\quad \|\gamma^{J,j,\tau}_n\|_{\st(\R \backslash (s^{j}_{+,n},s^{j+1}_{-,n}))}=o_\tau,\quad\|\gamma^{J,j,\tau}_n\|_{\st(\R)}\leq \|\gamma^{J}_n \|_{\st(s^j_{+,n},s^{j+1}_{-,n})}+o_\tau
%\end{align}
% where the 1st follows from
% $\|\gamma^{J'}_n\|_{\st(\R)}\le  \tau ^{-1}$ and by   $\| \lambda^{l}_n\|_{\st(\R_+)}\stackrel{n\to \infty}{\to} 0$
%   for the $l$'s s.t.\ $s^l_n \stackrel{n\to \infty}{\to} -\infty$ and  the 3rd from the previous two.  We now consider the 2nd. $\gamma^{J,j,\tau}_n$ is defined by the sum in \eqref{eq:defgammajtau}   with $ \gamma^{J,j,k}_n= \varphi ^{J,j,k} [s^{J,j,k}_n] $, where we have relabeled $\lambda^l_n=\varphi^l[s^l_n]$ as in \ref{Assumption:A7}.
%For all $1\leq k\leq n_{j,\tau}$ in \eqref{eq:defgammajtau}   we have $|s^{J,j,k}_n -s^{\ell}_{n}|  \stackrel{n\to \infty}{\to}+ \infty $ for  $\ell =j,j+1 $. This implies that for each $k$ we have $\|\varphi ^{J,j,k} [s^{J,j,k}_n]\|_{\st(\R \backslash (s^{j}_{+,n},s^{j+1}_{-,n}))} \stackrel{n\to \infty}{\to} 0 $, and so in particular $\|\gamma^{J,j,\tau}_n\|_{\st(\R \backslash (s^{j}_{+,n},s^{j+1}_{-,n}))}\stackrel{n\to \infty}{\to} 0 $. }

 From $ \xi _n(0) = \xi _{0n}=  \sum_{j=0}^{J-1}\lambda^j_n(0) +\gamma^J_n (0) $, by $z_n(0) \stackrel{n\to \infty}{\rightarrow} z^0$,  by the conservation of $\mathbb{M}$ and $\mathbb{E}$ for \eqref{1}--\eqref{2} %,  by the fact that $  \| u [t_0] \| _{L^2}^2$ and  $  \| |\Delta | ^{\frac{\theta}{2}}   u [t_0] \| _{L ^{2}}^2$ are constant
 and
by the Pythagorean equality
\eqref{pyth}   we have, for $o(1)\stackrel{n\to \infty}{\rightarrow}0$,
\begin{equation}\label{eq:exp1} \begin{aligned} &   \mathbb{M}(\xi _n,z _n ) = \mathbb{M}(\xi ^0,z^0 )+\sum_{j=1}^{J-1}
2^{-1}\|  \lambda^j_n \| _{L^2}^2 + 2^{-1} \|  \gamma^J_n  \| _{L^2}^2 +o(1),\\&
 \mathbb{E}(\xi _n,z _n ) = \mathbb{E}(\xi ^0, z^0 )+\sum_{j=1}^{J-1}
2^{-1}\|\nabla \lambda^j_n \| _{L^2}^2 + 2^{-1} \|\nabla \gamma^J_n  \| _{L^2}^2 +o(1).
\end{aligned}
\end{equation}
%
% {
%The following lemma is a direct consequence of Proposition \ref{prop:3} and \eqref{eq:uptau}.
%\begin{lemma}\label{lem:14}
%There exists a fixed $C>0$ s.t.
%for any sufficiently large $\tau$ and for   the $\Upsilon^{J,\tau}_n$ of \eqref{eq:upsilon}  we have
%\begin{equation}\label{eq:lem:141}
% \|  \Upsilon^{J,\tau}_n \| _{\stz ^{\theta} (\R )}\le C \mathcal{N} _{\theta},  \quad \quad \|   \Upsilon^{J,\tau}_n \| _{\st (\R_+ )}=o_\tau .
% \end{equation}
%\end{lemma}
%}

The following lemma is     proved, see formulas (7.15) and (7.30), in Sect. 7 \cite{NakanishiJMSJ}.

\begin{lemma}\label{lem:15}  We have for the $\lambda^k_n$'s  of Proposition \ref{prop:lindecomp} and for $0\le j<L$
\begin{align*}
\sum_{k=0}^{j-1}\|\lambda^k_n\|_{\st(s^j_-,\infty)}+\sum_{k=j+1}^{J-1}\|\lambda^k_n\|_{\st(0,s^j_{+,n})}\stackrel{n\to +\infty}{\rightarrow} 0 \text{ and }
\|\lambda^j_n\|_{\st\(\R\setminus (s^j_{-,n} , s^j_{+,n})\)}=o_\tau   .
\end{align*}
\end{lemma}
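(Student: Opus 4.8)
The plan is to exploit that each $\lambda^k_n$ is \emph{exactly} a free Schr\"odinger evolution. By Proposition~\ref{prop:lindecomp}(2) we have $\lambda^k_n=\varphi^k[s^k_n]$, i.e. $\lambda^k_n(t)=e^{\im (t-s^k_n)\Delta}\varphi^k$, so for every interval $I$
\begin{equation*}
\|\lambda^k_n\|_{\st(I)}=\|\varphi^k[0]\|_{\st(I-s^k_n)}.
\end{equation*}
Since $\varphi^k\in H^1_{rad}\subset\Hh$, Strichartz (Lemma~\ref{lem:1}) together with the embedding $\stzh\hookrightarrow\st$ gives $\|\varphi^k[0]\|_{\st(\R)}\le C\|\varphi^k\|_{\Hh}<\infty$; in particular, by monotone convergence the tails vanish: $\|\varphi^k[0]\|_{\st((T,\infty))}\to 0$ as $T\to+\infty$ and $\|\varphi^k[0]\|_{\st((-\infty,T))}\to 0$ as $T\to-\infty$. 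Thus every quantity to be estimated is a shift-and-restrict of a fixed function of finite $\st(\R)$ norm, and the whole lemma reduces to checking that the relevant endpoints run off to $\pm\infty$ in the shifted frame.

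Next I would record the divergence data from~\ref{Assumption:A2}--\ref{Assumption:A3}: after the ordering, $s^i_n-s^{i-1}_n\to+\infty$ for $1\le i<L$, hence $s^j_n-s^k_n\to+\infty$ for $0\le k<j<L$ and $s^j_n\to+\infty$ for $1\le j<L$ while $s^0_n=0$; for $L\le k<J$ one has $s^k_n\to-\infty$; and $s^j_{\pm,n}=s^j_n\pm\tau$ except $s^0_{-,n}=0$. For the first sum, $0\le k<j<L$ and $\tau$ fixed give $s^j_{-,n}-s^k_n=s^j_n-s^k_n-\tau\to+\infty$, so $\|\lambda^k_n\|_{\st(s^j_{-,n},\infty)}=\|\varphi^k[0]\|_{\st((s^j_{-,n}-s^k_n,\infty))}\to0$; the sum has finitely many terms, hence tends to $0$. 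For the second sum I split the range $j+1\le k\le J-1$: if $j+1\le k<L$ then $s^j_{+,n}-s^k_n=s^j_n-s^k_n+\tau\to-\infty$ and $(0,s^j_{+,n})$ corresponds, in the $\varphi^k$-frame, to an interval contained in $(-\infty,\,s^j_{+,n}-s^k_n)$, so $\|\lambda^k_n\|_{\st(0,s^j_{+,n})}\le\|\varphi^k[0]\|_{\st((-\infty,s^j_{+,n}-s^k_n))}\to0$; if instead $L\le k\le J-1$ then $s^k_n\to-\infty$ while $s^j_{+,n}\ge\tau>0$, so $(0,s^j_{+,n})$ corresponds to an interval contained in $(-s^k_n,\infty)$ with $-s^k_n\to+\infty$, whence again $\|\lambda^k_n\|_{\st(0,s^j_{+,n})}\le\|\varphi^k[0]\|_{\st((-s^k_n,\infty))}\to0$. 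Summing finitely many terms gives the second claim.

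For the $o_\tau$ statement the $n$-dependence disappears entirely. For $1\le j<L$, $s^j_{\pm,n}=s^j_n\pm\tau$, so
\begin{equation*}
\|\lambda^j_n\|_{\st(\R\setminus(s^j_{-,n},s^j_{+,n}))}=\|\varphi^j[0]\|_{\st(\R\setminus(-\tau,\tau))},
\end{equation*}
which is independent of $n$ and tends to $0$ as $\tau\to+\infty$ since $\|\varphi^j[0]\|_{\st(\R)}<\infty$; hence it is $o_\tau$. For $j=0$, $s^0_{-,n}=0$, and since the whole construction runs on the forward half-line $(0,\infty)$, the relevant set is $[\tau,\infty)$, so $\|\lambda^0_n\|_{\st([\tau,\infty))}=\|\varphi^0[0]\|_{\st([\tau,\infty))}$, again independent of $n$ and $o_\tau$.

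There is essentially no analytic obstacle: $\lambda^k_n$ being an \emph{exact} free solution makes the $\st$ norms translation-covariant and globally finite, so the lemma is pure bookkeeping. The only points needing care are (i) tracking the sign of $s^j_n-s^k_n$ through the ordering of~\ref{Assumption:A2}, which forces the split of the second sum at $k=L$; and (ii) the exceptional convention $s^0_{-,n}=0$, so that for $j=0$ the set $\R\setminus(s^0_{-,n},s^0_{+,n})$ is read inside the forward half-line on which the argument is carried out. This reproduces Nakanishi's (7.15) and (7.30).
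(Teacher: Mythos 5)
Your proof is correct, and it is the natural argument: since $\lambda^k_n(t)=e^{\im(t-s^k_n)\Delta}\varphi^k$ is an exact free evolution, $\|\lambda^k_n\|_{\st(I)}=\|\varphi^k[0]\|_{\st(I-s^k_n)}$ with $\|\varphi^k[0]\|_{\st(\R)}<\infty$ by Strichartz and $\stzh\hookrightarrow\st$, so the three assertions reduce to tracking, via the ordering in \ref{Assumption:A2}--\ref{Assumption:A3}, that the shifted intervals escape to $\pm\infty$ --- exactly as you do, including the exceptional case $s^0_{-,n}=0$ where the $o_\tau$ bound is read on $[\tau,\infty)$, consistent with every use of Lemma~\ref{lem:15} in Section~\ref{sec:it}. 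The paper itself gives no proof but only cites Nakanishi's formulas (7.15) and (7.30); your write-up is a correct self-contained rendering of that same translation/decay argument.
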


\section{The main iteration argument}\label{sec:it}

The following  analogue of Lemma 7.1 \cite{NakanishiJMSJ} is the main
property of profile decompositions.
\begin{proposition}\label{prop:17} Let $0< l \le   L$ with the $L$ of \ref{Assumption:A2} and assume
  that the    $(\xi^j, z^j)$ in  \ref{Assumption:A4} scatter forward  for all $j< l$.
  Let $\ell =\min \{ l, L-1 \}$.
Then the following are true:
\begin{enumerate}
\item[$(i)$]   for $0 \le j \leq \ell $  we have
\begin{align}
\|\xi_n[s^j_{-,n}]-\gamma^{J}_n
-\sum_{i=j}^{J-1}\lambda_n^i \|_{\st(s^j_{-,n},\infty)}=o_\tau,\label{$(i)$-$1$}\\
\|\Lambda^j_n[s^j_{-,n}]_>-\lambda^j_n\|_{\stz^1(0,\infty)}=o_\tau ;\label{$(i)$-$2$}
\end{align}

%
%\item[$(i)$-$2$] for $0\leq j\le \ell $  we have
%\begin{equation}\label{$(i)$-$2$}
%    \|\Lambda^j_n[s^j_{-,n}]_>-\lambda^j_n\|_{\stz^1(0,\infty)}=o_\tau ;
%\end{equation}
\item[$(ii)$]  for $0 \leq j\leq \ell $  we have
\begin{equation}\label{$(ii)$-$23$}
    \|\Lambda^j_n\|_{\nnorm{0}{s^j_{-,n}}} =o_\tau ;
\end{equation}

\item[$(iii)$]  for $0 \leq j< \ell $  we have
\begin{equation}\label{$(ii)$-$23b$}
      \|\Lambda^j_n\|_{\nnorm{s^j_{+,n}}{\infty}}=o_\tau ;
\end{equation}

\item[$(iv)$] for $0 \leq j\leq  \ell $  we have
\begin{equation}\label{$(ii)$-$1$}
   \|\xi_n-\Lambda^j_n\|_{\nnorm{s^j_{-,n}}{s^j_{+,n}}}=o_\tau ;
\end{equation}
%
%
%\item[$(ii)$-$3$]  for $0\leq j<\ell $  we have
%\begin{equation}\label{$(ii)$-$3$}
%    \|\Lambda^j_n\|_{\nnorm{s^j_{+,n}}{\infty}}=o_\tau ;
%\end{equation}

\item[$(v)$] for $0\leq j<\ell $  we have
\begin{equation}\label{$(iii)$-$1$}
    \|\xi_n- \Gamma^{J,j,\tau}_n\|_{\nnorm{s^j_{+,n}}{s^{j+1}_{-,n}}}=o_\tau ;
\end{equation}

\item[$(vi)$]
for $0\leq j<\ell$,
\begin{align}\label{22}
|z^{j+1} (-\tau )|+ \|\xi^{j+1}-\varphi^{j+1}[0]\|_{\stz^1(-\infty,-\tau)}\to 0,\quad \text{as }\tau\to \infty.
\end{align}
\item[$(vii)$]
for $0\leq j<\ell$,
\begin{align}\label{GammaAf}
\|\Gamma^{J,j,\tau}_n-\gamma^J_n\|_{\st(s^{j+1}_{-,n},\infty)}=o_\tau.
\end{align}
\end{enumerate}

\end{proposition}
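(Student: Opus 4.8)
\emph{Strategy.} The plan is to prove $(i)$--$(vii)$ simultaneously by finite induction on $j=0,1,\dots,\ell$, marching $\xi_n$ through the concentration windows $(s^j_{-,n},s^j_{+,n})$ and the gaps $(s^j_{+,n},s^{j+1}_{-,n})$, following Sect.~7 of \cite{NakanishiJMSJ} but propagating the forcing term $|z_n|^2z_nG$ through every estimate. I first record the consequences of the standing hypothesis that $(\xi^i,z^i)$ scatters forward for $i<l$: Lemmas \ref{lem:12}--\ref{lem:scatt1} give $\|\xi^i\|_{\st(0,\infty)}<\infty$, $\|\xi^i\|_{\nnorm{T}{\infty}}+\|\xi^i\|_{L^2W^{1,6}(T,\infty)}\to0$ and $\|\xi^i-\varphi^i[0]\|_{\stzo(T,\infty)}\to0$ as $T\to+\infty$, while the argument of Theorem \ref{thm:1} gives $z^i(T)\to0$. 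With this, $(iii)$ is immediate: $\Lambda^j_n$ is a translate of $\xi^j$ and Nakanishi's seminorm is translation invariant, so $\|\Lambda^j_n\|_{\nnorm{s^j_{+,n}}{\infty}}\le\|\xi^j\|_{\nnorm{\tau}{\infty}}=o_\tau$ for $j<\ell$ by Lemma \ref{lem:13}. The induction hypothesis at stage $j$ is $(i)$ itself, which for $j=0$ is the linear profile decomposition \eqref{prop:lindecomp1} together with \ref{Assumption:A2} and Strichartz (recall $s^0_{-,n}=0$), plus assertion $(vi)$ at the \emph{previous} stage, which furnishes backward scattering of $\xi^j$ (for $j=0$ one uses instead $\xi^0(0)=\varphi^0$, which makes $(i)$-$2$ and $(ii)$ trivial).

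\emph{The window $(s^j_{-,n},s^j_{+,n})$.} Here Lemma \ref{lem:15} shows that all $\lambda^i_n$ with $i\ne j$ and the tail $\gamma^J_n$ are $o_\tau$ in $\st$, so by $(i)$ at stage $j$ the datum of $\xi_n$ is $o_\tau$-close in $\st$ to $\lambda^j_n$, which by backward scattering of $\xi^j$ and Strichartz (plus Lemma \ref{lem:referee} to truncate $\xi^j$ at $-\tau$) is $o_\tau$-close to $\Lambda^j_n[s^j_{-,n}]$; this yields $(i)$-$2$, and $(ii)$ follows from backward scattering of $\xi^j$ applied on $(-s^j_n,-\tau)$. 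For $(iv)$ I compare $\xi_n$ with $\Lambda^j_n$ by the nonlinear perturbation Lemma \ref{lem:16}: the data difference is $o_\tau$ in $\st$ by the above and by Lemma \ref{lem:11}, and the forcing-term difference $\||z_n|^2z_n-|z^j_n|^2z^j_n\|_{L^4(s^j_{-,n},s^j_{+,n})}$ is $o_\tau$ because $z_n(\cdot+s^j_n)\to z^j$ uniformly on the compact interval $(-\tau,\tau)$ and both stay bounded by $\mathcal N_0$.

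\emph{The gap $(s^j_{+,n},s^{j+1}_{-,n})$, for $j<\ell$.} Subadditivity (Lemma \ref{lem:3}) together with $(i)$-$2$, $(ii)$, $(iv)$ transfers $(i)$ across the window: at $s^j_{+,n}$ the freely evolved $\xi_n$ is within $o_\tau$ in $\st$ of $\gamma^J_n+\sum_{i>j}\lambda^i_n$, which by \ref{Assumption:A9} and Lemma \ref{lem:15} matches the data of $\Gamma^{J,j,\tau}_n$ up to $o_\tau$. On the gap I then run a bootstrap: as long as $\|\xi_n\|_{\st}$ is small on an initial sub-interval, Proposition \ref{prop:1} (the $L^4_t$ Fermi Golden Rule estimate \eqref{11.11.1}--\eqref{11.12}, fed by $|z_n(s^j_{+,n})|\to|z^j(\tau)|=o_\tau$ from the window) bounds $\|z_n\|_{L^{12}}$, hence the forcing term $\|\,|z_n|^2z_nG\|_{L^4L^{6/5}}$, hence by Proposition \ref{prop:3} $\|\Gamma^{J,j,\tau}_n\|_{\st}$ (whose linear part is $\gamma^J_n$, already $o_\tau$ by \ref{Assumption:A2}); since $\xi_n$ and $\Gamma^{J,j,\tau}_n$ carry the \emph{same} forcing term there, Lemma \ref{lem:16} compares them with vanishing forcing difference and $o_\tau$ data difference, improving the bootstrap and giving $(v)$ on the whole gap. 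In particular $\|\xi_n\|_{\st(s^j_{+,n},s^{j+1}_{-,n})}=o_\tau$, so by Proposition \ref{prop:1} $\|z_n\|_{L^6\cap L^{12}(s^j_{+,n},s^{j+1}_{-,n})}=o_\tau$ and $|z_n(s^{j+1}_n-\tau)|=o_\tau$; passing to the limit with \ref{Assumption:A4} gives $z^{j+1}(-\tau)\to0$ as $\tau\to\infty$.

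\emph{Closing the step.} Combining $(i)$ at stage $j$ with $(i)$-$2$, $(ii)$, $(iv)$, $(v)$ and subadditivity shows that at $s^{j+1}_{-,n}$ the freely evolved $\xi_n$ is within $o_\tau$ in $\st$ of $\gamma^J_n+\sum_{i\ge j+1}\lambda^i_n$, which is exactly $(i)$ at stage $j+1$. Taking weak limits of $\xi_n(\cdot+s^{j+1}_n)$ on $(-\infty,-\tau)$, only $\lambda^{j+1}_n(\cdot+s^{j+1}_n)=\varphi^{j+1}[0]$ survives (the others tend weakly to $0$ by Proposition \ref{prop:lindecomp}), so $\|\xi^{j+1}-\varphi^{j+1}[0]\|_{\st(-\infty,-\tau)}=o_\tau$; feeding this, $z^{j+1}(-\tau)\to0$ and $z^{j+1}\in L^{12}(-\infty,-\tau)$ into the equation for $\xi^{j+1}$ via Proposition \ref{prop:3} upgrades it to $\stzo$, giving $(vi)$. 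Finally $(vii)$: on $(s^{j+1}_{-,n},\infty)$ the difference $\Gamma^{J,j,\tau}_n-\gamma^J_n$ is the Duhamel integral of the source supported in $[s^j_{+,n},s^{j+1}_{-,n}]$; its cubic part is $o_\tau$ in $\st$ by Strichartz since $\Gamma^{J,j,\tau}_n$ is $o_\tau$ in $\st$ on the gap, and its $|z_n|^2z_nG$ part splits into the contribution of times more than $\sqrt{\tau}$ before $s^{j+1}_{-,n}$, which is $o_\tau$ by Lemma \ref{lem:referee} (using $\|z_n\|_{L^6(\text{gap})}=o_\tau$), and the contribution of the last $\sqrt{\tau}$ of the gap, where $|z_n|=o_\tau$ by $(vi)$. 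This closes the induction.

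\emph{Main obstacle.} The delicate point, with no counterpart in \cite{NakanishiJMSJ}, is making the forcing term $|z_n|^2z_nG$ harmless on the gaps: the bootstrap there only closes because $\|z_n\|_{L^{12}_t}^3=\|\,|z_n|^2z_n\|_{L^4_t}$ can be fed back from $\|\xi_n\|_{\st}$ through the Fermi Golden Rule identity \eqref{11.11.1}--\eqref{11.12}, and because the smallness of $|z_n|$ needed at $s^j_{+,n}$ and near $s^{j+1}_{-,n}$ is traced, through $(iv)$ and $(vi)$, back to the forward/backward scattering of the neighbouring profiles. Keeping straight the hierarchy $o_\tau\ll\sup_n\|\gamma^J_n\|_{\st(\R)}\ll\max\{\mathcal N_1^{-3},\mathcal N_0\}\ll1$ through all these estimates is the main bookkeeping burden.
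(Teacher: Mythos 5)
The high-level structure of your proposal — finite induction on $j$, marching through windows $(s^j_{-,n},s^j_{+,n})$ and gaps $(s^j_{+,n},s^{j+1}_{-,n})$, using the perturbation lemmas with the Fermi Golden Rule feeding back the $L^{12}$ bound on $z_n$ — matches the paper's proof of Proposition \ref{prop:17}, and your sketches of $(i)$--$(v)$ are in essence Claims \ref{claim:1}--\ref{claim:5}. However, there is a genuine gap in your treatment of $(vi)$ and especially $(vii)$.

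You repeatedly assert smallness that is not available. From \eqref{18} one only gets
\begin{equation*}
\|\xi_n\|_{\st(s^j_{+,n},s^{j+1}_{-,n})}+\|z_n\|_{L^{12}(s^j_{+,n},s^{j+1}_{-,n})}^3 \lesssim \|\gamma^J_n\|_{\st(\R)}+o_\tau,
\end{equation*}
and from \eqref{eq:lem:hypF1} only $\|z_n\|_{L^6(\text{gap})}^3\lesssim\mathcal N_0$. Once $J$ is fixed (\ref{Assumption:A2}), $\|\gamma^J_n\|_{\st(\R)}$ is small but \emph{not} $o_\tau$, so your claims ``$\|\xi_n\|_{\st(\text{gap})}=o_\tau$'', ``$\|\Gamma^{J,j,\tau}_n$ is $o_\tau$ in $\st$ on the gap'', ``$\|z_n\|_{L^6\cap L^{12}(\text{gap})}=o_\tau$'' are all false. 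Consequently your derivation of $z^{j+1}(-\tau)\to0$ from these bounds is invalid. The paper's route (Claim \ref{claim:backscat}) is different: it shows $\|\xi^{j+1}\|_{\st(-\infty,0)}<\infty$ by weak lower semicontinuity against the merely \emph{bounded} $\|\xi_n\|_{\st(\text{gap})}\leq 1$, then invokes Lemma \ref{lem:12}/\ref{lem:scatt1} for backward scattering, and only \emph{then} gets $z^{j+1}(-\tau)\to0$.

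The more serious gap is in $(vii)$. Lemma \ref{lem:referee} controls the $\st$ norm of a Duhamel tail only when there is a positive time separation $T$ between source and evaluation, giving a factor $T^{-1/4}$; it gives nothing for the window $(s^{j+1}_{-,n},s^{j+1}_{+,n})$, where the source is adjacent in time. Your sketch (cubic part ``$o_\tau$ by Strichartz'', forcing part handled by a $\sqrt\tau$ split of the source) does not address this window, and since the smallness of $\Gamma^{J,j,\tau}_n$ in $\st$ on the gap is not $o_\tau$, Strichartz alone cannot produce $o_\tau$ for the cubic contribution there. This is exactly where the paper's new machinery lives: Claim \ref{lem:LinftyL4} establishes the a priori bound $\|\Gamma^{J,j,\tau}_n\|_{L^\infty(L^2\cap L^4)}\lesssim\mathcal N_0$; Claim \ref{claim:weight} then bootstraps a weighted dispersive estimate $\|\Gamma^{J,j,\tau}_n\|_w=o_\tau$ (the weight $\<t-s^{j+1}_{-,n}\>^{-\delta}$ decays into the gap, the forcing near $s^{j+1}_{-,n}$ is killed by backward scattering of $z^{j+1}$, and the cubic term closes because $\|\Gamma\|_{L^\infty(L^2\cap L^4)}^2\ll1$); finally the interpolation \eqref{GammaNextInterp} against the $\stz^1$ bound \eqref{GammaNext0} yields $\|\Gamma^{J,j,\tau}_n\|_{\st(s^{j+1}_{-,n},s^{j+1}_{+,n})}=o_\tau$, after which the tail $(s^{j+1}_{+,n},\infty)$ is handled by Lemma \ref{lem:referee} with $T\sim\tau$ and the merely \emph{bounded} $\|z_n\|_{L^6(\text{gap})}$. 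None of this appears in your sketch; without it, $(vii)$ is not proved and the induction to $(i)$ at stage $j+1$ does not close.
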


\begin{remark}
We prove Proposition \ref{prop:17} by induction.  First  we prove \eqref{$(i)$-$1$} and \eqref{$(i)$-$2$} for $j=0$, which are trivial, and then we prove $(i)\Rightarrow (ii)\Rightarrow \cdots \Rightarrow (vii)$ $\Rightarrow$ ($(i)$ for $j+1$).
Therefore,  step by step (finite induction), we have the conclusion.
However, for $(vii)$, we  specify that $\xi^{j+1}$ scatters backward to $\varphi^{j+1}$ only after we have \eqref{$(i)$-$1$} of $(i)$ for $j+1$.
\end{remark}

\begin{proof}  The proof of Proposition \ref{prop:17} is the consequence of Claims \ref{claim:1}--\ref{claim:8}.

\begin{claim}\label{claim:1}\eqref{$(i)$-$1$} and \eqref{$(i)$-$2$} are true for $j=0$.
\end{claim}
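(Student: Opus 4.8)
The plan is to observe that, in the base case $j=0$, both assertions are in fact \emph{exact identities}: the quantities inside the two norms vanish identically, so the statements hold trivially, a fortiori as $o_\tau$ quantities. The only genuine content is to unwind the conventions of \ref{Assumption:A1}--\ref{Assumption:A6}, namely that $s^0_n=0$ and $s^0_{-,n}=0$.

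For \eqref{$(i)$-$1$} with $j=0$: since $s^0_{-,n}=0$ and $\xi_n(0)=\xi_{0n}$, the notation \eqref{11.1}--\eqref{eq:u0t0} gives $\xi_n[s^0_{-,n}]=\xi_n[0]=\xi_{0n}[0]$. Applying the linear profile decomposition \eqref{prop:lindecomp1} with the base sequence $s_n=s^0_n=0$ yields $\xi_{0n}[0]=\sum_{i=0}^{J-1}\lambda^i_n+\gamma^J_n$. Hence $\xi_n[s^0_{-,n}]-\gamma^J_n-\sum_{i=0}^{J-1}\lambda^i_n\equiv 0$ for every $n$ and $\tau$, so the left-hand side of \eqref{$(i)$-$1$} is identically $0$, which is $o_\tau$.

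For \eqref{$(i)$-$2$} with $j=0$: the key preliminary point is that $\varphi^0=\xi^0(0)$. On one hand, by \ref{Assumption:A4} with $s^0_n=0$ we have $\xi_n\rightharpoonup\xi^0$ weakly in $H^1$ uniformly on compact time intervals, hence $\xi_n(0)\rightharpoonup\xi^0(0)$ in $H^1$; on the other hand, by part $(2)$ of Proposition \ref{prop:lindecomp} applied with $s_n=s^0_n=0$ we have $\xi_n(0)=\xi_{0n}[0](0)\rightharpoonup\varphi^0$ in $H^1$. Uniqueness of weak limits gives $\varphi^0=\xi^0(0)$. Now $\Lambda^0_n=\xi^0(\cdot-s^0_n)=\xi^0$ by \ref{Assumption:A6}, and $\lambda^0_n=\varphi^0[s^0_n]=\varphi^0[0]$. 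Since $s^0_{-,n}=0$, definition \eqref{11.2} gives that on $(0,\infty)$ the function $\Lambda^0_n[s^0_{-,n}]_>$ equals $\xi^0(0)[0]$, i.e.\ the free Schr\"odinger evolution of $\xi^0(0)$, while $\lambda^0_n=\varphi^0[0]$ is the free evolution of $\varphi^0$. As $\varphi^0=\xi^0(0)$, the difference $\Lambda^0_n[s^0_{-,n}]_>-\lambda^0_n$ vanishes on $(0,\infty)$, so $\|\Lambda^0_n[s^0_{-,n}]_>-\lambda^0_n\|_{\stz^1(0,\infty)}=0=o_\tau$.

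There is no analytic obstacle in this step: it is precisely the (trivial) base case of the finite induction in Proposition \ref{prop:17}. The only care needed is to keep track of the degenerate conventions $s^0_n=0$ and $s^0_{-,n}=0$, which are exactly what make the time-shift in $\Lambda^0_n$ and the ``$>$''-truncation $[\,\cdot\,]_>$ invisible on $(0,\infty)$, thereby collapsing \eqref{$(i)$-$1$} to the defining equality of the linear profile decomposition and \eqref{$(i)$-$2$} to the identification $\varphi^0=\xi^0(0)$.
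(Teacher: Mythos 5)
Your proposal is correct and takes essentially the same approach as the paper, which disposes of the base case by noting that both left-hand sides vanish identically by definition. You have simply spelled out the bookkeeping (the conventions $s^0_n=s^0_{-,n}=0$, the defining equality \eqref{prop:lindecomp1}, and the identification $\varphi^0=\xi^0(0)$ via uniqueness of weak limits) that the paper leaves implicit.
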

\proof Claims \eqref{$(i)$-$1$} and \eqref{$(i)$-$2$} for $j=0$  are true because the l.h.s.\ are $0$ by definition. \qed

\begin{claim}[Proof of $(ii)$ for $j$]\label{claim:2}Assume \eqref{$(i)$-$1$} and \eqref{$(i)$-$2$} for a  $j $ with $j \le \ell$. Then   \eqref{$(ii)$-$23$}   is true for   $j $.
\end{claim}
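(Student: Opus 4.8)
The plan is to deduce $(ii)$ for $j$, i.e. $\|\Lambda^j_n\|_{\nnorm{0}{s^j_{-,n}}}=o_\tau$, from the two hypotheses \eqref{$(i)$-$1$} and \eqref{$(i)$-$2$}. First I would rewrite $\Lambda^j_n=\xi^j(\cdot-s^j_n)$ on the time interval $(0,s^j_{-,n})=(0,s^j_n-\tau)$ as a small perturbation, in Nakanishi's seminorm, of the linear flow $\lambda^j_n=\varphi^j[0]$ (recall $\varphi^j=\lambda^j_n[s^j_n]$-weak-limit and $\lambda^j_n=\varphi^j[s^j_n]$ from \ref{Assumption:A5}--\ref{Assumption:A6}, so $\lambda^j_n$ is the free evolution with the same ``data at $s^j_n$'' as $\Lambda^j_n$ morally has). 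The key input is \eqref{$(i)$-$2$}, which says $\|\Lambda^j_n[s^j_{-,n}]_>-\lambda^j_n\|_{\stz^1(0,\infty)}=o_\tau$; combined with the domination $\|\cdot\|_{\nnorm{\cdot}{\cdot}}\le C\|\cdot\|_{\stz^1}$ from \eqref{11.8.1} and the subadditivity of the seminorm (Lemma \ref{lem:3}), this should immediately control $\|\Lambda^j_n\|_{\nnorm{0}{s^j_{-,n}}}$ up to the seminorm of $\lambda^j_n$ itself on $(0,s^j_{-,n})$.

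Next I would estimate $\|\lambda^j_n\|_{\nnorm{0}{s^j_{-,n}}}$. Since $\lambda^j_n$ is a free solution, its seminorm on any interval is bounded (up to constants) by $\|\lambda^j_n\|_{\st}$ on a slightly larger interval — indeed for a free solution $u[T]_>=u$ and $u[S]=u$, so the seminorm is literally $\sup_{S<T}\|u-u\|_{\st}$-type quantities but with the ``$\chi_{(-\infty,T]}$'' truncation one still reduces, via Lemma \ref{lem:4} with $f=0$, to zero; more carefully, using \eqref{11.8} the seminorm is controlled by $\|\lambda^j_n\|_{\st(0,s^j_{-,n})}$ plus boundary terms $\|\lambda^j_n[T_1]-\lambda^j_n[S]\|_{\st(T_1,\infty)}$ which for a free flow vanish. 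The decisive fact is then Lemma \ref{lem:15}: $\|\lambda^j_n\|_{\st(\R\setminus(s^j_{-,n},s^j_{+,n}))}=o_\tau$, and in particular $\|\lambda^j_n\|_{\st(0,s^j_{-,n})}=o_\tau$ since $(0,s^j_{-,n})\subset \R\setminus(s^j_{-,n},s^j_{+,n})$ (the concentration interval of the $j$-th profile is $(s^j_{-,n},s^j_{+,n})$). So $\|\lambda^j_n\|_{\nnorm{0}{s^j_{-,n}}}=o_\tau$, and adding the $o_\tau$ from \eqref{$(i)$-$2$} gives the claim.

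Concretely the chain I would write is: $\|\Lambda^j_n\|_{\nnorm{0}{s^j_{-,n}}}\le \|\Lambda^j_n-\lambda^j_n\|_{\nnorm{0}{s^j_{-,n}}}+\|\lambda^j_n\|_{\nnorm{0}{s^j_{-,n}}}$; the first term is handled by noting that on $(0,s^j_{-,n})$ the function $\Lambda^j_n$ agrees with $\Lambda^j_n[s^j_{-,n}]_>$ (this is the point of the ``$>$'' truncation at the right endpoint $s^j_{-,n}$), so using the definition \eqref{11.7} of the seminorm restricted to $(0,s^j_{-,n})$ together with \eqref{11.8.1} bounds it by $C\|\Lambda^j_n[s^j_{-,n}]_>-\lambda^j_n\|_{\stz^1(0,\infty)}=o_\tau$ by \eqref{$(i)$-$2$}; the second term is $o_\tau$ by the Lemma \ref{lem:15} argument above.

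The main obstacle I anticipate is the bookkeeping around the seminorm $\nnorm{\cdot}{\cdot}$ and the ``$>$''-subscript: one must be careful that $\|\Lambda^j_n\|_{\nnorm{0}{s^j_{-,n}}}$ only involves $\Lambda^j_n[T]_>$ and $\Lambda^j_n[S]$ for $0<S<T<s^j_{-,n}$, which all depend on $\Lambda^j_n$ only through its values on $(-\infty,s^j_{-,n})$, and on that range the comparison with $\lambda^j_n$ is controlled by \eqref{$(i)$-$2$}; matching the endpoint conventions in \ref{Assumption:A3} (in particular $s^0_{-,n}=0$, which makes the $j=0$ case of $(ii)$ trivial and consistent with Claim \ref{claim:1}) is the only real care needed. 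Everything else is a direct application of \eqref{11.8.1}, Lemma \ref{lem:3}, \eqref{11.8}, and Lemma \ref{lem:15}.
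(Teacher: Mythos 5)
Your proof arrives at essentially the same estimate as the paper's, but the treatment of $\|\lj\|_{\nnorm{0}{\sjm}}$ is circuitous and partly wrong, even though the conclusion is right. The decisive observation—which you do make first, via Lemma \ref{lem:4} applied to the free flow (i.e.\ $f=0$ in \eqref{11.9})—is that a free solution has identically vanishing Nakanishi seminorm. Stronger still, since $u\mapsto u[T]_>-u[S]$ is linear and annihilates free flows, one has the \emph{equality} $\|\Lj\|_{\nnorm{0}{\sjm}}=\|\Lj-\lj\|_{\nnorm{0}{\sjm}}$; this is the paper's first step and it renders your triangle inequality superfluous. Your ``more careful'' version, however, is incorrect: \eqref{11.8} is a \emph{lower} bound for the seminorm, not an upper bound, so it does not control $\|\lj\|_{\nnorm{0}{\sjm}}$ by $\|\lj\|_{\st(0,\sjm)}$, and the appeal to Lemma \ref{lem:15} to produce an $o_\tau$ is therefore a red herring (and unneeded, since the quantity is exactly zero). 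Two smaller slips: $\lambda^j_n=\varphi^j[s^j_n]=\lambda^j(\cdot-s^j_n)$, not $\varphi^j[0]$; and the inequality $\|\Lj\|\le\|\Lj-\lj\|+\|\lj\|$ is the triangle inequality of the seminorm in its function argument, not the interval subadditivity of Lemma \ref{lem:3}. Once these are tightened, your chain collapses to the paper's one-line proof:
\begin{align*}
\|\Lj\|_{\nnorm{0}{\sjm}}=\|\Lj-\lj\|_{\nnorm{0}{\sjm}}\lesssim\|\Lj-\lj\|_{\stz^1(0,\sjm)}\le\|\Lj[\sjm]_>-\lj\|_{\stz^1(0,\infty)}=o_\tau,
\end{align*}
by \eqref{11.8.1}, the fact that $\Lj=\Lj[\sjm]_>$ on $(0,\sjm)$, and \eqref{$(i)$-$2$}.
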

\proof
The claim follows from
\begin{align*}\|\Lambda^j_n \|_{\nnorm{0}{\sjm}}&=\|\Lj-\lj\|_{\nnorm{0}{\sjm}}\lesssim\|\Lj-\lj\|_{\stz^1(0,\sjm)}\leq \| \Lj[\sjm]_>-\lj\|_{\stz^1(0,\infty)}=o_\tau ,
\end{align*}
where we have used \eqref{11.8.1} as well as \eqref{$(i)$-$2$}.\qed

\begin{claim}[Proof of $(iii)$ for $j$]\label{claim:21}Assume \eqref{$(i)$-$1$} and \eqref{$(i)$-$2$} for a  $j $ with $j < \ell$. Then   \eqref{$(ii)$-$23b$}   is true for   $j $.
\end{claim}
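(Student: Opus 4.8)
We need to prove Claim \ref{claim:21}: under the inductive hypotheses \eqref{$(i)$-$1$} and \eqref{$(i)$-$2$} for $j<\ell$, the bound $\|\Lambda^j_n\|_{\nnorm{s^j_{+,n}}{\infty}}=o_\tau$ holds. The key geometric fact is that $s^j_{+,n}=s^j_n+\tau$, so on the interval $(s^j_{+,n},\infty)$ the translated profile $\Lambda^j_n=\xi^j(\cdot-s^j_n)$ lives in the region $t-s^j_n>\tau$, i.e.\ the ``far future'' of $\xi^j$. Since $(\xi^j,z^j)$ scatters forward (this is part of the standing assumption of Proposition \ref{prop:17} for $j<l$, hence for $j\le\ell$), the profile is well-approximated by a free solution there, and its Nakanishi seminorm should be small for $\tau$ large.

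\textbf{Plan.} First I would use the subadditivity of Nakanishi's seminorm (Lemma \ref{lem:3}) together with the domination by the Strichartz norm \eqref{11.8.1}: since $\Lambda^j_n$ is, after the shift $t\mapsto t-s^j_n$, just $\xi^j$ restricted to $(\tau,\infty)$, we have
\begin{align*}
\|\Lambda^j_n\|_{\nnorm{s^j_{+,n}}{\infty}}=\|\xi^j\|_{\nnorm{\tau}{\infty}}\le C\|\xi^j\|_{\stz^1(\tau,\infty)}.
\end{align*}
Now $(\xi^j,z^j)$ is a solution of \eqref{1}--\eqref{2} that scatters forward; by Lemma \ref{lem:12} this is equivalent to $\|\xi^j\|_{\mathfrak{st}(\R)}<\infty$, hence $\|\xi^j\|_{\mathfrak{st}(\tau,\infty)}\to 0$ as $\tau\to\infty$. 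Combining this with the scattering of $z^j$ (so $|z^j(\tau)|\to 0$, using e.g.\ the argument at the end of the proof of Theorem \ref{thm:1}, or $\|z^j\|_{L^{12}(\tau,\infty)}\to 0$ from Proposition \ref{prop:1}), and applying Lemma \ref{lem:-8} to get a uniform $H^{1/2}$ (indeed $H^1$) bound $\mathcal N_{1/2}$ on $\xi^j(\tau)$, I can invoke Proposition \ref{prop:1} to conclude $\|\xi^j\|_{\st(\tau,\infty)}<\infty$ and then, for $\tau$ large enough that $\big(\|\xi^j[\tau]\|_{\st(\tau,\infty)}+|z^j(\tau)|^2\big)\max\{1,\mathcal N_{1/2}^3\}\ll 1$, Lemma \ref{lem:8} (equivalently the remark after it) gives $\|\xi^j\|_{\stz^1(\tau,\infty)}<\infty$ with this quantity tending to $0$ as $\tau\to\infty$. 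In fact this is exactly the content of Lemma \ref{lem:13} applied to $(\xi^j,z^j)$: since $(\xi^j,z^j)$ scatters forward and $\mathcal N_0\le\mu_0$, we directly get $\|\xi^j\|_{\nnorm{T}{\infty}}\to 0$ as $T\to+\infty$.

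\textbf{Cleaner route.} Actually the cleanest argument is: by Lemma \ref{lem:13} applied to the solution $(\xi^j,z^j)$ (which scatters forward by hypothesis and satisfies $\mathcal N_0\le\mu_0$ by \ref{Assumption:A1}), we have $\|\xi^j\|_{\nnorm{T}{\infty}}\to 0$ as $T\to+\infty$. Translating by $s^j_n$ and taking $T=\tau$,
\begin{align*}
\|\Lambda^j_n\|_{\nnorm{s^j_{+,n}}{\infty}}=\|\xi^j(\cdot-s^j_n)\|_{\nnorm{s^j_n+\tau}{\infty}}=\|\xi^j\|_{\nnorm{\tau}{\infty}},
\end{align*}
which is independent of $n$ and tends to $0$ as $\tau\to+\infty$; hence it is $o_\tau$ in the sense of Definition \ref{eq:littleo}. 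Note here I must be slightly careful that $\nnorm{\cdot}{\cdot}$ is translation-invariant, which follows immediately from the definition \eqref{11.7} since $e^{\im(t-t_0)\Delta}$ is, together with the translation-invariance of the $\st=L^4L^6$ norm in time.

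\textbf{Main obstacle.} The conceptual point — and the only place the inductive hypothesis $(i)$ is genuinely needed — is subtle: Lemma \ref{lem:13} requires that $(\xi^j,z^j)$ \emph{scatters forward}, and for $j<\ell\le L-1$ this holds by the running assumption of Proposition \ref{prop:17} only if $j<l$; this is fine for $j<\ell$. So in fact the hypotheses \eqref{$(i)$-$1$}, \eqref{$(i)$-$2$} are not directly used in this particular claim's argument beyond being part of the inductive package — the real input is the forward scattering of $\xi^j$, itself part of the hypothesis of Proposition \ref{prop:17}. Thus the main (minor) obstacle is just bookkeeping: verifying translation invariance of $\nnorm{\cdot}{\cdot}$ and checking that the hypotheses of Lemma \ref{lem:13} (namely $\mathcal N_0\le\mu_0$ for the solution $(\xi^j,z^j)$) are met, which follows from \eqref{eq:exp1}/\ref{Assumption:A1} since the mass of each profile is bounded by that of $\xi_n$.
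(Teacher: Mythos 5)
Your ``cleaner route'' is precisely the paper's proof: translate the Nakanishi seminorm by $s^j_n$ to reduce $\|\Lambda^j_n\|_{\nnorm{s^j_{+,n}}{\infty}}$ to $\|\xi^j\|_{\nnorm{\tau}{\infty}}$, then apply Lemma \ref{lem:13} to the forward-scattering solution $(\xi^j,z^j)$. The longer preliminary ``Plan'' is redundant but not wrong, and your observation that the inductive hypotheses \eqref{$(i)$-$1$}, \eqref{$(i)$-$2$} are not actually used here (only the standing assumption of forward scattering) is accurate.
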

\proof
By Lemma \ref{lem:13}  and the hypothesis  that $( \xi ^j, z^j )$ is scattering forward   for $0\le j <\ell$,   by the definition of $\Lj$  in \ref{Assumption:A6} we have
\begin{align*}
\|\Lj\|_{\nnorm{\sjp}{\infty}}=\| \xi^j \|_{\nnorm{\tau}{\infty}} = o_\tau \text{  for $0\le j < \ell$ }.
\end{align*}
\qed

\begin{claim}[Proof of {$(iv)$}  for $j$]\label{claim:4}
Assume \eqref{$(i)$-$1$} and \eqref{$(i)$-$2$} for a $j$ with $j\le \ell$.
Then  \eqref{$(ii)$-$1$} is true for $j$.
\end{claim}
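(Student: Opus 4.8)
The plan is to compare the two solutions $(\xi_n,z_n)$ and $(\Lambda^j_n,z^j_n)$ of \eqref{1}--\eqref{2} on the window $(s^j_{-,n},s^j_{+,n})$ by means of the nonlinear perturbation lemma (Lemma~\ref{lem:16}), where $z^j_n:=z^j(\cdot-s^j_n)$ as in \ref{Assumption:A6} and $(\Lambda^j_n,z^j_n)$ solves \eqref{1}--\eqref{2} because $(\xi^j,z^j)$ does. By Lemma~\ref{lem:16}, \eqref{$(ii)$-$1$} will follow once we show that, given $\varepsilon>0$, one can take first $\tau$ and then $n$ large enough that the data difference $\|(\xi_n-\Lambda^j_n)[s^j_{-,n}]\|_{\st(s^j_{-,n},s^j_{+,n})}$ and the forcing difference $\|z_n|z_n|^2-z^j_n|z^j_n|^2\|_{L^4(s^j_{-,n},s^j_{+,n})}$ both fall below the threshold $\delta_*=\delta_*(\mathcal{N}_0,\mathcal{N}_1,\mathcal{N}_2,\varepsilon)$ furnished by that lemma.

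The forcing difference is controlled by \ref{Assumption:A4}: the functions $z_n(\cdot+s^j_n)$ converge to $z^j$ pointwise, and, being uniformly bounded and equicontinuous on $[-\tau,\tau]$ (the bound on $\dot z_n$ coming from \eqref{2} and conservation of $\mathbb{M}$), uniformly on $[-\tau,\tau]$; hence this difference is $\le(2\tau)^{1/4}o_n(1)\to0$ for each fixed $\tau$. For the data difference I would write
\[
\xi_n[s^j_{-,n}]-\Lambda^j_n[s^j_{-,n}]=\Big(\xi_n[s^j_{-,n}]-\gamma^J_n-\sum_{i=j}^{J-1}\lambda^i_n\Big)-\Big(\Lambda^j_n[s^j_{-,n}]-\lambda^j_n\Big)+\gamma^J_n+\sum_{i=j+1}^{J-1}\lambda^i_n
\]
and estimate the $\st(s^j_{-,n},s^j_{+,n})$-norm of each group: the first is $o_\tau$ by the inductive hypothesis \eqref{$(i)$-$1$}; the second is $o_\tau$ by \eqref{$(i)$-$2$}, the embedding $\stz^1\hookrightarrow\st$, and the identity $\Lambda^j_n[s^j_{-,n}]_>=\Lambda^j_n[s^j_{-,n}]$ on $(s^j_{-,n},\infty)$; $\|\gamma^J_n\|_{\st(|s^j_n-t|<\tau)}\to0$ by Lemma~\ref{lem:gamma0}; and $\sum_{i>j}\|\lambda^i_n\|_{\st(0,s^j_{+,n})}\to0$ by Lemma~\ref{lem:15}. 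So the data difference is $o_\tau$ too.

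The point requiring care is that $\delta_*$ degrades fast with $\mathcal{N}_2$, while a crude bound only gives $\mathcal{N}_2\lesssim\tau^{1/4}\mathcal{N}_1^2$ over the window, which is not enough to absorb the (only polynomially $\tau$-small) data and forcing differences. To make $\mathcal{N}_2$ bounded uniformly in $\tau$, I would first observe that \eqref{$(i)$-$1$}, the choice $\|\gamma^J_n\|_{\st(\R)}\ll\mathcal{N}_1^{-3}$ of \ref{Assumption:A2}, the bounds $\|\lambda^i_n\|_{\st(\R)}\lesssim\|\varphi^i\|_{\dot H^{1/2}}\lesssim\mathcal{N}_1^2$ (weak lower semicontinuity of the $H^1$-norm) and the localisation of the $\lambda^i_n$ ($i\neq j$) from Lemma~\ref{lem:15} yield $\|\xi_n[s^j_{-,n}]\|_{\st(s^j_{-,n},s^j_{+,n})}\lesssim\mathcal{N}_1^2$ with a constant independent of $\tau$, and likewise (via \eqref{$(i)$-$2$}, or, for $j<l$, via $\|\xi^j\|_{\st(\R)}<\infty$ from Lemma~\ref{lem:12}) $\|\Lambda^j_n[s^j_{-,n}]\|_{\st(s^j_{-,n},s^j_{+,n})}\lesssim\mathcal{N}_1^2$. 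Splitting the window into a number of subintervals that depends only on $\mathcal{N}_0,\mathcal{N}_1$ (the sum over them of the fourth powers of these free-flow norms being $\lesssim\mathcal{N}_1^8$), the $L^4$-estimates of Section~\ref{sec:l4} (Lemma~\ref{lem:8}, Proposition~\ref{prop:1}) upgrade these to $\|\xi_n\|_{\st}+\|\Lambda^j_n\|_{\st}\le C(\mathcal{N}_0,\mathcal{N}_1)$ on the window; together with $\mathcal{N}_0\ll1$ (from \ref{Assumption:A1} and conservation of $\mathbb{M}$) and $\mathcal{N}_1,\mathcal{N}_{1/2}$ bounded (Lemma~\ref{lem:-8} and weak lower semicontinuity for $\xi^j$), this fixes $\delta_*>0$ independently of $\tau$. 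Lemma~\ref{lem:16} then gives $\|\xi_n-\Lambda^j_n\|_{\nnorm{s^j_{-,n}}{s^j_{+,n}}}\le\varepsilon$, that is \eqref{$(ii)$-$1$}.

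The main obstacle is therefore not the soft convergence on the fixed-length window but this uniform-in-$\tau$ control of the Strichartz-type norms entering $\delta_*$, and it is exactly here that the inductive hypotheses \eqref{$(i)$-$1$}--\eqref{$(i)$-$2$}, the choice of $J$ and $L$ in \ref{Assumption:A2}, the profile estimates of Lemma~\ref{lem:15}, and — for the indices $j<l$ — the assumed forward scattering of the $(\xi^j,z^j)$, are all indispensable.
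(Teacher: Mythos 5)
Your proof reproduces the paper's argument: the same splitting of $(\xi_n-\Lambda^j_n)[\sjm]$ into the four pieces controlled respectively by \eqref{$(i)$-$1$}, \eqref{$(i)$-$2$}, Lemma \ref{lem:gamma0} and Lemma \ref{lem:15}, the same uniform-convergence estimate for the forcing difference on the window coming from \ref{Assumption:A4}--\ref{Assumption:A6}, and the same final appeal to Lemma \ref{lem:16}. The one place where you do more than the paper is the discussion of whether $\delta_*=\delta_*(\cN_0,\cN_1,\cN_2,\varepsilon)$ can be taken uniform in $\tau$ and $n$; the paper invokes Lemma \ref{lem:16} without comment. Your worry is legitimate, because $\delta_*$ decays super-exponentially in $\cN_2$ and a crude $\tau^{1/4}$ growth of $\|\xi_n\|_{\st(\sjm,\sjp)}$ would indeed defeat the $o_\tau$ control on the data difference. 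Your bound for the $\Lambda^j_n$ side, $\|\Lambda^j_n\|_{\st(\sjm,\sjp)}=\|\xi^j\|_{\st(-\tau,\tau)}$ bounded uniformly via forward scattering of $\xi^j$ (assumed for $j<l$) together with backward scattering from item $(vi)$ at $j-1$ (already available at this stage of the induction), is the right observation. For the $\xi_n$ side, however, your ``upgrade'' from a uniform bound on $\|\xi_n[\sjm]\|_{\st(\sjm,\sjp)}$ to one on $\|\xi_n\|_{\st(\sjm,\sjp)}$ via a finite subdivision and Lemma \ref{lem:8}/Proposition \ref{prop:1} is not immediate as stated, since those estimates require control of the free flow $\xi_n[S_i]$ from each sub-starting time $S_i$, not of $\xi_n[\sjm]$ restricted to $I_i$, and making this rigorous needs a bootstrap/continuity step. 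A simpler way out is to note that the proof of Lemma \ref{lem:16} subdivides only according to the reference solution $\xi_1$, so choosing $\xi_1=\Lambda^j_n$ and $\xi_2=\xi_n$ makes only $\|\Lambda^j_n\|_{\st}+\|z^j_n\|_{L^{12}}^3$ enter the choice of $N$ and $\tilde\delta$, after which the bound on $\xi_2=\xi_n$ is \emph{derived} inside that proof from the smallness of the data and forcing differences; this removes the need to bound $\|\xi_n\|_{\st(\sjm,\sjp)}$ a priori.
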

\proof   We have
 \begin{equation} \begin{aligned}\label{xiLambda}    \|(\xi_n-\Lj)[\sjm]\|_{\st(\sjm,\sjp)}\leq &\|\xi_n[\sjm]-\gamma^J_n-\sum_{i=j}^{J-1}\lambda^i_n\|_{\st(\sjm,\sjp)} +\|\gamma^{J}_n\|_{\st(\sjm,\sjp)}\\&+\sum_{i= j+1}^{J-1}\|\lambda^i_n\|_{\st(\sjm,\sjp)}+\|(\Lj-\lj)[\sjm]\|_{\st(\sjm,\sjp)}\\=&o_\tau,\end{aligned}
\end{equation}
where we used  the following bounds for the terms in the r.h.s.: \eqref{$(i)$-$1$} for $j$ for the 1st;    Lemma \ref{lem:gamma0} for the 2nd; Lemma \ref{lem:15} for the 3rd;    \eqref{$(i)$-$2$}  for the  4th.
Therefore by $\||z_n|^2z_n -z^j_n|z^j_n|^2\|_{L^4(\sjm,\sjp)}\stackrel{n\to \infty}{\rightarrow} 0$, which follows from \ref{Assumption:A4} and \ref{Assumption:A6},   we can apply Lemma \ref{lem:16} and obtain \eqref{$(ii)$-$1$}  for $j$.\qed

\begin{claim}[Proof of {$(v)$}    for $j$]\label{claim:5}
Assume \eqref{$(i)$-$1$} and \eqref{$(i)$-$2$} and \eqref{$(ii)$-$1$} for a $j$ with $j<\ell$.
Then \eqref{$(iii)$-$1$} is true  for $j$.
\end{claim}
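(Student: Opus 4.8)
The plan is to compare $\xi_n$ and $\Gamma^{J,j,\tau}_n$ on the gap interval $(s^j_{+,n},s^{j+1}_{-,n})$ via the Nakanishi-seminorm perturbation machinery, exactly as in Claim \ref{claim:4} but now with $\Gamma^{J,j,\tau}_n$ playing the role of the comparison solution. First I would record that both $\xi_n$ and $\Gamma^{J,j,\tau}_n$ satisfy an equation of the form \eqref{11.14.2} on $I^{j,\tau}_n=(s^j_{+,n},s^{j+1}_{-,n})$: $\xi_n$ solves \eqref{1} with forcing $F=|z_n|^2z_nG$, while $\Gamma^{J,j,\tau}_n$ solves, by \ref{Assumption:A9}, the same equation with the \emph{same} forcing $|z_n|^2z_nG$ on that interval (the cutoff $\chi_{[s^j_{+,n},s^{j+1}_{-,n}]}$ is identically $1$ there). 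Thus the difference of the two forcing terms vanishes on $I^{j,\tau}_n$, so the quantity $\| |z_n|^2z_n-|z_n|^2z_n\|_{L^4(I^{j,\tau}_n)}=0$, which is the crucial simplification making Lemma \ref{lem:16} applicable with a trivial bound on the forcing difference.

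The one thing that must be checked is the smallness of the initial data difference at the left endpoint $s^j_{+,n}$, namely that $\|(\xi_n-\Gamma^{J,j,\tau}_n)[s^j_{+,n}]\|_{\st(s^j_{+,n},s^{j+1}_{-,n})}$ is $o_\tau$. Here I would use the triangle inequality and the previously established facts of the induction: by \ref{Assumption:A9} we have $\Gamma^{J,j,\tau}_n(s^j_{+,n})=\gamma^{J,j,\tau}_n(s^j_{+,n})$, and $\gamma^{J,j,\tau}_n$ differs from the full linear remainder $\gamma^J_n$ (and from $\xi_n$) by pieces controlled through \eqref{$(i)$-$1$} for $j$, Lemma \ref{lem:15}, and the fact (Claim \ref{claim:2}, i.e.\ \eqref{$(ii)$-$1$} for $j$ together with subadditivity Lemma \ref{lem:3} and the bound \eqref{11.8}) that $\xi_n$ is well-approximated by $\Lambda^j_n$ on $(s^j_{-,n},s^j_{+,n})$, hence $\|(\xi_n-\gamma^J_n-\sum_{i>j}\lambda^i_n-\Lambda^j_n[s^j_{+,n}]_>)[\,\cdot\,]\|$-type quantities are $o_\tau$ after propagating to $s^j_{+,n}$; combined with $\|\Lambda^j_n[s^j_{+,n}]\|_{\st(s^j_{+,n},\infty)}=o_\tau$ (from \eqref{$(ii)$-$23b$}, which holds for $j<\ell$ by Claim \ref{claim:21}) and $\|\lambda^i_n\|_{\st(s^j_{+,n},\infty)}=o_\tau$ for $i>j$ (Lemma \ref{lem:15}), one gets that $\xi_n[s^j_{+,n}]$ and $\gamma^J_n[s^j_{+,n}]=\Gamma^{J,j,\tau}_n[s^j_{+,n}]$ are $o_\tau$-close in $\st(s^j_{+,n},\infty)$. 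I would also note that the hypotheses $\mathcal N_0\le\mu_0$ and the $\st$-smallness needed to invoke Lemma \ref{lem:16} follow from \ref{Assumption:A1}, \ref{Assumption:A2} and the already-proven $\st$-bounds on $\xi_n$ and on $\Gamma^{J,j,\tau}_n$ (the latter via Proposition \ref{prop:1}/\ref{prop:3} applied on the gap, using that its data is a piece of $\gamma^J_n$ which is small by \ref{Assumption:A2}).

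With these two inputs — zero forcing difference and $o_\tau$ data difference — Lemma \ref{lem:16} applied on $(s^j_{+,n},s^{j+1}_{-,n})$ gives directly $\|\xi_n-\Gamma^{J,j,\tau}_n\|_{\nnorm{s^j_{+,n}}{s^{j+1}_{-,n}}}\le\varepsilon$ for any prescribed $\varepsilon$, once $\tau$ is large and $n$ large; that is precisely \eqref{$(iii)$-$1$}. The main obstacle, as usual in this scheme, is the bookkeeping for the initial-data estimate at $s^j_{+,n}$: one must carefully pass from the statement "$\xi_n$ is close to $\Lambda^j_n$ on $(s^j_{-,n},s^j_{+,n})$" plus "$\Lambda^j_n$ is negligible afterwards" to "$\xi_n[s^j_{+,n}]$ is close to $\gamma^J_n[s^j_{+,n}]$", using the seminorm inequality \eqref{11.8} and subadditivity to move across the endpoint $s^j_{+,n}$, together with Lemma \ref{lem:gamma0} to discard $\gamma^J_n$ near $s^j_n$. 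Everything else is a routine application of the perturbation lemma.
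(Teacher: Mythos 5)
Your overall strategy matches the paper's: compare $\xi_n$ with $\Gamma^{J,j,\tau}_n$ on the gap interval via a perturbation estimate, exploiting that the two forcings coincide there and that the data difference at $s^j_{+,n}$ is $o_\tau$ (which you correctly assemble from \eqref{$(i)$-$1$}, \eqref{$(i)$-$2$}, \eqref{$(ii)$-$1$} together with \eqref{11.8}, Lemma \ref{lem:15}, Lemma \ref{lem:gamma0} and the forward scattering of $\xi^j$ — this is essentially the paper's \eqref{15.1}--\eqref{16}).

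The one genuine problem is the choice of perturbation lemma. You invoke Lemma \ref{lem:16}, but that lemma is stated only for pairs of solutions $(\xi_j,z_j)$ of the \emph{coupled} system \eqref{1}--\eqref{2}, and $\Gamma^{J,j,\tau}_n$ is not such a solution: it solves the cutoff equation \eqref{eq:Jjt}, is not paired with any ODE solution of \eqref{2}, and its "$z$-part" is simply borrowed from $\xi_n$. So Lemma \ref{lem:16} does not apply as stated. The paper instead applies Lemma \ref{lem:17}, which is the correct tool here because it is formulated for two solutions of the inhomogeneous NLS \eqref{11.14.2} with arbitrary forcings $F_1,F_2$; in the present setting $F_1=F_2=|z_n|^2z_nG$ on the gap, so $F_1-F_2=0$ and $\delta=o_\tau$ from the data estimate. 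Moreover, the reason the paper can use the one-shot Lemma \ref{lem:17} rather than the iterative Lemma \ref{lem:16} is that the $\st$-norms are already small on the gap: this is the content of \eqref{17}--\eqref{18}, derived from \eqref{16}, the smallness of $\|\gamma^J_n\|_{\st}$ from \ref{Assumption:A2}, and crucially Proposition \ref{prop:1} (the $L^4$-in-time/FGR estimate) to control $\|z_n\|_{L^{12}}$ and hence $\|F_j\|_{L^4L^{6/5}}$. You do mention Proposition \ref{prop:1} in passing, but this step is where the nontrivial input lives and deserves to be the centerpiece: it is precisely what makes the hypotheses $\tilde\delta\le\mu_{1/2}\min(\cNh^{-3},1)$ of Lemma \ref{lem:17} hold, and without it neither Lemma \ref{lem:17} nor Lemma \ref{lem:16} would close.
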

\proof
Because of  forward scattering of $\xi^j$ and by \eqref{13} for $0\le j<l$ we have
\begin{align}\label{15}
\|\Lj[\sjp]\|_{\st(\sjp,\infty)}=\|\xi^j[\tau ]\|_{\st(\tau,\infty)}\stackrel{\tau \to + \infty}{\rightarrow} 0.
\end{align}
We have  for $0\le j <l$
\begin{align}
& \| \xi_n[\sjp]-\gamma^J_n-\sum_{i=j}^{J-1} \lambda^i_n\|_{\st(\sjp,\infty)} \leq
  \|\xi_n[\sjm]-\gamma^J_n-\sum_{i=j}^{J-1} \lambda^i_n \|_{\st(\sjp,\infty)}\nonumber \\& +\| \lambda^j_n- \Lambda ^{j}_n [\sjm]\|_{\st(\sjp,\infty)}
 +\|(\xi_n-\Lj)[\sjm]-(\xi_n-\Lambda^j_n)[\sjp]\|_{\st(\sjp,\infty)}\label{15.1}
\\& +\|\Lambda^j_n[\sjp]\|_{\st(\sjp,\infty)}+\|\lambda^j_n\|_{\st(\sjp,\infty)}=o_\tau,\nonumber
\end{align}
where we have used  the following bounds for the terms in the r.h.s.: \eqref{$(i)$-$1$} for $j$ for  the 1st and \eqref{$(i)$-$2$} for the 2nd;    \eqref{11.8} and   \eqref{$(ii)$-$1$} for the 3rd; \eqref{15} for the 4th; Lemma \ref{lem:15} for the last.
Therefore  by Lemma \ref{lem:15} and by \eqref{eq:Jjt}
\begin{align}\label{16}
\| \big (\xi_n-{\Gamma^{J,j,\tau }_n}\big )[\sjp]\|_{\st(\sjp, s^{j+1}_{-,n})}=\| \xi_n[\sjp]-\gamma^J_n\|_{\st(\sjp, s^{j+1}_{-,n})}=o_\tau  .
\end{align}
Thus
\begin{equation}  \label{17}  \begin{aligned} & \|\xi_n[\sjp]\|_{\st(\sjp, s^{j+1}_{-,n})}\le    \|\gamma^{J}_n\|_{\st(s^j_{+,n},s^{j+1}_{-,n})} +o_\tau.
\end{aligned}
\end{equation}
By \eqref{17},   forward scattering of $(\xi^j, z^j)$ and    uniform convergence on compact sets  $z_n( \cdot +s_n^j)\to z^j$,   picking
$J\gg1$ and  $\tau\gg 1$  we have
\begin{equation}\label{eq:18-1}
\|\xi_n[\sjp]\|_{\st(\sjp, s^{j+1}_{-,n})}+|z_n(\sjp )|^2\ll \min \{ 1,\cNh^{-3}\} .
\end{equation}
Thus by Proposition \ref{prop:1} and \ref{Assumption:A2}, for $\tau\gg 1$ and $n\gg1$ we have
\begin{align}\label{18}
\|\xi_n\|_{\st(\sjp, s^{j+1}_{-,n})}+\|z_n\|_{L^\infty(\sjp, s^{j+1}_{-,n})}^2+\|z_n\|_{L^{12}(\sjp, s^{j+1}_{-,n})}^3\lesssim \|\gamma_n^J\|_{\st(s^j_{+,n},s^{j+1}_{-,n} )}+o_\tau
\ll \min \{ 1,\cNh^{-3}\}.
\end{align}
%Since we have chosen
%$J\gg1$, by Proposition \ref{prop:lindecomp} and \eqref{eq:indata}  we can assume r.h.s.\eqref{18}$\ll \min \{ 1,\cNh^{-3}\}$  for $n\gg 1$.
Then   \eqref{$(iii)$-$1$} is obtained  from Lemma \ref{lem:17}.

\qed

We  record that from \eqref{18} and Lemma \ref{lem:8}, we have
\begin{align}\label{eq:lem:hypF1}
\|z_n\|_{L^{6 }(\sjp, s^{j+1}_{-,n})}^3 \le
 C_0 \mathcal{N}_0.
\end{align}

\begin{claim}[Partial proof of $(vi)$ for $j$]\label{claim:backscat}
Assume \eqref{18} for a $j$ with $j< \ell$.
%(which follows from \eqref{$(i)$-$1$} and \eqref{$(i)$-$1$} for $j$).
Then, there exists some $h_-^{j+1}\in H^1$ s.t.\ we have
\begin{align}\label{22partial}
|z^{j+1}(-\tau)|+\|\xi^{j+1}-h^{j+1}_-[0]\|_{\stz^1(-\infty,-\tau)}\to 0\text{ as }\tau \to \infty.
\end{align}
\end{claim}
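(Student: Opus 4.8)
The goal is to produce, out of the solution $(\xi^{j+1},z^{j+1})$ of \eqref{1}--\eqref{2} obtained in \ref{Assumption:A4}, a free profile $h^{j+1}_-[0]$ to which $\xi^{j+1}$ is asymptotic as $t\to-\infty$, and to show $z^{j+1}(-\tau)\to 0$. The natural strategy is: first transfer the smallness estimate \eqref{18} from $(\xi_n,z_n)$ on $(s^j_{+,n},s^{j+1}_{-,n})$ to the limit profile $(\xi^{j+1},z^{j+1})$ on an interval $(-T,-\tau)$ by the weak/local-uniform convergence of \ref{Assumption:A4}; second, use the already-established a priori theory (Proposition~\ref{prop:1}, Lemma~\ref{lem:8}, Lemma~\ref{lem:12}) to conclude that $(\xi^{j+1},z^{j+1})$ has finite $\st$ norm on $(-\infty,-\tau)$ for some fixed large $\tau$, hence scatters backward; third, identify the backward asymptotic state $h^{j+1}_-$ and quantify the rate of convergence via Lemma~\ref{lem:scatt1} (its time-reversed version) and the argument at the end of the proof of Theorem~\ref{thm:1}.

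\textbf{Step 1: pass \eqref{18} to the limit.} Recenter the picture at the left endpoint. By \ref{Assumption:A4}, $\xi_n(\cdot+s^{j+1}_n)\rightharpoonup\xi^{j+1}$ weakly in $H^1$ and locally uniformly, and $z_n(\cdot+s^{j+1}_n)\to z^{j+1}$ in $\C$. Since for $L\le j<J$ we have $s^{j+1}_n\to-\infty$ (or, for $j+1<L$, $s^{j+1}_n-s^j_n\to+\infty$), the interval $(s^j_{+,n},s^{j+1}_{-,n})$ recentered at $s^{j+1}_n$ becomes $(s^j_{+,n}-s^{j+1}_n,\,-\tau)$ whose left endpoint tends to $-\infty$. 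Fix any $T>\tau$. For $n$ large, $(-T,-\tau)\subset(s^j_{+,n}-s^{j+1}_n,-\tau)$, and by lower semicontinuity of the $\st$ norm under the local convergence of \ref{Assumption:A4} together with \eqref{18} and \ref{Assumption:A2},
\begin{align*}
\|\xi^{j+1}\|_{\st(-T,-\tau)}^4+|z^{j+1}(-\tau)|^{8}\le \liminf_{n\to\infty}\Big(\|\xi_n\|_{\st(\sjp-s^{j+1}_n,-\tau)}^4+|z_n(-\tau+s^{j+1}_n)|^{8}\Big)\ll \min\{1,\cNh^{-6}\}.
\end{align*}
Letting $T\to+\infty$ gives $\|\xi^{j+1}\|_{\st(-\infty,-\tau)}\ll\min\{1,\cNh^{-3}\}$ and $|z^{j+1}(-\tau)|^2\ll\min\{1,\cNh^{-3}\}$, where $\cNh$ for $\xi^{j+1}$ is controlled via \eqref{pyth} and Lemma~\ref{lem:-8} uniformly. (One also needs $\mathcal N_0$ for $(\xi^{j+1},z^{j+1})$ small, which follows from \eqref{eq:exp1} and conservation of $\mathbb M$.) The same control of $z^{j+1}$ on the whole $(-\infty,-\tau)$, not just at the endpoint, comes from \eqref{eq:lem:hypF1} passed to the limit, giving $\|z^{j+1}\|_{L^6(-\infty,-\tau)}<\infty$.

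\textbf{Step 2: backward scattering and the rate.} With $\|\xi^{j+1}\|_{\st(-\infty,-\tau)}<\infty$ in hand for a \emph{fixed} large $\tau$, the time-reversed version of Lemma~\ref{lem:12} shows that $(\xi^{j+1},z^{j+1})$ scatters backward; in particular there is $h^{j+1}_-\in H^1$ with $\|\xi^{j+1}(t)-e^{\im t\Delta}h^{j+1}_-\|_{H^1}\to0$ as $t\to-\infty$. To upgrade this to the quantitative statement \eqref{22partial}, run the time-reversed analogue of Lemma~\ref{lem:scatt1}: proceeding as in the derivation of \eqref{11.0}, for $t_0\to-\infty$,
\begin{align*}
\|\xi^{j+1}-\xi^{j+1}[t_0]\|_{\stz^1(-\infty,t_0)}\lesssim |z^{j+1}(t_0)|+\big(\|z^{j+1}\|_{L^\infty(-\infty,t_0)}^4+\|\xi^{j+1}\|_{\st(-\infty,t_0)}^2\big)\|\xi^{j+1}\|_{\stz^1(-\infty,t_0)},
\end{align*}
and since $|z^{j+1}(t_0)|+\|\xi^{j+1}\|_{\st(-\infty,t_0)}\to0$ as $t_0\to-\infty$ (the latter because $\|\xi^{j+1}\|_{\st(-\infty,-\tau)}<\infty$), we get $\|\xi^{j+1}-\xi^{j+1}[t_0]\|_{\stz^1(-\infty,t_0)}\to0$. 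Taking $t_0=-\tau$ and noting $\xi^{j+1}[-\tau]=e^{\im(t+\tau)\Delta}\xi^{j+1}(-\tau)=h^{j+1}_-[0]$ up to an $H^1$-error $\|\xi^{j+1}(-\tau)-e^{-\im\tau\Delta}h^{j+1}_-\|_{H^1}$ that also $\to0$ (this is exactly backward scattering), one obtains $\|\xi^{j+1}-h^{j+1}_-[0]\|_{\stz^1(-\infty,-\tau)}\to0$. Finally, $|z^{j+1}(-\tau)|\to0$ as $\tau\to\infty$ follows from the $z$-equation exactly as in the end of the proof of Theorem~\ref{thm:1}: $\|\im\dot z^{j+1}-z^{j+1}\|_{L^\infty}\lesssim\|z^{j+1}\|_{L^\infty}^2\|\xi^{j+1}\|_{L^\infty L^2}$, combined with $\|z^{j+1}\|_{L^\infty(-\infty,-\tau)}\lesssim\|\dot z^{j+1}\|_{L^\infty(-\infty,-\tau)}^{1/7}\|z^{j+1}\|_{L^6(-\infty,-\tau)}^{6/7}$ and $\|z^{j+1}\|_{L^6(-\infty,-\tau)}\to0$ since $\|z^{j+1}\|_{L^6(-\infty,0)}<\infty$.

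\textbf{Main obstacle.} The delicate point is Step 1, the passage of the $\st$-smallness \eqref{18} to the weak limit: $\st=L^4_tL^6_x$ is not weakly lower semicontinuous for free (the spatial $L^6$ is not compactly embedded), so one must exploit the \emph{local uniform} convergence $\xi_n(\cdot+s^{j+1}_n)\to\xi^{j+1}$ of \ref{Assumption:A4} on compact space-time sets, together with the uniform $H^1$ bound and a diagonal/Fatou argument on each compact time window, rather than generic weak convergence; this is the place where radiality (compactness of $H^1_{rad}\hookrightarrow L^4$, hence also the $L^6$ gain on compacts combined with $H^1$ control) is genuinely used. Everything after that is a routine reprise of the already-established backward-scattering machinery (Lemmas~\ref{lem:12}, \ref{lem:scatt1}, Proposition~\ref{prop:1}) applied to the single solution $(\xi^{j+1},z^{j+1})$.
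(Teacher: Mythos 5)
Your proof follows essentially the same route as the paper's: pass the $\st$-smallness \eqref{18} to $\xi^{j+1}$ on $(-T,-\tau)$ by weak lower semicontinuity of the $\st$ norm under the convergence in \ref{Assumption:A4}, then invoke the time-reversed versions of Lemmas \ref{lem:12} and \ref{lem:scatt1} to obtain backward scattering and the quantitative $\stz^1$ decay (which is exactly what the paper does, after observing that, for some fixed $\tau$, $\|\xi^{j+1}\|_{\st(-T,-\tau)}\le 1$ uniformly in $T$). One small remark: the ``main obstacle'' you flag is not actually an obstacle---for each fixed $t$, weak $H^1$ convergence gives weak $L^6$ convergence, hence $\|\xi^{j+1}(t)\|_{L^6}\le\liminf_n\|\xi_n(t+s^{j+1}_n)\|_{L^6}$, and Fatou in $t$ then yields weak lower semicontinuity of $\|\cdot\|_{L^4_tL^6_x}$ without any compactness or radiality; this is precisely the one-line argument the paper's proof tacitly uses.
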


\begin{remark}
To get  \eqref{22}  we need to show $h^{j+1}_- =\varphi^{j+1}$.
This will done after we show \eqref{$(i)$-$1$} for $j+1$.
\end{remark}

\proof
By Lemmas \ref{lem:12} and   \ref{lem:scatt1}, we only have to show $\|\xi^{j+1}\|_{\st(-\infty,0)}<\infty$.
Thus, it suffices to show that for some $\tau>0$, we have
$\|\xi^{j+1}\|_{\st(-T,-\tau)}\leq 1$ for arbitrary $T>\tau$.
Since $\xi_n(s^{j+1}_n+t)\rightharpoonup \xi^{j+1}(t)$, by weak lower semi-continuity and by \eqref{18}, we have
\begin{align*}
\|\xi^{j+1}\|_{\st(-T,-\tau)}\leq \liminf_{n\to \infty} \|\xi_n\|_{\st(s^{j+1}_{n}-T,s^{j+1}_{-,n})}\leq \liminf_{n\to \infty} \|\xi_n\|_{\st(s^{j}_{+,n},s^{j+1}_{-,n})}\leq 1.
\end{align*}
Therefore, we have the conclusion.
\qed

The proof of  \eqref{GammaAf}  follows from   Claims \ref{lem:LinftyL4}--\ref{claim:fin7.7}.
\begin{claim}\label{lem:LinftyL4}
Assume \eqref{18} and \eqref{eq:lem:hypF1} for a $j$ with $j<\ell$.
 %(which follows from \eqref{$(i)$-$1$} and \eqref{$(i)$-$1$} for $j$).
Then  $\|\Gamma^{J,j,\tau}_n \|_{L^\infty (L^2\cap L^4;(s^j_{+,n},\infty))}\lesssim \mathcal N_0$.
\end{claim}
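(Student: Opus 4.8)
Since each $\lambda^{j}_{n}$ and $\xi_{0n}[s_{n}]$ solve the free Schr\"odinger equation, so does $\gamma^{J}_{n}$; writing $S:=s^{j+1}_{-,n}$ (with $S=\infty$ when $j=L-1$) and using $e^{\im(t-s^{j}_{+,n})\Delta}\gamma^{J}_{n}(s^{j}_{+,n})=\gamma^{J}_{n}(t)$, the Duhamel formula for \eqref{eq:Jjt} becomes, on $[s^{j}_{+,n},\infty)$,
\begin{align*}
\Gamma^{J,j,\tau}_{n}=\gamma^{J}_{n}+\cD\Big(\chi_{[s^{j}_{+,n},S]}\big(|\Gamma^{J,j,\tau}_{n}|^{2}\Gamma^{J,j,\tau}_{n}+|z_{n}|^{2}z_{n}G\big)\Big)[s^{j}_{+,n}].
\end{align*}
The linear term is already controlled: $\|\gamma^{J}_{n}\|_{L^{\infty}L^{2}(\R)}\lesssim\mathcal N_{0}$ by the Pythagorean identity \eqref{pyth} with $\theta=0$; $\|\gamma^{J}_{n}\|_{L^{\infty}L^{4}(\R)}\ll\mathcal N_{0}$ by the choice of $J$ in \ref{Assumption:A2}; and $\|\gamma^{J}_{n}\|_{L^{2}L^{6}(\R)}\lesssim\mathcal N_{0}$ by the free Strichartz estimate. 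Hence everything reduces to bounding the $\cD$-term in $L^{\infty}(L^{2}\cap L^{4})(\R)$, its source being supported in $[s^{j}_{+,n},S]$.

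First I would set up the perturbative bounds on the active interval $(s^{j}_{+,n},S)$, where $\Gamma^{J,j,\tau}_{n}$ solves \eqref{11.14.2} with $F=|z_{n}|^{2}z_{n}G$. Since $\|\Gamma^{J,j,\tau}_{n}[s^{j}_{+,n}]\|_{\st(s^{j}_{+,n},S)}=\|\gamma^{J}_{n}\|_{\st(s^{j}_{+,n},S)}\ll1$ by \ref{Assumption:A2} and $\|F\|_{L^{4}L^{6/5}(s^{j}_{+,n},S)}=\|z_{n}\|_{L^{12}(s^{j}_{+,n},S)}^{3}\|G\|_{L^{6/5}}\ll1$ by \eqref{18}, Proposition \ref{prop:3} gives $\|\Gamma^{J,j,\tau}_{n}\|_{\st(s^{j}_{+,n},S)}\ll1$, and then Lemma \ref{lem:9} ($G\in\mathcal S$ and \eqref{eq:lem:hypF1} keeping the relevant data norm $\lesssim\mathcal N_{0}$) yields $\|\Gamma^{J,j,\tau}_{n}\|_{\stz^{0}(s^{j}_{+,n},S)}\lesssim\mathcal N_{0}$, whence $\|\Gamma^{J,j,\tau}_{n}\|_{L^{\infty}L^{2}\cap L^{2}L^{6}(s^{j}_{+,n},S)}\lesssim\mathcal N_{0}$ and, by interpolation, $\|\Gamma^{J,j,\tau}_{n}\|_{L^{8/3}_{t}L^{4}_{x}(s^{j}_{+,n},S)}\lesssim\mathcal N_{0}$. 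The $L^{\infty}L^{2}(\R)$ bound on the $\cD$-term then follows from Lemma \ref{lem:1} ($\theta=0$): its cubic source is $\lesssim\|\Gamma^{J,j,\tau}_{n}\|_{L^{\infty}L^{2}}\|\Gamma^{J,j,\tau}_{n}\|_{\st}^{2}$ and its forcing source $\lesssim\|z_{n}\|_{L^{6}}^{3}\|G\|_{L^{6/5}}$ in $L^{2}L^{6/5}(s^{j}_{+,n},S)$, and absorbing the small cubic contribution into $\|\Gamma^{J,j,\tau}_{n}\|_{L^{\infty}L^{2}}$ gives $\|\Gamma^{J,j,\tau}_{n}\|_{L^{\infty}L^{2}(s^{j}_{+,n},\infty)}\lesssim\mathcal N_{0}$.

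For the $L^{\infty}L^{4}$ bound I would split the $\cD$-term. The forcing part is handled by interpolating $L^{\infty}L^{2}$ and $L^{\infty}L^{6}$, both estimated by Strichartz (the latter through $\stz^{1}$) by $\|z_{n}\|_{L^{6}(s^{j}_{+,n},S)}^{3}\|G\|_{W^{1,6/5}}\lesssim\mathcal N_{0}$, the Schwartz character of $G$ and \eqref{eq:lem:hypF1} being what keep the power of $\mathcal N_{0}$ equal to one. For the cubic part I would use the dispersive decay $\|e^{\im t\Delta}f\|_{L^{4}}\lesssim|t|^{-3/4}\|f\|_{L^{4/3}}$; splitting the kernel into its parts on $\{|t-s|<1\}$ and $\{|t-s|\ge1\}$ gives
\begin{align*}
\big\|\cD(\chi_{[s^{j}_{+,n},S]}|\Gamma^{J,j,\tau}_{n}|^{2}\Gamma^{J,j,\tau}_{n})[s^{j}_{+,n}]\big\|_{L^{\infty}L^{4}(\R)}\lesssim\|\Gamma^{J,j,\tau}_{n}\|_{L^{\infty}L^{4}(s^{j}_{+,n},S)}^{3}+\|\Gamma^{J,j,\tau}_{n}\|_{L^{6}_{t}L^{4}_{x}(s^{j}_{+,n},S)}^{3},
\end{align*}
and then, interpolating $\|\Gamma^{J,j,\tau}_{n}\|_{L^{6}_{t}L^{4}_{x}}\lesssim\|\Gamma^{J,j,\tau}_{n}\|_{L^{8/3}_{t}L^{4}_{x}}^{4/9}\|\Gamma^{J,j,\tau}_{n}\|_{L^{\infty}L^{4}}^{5/9}$ with $\|\Gamma^{J,j,\tau}_{n}\|_{L^{8/3}_{t}L^{4}_{x}(s^{j}_{+,n},S)}\lesssim\mathcal N_{0}$, the right-hand side becomes super-linear in $M:=\|\Gamma^{J,j,\tau}_{n}\|_{L^{\infty}L^{4}(s^{j}_{+,n},S)}$. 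Combining with the linear and forcing bounds gives $M\lesssim\mathcal N_{0}+M^{3}+\mathcal N_{0}^{4/3}M^{5/3}$; this can be closed by a continuity argument on $(s^{j}_{+,n},S)$, legitimate because $\Gamma^{J,j,\tau}_{n}\in C(H^{1})\hookrightarrow C(L^{4})$ with $\|\Gamma^{J,j,\tau}_{n}\|_{L^{\infty}H^{1}(s^{j}_{+,n},S)}\lesssim\mathcal N_{1}$ (Lemma \ref{lem:9}, $\theta=1$), so $M$ is a priori finite and continuous as the endpoint varies, while $\Gamma^{J,j,\tau}_{n}(s^{j}_{+,n})=\gamma^{J}_{n}(s^{j}_{+,n})$ has $L^{4}$-norm $\ll\mathcal N_{0}$ and $\mathcal N_{0}\ll1$. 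This yields $M\lesssim\mathcal N_{0}$, and feeding it back into the estimate for the full $\cD$-term over $\R$ gives $\|\Gamma^{J,j,\tau}_{n}\|_{L^{\infty}L^{4}(s^{j}_{+,n},\infty)}\lesssim\mathcal N_{0}$.

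The step I expect to be the real obstacle is this $L^{\infty}L^{4}$ bound on the cubic $\cD$-term with the correct power of $\mathcal N_{0}$. One cannot pass through $H^{3/4}\hookrightarrow L^{4}$ and the $\stz^{3/4}$ Strichartz estimate, since $\|\gamma^{J}_{n}(s^{j}_{+,n})\|_{H^{3/4}}$ is only $\lesssim\mathcal N_{0}^{1/4}\mathcal N_{1}^{3/4}$, costing a factor $(\mathcal N_{1}/\mathcal N_{0})^{3/4}\gg1$; the argument must instead combine (i) the $J$-smallness of $\|\gamma^{J}_{n}\|_{L^{\infty}L^{4}}$, (ii) the genuinely available bound $\|\Gamma^{J,j,\tau}_{n}\|_{L^{8/3}_{t}L^{4}_{x}}\lesssim\mathcal N_{0}$ coming from the $\stz^{0}$ estimate, (iii) the $L^{4/3}\to L^{4}$ dispersive decay, which lets the two "extra" copies of $\Gamma^{J,j,\tau}_{n}$ enter through a finite-in-time $L^{4}_{x}$ norm, and (iv) the super-linearity of the resulting inequality, which lets a continuity argument close it. A further subtlety is that $(s^{j}_{+,n},S)$ has length $\to\infty$ with $n$ (and is infinite when $j=L-1$), so all these estimates must be phrased with norms that stay finite over the whole long interval — notably $\|z_{n}\|_{L^{6}}$, $\|z_{n}\|_{L^{12}}$ and $\|\Gamma^{J,j,\tau}_{n}\|_{L^{2}L^{6}}$ — which is exactly why the forcing term is treated via Strichartz rather than via the $L^{4/3}\to L^{4}$ decay.
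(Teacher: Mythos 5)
Your proposal is correct and follows essentially the same route as the paper's proof: write $\Gamma^{J,j,\tau}_n$ via Duhamel as $\gamma^J_n$ plus the forcing and cubic $\cD$-terms, get the $L^\infty L^2$ bound from Lemma \ref{lem:9}, bound $\|\gamma^J_n\|_{L^\infty L^4}$ using \ref{Assumption:A2}, treat the forcing term by Strichartz in $\stz^1$, and close the cubic term by $L^{4/3}\to L^4$ dispersive decay combined with $\stz^0\hookrightarrow L^{8/3}L^4$ and a bootstrap/continuity argument. The only (immaterial) difference is your choice of H\"older exponents on the tail $|t-s|\ge 1$: you put all three copies in $L^6_tL^4_x$ and interpolate, whereas the paper puts two in $L^{8/3}_tL^4_x$ and one in $L^\infty_tL^4_x$; both yield the same superlinear inequality in $\|\Gamma^{J,j,\tau}_n\|_{L^\infty L^4}$ with coefficient $\lesssim\mathcal N_0$.
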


\proof
First, \eqref{18} combined with Lemma \ref{lem:9} yield
\begin{align*}
\|\Gamma^{J,j,\tau}_n \|_{L^\infty  L^2(s^j_{+,n},\infty)}\leq \|\Gamma^{J,j,\tau}_n \|_{\stz^0(s^j_{+,n},\infty)}\lesssim \mathcal N_0.
\end{align*}
We next estimate  $\|\Gamma^{J,j,\tau}_n \|_{L^\infty  L^4(s^j_{+,n},\infty)}$.
By Duhamel's formula
\begin{align}\label{LinftyL4est1}
\Gamma^{J,j,\tau}_n(t) =\gamma^J_n(t) +\mathcal D \(1_{[s^j_{+,n},s^{j+1}_{-,n}]} \(|z_n|^2 z_n G \)\)[s^j_{+,n}](t)+\mathcal D\( 1_{[s^j_{+,n},s^{j+1}_{-,n}]} \(|\Gamma^{J,j,\tau}_n|^2\Gamma^{J,j,\tau}_n\)\)[s^j_{+,n}](t).
\end{align}
For the first term in the r.h.s.\ of \eqref{LinftyL4est1}, we have $\|\gamma_n^J\|_{L^\infty L^4(\R)}\lesssim \mathcal N_0$ by \ref{Assumption:A2}.
The second term can be bounded by Strichartz's estimates and \eqref{eq:lem:hypF1}.
Indeed, by $\stz^1\hookrightarrow L^\infty L^4$ and \eqref{eq:lem:hypF1}, we have
\begin{align*}
\|\mathcal D \(1_{[s^j_{+,n},s^{j+1}_{-,n}]} \(|z_n|^2 z_n G \)\)[s^j_{+,n}]\|_{L^\infty L^4(s^j_{+,n},\infty)}&\lesssim \|z_n|^2 z_n G \|_{L^2 W^{1,\frac{6}{5}}(s^j_{+,n},s^{j+1}_{-,n})}\lesssim \|z_n\|_{L^6(s^j_{+,n},s^{j+1}_{-,n})}^3\\&\lesssim \mathcal N_0.
\end{align*}
We  handle the last term by bootstrap, that is, we
assume   $\|\Gamma^{J,j,\tau}_n\|_{L^\infty L^4(s^j_{+,n},s^j_{+,n}+T)}\leq C\mathcal N_0$ for sufficiently large $C>0$ (but $C\mathcal N_0\ll 1$), and
then we show that we can replace $C$ by $C/2$, achieving  the  desired conclusion by standard arguments.
The estimates to accomplish this follow. We write
\begin{align*}
&\| \mathcal D\( 1_{[s^j_{+,n},s^{j+1}_{-,n}]} \(|\Gamma^{J,j,\tau}_n|^2\Gamma^{J,j,\tau}_n\)\)[s^j_{+,n}] \|_{L^\infty L^4(s^j_{+,n},s^j_{+,n}+T)}\\&\lesssim \sup_{t\in (s^j_{+,n},s^j_{+,n}+T)} \int_{t-1}^t 1_{[s^j_{+,n},s^{j+1}_{-,n}]} (s) |t-s|^{-3/4} \|\Gamma^{J,j,\tau}_n\|_{L^4}^3\,ds
\\&\quad
+\(\int_{s^j_{+,n}}^{t-1}|t-s|^{-3p'/4}\,ds\)^{1/p'}\(\int_{s^j_{+,n}}^{t-1}1_{[s^j_{+,n},s^{j+1}_{-,n}]}(s)\|\Gamma^{J,j,\tau}_n(s)\|_{L^4}^{2p}\,ds\)^{1/p}\|\Gamma^{J,j,\tau}_n\|_{L^\infty L^4}.
\end{align*}
Then, taking $p=4/3$ ($p'=4$), by $\stz^0\hookrightarrow L^{8/3}L^4$, we have
\begin{align*}
&\| \mathcal D\( 1_{[s^j_{+,n},s^{j+1}_{-,n}]} \(|\Gamma^{J,j,\tau}_n|^2\Gamma^{J,j,\tau}_n\)\)[s^j_{+,n}] \|_{L^\infty L^4(s^j_{+,n},s^j_{+,n}+T)}\\&\lesssim \(\|\Gamma^{J,j,\tau}_n\|_{L^\infty L^4(s^j_{+,n},s^j_{+,n}+T)}^2+\|\Gamma^{J,j,\tau}_n\|_{\stz^0(s^j_{+,n},s^{j+1}_{-,n})}^2\)
\|\Gamma^{J,j,\tau}_n\|_{L^\infty L^4(s^j_{+,n},s^j_{+,n}+T)}.
\end{align*}
Therefore, we have the desired estimate.
\qed

\begin{claim}\label{claim:weight}
Assume \eqref{18}, \eqref{eq:lem:hypF1} and \eqref{22partial} for a $j$ with $j< \ell$. %(which follows from \eqref{$(i)$-$1$} and \eqref{$(i)$-$1$} for $j$).
Set
\begin{equation*}
    \|f\|_w:=\sup_{s^j_{+,n}<t<s^{j+1}_{+,n}}w(t)\|f(t)\|_{L^4+L^\infty} \text{ where}
\end{equation*}
 \begin{align*}
w(t)=\begin{cases} 1 & t>s^{j+1}_{-,n},\\
\<t-s^{j+1}_{-,n}\>^{-\delta} & t\leq s^{j+1}_{-,n}
\end{cases}
\end{align*}
for a preassigned $\delta >0$. Then, we have
\begin{align}\label{Gammaw}
\|\Gamma^{J,j,\tau}_n \|_w =o_\tau.
\end{align}
\end{claim}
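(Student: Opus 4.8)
\emph{Proof strategy.} Write $\Gamma:=\Gamma^{J,j,\tau}_n$. By Duhamel's formula \eqref{LinftyL4est1} we have $\Gamma=\gamma^J_n+\cD(1_{[\sjp,\sjpm]}|z_n|^2z_nG)[\sjp]+\cD(1_{[\sjp,\sjpm]}|\Gamma|^2\Gamma)[\sjp]$, so it suffices to bound each of the three summands in $\|\cdot\|_w$; note that $\|\Gamma\|_w\le\sup_t\|\Gamma(t)\|_{L^4}\lesssim\cN_0<\infty$ by Claim \ref{lem:LinftyL4}, so any bootstrap is legitimate. The mechanism used throughout is the order of limits in Definition \ref{eq:littleo}: $\tau$ is frozen while $n\to\infty$, so every time window of width $O(\tau)$ around $\sjpp$ or $\sjpm$ is a \emph{fixed} window, on which one may use $z_n(\cdot+s^{j+1}_n)\to z^{j+1}$ uniformly (step \ref{Assumption:A4}), Lemma \ref{lem:11}, and the backward scattering of $(\xi^{j+1},z^{j+1})$ obtained in Claim \ref{claim:backscat}, which by Definition \ref{def:1} and \eqref{22partial} gives $\rho(\tau):=\sup_{r\le-\tau}|z^{j+1}(r)|\to0$ as $\tau\to\infty$. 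For the linear term $\gamma^J_n$: on $(\sjpm-\tau,\sjpp)\subset(s^{j+1}_n-2\tau,s^{j+1}_n+\tau)$ the weight is $\le1$ and $\|\gamma^J_n\|_{L^\infty L^4}\to0$ as $n\to\infty$ by Lemma \ref{lem:11} (since $\gamma^J_n$ solves the free equation with $\gamma^J_n(s^{j+1}_n)\rightharpoonup0$ in $H^1$); on $(\sjp,\sjpm-\tau)$ one has $w(t)=\langle t-\sjpm\rangle^{-\delta}\lesssim\tau^{-\delta}$ and $\|\gamma^J_n(t)\|_{L^4+L^\infty}\le\|\gamma^J_n(t)\|_{L^4}\lesssim\cN_1$. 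Hence $\|\gamma^J_n\|_w=o_\tau$.

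\emph{The forcing Duhamel term.} Set $F:=|z_n|^2z_nG$, supported in $[\sjp,\sjpm]=[s^j_n+\tau,s^{j+1}_n-\tau]$. If $w(t)\le\tau^{-\delta/2}$ (equivalently, $t$ lies at distance $\ge\tau^{1/2}$ in the past of $\sjpm$) one uses only $\|\cD(1_{[\sjp,\sjpm]}F)\|_{L^\infty L^4(\R)}\lesssim\|F\|_{L^2W^{1,6/5}(\sjp,\sjpm)}\lesssim\|z_n\|_{L^6(\sjp,\sjpm)}^3\lesssim\cN_0$ (Strichartz and \eqref{eq:lem:hypF1}), so that $w(t)\|\cD(1\cdot F)(t)\|_{L^4+L^\infty}\lesssim\tau^{-\delta/2}\cN_0=o_\tau$. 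For the remaining $t$ --- those within distance $\tau^{1/2}$ of $\sjpm$, and those in $(\sjpm,\sjpp)$ --- split $F$ into three pieces: (i) the part on $[\sjp,s^{j+1}_n-2\tau]$, which is at temporal distance $\gtrsim\tau$ from $t$, estimated by the $L^1_x\to L^\infty_x$ dispersive decay and Cauchy--Schwarz in $s$ against $\|z_n\|_{L^6(\sjp,\sjpm)}^3\lesssim\cN_0$, giving $\lesssim\tau^{-1}\cN_0$; (ii) the part on $[s^{j+1}_n-2\tau,\,t-1]$ (or on all of $[s^{j+1}_n-2\tau,\sjpm]$ when the latter is $\ge1$ away from $t$), where $s-s^{j+1}_n$ ranges over a fixed compact subset of $(-\infty,-\tau)$ and hence $\|z_n\|_{L^\infty}\le\rho(\tau)+o_n$, estimated by the $L^{6/5}_x\to L^6_x$ dispersive decay to get $\lesssim(\rho(\tau)+o_n)^3$; (iii) the part on $[t-1,t]\cap(-\infty,\sjpm]$ (width $\le1$, again in a fixed compact set where $\|z_n\|_{L^\infty}\le\rho(\tau)+o_n$), estimated by Strichartz to get $\lesssim(\rho(\tau)+o_n)^3$. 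Since $w\le1$ there, $\|\cD(1_{[\sjp,\sjpm]}F)\|_w\lesssim\tau^{-\delta/2}\cN_0+\tau^{-1}\cN_0+\rho(\tau)^3+o_n=o_\tau$.

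\emph{The cubic Duhamel term.} This is handled by an argument parallel to the one for $F$, with $|\Gamma|^2\Gamma$ in place of $F$ and the pointwise control of $|z_n|^3$ replaced by $\|\Gamma\|_{L^\infty(L^2\cap L^4)(\sjp,\infty)}\lesssim\cN_0$ (Claim \ref{lem:LinftyL4}), $\|\Gamma\|_{\stzo(\sjp,\sjpm)}\lesssim\cN_1$ (Lemma \ref{lem:9}, legitimate since $\|\Gamma\|_{\st(\sjp,\sjpm)}\ll\min\{1,\cNh^{-3}\}$ by \eqref{18}), and the smallness $\|\Gamma\|_{\st(\sjp,\sjpm)}\lesssim\|\gamma^J_n\|_{\st(\R)}+o_\tau$ (from \eqref{18} and \eqref{$(iii)$-$1$}). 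In the regime $w(t)\le\tau^{-\delta/2}$, the a priori bound $\|\cD(1_{[\sjp,\sjpm]}|\Gamma|^2\Gamma)\|_{L^\infty L^4}\lesssim\|\Gamma\|_{\st(\sjp,\sjpm)}^2\|\Gamma\|_{\stzo(\sjp,\sjpm)}\ll1$, multiplied by $\tau^{-\delta/2}$, is $o_\tau$; in the complementary regime the pieces far from the current time $t$ are $o_\tau$ by dispersion as in (i), while the pieces adjacent to $t$ produce, after (non-admissible) Strichartz, a term carrying the $\|\cdot\|_w$-norm of $\Gamma$ with coefficient $\lesssim\cN_0+\|\gamma^J_n\|_{\st(\R)}\ll1$. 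Collecting the three summands, $\|\Gamma\|_w\lesssim o_\tau+(\cN_0+\|\gamma^J_n\|_{\st(\R)})\|\Gamma\|_w$, and since the coefficient is $\ll1$ and $\|\Gamma\|_w<\infty$, absorbing yields $\|\Gamma\|_w=o_\tau$.

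\emph{Main obstacle.} The technical heart is the forcing-and-cubic estimate: one has to split both the source term and the time variable so that the weight exactly compensates the logarithmic loss near the time-diagonal $s=t$ and the polynomial loss accumulated over the (arbitrarily long) gap, and one must extract the smallness of $z_n$ on the part of the forcing support adjacent to $\sjpm$ \emph{only} from $\rho(\tau)\to0$, which carries no a priori rate --- this forces the ``fast-decay'' windows to be taken of fixed width and makes it essential that $\tau\to\infty$ only after $n\to\infty$. The cubic term is the most delicate point, because $L^4+L^\infty$ is too weak a space to be recovered from the $L^4$-based Strichartz bounds on $|\Gamma|^2\Gamma$; one circumvents this by interpolating against the stronger bound $\|\Gamma\|_{\stzo(\sjp,\sjpm)}\lesssim\cN_1$ of Lemma \ref{lem:9} and by closing the estimate through the $\|\cdot\|_w$-bootstrap, whose admissibility rests on Claim \ref{lem:LinftyL4}.
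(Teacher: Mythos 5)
Your proposal is correct and follows essentially the same route as the paper's proof: the three-term Duhamel decomposition \eqref{LinftyL4est1}, a time splitting into a ``far'' region where the weight $w(t)$ is small and a ``near'' region around $s^{j+1}_{-,n}$ where backward scattering of $(\xi^{j+1},z^{j+1})$ forces $|z_n|$ to be small, dispersive decay estimates with the $\min(|t-s|^{-3/2},|t-s|^{-3/4})$ kernel, and absorption of the cubic Duhamel term via a $\|\cdot\|_w$-bootstrap grounded in Claim \ref{lem:LinftyL4}. One small point in your favor: for the smallness of $\sup_{|t-s^{j+1}_n|\lesssim\tau}\|\gamma^J_n(t)\|_{L^4}$ you invoke Lemma \ref{lem:11} (the $L^\infty L^4$ statement coming from weak $H^1$-convergence of $\gamma^J_n(s^{j+1}_n)$ to $0$), which is the right tool here; the paper cites Lemma \ref{lem:gamma0}, whose statement is about the $\st=L^4L^6$ norm, so your citation is the cleaner one. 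The remaining differences (the $\tau^{-\delta/2}$ window scale versus the paper's auxiliary parameter $S$, and the slightly more elaborate discussion of the cubic-term interpolation) are cosmetic and do not change the structure of the argument.
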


\proof
The proof is similar to that of Claim \ref{lem:LinftyL4} because
we estimate the three terms in \eqref{LinftyL4est1}.
First, we have $\|\gamma^J_n\|_w\to 0$ as $n\to \infty$.
Indeed, fix $\varepsilon>0$ arbitrary and take $T>0$ so that $\<T\>^{-\delta}<\varepsilon$. Then
\begin{align*}
\|\gamma^J_n\|_w\leq \sup_{s^j_{+,n}\leq t\leq s^{j+1}_{-,n}-T}w(t)\|\gamma^J_n(t)\|_{L^4}+\sup_{s^{j+1}_--T\leq t\leq  s^{j+1}_{+,n}}\|\gamma^J_n(t)\|_{L^4}
\end{align*}
where the 1st term can be bounded by $\<T\>^{-\delta}\|\gamma^J_n\|_{L^\infty L^4}<\varepsilon$ and the 2nd term converges to $0$ as $n\to \infty$ by Lemma \ref{lem:gamma0}.

We next bound the second term of the r.h.s. of \eqref{LinftyL4est1}
\begin{align}\label{west1}
&\|\mathcal D \(1_{[s^j_{+,n},s^{j+1}_{-,n}]} \(|z_n|^2 z_n G \)\)[s^j_{+,n}]\|_w \nonumber\\&\lesssim \sup_{s^j_{+,n}<t<s^{j+1}_{+,n}} w(t)\(\int_{s^j_+}^{t-1}+\int_{t-1}^t\)1_{[s^j_{+,n},s^{j+1}_{-,n}]}(s)
\min(|t-s|^{-3/2},|t-s|^{-3/4})|z_n(s)|^3\,ds .
\end{align}
For $S>1$ yet to be determined,
we divide the time region in three cases $t<s^{j+1}_{-,n}-S$, $s^{j+1}_{-,n}-S<t<s^{j+1}_{-,n}+S$ and $s^{j+1}_{-,n}+S<t<s^{j+1}_{+,n}$. In the 1st case we consider, using the mass invariant \eqref{eq:mass},
\begin{align*}
&\sup_{s^j_{+,n}<t<s^{j+1}_{-,n}-S} w(t)\int_{s^j_+}^{t-1}1_{[s^j_{+,n},s^{j+1}_{-,n}]}(s)|t-s|^{-3/2}|z_n(s)|^3\,ds\lesssim \< S\>^{-\delta}\|z_n\|_{L^\infty(s^j_{+,n},s^{j+1}_{-,n} )}^3\le \< S\>^{-\delta}\mathbb{M} ^3.\end{align*}
In the 2nd case we consider
\begin{align*}&
\sup_{s^{j+1}_{-,n}-S<t<s^{j+1}_{-,n}+S} w(t)\int_{s^j_+}^{t-1}1_{[s^j_{+,n},s^{j+1}_{-,n}]}(s)|t-s|^{-3/2}|z_n(s)|^3\,ds\\&
\leq\sup_{s^ {j+1}_{-,n}-S<t<s^{j+1}_{-,n}+S} w(t)\(\int_{s^j_+}^{s^{j+1}_{-,n}-S}+\int_{s^{j+1}_{-,n}-S}^{t-1}\)1_{[s^j_{+,n},s^{j+1}_{-,n}]}(s)|t-s|^{-3/2}|z_n(s)|^3\,ds\\&\lesssim \<S\>^{-\delta}\|z_n\|_{L^\infty}^3 + \| z_n\|_{L^\infty (s^{j+1}_{-,n}-S,s^{j+1}_{-,n}+S)}^3\le  \<S\>^{-\delta}\mathbb{M} ^3+o_\tau,
\end{align*}
where we have used backward scattering for $j+1$ in the last inequality (that is, it has been
proved in Claim \ref{claim:backscat} that $\| z^{j+1} \|  _{L^\infty (-\tau -S, -\tau +S) }=o_\tau$ for fixed $S$,
and, using \ref{Assumption:A4} and \ref{Assumption:A6}, we get $\|  z_n \|  _{L^\infty (-\tau -S, -\tau +S) }=o_\tau$). Finally, in 3rd case we consider
\begin{align*}
\sup_{s^{j+1}_{-,n}+S<t<s^{j+1}_{+,n}} w(t)\int_{s^j_+}^{t-1}1_{[s^j_{+,n},s^{j+1}_{-,n}]}(s)|t-s|^{-3/2}|z_n(s)|^3\,ds\lesssim S^{-1/2}\|z_n\|_{L^\infty}^3\le S^{-1/2} \mathbb{M} ^3.
\end{align*}
The term with $\int_{t-1}^t$ in \eqref{west1} can be bounded in similarly.
For the case $t<s^{j+1}_{-,n}-S$, we can use the smallness of $w(t)$, for the case $s^{j+1}_{-,n}-S<t<s^{j+1}_{-,n}+S$, we can use the backward scattering and for the case $s^{j+1}_{-,n}+S<t<s^{j+1}_{+,n}$ the integral becomes $0$.

Finally, we estimate the third term of \eqref{LinftyL4est1} by bootstrap:
\begin{align*}
&\| \mathcal D\( 1_{[s^j_{+,n},s^{j+1}_{-,n}]} \(|\Gamma^{J,j,\tau}_n|^2\Gamma^{J,j,\tau}_n\)\)[s^j_{+,n}](t)\|_{L^4+L^\infty} \\&\lesssim \| \Gamma^{J,j,\tau}_n \|_{L^\infty L^2\cap L^4(s^{j}_{+,n},\infty)}^2 \int_{s^j_{+,n}}^t \min(|t-s|^{-3/4},|t-s|^{-3/2})  \|\Gamma^{J,j,\tau}_n(s)\|_{L^2+L^4}\,ds.
\end{align*}
The conclusion follows from   Claim \ref{lem:LinftyL4}  and
\begin{align*}
w(t)\int_{s^j_{+,n}}^t \min(|t-s|^{-3/4},|t-s|^{-3/2})  w(s)^{-1}\,ds\lesssim1  .
\end{align*}

\qed

\begin{claim}[Proof of $(vii)$ for $j$]\label{claim:fin7.7}
We have \eqref{GammaAf}.
\end{claim}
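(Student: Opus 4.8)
\textbf{Plan of proof of Claim \ref{claim:fin7.7}.} The plan is to start from the Duhamel identity \eqref{LinftyL4est1}, which writes $\Gamma^{J,j,\tau}_n-\gamma^J_n=\cD_F+\cD_{NL}$ with $\cD_F:=\cD\big(1_{[s^j_{+,n},s^{j+1}_{-,n}]}\,|z_n|^2z_nG\big)[s^j_{+,n}]$ and $\cD_{NL}:=\cD\big(1_{[s^j_{+,n},s^{j+1}_{-,n}]}\,|\Gamma^{J,j,\tau}_n|^2\Gamma^{J,j,\tau}_n\big)[s^j_{+,n}]$, and to prove $\|\cD_F\|_{\st(s^{j+1}_{-,n},\infty)}=o_\tau$ and $\|\cD_{NL}\|_{\st(s^{j+1}_{-,n},\infty)}=o_\tau$ separately. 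Both sources are supported in $[s^j_{+,n},s^{j+1}_{-,n}]$, so on $(s^{j+1}_{-,n},\infty)$ each of $\cD_F$ and $\cD_{NL}$ is a free Schr\"odinger solution, and the source lies entirely to the left of the interval where we measure it; this is what makes the dispersive estimates below effective.

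For $\cD_F$ I would fix $S>1$ and split the source at $s^{j+1}_{-,n}-S$. After translating time by $s^{j+1}_{-,n}$, the contribution of $[s^j_{+,n},s^{j+1}_{-,n}-S]$ is estimated by Lemma \ref{lem:referee} together with \eqref{eq:lem:hypF1}, giving $\lesssim S^{-1/4}\|z_n\|_{L^6(s^j_{+,n},s^{j+1}_{-,n})}^3\lesssim S^{-1/4}\mathcal N_0$. The contribution of $[s^{j+1}_{-,n}-S,s^{j+1}_{-,n}]$ is estimated by the Strichartz estimate Lemma \ref{lem:1} and $\stzh\hookrightarrow\st$, giving $\lesssim\|z_n\|_{L^6(s^{j+1}_{-,n}-S,s^{j+1}_{-,n})}^3$; by \ref{Assumption:A4} one has $\limsup_n$ of this $\le CS^{1/2}\|z^{j+1}\|_{L^\infty(-\infty,-\tau)}^3$, which tends to $0$ as $\tau\to\infty$ for $S$ fixed, because the backward scattering of $(\xi^{j+1},z^{j+1})$ proved in Claim \ref{claim:backscat} forces $\|z^{j+1}\|_{L^\infty(-\infty,-\tau)}\to0$. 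Hence $\limsup_\tau\limsup_n\|\cD_F\|_{\st(s^{j+1}_{-,n},\infty)}\lesssim S^{-1/4}\mathcal N_0$ for every $S$, and letting $S\to\infty$ yields $\cD_F=o_\tau$.

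The nonlinear term $\cD_{NL}$ is the main obstacle: since $s^{j+1}_{-,n}-s^j_{+,n}\to\infty$, crude Strichartz or $L^{6/5}$-$L^6$ bounds all lose a positive power of this gap length. The remedy is to fix $R>1$, split the source at $s^{j+1}_{-,n}-R$, and treat the two pieces by genuinely different mechanisms. On $[s^j_{+,n},s^{j+1}_{-,n}-R]$ one has $|t-s|\ge (t-s^{j+1}_{-,n})+R$ for $t>s^{j+1}_{-,n}$, so the $L^1$-$L^\infty$ decay estimate, $H^{1/2}\hookrightarrow L^3$, and $\|\Gamma^{J,j,\tau}_n\|_{L^\infty H^{1/2}(s^j_{+,n},s^{j+1}_{-,n})}\lesssim\mathcal N_{1/2}$ (from \eqref{18} and Lemma \ref{lem:9}) give, with $u:=t-s^{j+1}_{-,n}>0$,
\[
\|\cD_{NL}^{\mathrm{far}}(t)\|_{L^\infty}\lesssim\mathcal N_{1/2}^3\int_{u+R}^{\infty}\sigma^{-3/2}\,d\sigma\lesssim\mathcal N_{1/2}^3(u+R)^{-1/2};
\]
interpolating this with $\|\cD_{NL}^{\mathrm{far}}\|_{L^\infty L^2(s^{j+1}_{-,n},\infty)}$, which Lemma \ref{lem:1} and $\|\Gamma^{J,j,\tau}_n\|_{\stz^0\cap\stzh}\lesssim\mathcal N_{1/2}$ bound by a constant $C(\mathcal N_0,\mathcal N_1)$, one gets $\|\cD_{NL}^{\mathrm{far}}(t)\|_{L^6}\lesssim C(\mathcal N_0,\mathcal N_1)(u+R)^{-1/3}$, hence $\|\cD_{NL}^{\mathrm{far}}\|_{\st(s^{j+1}_{-,n},\infty)}\lesssim C(\mathcal N_0,\mathcal N_1)R^{-1/12}$, a bound uniform in $\tau$ and $n$. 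On the remaining interval $[s^{j+1}_{-,n}-R,s^{j+1}_{-,n}]$, of bounded length $R$, the non-admissible Strichartz estimate Lemma \ref{lem:2} with $(4,6)$ and $(8/5,4)$ gives $\|\cD_{NL}^{\mathrm{near}}\|_{\st(s^{j+1}_{-,n},\infty)}\lesssim\|\Gamma^{J,j,\tau}_n\|_{L^8L^4(s^{j+1}_{-,n}-R,s^{j+1}_{-,n})}^3\le R^{3/8}\sup_{s\in[s^{j+1}_{-,n}-R,s^{j+1}_{-,n}]}\|\Gamma^{J,j,\tau}_n(s)\|_{L^4}^3$, and on this range I would bound $\|\Gamma^{J,j,\tau}_n(s)\|_{L^4}$ by combining $\|\Gamma^{J,j,\tau}_n(s)\|_{L^2\cap L^4}\lesssim\mathcal N_0$ from Claim \ref{lem:LinftyL4} with $\|\Gamma^{J,j,\tau}_n(s)\|_{L^4+L^\infty}\le\|\Gamma^{J,j,\tau}_n\|_w\<R\>^{\delta}$ from Claim \ref{claim:weight}, via the elementary inequality $\|f\|_{L^4}^2\lesssim\|f\|_{L^4+L^\infty}\|f\|_{L^2}+\|f\|_{L^4+L^\infty}^2$; since $\|\Gamma^{J,j,\tau}_n\|_w=o_\tau$ by \eqref{Gammaw} and $R$ is fixed, this forces $\limsup_\tau\limsup_n\|\cD_{NL}^{\mathrm{near}}\|_{\st(s^{j+1}_{-,n},\infty)}=0$. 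Putting the two pieces together, $\limsup_\tau\limsup_n\|\cD_{NL}\|_{\st(s^{j+1}_{-,n},\infty)}\lesssim C(\mathcal N_0,\mathcal N_1)R^{-1/12}$ for every $R$; letting $R\to\infty$ gives $\cD_{NL}=o_\tau$, and together with $\cD_F=o_\tau$ this proves \eqref{GammaAf}. The heart of the argument is exactly this splitting of $\cD_{NL}$: away from $s^{j+1}_{-,n}$ one uses the fast, integrable $|t-s|^{-3/2}$ decay with only the uniform $H^{1/2}$ bound (so nothing is lost over the long gap), while near $s^{j+1}_{-,n}$, on an interval of controlled length, one uses the localized smallness carried by the weighted norm of Claim \ref{claim:weight}.
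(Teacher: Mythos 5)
Your proposal is correct, but it follows a genuinely different organisation from the paper's. The paper splits the \emph{measuring interval} $(s^{j+1}_{-,n},\infty)$ at $s^{j+1}_{+,n}$: on the middle interval of length $2\tau$ it shows $\|\Gamma^{J,j,\tau}_n\|_{\st}=o_\tau$ via Claim \ref{claim:weight} and the interpolation $\|f\|_{\st}\lesssim\|f\|_{\stz^1}^{2/3}\|f\|_{L^\infty(L^4+L^\infty)}^{1/3}$, and handles $\gamma^J_n$ there with Lemma \ref{lem:gamma0}; on $(s^{j+1}_{+,n},\infty)$ the source lies at distance $\ge 2\tau$, so a single application of Lemma \ref{lem:referee} (with the gap $T\sim\tau$) gives $\tau^{-1/4}(\mathcal N_0+\mathcal N_1^3)=o_\tau$, using $\stz^1\hookrightarrow L^6L^{18/5}$ and the bound \eqref{GammaNext0}. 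Thus the paper never needs an auxiliary parameter: the built-in scale $\tau$ serves both as middle-interval length and as source gap. You instead split the \emph{source} at $s^{j+1}_{-,n}-R$ (resp.\ $-S$) with an extra parameter and measure on the whole half-line $(s^{j+1}_{-,n},\infty)$. The far source gives a uniform $R^{-1/12}$ (resp.\ $S^{-1/4}$) via $L^1$--$L^\infty$ decay + interpolation (resp.\ Lemma \ref{lem:referee}); the near source is forced small, for fixed $R$, by the backward scattering $\|z^{j+1}\|_{L^\infty(-\infty,-\tau)}\to0$ of Claim \ref{claim:backscat} (for the forcing) and by the weighted bound $\|\Gamma^{J,j,\tau}_n\|_w=o_\tau$ of Claim \ref{claim:weight} together with your elementary inequality $\|f\|_{L^4}^2\lesssim\|f\|_{L^4+L^\infty}\|f\|_{L^2}+\|f\|_{L^4+L^\infty}^2$ (for the cubic term); finally $R\to\infty$. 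Your route buys independence from Lemma \ref{lem:gamma0} and from the $\stz^1$-interpolation step \eqref{GammaNextInterp}, at the cost of a triple limit ($n,\tau,R$), an extra elementary interpolation inequality to check, and a slightly heavier direct dispersive estimate with weights $(u+R)^{-1/2}$. Both arguments are valid and hinge on the same two mechanisms — dispersive decay across a long gap, and the localized smallness encoded in Claims \ref{claim:backscat} and \ref{claim:weight} — but deploy them along different cuts.
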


\proof
By \eqref{18} and our choice $J\gg 1$ we can apply Proposition \ref{prop:3} concluding
\begin{equation*}
    \|\Gamma^{J,j,\tau}_n\|_{\st(s^{j }_{+,n},s^{j+1}_{-,n})}\le  \mu  _{1/2} \min \{ 1, \cNh^{-3} \}
\end{equation*}
for the constant $\cNh$ in \eqref{eq:indata}, a constant that, thanks to the Pythagorean formula \eqref{pyth},
serves also as a bound for $\|\gamma^J_n\|_{L^\infty   H ^{\frac{1}{2}} (\R )}$. Then, by Lemma \ref{lem:9}, for the interval $(s^{j }_{+,n},s^{j+1}_{-,n})$ and the standard Strichartz's estimates of Lemma \ref{lem:1} for $(s^{j+1}_{-,n},\infty )$, for a fixed $C$ and the $\mathcal{N}_1$  in  \eqref{eq:indata} we obtain
\begin{align}\label{GammaNext0}
\|\Gamma^{J,j,\tau}_n\|_{\stz ^1(s^{j }_{+,n},\infty)}\le C \mathcal{N}_1.
\end{align}
We next claim
\begin{align}\label{GammaNext}
\|\Gamma^{J,j,\tau}_n\|_{\st(s^{j+1}_{-,n},s^{j+1}_{+,n})}=o_\tau.
\end{align}
Notice that we have
\begin{align}\label{GammaNext4infty}
\|\Gamma^{J,j,\tau}_n\|_{L^\infty  (L^4+L^\infty ) ( s^{j+1}_{-,n},s^{j+1}_{+,n})}=o_\tau,
\end{align}
from \eqref{Gammaw} and the definition of $\|\cdot\|_w$.
By interpolation $\|f\|_{\st}\leq \|f\|_{L^\infty L^\infty}^{1/3}\|f\|_{L^{8/3}L^4}^{2/3}$, $\|f\|_{\st}\leq \|f\|_{L^\infty L^4}^{1/3}\|f\|_{L^{8/3}L^8}^{2/3}$ and $\stz^1 \hookrightarrow L^{8/3}B^1_{4,2}\hookrightarrow L^{8/3}L^4, L^{8/3}L^8$, we have
\begin{align}\label{GammaNextInterp}
\|\Gamma^{J,j,\tau}_n\|_{\st(s^{j+1}_{-,n},s^{j+1}_{+,n})}&\lesssim \|\Gamma^{J,j,\tau}_n\|_{\stz^1(s^{j+1}_{-,n},s^{j+1}_{+,n})}^{2/3}\|\Gamma^{J,j,\tau}_n\|_{L^\infty (L^4+L^\infty )(s^{j+1}_{-,n},s^{j+1}_{+,n})}^{1/3}=o_\tau.
\end{align}
Therefore we have \eqref{GammaNext}.
%To get \eqref{GammaNext} we split $\Gamma^{J,j,\tau}_n = \gamma^J_n + (\Gamma^{J,j,\tau}_n - \gamma^J_n)$. By
%Lemma \ref{lem:gamma0} we know that \eqref{GammaNext} is true  with $\Gamma^{J,j,\tau}_n$ replaced by $\gamma^J_n$. So now we need to replace  $\Gamma^{J,j,\tau}_n$ with $\Gamma^{J,j,\tau}_n - \gamma^J_n$,
%with the latter the sum of the last two terms in \eqref{LinftyL4est1}.
%Inspection of the proof in
%Claim \ref{claim:weight} shows that we have
%\begin{align}\label{GammaNext1}
%\|\Gamma^{J,j,\tau}_n-\gamma^J_n\|_{L^\infty L^\infty(s^{j+1}_{-,n},s^{j+1}_{+,n})}=o_\tau.
%\end{align}
%Then, by $[L^a L ^{\frac{3}{2}a}, L^\infty L^\infty] _{\theta}= \st$ for $a= 4(1-\theta ) $
%and from $\stz ^{1}\hookrightarrow L^a L ^{\frac{3}{2}a}$ for some $a<4$, yields
%\eqref{GammaNext} thanks to \eqref{GammaNext0}.
By  \eqref{GammaNext}   and  Lemma \ref{lem:gamma0},  to get \eqref{GammaAf} it suffices to prove
\begin{align*}
\|\Gamma^{J,j,\tau}_n-\gamma^J_n\|_{\st(s^{j+1}_{+,n},\infty)}=o_\tau.
\end{align*}
This last formula follows from \eqref{LinftyL4est1} combined with Lemma \ref{lem:referee}.
Indeed, by $\stz^1\hookrightarrow L^6L^{18/5}$, we have, by \eqref{eq:lem:hypF1},  \eqref{GammaNext0} and Lemma \ref{lem:referee}
\begin{equation*}  \begin{aligned}   \|\Gamma^{J,j,\tau}_n-\gamma^J_n\|_{\st(s^{j+1}_{+,n},\infty)}&\lesssim \tau^{-1/4}\(\|z_n\|_{L^6(s^j_{+,n},s^{j+1}_{-,n})}^3+\|\Gamma^{J,j,\tau}_n\| ^3_{\stz^1(s^j_{+,n},s^{j+1}_{-,n})}\)
\\& \lesssim \tau^{-1/4} \mathcal{N}_1  (1+ \mathcal{N}_1 ^2)=o_\tau  .
  \end{aligned}
\end{equation*}
\qed

\begin{claim}[Proof of \eqref{$(i)$-$1$}   for $j+1$]\label{claim:6}Assume all the formulas in the statement of Proposition \ref{prop:17} for $j$, with $j<\ell$.
Then \eqref{$(i)$-$1$} is true for $j+1$.
\end{claim}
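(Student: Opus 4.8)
The plan is to prove \eqref{$(i)$-$1$} for $j+1$, i.e.\ that
\[
\|\xi_n[\sjpm]-\gamma^{J}_n-\sum_{i=j+1}^{J-1}\lambda^i_n\|_{\st(\sjpm,\infty)}=o_\tau ,
\]
by chaining, on the interval $(\sjpm,\infty)$, two approximations: first comparing $\xi_n[\sjpm]$ with $\xi_n[\sjp]$ by means of $(v)$ and $(vii)$ for $j$, and then comparing $\xi_n[\sjp]$ with $\gamma^J_n+\sum_{i=j+1}^{J-1}\lambda^i_n$ by means of the estimate \eqref{15.1} already obtained in the proof of $(v)$ together with Lemma \ref{lem:15}.

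For the first comparison I would apply the second inequality of \eqref{11.8} to $u=\xi_n-\Gamma^{J,j,\tau}_n$ with $T_0=\sjp$, $T_1=\sjpm$; since $j<\ell$, the conclusion $(v)$, namely \eqref{$(iii)$-$1$}, is available and yields
\[
\|(\xi_n-\Gamma^{J,j,\tau}_n)[\sjpm]-(\xi_n-\Gamma^{J,j,\tau}_n)[\sjp]\|_{\st(\sjpm,\infty)}\le\|\xi_n-\Gamma^{J,j,\tau}_n\|_{\nnorm{\sjp}{\sjpm}}=o_\tau .
\]
By \ref{Assumption:A9} one has $\Gamma^{J,j,\tau}_n(\sjp)=\gamma^J_n(\sjp)$ and $\gamma^J_n$ is a free solution, so $(\xi_n-\Gamma^{J,j,\tau}_n)[\sjp]=\xi_n[\sjp]-\gamma^J_n$, exactly as in \eqref{16}; moreover, since $j<\ell\le L-1$, by \ref{Assumption:A9} the function $\Gamma^{J,j,\tau}_n$ solves the free Schr\"odinger equation on $(\sjpm,\infty)$, so that $\Gamma^{J,j,\tau}_n[\sjpm]=\Gamma^{J,j,\tau}_n$ there, whence with \eqref{GammaAf} we get $\|\Gamma^{J,j,\tau}_n[\sjpm]-\gamma^J_n\|_{\st(\sjpm,\infty)}=o_\tau$. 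Substituting these two facts into the previous display, the $\gamma^J_n$-terms cancel and we are left with $\|\xi_n[\sjpm]-\xi_n[\sjp]\|_{\st(\sjpm,\infty)}=o_\tau$.

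For the second comparison I would invoke \eqref{15.1}, which is valid under the present inductive hypotheses, giving $\|\xi_n[\sjp]-\gamma^J_n-\sum_{i=j}^{J-1}\lambda^i_n\|_{\st(\sjp,\infty)}=o_\tau$; in particular this holds with $\st(\sjpm,\infty)$ in place of $\st(\sjp,\infty)$, because $\sjpm>\sjp$ for $n$ large (as $s^{j+1}_n-s^j_n\to+\infty$ by \ref{Assumption:A2}). For the same $n$ one has $(\sjpm,\infty)\subset\R\setminus(\sjm,\sjp)$, so Lemma \ref{lem:15} gives $\|\lambda^j_n\|_{\st(\sjpm,\infty)}=o_\tau$ and the $i=j$ term may be dropped, leaving $\|\xi_n[\sjp]-\gamma^J_n-\sum_{i=j+1}^{J-1}\lambda^i_n\|_{\st(\sjpm,\infty)}=o_\tau$. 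Adding this to the conclusion of the previous paragraph via the triangle inequality produces \eqref{$(i)$-$1$} for $j+1$, which finishes the induction step and hence the proof of Proposition \ref{prop:17}.

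The only point that needs care is the bookkeeping of the linear propagators $[\,\cdot\,]$ based at the different times $\sjp$ and $\sjpm$; in particular, the role of the hypothesis $j<\ell$ is precisely to guarantee $j<L-1$, so that $\Gamma^{J,j,\tau}_n$ is a genuine free solution after $\sjpm$ and \eqref{GammaAf} can be propagated to $\xi_n[\sjpm]$. All of the analytic content — radiation damping, the $L^4$-in-time (Fermi Golden Rule) estimates, and the weighted bounds on $\Gamma^{J,j,\tau}_n$ — has already been absorbed into $(v)$ and $(vii)$, so I do not expect a genuine obstacle in this last step beyond keeping track of the intervals and base points.
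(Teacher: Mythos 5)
Your proof is correct and uses exactly the same ingredients as the paper's: the Nakanishi seminorm estimate from $(v)$ (via \eqref{11.8}), the identification $(\xi_n-\Gamma^{J,j,\tau}_n)[\sjp]=\xi_n[\sjp]-\gamma^J_n$ from \ref{Assumption:A9}, the free-propagation of $\Gamma^{J,j,\tau}_n$ after $\sjpm$ together with $(vii)$, the bound \eqref{15.1} from the proof of $(v)$, and Lemma \ref{lem:15} to discard $\lambda^j_n$. The only difference from the paper's \eqref{19} is bookkeeping: you isolate the intermediate estimate $\|\xi_n[\sjpm]-\xi_n[\sjp]\|_{\st(\sjpm,\infty)}=o_\tau$ before invoking \eqref{15.1}, whereas the paper first replaces $\gamma^J_n$ by $\Gamma^{J,j,\tau}_n[\sjpm]$ and shifts $\sum_{i\ge j+1}$ to $\sum_{i\ge j}$, then changes the propagator base point. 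These are the same triangle-inequality terms in a different order, so the approach is essentially identical.
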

\proof
We have
 \begin{equation}\label{19} \begin{aligned} &   \| \xi_n[\sjpm]-\gamma^J_n-\sum_{i=j+1}^{J-1}\lambda^l_n \|_{\st(\sjpm,\infty)}\leq
 \| (\xi_n-\Gamma^{J,j,\tau}_n)[\sjpm]-\sum_{i=j}^{J-1}\lambda^l_n \|_{\st(\sjpm,\infty)}+o_\tau\\&
\leq  \| (\xi_n-\Gamma^{J,j,\tau}_n)[s^j_{+,n}]-\sum_{i=j}^{J-1}\lambda^l_n \|_{\st(\sjpm,\infty)}+\|(\xi_n-\Gamma^{J,j,\tau}_n)[s^j_{+,n}]-(\xi_n-\Gamma^{J,j,\tau}_n)[s^j_{-,n}]\|_{\st(s^{j+1}_{-,n},\infty)}\\&\quad +o_\tau=o_\tau.
\end{aligned}
\end{equation}
where in the first inequality we have used Lemma \ref{lem:15} and \eqref{GammaAf} and in the 2nd inequality we have used  \eqref{15.1} and \eqref{$(iii)$-$1$}.
\qed

\begin{claim}[Back scattering:completed]\label{claim:7}Assume all the formulas in the statement of Proposition \ref{prop:17} for $j$, with $j< \ell$ and assume
that   \eqref{$(i)$-$1$} is true for $j+1$.
Then  \eqref{22} is true.
\end{claim}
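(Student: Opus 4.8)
Claim \ref{claim:backscat} has already established \eqref{22partial}, i.e.\ backward scattering of $\xi^{j+1}$ to \emph{some} asymptotic state $h^{j+1}_-\in H^1$; the entire content of the present claim is the identification $h^{j+1}_-=\varphi^{j+1}$, after which \eqref{22partial} is literally \eqref{22}. The plan is to compare the backward asymptotic state of $\xi^{j+1}$ with the linear profile $\lambda^{j+1}=\varphi^{j+1}[0]$ by reading off weak limits in \eqref{$(i)$-$1$} for $j+1$ (which is available by Claim \ref{claim:6}), after translating time by $s^{j+1}_n$.

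First I would shift time by $s^{j+1}_n$ and compute the weak-$H^1$ limits (for each fixed time, indeed uniformly on compact time intervals by \ref{Assumption:A4}) of the terms making up $f_n:=\xi_n[s^{j+1}_{-,n}]-\gamma^J_n-\sum_{i=j+1}^{J-1}\lambda^i_n$. Since $s^{j+1}_{-,n}=s^{j+1}_n-\tau$, one has $\xi_n[s^{j+1}_{-,n}](s^{j+1}_n+s)=e^{\im(s+\tau)\Delta}\xi_n(s^{j+1}_n-\tau)$, which by \ref{Assumption:A4} tends weakly to $e^{\im(s+\tau)\Delta}\xi^{j+1}(-\tau)=\xi^{j+1}[-\tau](s)$. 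Next $\gamma^J_n(s^{j+1}_n+s)=e^{\im s\Delta}\gamma^J_n(s^{j+1}_n)\rightharpoonup 0$ by property $(3)$ of Proposition \ref{prop:lindecomp} (note $j+1\le L-1<J$ by \ref{Assumption:A2}); for $i\ne j+1$, $\lambda^i_n(s^{j+1}_n+s)=e^{\im(s^{j+1}_n-s^i_n+s)\Delta}\varphi^i\rightharpoonup 0$ because $|s^{j+1}_n-s^i_n|\to\infty$ by property $(1)$ and $e^{\im t\Delta}$ sends a fixed $H^1$ function weakly to $0$ as $|t|\to\infty$; while $\lambda^{j+1}_n(s^{j+1}_n+s)=e^{\im s\Delta}\varphi^{j+1}=\varphi^{j+1}[0](s)$. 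Hence $f_n(s^{j+1}_n+s)\rightharpoonup\xi^{j+1}[-\tau](s)-\varphi^{j+1}[0](s)$ weakly in $H^1$ for each $s\in(-\tau,\infty)$.

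Now I would invoke weak lower semicontinuity of the $\st$-norm, exactly as in Claim \ref{claim:backscat}: the sequence $f_n$ is uniformly bounded in $L^4L^6(-\tau,\infty)$ (via $\mathrm{Stz}^{1/2}\hookrightarrow\st$ together with the uniform $H^1$ bounds from \eqref{eq:indata}, the Pythagorean formula \eqref{pyth}, and Lemma \ref{lem:-8}), so, setting $\epsilon(\tau):=\limsup_n\|f_n\|_{\st(s^{j+1}_{-,n},\infty)}$, the pointwise weak limit above yields
\[
\|\xi^{j+1}[-\tau]-\varphi^{j+1}[0]\|_{\st(-\tau,\infty)}\le\epsilon(\tau),
\]
where $\epsilon(\tau)\to 0$ as $\tau\to\infty$ because \eqref{$(i)$-$1$} for $j+1$ is $o_\tau$. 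On the other hand, \eqref{22partial} gives (by Strichartz and $\mathrm{Stz}^{1/2}\hookrightarrow\st$, since both are free solutions) $\|\xi^{j+1}[-\tau]-h^{j+1}_-[0]\|_{\st(\R)}\lesssim\|e^{\im\tau\Delta}\xi^{j+1}(-\tau)-h^{j+1}_-\|_{H^1}\to 0$ as $\tau\to\infty$. Subtracting these two on the interval $(-\tau,\infty)$, $\|\varphi^{j+1}[0]-h^{j+1}_-[0]\|_{\st(-\tau,\infty)}\to 0$; but this quantity is nonnegative and nondecreasing in $\tau$, hence it vanishes for every $\tau$, forcing $\varphi^{j+1}[0]=h^{j+1}_-[0]$ as free solutions and therefore $\varphi^{j+1}=h^{j+1}_-$. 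Substituting into \eqref{22partial} gives \eqref{22}.

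I expect the delicate point to be the passage from pointwise weak-$H^1$ convergence to the $\st$-norm inequality: one must justify that the $L^4L^6$-bounded sequence $f_n$ has a subsequence converging weakly in $L^4L^6(-\tau,\infty)$ to a limit that the pointwise analysis identifies as $\xi^{j+1}[-\tau]-\varphi^{j+1}[0]$ (a dominated-convergence argument testing against functions $\phi(s)\psi(x)$), so that weak lower semicontinuity applies. This is the step where the radial structure (through $H^1_{rad}\hookrightarrow L^4$, as in Lemma \ref{lem:11}) and the uniform Strichartz control on each summand are used; everything else is bookkeeping of the profile parameters $s^i_n,\varphi^i$.
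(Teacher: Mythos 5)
Your proof is correct, but it takes a genuinely different route from the paper's. The paper first establishes the auxiliary estimate \eqref{20}, namely $\|\xi_n[s^{j+1}_{-,n}]-\lambda^{j+1}_n\|_{\st(s^{j+1}_{-,n},s^{j+1}_{+,n})}=o_\tau$ on the \emph{compact} interval of length $2\tau$, by a chain of triangle inequalities drawing on \eqref{$(iii)$-$1$}, Lemma \ref{lem:gamma0}, \eqref{GammaNext}, \eqref{15.1}, and Lemma \ref{lem:15}. It then converts the weak $H^1$ convergence of the data $\xi_n(s^{j+1}_n-\tau)-\lambda^{j+1}_n(s^{j+1}_n-\tau)\rightharpoonup\xi^{j+1}(-\tau)-\lambda^{j+1}(-\tau)$ into \emph{strong} $L^\infty L^4$ convergence of the free evolutions on the compact interval via Lemma \ref{lem:11}, upgrading to $\st$ by interpolation against the uniform $\stz^1$ bound, to obtain $\|(\xi^{j+1}-\lambda^{j+1})[-\tau]\|_{\st(-\tau,0)}=o_\tau$ (this is \eqref{eq:backsc1}), and only then concludes $h^{j+1}_-=\varphi^{j+1}$. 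You instead bypass \eqref{20} entirely: starting from \eqref{$(i)$-$1$} at level $j+1$ on the half-line $(s^{j+1}_{-,n},\infty)$, you identify the weak limit of the defect $f_n$ term by term from the orthogonality clauses (1)--(3) of Proposition \ref{prop:lindecomp}, and then invoke weak lower semicontinuity of the $L^4L^6$ norm to obtain the \emph{stronger} estimate $\|\xi^{j+1}[-\tau]-\varphi^{j+1}[0]\|_{\st(-\tau,\infty)}=o_\tau$ directly. The trade-off is clear: your route is shorter and yields a bound on the full half-line, but it replaces the elementary compact-interval tool (Lemma \ref{lem:11}) with weak l.s.c.\ on an unbounded time domain, which requires the slightly more delicate identification of the weak $L^4L^6$ limit from the pointwise weak $H^1$ limits (the dominated-convergence/Egorov step you flag as the delicate point). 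Since $\lambda^{j+1}=\lambda^{j+1}[-\tau]=\varphi^{j+1}[0]$, your estimate subsumes \eqref{eq:backsc1}, and the subsequent identification $h^{j+1}_-=\varphi^{j+1}$ via monotonicity of $\tau\mapsto\|\varphi^{j+1}[0]-h^{j+1}_-[0]\|_{\st(-\tau,\infty)}$ is exactly the paper's closing argument. Both proofs are sound; yours is more economical at the cost of a less elementary compactness input.
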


\proof
First,
\begin{align}
&\|\xi_n[s^{j+1}_{-,n}]-\lambda^{j+1}_n\|_{\st(s^{j+1}_{-,n},s^{j+1}_{+,n})}   \leq \| (\xi_n-\Gamma^{J,j,\tau}_n)[s^{j+1}_{-,n}]-(\xi_n-\Gamma^{J,j,\tau}_n)[s^{j+1}_{-,n}]\|_{\st(s^{j+1}_{-,n},s^{j+1}_{+,n})}\nonumber\\&
\quad+\|\Gamma^{J,j,\tau}_n[s^{j+1}_{-,n}]-\Gamma^{J,j,\tau}_n[s^j_{+,n}]\|_{\st(s^{j+1}_{-,n},s^{j+1}_{+,n})}
+\|(\xi_n-\sum_{i=j+1}^{J-1}\lambda^j_n-\Gamma^{J,j,\tau}_n)[s^j_{+,n}] \|_{\st(s^{j+1}_{-,n},s^{j+1}_{+,n})}\nonumber\\&\quad
+\sum_{i>j+1}^{J-1}\|\lambda^j_n\|_{\st(s^{j+1}_{-,n},s^{j+1}_{+,n})}=o_\tau.\label{20}
\end{align}
Here, for the 1st term we have used \eqref{11.8} and \eqref{$(iii)$-$1$}, for the 2nd term we have used Lemma \ref{lem:gamma0} and \eqref{GammaNext}.
Notice that we have $\Gamma^{J,j,\tau}_n[s^{j+1}_{-,n}]-\Gamma^{J,j,\tau}_n[s^j_{+,n}]=\Gamma^{J,j,\tau}_n-\gamma^J_n$.
For the 3rd term we have used \eqref{15.1} and for the 4th term we used Lemma \ref{lem:15}.

\noindent Since
\begin{align*}
\xi _n(s^{j+1}_n-\tau)-\lambda^{j+1}_n(s^{j+1}_n-\tau)\wto \xi^{j+1}(-\tau)-\lambda^{j+1}(-\tau)
\end{align*}
and
\begin{align*}
\(\xi^{j+1}(-\tau)-\lambda^{j+1}(-\tau)\)[s^{j+1}_n-\tau](t)=(\xi^{j+1}-\lambda^{j+1})[-\tau](t-s^{j+1}_n),
\end{align*}
we have
\begin{equation}\label{eq:backsc1} \begin{aligned}    \|(\xi^{j+1}-\lambda^{j+1})[-\tau]\|_{\st(-\tau,0)}&=
\|\(\xi^{j+1}(-\tau)-\lambda^{j+1}(-\tau)\)[s^{j+1}_n-\tau]\|_{\st(s^{j+1}_n-\tau,s^{j+1}_n)}\\& \leq
\| \(\(\xi^{j+1}(-\tau)-\lambda^{j+1}(-\tau)\)-\(\xi _n-\lambda^{j+1}_n\)\)[s^{j+1}_n-\tau]\|_{\st(s^{j+1}_n-\tau,s^{j+1}_n)}
\\&\quad +\|\(\xi  _n-\lambda^{j+1}_n\)[s^{j+1}_{-,n}]\|_{\st(s^{j+1}_{-,n},s^{j+1}_n)} =o_\tau \end{aligned}
\end{equation}
where we bound the   term in the 2nd line by Lemma \ref{lem:11} and the following term by \eqref{20}, since $\lambda^{j+1}_n [s^{j+1}_{-,n}]= \lambda^{j+1}_n $.
%
%Thus, since $\lambda^{j+1}$ scatters (by definition, since  it is a linear solution), there exists $\tau_*\gg1$ s.t.
%\begin{align*}
%\|\lambda^{j+1}\|_{\st(-\infty,-\tau_*)}+\sup_{\tau>\tau_*}\|\(\xi^{j+1}-\lambda^{j+1}\)[-\tau]\|_{\st(-\tau,0)}\ll \cNh^{-3}.
%\end{align*}
%Hence $\|\xi^{j+1}[-\tau]\|_{\st(-\tau,-\tau_*)}\ll \cNh^{-3}$. We next observe that
% \begin{equation} \begin{aligned} & \|z_n\|_{L^{12}(s^{j+1}_n-\tau, s^{j+1}_{n}-\tau_*)}\le  \|z_n\|_{L^{12}(s^{j}_n+\tau _*, s^{j+1}_{n}-\tau_*)} \lesssim \|\gamma_n^J\|_{\st(\R )}+o _{\tau_*} \ll \cNh^{-3}  \end{aligned}\nonumber
%\end{equation}
%which  follows from \eqref{18} with $\tau$ replaced by $\tau_*$. By
%the locally uniform convergence $ z_n(\cdot +s^{j+1}_n)\stackrel{n\to \infty}{\rightarrow}z^{j+1}$
%  we obtain  $\|z^{j+1}\|_{L^{12}(-\tau, -\tau_*)}\ll \cNh^{-3}$.
%  Since this occurs uniformly for $ \tau > \tau _*$   there exists $\tau_n\to \infty$ s.t.\ $|z^{j+1}(-\tau_n)|^2 \ll \cNh^{-3}$.
%  Then $\|\xi^{j+1}[-\tau]\|_{\st(-\tau,-\tau_*)}+|z^{j+1}(-\tau_n)|^2   \ll \cNh^{-3}$ imply  $\|\xi_n\|_{\st(-\tau_n,-\tau_*)}\ll \cNh^{-3}$ by Proposition \ref{prop:1}, and in turn this implies    we have $\|\xi^{j+1}\|_{\st(-\infty,-\tau_*)}+\|z^{j+1}\|_{L^{12}(-\infty, -\tau_*)}\ll \cNh^{-3}$ and the back scattering of $(z^{j+1}, \xi^{j+1})$ , in particular $z^{j+1}(-\tau )  \stackrel{\tau \to +\infty}{\rightarrow} 0.$  Then there exists $h_- ^{j+1}\in H^1$ s.t.\ by Lemma \ref{lem:1} and the fact that $e  ^{ \im \tau \Delta}$ is an isometry in $H^1$ we have
Now, recall that we have already proved in Claim \ref{claim:backscat} that there exists $h_-^{j+1}\in H^1$ s.t.\
  \begin{equation}\label{eq:backsc12}
  \lim _{\tau \to +\infty }\| \xi^{j+1} [-\tau ]- h_- ^{j+1}[0] \| _{\stz ^1 (-\infty , 0)} \lesssim  \lim _{\tau \to +\infty }\| \xi^{j+1} (-\tau )- e  ^{-\im \tau \Delta} h_- ^{j+1}  \| _{H ^1  } =0.
  \end{equation}
  By \eqref{eq:backsc1} and by $\lambda^{j+1}[-\tau] = \lambda^{j+1}= \varphi ^{j+1} [0]$,
   we have $\displaystyle \lim _{\tau \to +\infty }\| \xi^{j+1}[-\tau]- \varphi ^{j+1} [0]\|_{\st(-\tau,0)} =0$.
Thus we conclude that $h_- ^{j+1}= \varphi ^{j+1} $.
%By an obvious version for backward scattering of Lemma \ref{lem:scatt1}
%   we have $\displaystyle \lim _{\tau \to +\infty }\|  \xi^{j+1}- \xi^{j+1}[-\tau] \|_{\stz ^{1}(-\infty , -\tau) } =0$.
   This completes the proof of    \eqref{22}  for  $j+1$.
\qed

\begin{claim}[Proof of {$(i)$-$2$}  for $j+1$]\label{claim:8}Assume all the formulas in the statement of Proposition \ref{prop:17} for $j$, with    $j<\ell$  and assume
that \eqref{$(i)$-$1$} and \eqref{22} are  true for $j+1$. Then   \eqref{$(i)$-$2$} is true for $j+1$.
\end{claim}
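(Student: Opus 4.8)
The plan is to split $(0,\infty)$ at the cutting time $s^{j+1}_{-,n}=s^{j+1}_n-\tau$ --- which is strictly positive and tends to $+\infty$ as $n\to\infty$ because $1\le j+1< L$ forces $s^{j+1}_n\to+\infty$ by \ref{Assumption:A2} --- estimate the two resulting pieces separately, and recombine via $\|\cdot\|_{\stz^1(0,\infty)}\lesssim\|\cdot\|_{\stz^1(0,s^{j+1}_{-,n})}+\|\cdot\|_{\stz^1(s^{j+1}_{-,n},\infty)}$. The only substantive ingredient beyond the definitions is the backward scattering statement \eqref{22} for $j+1$, which gives both $\|\xi^{j+1}-\varphi^{j+1}[0]\|_{\stz^1(-\infty,-\tau)}\to0$ and, in particular, $\|\xi^{j+1}(-\tau)-\varphi^{j+1}[0](-\tau)\|_{H^1}\to0$ as $\tau\to\infty$.

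On $(0,s^{j+1}_{-,n})$, by the definition \eqref{11.2} of the $[\cdot]_>$ operation we have $\Lambda^{j+1}_n[s^{j+1}_{-,n}]_>=\Lambda^{j+1}_n$; recalling $\Lambda^{j+1}_n=\xi^{j+1}(\cdot-s^{j+1}_n)$ from \ref{Assumption:A6} and $\lambda^{j+1}_n=\varphi^{j+1}[s^{j+1}_n]$ from Proposition \ref{prop:lindecomp}, so that $\lambda^{j+1}_n(t)=\varphi^{j+1}[0](t-s^{j+1}_n)$, the difference on this interval is $(\xi^{j+1}-\varphi^{j+1}[0])(\cdot-s^{j+1}_n)$. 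By translation invariance of $\stz^1$ and the substitution $t\mapsto t-s^{j+1}_n$, its norm equals $\|\xi^{j+1}-\varphi^{j+1}[0]\|_{\stz^1(-s^{j+1}_n,-\tau)}\le\|\xi^{j+1}-\varphi^{j+1}[0]\|_{\stz^1(-\infty,-\tau)}$, which is $o_\tau$ by \eqref{22}.

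On $(s^{j+1}_{-,n},\infty)$, by \eqref{11.1}--\eqref{11.2} the function $\Lambda^{j+1}_n[s^{j+1}_{-,n}]_>$ equals $e^{\im(t-s^{j+1}_{-,n})\Delta}\Lambda^{j+1}_n(s^{j+1}_{-,n})$ there, while $\lambda^{j+1}_n$ equals $e^{\im(t-s^{j+1}_{-,n})\Delta}\lambda^{j+1}_n(s^{j+1}_{-,n})$ there by the semigroup property of the free propagator (first identity in the Remark after \eqref{11.3}). Hence their difference on this interval is the free evolution from time $s^{j+1}_{-,n}$ of $\Lambda^{j+1}_n(s^{j+1}_{-,n})-\lambda^{j+1}_n(s^{j+1}_{-,n})$, and since $s^{j+1}_{-,n}=s^{j+1}_n-\tau$ this data difference equals $\xi^{j+1}(-\tau)-\varphi^{j+1}[0](-\tau)$ by \ref{Assumption:A6} and the identity $\lambda^{j+1}_n(t)=\varphi^{j+1}[0](t-s^{j+1}_n)$. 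By the Strichartz estimate (Lemma \ref{lem:1}), the $\stz^1$ norm over $(s^{j+1}_{-,n},\infty)$ is therefore $\lesssim\|\xi^{j+1}(-\tau)-\varphi^{j+1}[0](-\tau)\|_{H^1}$, again $o_\tau$ by \eqref{22}. Adding the two contributions yields $\|\Lambda^{j+1}_n[s^{j+1}_{-,n}]_>-\lambda^{j+1}_n\|_{\stz^1(0,\infty)}=o_\tau$, i.e.\ \eqref{$(i)$-$2$} for $j+1$.

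Since everything reduces to bookkeeping with the definitions plus one Strichartz estimate, there is no genuine obstacle; the one place that needs care is checking that $\lambda^{j+1}_n$, restricted to $(s^{j+1}_{-,n},\infty)$, is genuinely the free propagation of its own value at $s^{j+1}_{-,n}$ (so the comparison on that interval collapses to a single Strichartz bound applied to the data difference at the cutting time), and that the translation used in the $\stz^1$ norm on $(0,s^{j+1}_{-,n})$ is tracked so that the resulting quantity lands inside the $o_\tau$ bound furnished by \eqref{22}.
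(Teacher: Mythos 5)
Your proof is correct and follows essentially the same route as the paper: split at $s^{j+1}_{-,n}$, use the definition of $[\cdot]_>$ and the translation identity on $(0,s^{j+1}_{-,n})$ to reduce to $\|\xi^{j+1}-\varphi^{j+1}[0]\|_{\stz^1(-\infty,-\tau)}$, and use the Strichartz estimate on $(s^{j+1}_{-,n},\infty)$ to reduce to the $H^1$ size of the data difference at the cutting time $-\tau$, both controlled by \eqref{22}. The only cosmetic difference is that the paper explicitly invokes \eqref{eq:backsc12} for the $H^1$ bound at $t=-\tau$, while you extract it from the $L^\infty H^1$ component of the $\stz^1$ norm in \eqref{22}; the two are interchangeable.
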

\proof Since by $ \lambda^{j+1}_n := \varphi ^{j+1}[s^{j+1}_n]$  and $ \lambda^{j+1}  := \varphi ^{j+1}[0]$
we have $\lambda^{j+1}_n= \lambda^{j+1}(\cdot-s^{j+1}_n)$,
\begin{align*}
&\|\Lambda^{j+1}_n[s^{j+1}_{-,n}]_> - \lambda^{j+1}_n \|_{\stzo(0,\infty)}=
\|\Lambda^{j+1}_n[s^{j+1}_{-,n}]_> - \lambda^{j+1}(\cdot-s^{j+1}_n) \|_{\stzo(0,\infty)} \\&\le
 \|\xi^{j+1}(\cdot-s^{j+1}_n) - \lambda^{j+1}(\cdot-s^{j+1}_n) \|_{\stzo(0,s^{j+1}_{-,n})}
+\|\Lambda^{j+1}_n[s^{j+1}_{-,n}] - \lambda^{j+1}(\cdot-s^{j+1}_n)\|_{\stzo(s^{j+1}_{-,n},\infty)}
\\&\lesssim
 \|\xi^{j+1} - \lambda^{j+1} \|_{\stzo(-s^{j+1}_{-,n},-\tau)}
+\|\xi^{j+1}(-\tau) - \lambda^{j+1}(-\tau)\|_{H^1}=o_\tau
\end{align*}
where  in the last line we use  \eqref{22}  for  $j+1$  and \eqref{eq:backsc12},  where $e  ^{-\im \tau \Delta} h_- ^{j+1}= e  ^{-\im \tau \Delta} \varphi ^{j+1}=: \lambda^{j+1}(-\tau)$  as shown under \eqref{eq:backsc12}.\qed

The proof of Proposition \ref{prop:17} is completed. \end{proof}

\begin{corollary}\label{lem:boundst}  Assume $l=L$ in Proposition \ref{prop:17}. Then there exists a  fixed constant $C$   s.t.\ \begin{equation} \label{eq:unifbdstdef}\begin{aligned} &
   \|  \xi_n\|_{\st (0,\infty )  }  \le C  . \end{aligned}
\end{equation}
 \end{corollary}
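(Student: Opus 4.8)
The plan is to partition $(0,\infty)$ into the $2L$ time‑intervals on which Proposition \ref{prop:17} and the claims in its proof already provide control, bound $\xi_n$ in $\st=L^4L^6$ on each of them by a constant independent of $n$ once $\tau$ has been fixed large, and add up. Note first the facts available because $l=L$: then $\ell=L-1$, every $(\xi^j,z^j)$ with $0\le j\le L-1$ scatters forward by hypothesis, and by part $(vi)$ --- i.e.\ \eqref{22} --- the profiles with $1\le j\le L-1$ scatter backward as well, so by Lemma \ref{lem:12} one has $\|\xi^j\|_{\st(\R)}<\infty$ for $1\le j\le L-1$ and $\|\xi^0\|_{\st(0,\infty)}<\infty$. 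Recalling from \ref{Assumption:A3} that $s^0_{-,n}=0$ and $s^L_{-,n}=\infty$, the intervals $(\sjm,\sjp)$ and $(\sjp,\sjpm)$ for $0\le j\le L-1$ tile $(0,\infty)$ up to endpoints, and since $\st$ is additive in the fourth power over disjoint intervals, $\|\xi_n\|_{\st(0,\infty)}\le\sum_{j=0}^{L-1}\|\xi_n\|_{\st(\sjm,\sjp)}+\sum_{j=0}^{L-1}\|\xi_n\|_{\st(\sjp,\sjpm)}$; as $J$, hence $L$, is a fixed finite number it is enough to bound each of these $2L$ summands.

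On a profile interval $(\sjm,\sjp)$ I would write $\xi_n=\Lj+(\xi_n-\Lj)$. The main term gives $\|\Lj\|_{\st(\sjm,\sjp)}=\|\xi^j\|_{\st(-\tau,\tau)}\le\|\xi^j\|_{\st(\R)}<\infty$ for $j\ge1$, uniformly in $\tau$, and $\|\Lambda^0_n\|_{\st(0,\sjp)}\le\|\xi^0\|_{\st(0,\infty)}<\infty$ for $j=0$ (since $\Lambda^0_n=\xi^0$ and $s^0_{-,n}=0$). For the difference, \eqref{$(ii)$-$1$} gives $\|\xi_n-\Lj\|_{\nnorm{\sjm}{\sjp}}=o_\tau$, so by \eqref{11.8} the quantity $(\xi_n-\Lj)-(\xi_n-\Lj)[\sjm]$ has $\st(\sjm,\sjp)$‑norm $o_\tau$, while the leftover free evolution $(\xi_n-\Lj)[\sjm]$ is controlled in $\st(\R)$ by Strichartz (Lemma \ref{lem:1}) in terms of its $H^1$ datum, which is $\lesssim\mathcal N_1^2$ by Lemma \ref{lem:-8} and weak lower semicontinuity of $\|\cdot\|_{H^1}$ for $\xi^j$. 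Thus each profile interval contributes at most $\|\xi^j\|_{\st(\R)}+C\mathcal N_1^2+o_\tau$.

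On a gap interval $(\sjp,\sjpm)$ with $0\le j\le L-2$ the desired bound is exactly the estimate \eqref{18} obtained inside the proof of Claim \ref{claim:5}, namely $\|\xi_n\|_{\st(\sjp,\sjpm)}\lesssim\|\gamma^J_n\|_{\st(\sjp,\sjpm)}+o_\tau\le\|\gamma^J_n\|_{\st(\R)}+o_\tau$, with $\limsup_n\|\gamma^J_n\|_{\st(\R)}\ll1$ by \ref{Assumption:A2}. For the remaining piece, the half‑line $(s^{L-1}_{+,n},\infty)$ (recall $s^L_{-,n}=\infty$), I would rerun the chain of Claim \ref{claim:5} leading to \eqref{18} at $j=L-1$: this is legitimate because \eqref{$(i)$-$1$}, \eqref{$(i)$-$2$}, \eqref{$(ii)$-$1$} all hold at $j=\ell=L-1$ and $(\xi^{L-1},z^{L-1})$ scatters forward; it yields $\|\xi_n[s^{L-1}_{+,n}]\|_{\st(s^{L-1}_{+,n},\infty)}\le\|\gamma^J_n\|_{\st(\R)}+o_\tau$ as in \eqref{16}--\eqref{17}, and since $z_n(s^{L-1}_{+,n})\to z^{L-1}(\tau)$ by \ref{Assumption:A4} while $|z^{L-1}(\tau)|\to0$ as $\tau\to\infty$ by the forward scattering of $z^{L-1}$, one gets $|z_n(s^{L-1}_{+,n})|^2=o_\tau$, so for $J$ and $\tau$ large the hypothesis \eqref{eq:18-1} of Proposition \ref{prop:1} on $(s^{L-1}_{+,n},\infty)$ holds and gives $\|\xi_n\|_{\st(s^{L-1}_{+,n},\infty)}\lesssim\|\gamma^J_n\|_{\st(\R)}+o_\tau\ll1$. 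Summing the $2L$ contributions and letting $n\to\infty$ then $\tau\to\infty$ bounds $\limsup_n\|\xi_n\|_{\st(0,\infty)}$ by $\sum_{j=0}^{L-1}\|\xi^j\|_{\st(\R)}+LC\mathcal N_1^2+C\limsup_n\|\gamma^J_n\|_{\st(\R)}$, a finite constant; fixing $\tau$ large enough to absorb the residual $o_\tau$ terms and passing to the subsequence yields \eqref{eq:unifbdstdef}.

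The step I expect to be the main obstacle is precisely the last, half‑infinite gap interval $(s^{L-1}_{+,n},\infty)$: Proposition \ref{prop:17} stops short of it, since its conclusions $(iii)$, $(v)$, $(vii)$ are stated only for $j<\ell=L-1$, so one must verify by hand that the argument of Claim \ref{claim:5} producing \eqref{18} transfers to $j=L-1$ when $l=L$ --- this relies on the admissible‑range conclusions \eqref{$(i)$-$1$}, \eqref{$(ii)$-$1$} at $j=L-1$ and on the forward scattering of the final profile $\xi^{L-1}$ --- and one must check that the tail profiles $\lambda^i_n$ with $i\ge L$ (those whose centers satisfy $s^i_n\to-\infty$) remain negligible in $\st$ on this half‑line as $n\to\infty$, which is where one uses Lemma \ref{lem:15} together with the concentration of $\lambda^i_n$ far to the left of the window.
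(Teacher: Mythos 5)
Your proof is correct and follows essentially the same approach as the paper: the same tiling of $(0,\infty)$ into profile intervals $(\sjm,\sjp)$ and gap intervals $(\sjp,s^{j+1}_{-,n})$, with the gap intervals controlled by \eqref{18} and the profile intervals by $\|\Lj\|_{\st}$ plus $\|\xi_n-\Lj\|_{\st}$ via \eqref{$(ii)$-$1$}. The one minor variation is that you bound $(\xi_n-\Lj)[\sjm]$ by Strichartz together with the a priori $H^1$ bound of Lemma~\ref{lem:-8}, giving an $O(\mathcal N_1^2)$ contribution, whereas the paper invokes \eqref{xiLambda} to get the sharper $o_\tau$; both yield the needed uniform bound, and your explicit check that \eqref{18} transfers to the final half-infinite gap interval $j=L-1$ --- a step the paper leaves implicit by summing \eqref{18} over $j=0,\dots,L-1$ even though Claim~\ref{claim:5} is stated only for $j<\ell$ --- is a correct and worthwhile clarification.
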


\proof
First by \eqref{18}, we have
\begin{align*}
\sum_{j=0}^{L-1}\|\xi_n\|_{\st(s^j_{+,n},s^{j+1}_{-,n})}\leq \|\gamma^J_n\|_{\st(\R)}+o_\tau\leq 1,
\end{align*}
for $n$ and $\tau$ sufficiently large.
Next, for $0\leq j\leq L-1$, we have
\begin{align*}
\sum_{j=0}^{L-1}\|\xi_n\|_{\st(s^j_{-,n},s^j_{+,n})}\leq \sum_{j=0}^{L-1}\|\xi_n-\Lambda^j_n\|_{\st(s^j_{-,n},s^j_{+,n})}+\|\xi^0\|_{\st(0,\infty)}+\sum_{j=1}^{L-1}\|\xi^j\|_{\st(\R)}.
\end{align*}
The last two terms are bounded so it suffices to bound $\|\xi_n-\Lambda^j_n\|_{\st(s^j_{-,n},s^j_{+,n})}$ for each $j$.
\begin{align*}
\|\xi_n-\Lambda^j_n\|_{\st(s^j_{-,n},s^j_{+,n})}\leq\|\(\xi_n-\Lambda^j_n\)[s^j_{-,n}]\|_{\st(s^j_{-,n},s^j_{+,n})}+\|\xi_n-\Lambda^j_n\|_{[s^j_{-,n},s^j_{+,n}]}=o_\tau.
\end{align*}
Here, we have used \eqref{xiLambda} for the 1st term and \eqref{$(ii)$-$1$} for the 2nd term.
\qed

As in \cite{NakanishiJMSJ} we can formulate the following result, which can be proved similarly.

\begin{proposition}\label{prop:cor} Let
   $(\xi_n,z_n)\in C^0(\R,  H^1_{rad}\times \C ) $  be a sequence of solutions of \eqref{1}--\eqref{2}
satisfying \eqref{eq:indata}. Let
\begin{equation*}
   \xi _{ n}   [0]= \sum_{j=0}^{J-1}\lambda^j_n + \gamma^J_n
\end{equation*}
be the  linearized profile decomposition of Proposition \ref{prop:lindecomp} where $J$ is fixed but large enough. 
Let  $\{s_n^j\}_n$ ($0\leq j<J$) be the sequence given by Proposition \ref{prop:lindecomp} and 
\begin{equation*}
 ( \xi ^j(t),z^j(t)):= \lim _{n\to \infty}(\xi_n,z_n)(t+s^{j}_{n})
\end{equation*}
be the weak limit in $ H^1 _{rad} \times\C$. Assume $(\xi ^j,z^j   )$ scatters as $t\to \sigma \infty$ for each $j<J$ and $\sigma
\in \{ +,- \} $ satisfying $\displaystyle \lim _{n\to \infty }\sigma s^j_n\ge 0$. Then
$\displaystyle \sup _n\| \xi  _n \| _{\st (\R)}<\infty $.

\end{proposition}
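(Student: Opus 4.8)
The plan is to reduce the assertion to Corollary \ref{lem:boundst}, which already furnishes a uniform $\st(0,\infty)$ bound once all the forward-concentrating profiles scatter forward. Since $\st(\R)=L^4L^6(\R)$ involves an $L^4$ norm in time, $\|\xi_n\|_{\st(\R)}^4=\|\xi_n\|_{\st(-\infty,0)}^4+\|\xi_n\|_{\st(0,\infty)}^4$, so it suffices to bound each half-line norm uniformly in $n$. I would obtain the bound on $(0,\infty)$ directly from Corollary \ref{lem:boundst}, and the bound on $(-\infty,0)$ by applying the same corollary to the time-reversed sequence.

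\emph{The positive half-line.} After relabelling the profiles as in \ref{Assumption:A2} there is $0<L\le J$ with $s^0_n=0$, $s^j_n\to+\infty$ for $0<j<L$, and $s^j_n\to-\infty$ for $L\le j<J$. For every $j<L$ the sign $\sigma=+$ satisfies $\lim_n\sigma s^j_n\ge 0$, so the hypothesis of the proposition forces $(\xi^j,z^j)$ to scatter forward for all $j<L$; this is precisely the hypothesis of Proposition \ref{prop:17} with $l=L$, and Corollary \ref{lem:boundst} then produces a fixed $C$ with $\|\xi_n\|_{\st(0,\infty)}\le C$.

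\emph{The negative half-line.} I would pass to $v_n(t):=\overline{\xi_n(-t)}$, $w_n(t):=\overline{z_n(-t)}$. Using $\overline{e^{\im t\Delta}f}=e^{-\im t\Delta}\bar f$ and $\overline{(G|\xi)}=(\bar G|\bar\xi)$, a short computation shows that $(v_n,w_n)$ solves \eqref{1}--\eqref{2} with $G$ replaced by $\bar G$. Since $\widehat{\bar G}(y)=\overline{\widehat G(-y)}$, the function $\bar G$ is again radial and Schwartz, it satisfies \eqref{eq:FGR}, and it has the same constant $\Gamma$ by \eqref{10.5}, so every result of the preceding sections applies verbatim to $(v_n,w_n)$. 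As complex conjugation is an isometry of each $H^\theta$ and intertwines $e^{\im t\Delta}$ with $e^{-\im t\Delta}$, the sequence $(v_n,w_n)$ satisfies \eqref{eq:indata} with the same $\mathcal N_\theta$, and the linear profile decomposition of $v_n[0]=\overline{\xi_n(0)}$ reads
\[
v_n[0]=\sum_{j=0}^{J-1}\tilde\lambda^j_n+\tilde\gamma^J_n,\qquad \tilde\lambda^j_n:=\bar\varphi^j[\,-s^j_n\,],\quad \tilde\gamma^J_n(t):=\overline{\gamma^J_n(-t)},
\]
which again has all the properties listed in Proposition \ref{prop:lindecomp} (weak vanishing, the Pythagorean identity, and item (7)), since each of those is preserved under $x$-conjugation and $t\mapsto-t$, and $\|\tilde\gamma^J_n\|_{\st(\R)}=\|\gamma^J_n\|_{\st(\R)}$ so the same large $J$ remains admissible. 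The concentration parameters become $\tilde s^j_n=-s^j_n$ and the weak limits $\tilde\xi^j(t)=\overline{\xi^j(-t)}$, $\tilde z^j(t)=\overline{z^j(-t)}$, which scatter as $t\to-\sigma\infty$ exactly when $(\xi^j,z^j)$ scatters as $t\to\sigma\infty$. Hence for every $j$ with $\tilde s^j_n\to+\infty$ (that is, $s^j_n\to-\infty$, where $\sigma=-$ is the admissible sign for $(\xi_n,z_n)$) the limit $(\tilde\xi^j,\tilde z^j)$ scatters forward, and for $j=0$ it scatters in both directions; relabelling $(v_n,w_n)$ as in \ref{Assumption:A2}, all its forward-concentrating profiles scatter forward, so Corollary \ref{lem:boundst} yields a fixed $C'$ with $\|v_n\|_{\st(0,\infty)}\le C'$, i.e.\ $\|\xi_n\|_{\st(-\infty,0)}\le C'$. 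Combining the two bounds gives $\|\xi_n\|_{\st(\R)}\le(C^4+(C')^4)^{1/4}$ uniformly in $n$, which is the claim.

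The only genuinely delicate point is the bookkeeping in the last step: checking that conjugation composed with time reversal sends solutions of \eqref{1}--\eqref{2} for $G$ to solutions for $\bar G$, transports the linear profile decomposition of $\xi_n(0)$ to a bona fide one for $v_n(0)$, and correctly matches the sign conventions in the scattering hypothesis ($\sigma$ attached to $s^j_n$ versus $-\sigma$ attached to $-s^j_n$). Once this is verified, both half-line estimates are immediate applications of Corollary \ref{lem:boundst}, exactly in the spirit of Nakanishi \cite{NakanishiJMSJ}.
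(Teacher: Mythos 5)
Your proposal is correct, and it implements exactly what the paper gestures at with ``can be proved similarly'': the bound on $\st(0,\infty)$ is a direct reading of Corollary~\ref{lem:boundst} with $l=L$ (since the scattering hypothesis, restricted to profiles with $\lim_n s^j_n\ge 0$, is precisely the forward-scattering hypothesis of Proposition~\ref{prop:17}), while the bound on $\st(-\infty,0)$ reduces to the same corollary via the conjugation--time-reversal symmetry $(\xi,z)\mapsto(\overline{\xi(-\cdot)},\overline{z(-\cdot)})$, which sends solutions for $G$ to solutions for $\bar G$ with the same $\Gamma$ by \eqref{10.5}, preserves \eqref{eq:indata}, and transports the linear profile decomposition together with all its listed properties (weak vanishing, Pythagorean identity, smallness of $\gamma^J_n$) since conjugation and $t\mapsto -t$ are isometries of every space involved. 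The bookkeeping you flag as the delicate point — that $\tilde s^j_n=-s^j_n$ flips the admissible sign $\sigma$ so that the hypotheses of the proposition transform into themselves, and that $\tilde\lambda^j_n=\bar\varphi^j[-s^j_n]$, $\tilde\gamma^J_n=\overline{\gamma^J_n(-\cdot)}$ — is carried out correctly, so the two half-line estimates combine by disjointness of the $L^4_t$ integrals to give the uniform $\st(\R)$ bound.
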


\section{Scattering}\label{sec:scattering}
For each $\mu >0$ and $A\in \R$ we denote by $\text{GS}(\mu , A)$ the subset of $C_b ^0(\R , \C \times H^1)$ formed
by the solutions with $ \mathbb{M}\le \mu$ and $\mathbb{E}\le A$.  Let
\begin{equation} \label{induct1}\begin{aligned} & ST(\mu, A) = \sup \{ \| \xi \| _{\st (\R _+)} <\infty : (z,\xi )\in \text{GS}(\mu , A)  \} \\& \mathcal{X} =\{  (\mu , A):  ST(\mu, A)<\infty  \}  .\end{aligned}\nonumber
\end{equation}
We introduce the partial orders  in $\R ^2$
\begin{equation} \label{induct2}\begin{aligned} &(\mu _1, A_1) \le (\mu _2, A_2) \Leftrightarrow     \mu _1 \le \mu _2  \text{ and } A _1 \le A _2 \\&  (\mu _1, A_1) \ll (\mu _2, A_2) \Leftrightarrow     \mu _1 < \mu _2  \text{ and } A _1 < A _2.\end{aligned}\nonumber
\end{equation}
By the definition of $\mathcal{X}$
\begin{equation*}
(\mu _1, A_1) \le (\mu _2, A_2)  \text{ and } (\mu _2, A_2)\in \mathcal{X} \Rightarrow (\mu _1, A_1)\in \mathcal{X}.
\end{equation*}
Our goal is to prove that there exists $\mu _0>0$ s.t.\ $ (0, \mu _0) \times \R \subseteq \mathcal{X} $. By Theorem
\ref{thm:1} we know that there exists   $\delta _0>0$  s.t.\   $ (0, \delta _0) \times  (-\infty , \delta _0) \subseteq \mathcal{X} $.
Suppose there exists $(\mu _0, A_0)\in \R ^2 \backslash \mathcal{X}$  with $\mu _0\ll 1$ and write
\begin{equation} \label{induct3}\begin{aligned} &  E_*= \sup \{ A<A_0: (\mu _0, A)\in \mathcal{X}  \}  ,  \quad M_*= \sup \{ \mu<\mu_0: (\mu _0, E_*)\in \mathcal{X}  \}  .\end{aligned}\nonumber
\end{equation}
Then by Theorem \ref{thm:1}
\begin{equation} \label{induct4}\begin{aligned} &  0<E_*\le A_0   ,  \quad  0< M_*\le \mu_0 ,\end{aligned}\nonumber
\end{equation}
and $(M_*,E_*)$ is s.t.\
\begin{equation} \label{induct5}\begin{aligned} &    (\mu _1, A_1) \lvertneqq  (M_*,E_*) \ll  (\mu _2, A_2)\Rightarrow    (\mu _1, A_1) \in \mathcal{X}  \text{ and }  (\mu _2, A_2) \not\in \mathcal{X} . \end{aligned}
\end{equation}
Hence there is a sequence $ (M_n,E_n)\stackrel{n\to \infty}{\rightarrow} (M_*,E_*)$ and a sequence of solutions $( \xi _n,z_n)\in \text{GS}(M_n,E_n)$ s.t.\
\begin{equation} \label{induct6}\begin{aligned} &    M_n\le \mu _0 +o(1)  \text{  and }  \| \xi _n \|  _{\st (\R _+)} =+\infty  \text{  for all $n$} . \end{aligned}\nonumber
\end{equation}
We can apply to  the sequence  $(\xi _n,z_n)$ the profile decompositions of Section \ref{sec:profdec}. By weak convergence we have
\begin{equation} \label{induct7}\begin{aligned} &    \mathbb{M}( \xi ^j,z^j)\le  M_*  \text{  and }   \mathbb{E}(\xi ^j,z^j)\le  E_*. \end{aligned}
\end{equation}
Since $\| \xi _n \|  _{\st (\R _+)} =+\infty $ for all $n$, by Corollary \ref{lem:boundst}, the assumptions of Proposition \ref{prop:17} must fail and this means that there must exist $l<L$ s.t.\  we have $ \| \xi ^l \|  _{\st (\R _+)} =+\infty $. We choose $l$ minimal, in the sense
that if $\| \xi ^j\|  _{\st (\R _+)} =+\infty $  then $j\geq l$.
%$ s^{j}_n -s^{l}_n  \stackrel{n\to \infty}{\rightarrow} +\infty$.
 By   \eqref{induct5} and \eqref{induct7} we have
\begin{equation} \label{induct8}\begin{aligned} &
 (M_*,E_*)= (\mathbb{M}( \xi ^l,z^l), \mathbb{E}(\xi ^l,z^l)). \end{aligned}\nonumber
\end{equation}
Then $ \xi _n (\cdot + s^{l}_n) \stackrel{n\to \infty}{\rightarrow} \xi ^l$ strongly in $H^1$.
  If $l>0$, \eqref{22} implies $z^l(-s^l_n)\stackrel{n\to \infty}{\rightarrow}0$ in $\C$  and $\xi ^l(-s^l_n)-e^{-\im s^l_n \Delta}\varphi ^{l}\stackrel{n\to \infty}{\rightarrow}0$ in $H^1$.
Since $\lambda ^l_n(0):=e^{-\im s^l_n \Delta}\varphi ^{l} $
  and $\lambda ^l_n(0)\stackrel{n\to \infty}{\rightarrow}0$ in $L^4$
 we get
\begin{equation} \label{induct9}\begin{aligned} &
%M_*= \mathbb{M}( \xi ^l(-s^l_n),z^l(-s^l_n))=  2^{-1}\|  \lambda ^l_n(0) \| ^2 _{L^2}+o(1)\\&
E_*= \mathbb{E}(\xi ^l(-s^l_n),z^l(-s^l_n))= 2^{-1}\| \nabla \lambda ^l_n(0) \| ^2 _{L^2}    +o(1), \end{aligned}\nonumber
\end{equation}
from which we read
\begin{equation} \label{induct10}\begin{aligned} &
2^{-1}\| \nabla \lambda ^l_n(0) \| ^2 _{L^2} \ge  E_*  +  o(1) . \end{aligned}
\end{equation}
Let $( \xi ,z )\in \text{GS}(M_*,E_*)$ with $\| \xi \|  _{\st (\R _+)} =+\infty$.

\begin{claim}\label{claim:relcomp}The
image $(\xi (\R _+),z(\R _+))$ is relatively compact in
$ H^1 _{rad} \times \C$.
\end{claim}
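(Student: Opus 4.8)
\emph{Setup and reduction.} The plan is to argue by contradiction, using the profile decomposition of Section~\ref{sec:profdec} together with the minimality of $(M_*,E_*)$ recorded in \eqref{induct5}. It suffices to show that every sequence $\{(\xi(t_n),z(t_n))\}_n$ with $t_n\in\R_+$ has a subsequence converging in $H^1_{rad}\times\C$. If $\{t_n\}$ has a bounded subsequence we may assume $t_n\to t_*\ge0$ and invoke continuity of the flow of \eqref{1}--\eqref{2} in $H^1\times\C$ (global well-posedness). So assume $t_n\to+\infty$, and put $(\xi_n,z_n):=(\xi(\cdot+t_n),z(\cdot+t_n))$. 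These solve \eqref{1}--\eqref{2}, lie in $\text{GS}(M_*,E_*)$ by conservation of $\mathbb M$ and $\mathbb E$, and satisfy $\|\xi_n\|_{\st(\R_+)}=\|\xi\|_{\st(t_n,\infty)}$. Since $\|\xi\|_{\st(\R_+)}=+\infty$, the solution $(\xi,z)$ does not scatter forward, so Lemma~\ref{lem:12} gives $\|\xi\|_{\st(t_0,\infty)}=+\infty$ for \emph{every} $t_0$; in particular $\|\xi_n\|_{\st(\R_+)}=+\infty$ for all $n$.

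\emph{Collapse of the decomposition.} Apply Proposition~\ref{prop:lindecomp} to $\xi_n(0)=\xi(t_n)$ with $s^0_n=0$, form the nonlinear profiles $(\xi^j,z^j)$ as in \ref{Assumption:A4}, and order them as in \ref{Assumption:A2}. By weak limits and the conservation laws $\mathbb M(\xi^j,z^j)\le M_*$, $\mathbb E(\xi^j,z^j)\le E_*$, while the Pythagorean identities \eqref{eq:exp1} hold for $(\xi_n,z_n)$. Since $\|\xi_n\|_{\st(\R_+)}=+\infty$, Corollary~\ref{lem:boundst} shows that the hypotheses of Proposition~\ref{prop:17} with $l=L$ cannot hold, so some profile $(\xi^{j_0},z^{j_0})$ with $j_0<L$ fails to scatter forward; choose $j_0$ minimal. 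By \eqref{induct5}, together with time-reversibility of \eqref{1}--\eqref{2}, any solution in $\text{GS}(\mu,A)$ with $(\mu,A)\lvertneqq(M_*,E_*)$ scatters both forward and backward, so the non-scattering of $\xi^{j_0}$ forces $\big(\mathbb M(\xi^{j_0},z^{j_0}),\mathbb E(\xi^{j_0},z^{j_0})\big)=(M_*,E_*)$. Feeding this back into \eqref{eq:exp1}, and using that every term there is $\ge0$ (for the energy one uses $M_*\ll1$, so that the cubic coupling term in $\mathbb E$ is absorbed by $|z|^2$ and each $\mathbb E(\xi^k,z^k)\ge0$), we get $\varphi^k=0$ for $k\ne j_0$ — hence $j_0$ is the only profile, since $\varphi^k\ne0$ for $k\ge1$ — and $\gamma^J_n\to0$ in $H^1$. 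Because $\xi^{j_0}$ saturates the mass--energy, $\mathbb M,\mathbb E$ are conserved along $(\xi_n,z_n)$, and $H^1_{rad}\hookrightarrow L^4$ is compact, the weak convergence $\xi_n(\cdot+s^{j_0}_n)\rightharpoonup\xi^{j_0}$, $z_n(\cdot+s^{j_0}_n)\to z^{j_0}$ upgrades to strong convergence in $H^1$, i.e.\ $\xi(t_n+s^{j_0}_n)\to\xi^{j_0}(0)$ in $H^1$ and $z(t_n+s^{j_0}_n)\to z^{j_0}(0)$.

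\emph{The main obstacle: boundedness of the shift $s^{j_0}_n$.} If $j_0=0$ then $s^{j_0}_n=0$, and the last display yields $\xi(t_n)\to\xi^0(0)$ in $H^1$, $z(t_n)\to z^0$, finishing the proof. The hard part is to rule out $0<j_0<L$, i.e.\ $s^{j_0}_n\to+\infty$. In that case $\xi^{j_0}$, being the minimal non-scattering profile, scatters backward to $\varphi^{j_0}$ as in Claims~\ref{claim:backscat}--\ref{claim:7}, so $\mathbb M(\xi^{j_0},z^{j_0})=\tfrac12\|\varphi^{j_0}\|_{L^2}^2$; then \eqref{eq:exp1} forces $\mathbb M(\xi^0,z^0)=0$, hence $\xi^0\equiv0$, $z^0\equiv0$, and so $\xi(t_n)=e^{-\im s^{j_0}_n\Delta}\varphi^{j_0}+o_{H^1}(1)$, $z(t_n)\to0$, with $\|\xi(t_n)\|_{L^4}\to0$ (compact embedding) while $\|\xi(t_n)\|_{L^2}^2\to2M_*$ and $\|\nabla\xi(t_n)\|_{L^2}^2\to 2E_*$ by \eqref{eq:exp1}. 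I expect this to be the crux of the argument: the data $\xi(t_n)$ then differs by $o_{H^1}(1)$ from a free Schr\"odinger wave that concentrates only near the far-future time $t_n+s^{j_0}_n$, so that $\|e^{\im(\cdot-t_n)\Delta}\xi(t_n)\|_{\st(t_n,\infty)}$ is bounded (by $\|e^{\im s\Delta}\varphi^{j_0}\|_{\st(\R)}<\infty$, finite since $\varphi^{j_0}\in H^{1/2}$), uniformly in $n$. The plan is to exploit the smallness of $\mathbb M$ — the parameter governing the hypotheses of the nonlinear perturbation Lemmas~\ref{lem:16}--\ref{lem:17} and of Proposition~\ref{prop:1} is $\st$ together with $\mathbb M$ (equivalently $\mathcal N_0$), not $\mathbb E$ — together with $z(t_n)\to0$, to compare $\xi$ on $(t_n,\infty)$ with the solution carrying the free data $e^{-\im s^{j_0}_n\Delta}\varphi^{j_0}$ and with $z\equiv0$; this would give $\|\xi\|_{\st(t_n,\infty)}<\infty$, so $(\xi,z)$ scatters forward by Lemma~\ref{lem:12}, contradicting the reduction step. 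With $0<j_0<L$ excluded we conclude $j_0=0$, which proves relative compactness of $(\xi(\R_+),z(\R_+))$ in $H^1_{rad}\times\C$.
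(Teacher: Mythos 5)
Your setup and the ``collapse'' paragraph are correct and essentially match the first half of the paper's argument: the sequence $\xi(t_n)$ is decomposed, the minimality \eqref{induct5} forces the non-scattering profile to saturate $(M_*,E_*)$, the Pythagorean identities kill all other profiles and the remainder, and strong convergence follows whenever the offending profile is the one with zero time shift. The problem is the final paragraph, which you yourself flag as ``the crux'' and where you only sketch a ``plan''; that plan does not work, and it is precisely here that the paper uses a different idea.

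\textbf{Why your exclusion of $j_0>0$ fails.} If $j_0>0$ then $s^{j_0}_n\to+\infty$ and, as you correctly deduce, $\xi(t_n)=e^{-\im s^{j_0}_n\Delta}\varphi^{j_0}+o_{H^1}(1)$, $z(t_n)\to0$. From this you conclude that $\|e^{\im(\cdot-t_n)\Delta}\xi(t_n)\|_{\st(t_n,\infty)}$ is \emph{bounded}, by $\|e^{\im s\Delta}\varphi^{j_0}\|_{\st(\R)}<\infty$. But bounded is not small: the perturbation machinery you invoke (Proposition~\ref{prop:1}, Lemmas~\ref{lem:16}--\ref{lem:17}) requires the free-evolution $\st$ norm (respectively the $\st$ distance between two solutions) to be \emph{below a fixed threshold} $\mu_{1/2}\min\{1,\cNh^{-3}\}$; here it is of size $\|e^{\im s\Delta}\varphi^{j_0}\|_{\st(\R)}$, which is controlled only by the (possibly large) $H^{1/2}$ norm of $\varphi^{j_0}$, and the smallness of $\mathbb M$ does nothing to make it small. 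More decisively, a direct comparison cannot possibly produce $\|\xi\|_{\st(t_n,\infty)}<\infty$: by the very construction of $\xi^{j_0}$ and its backward scattering to $\varphi^{j_0}$, the data $\xi(t_n)$ is approximately $\xi^{j_0}(-s^{j_0}_n)$, so $\xi(t_n+\cdot)$ is approximately $\xi^{j_0}(\cdot-s^{j_0}_n)$ on $[0,\infty)$, and $\xi^{j_0}$ fails to scatter forward, i.e.\ $\|\xi^{j_0}\|_{\st(-s^{j_0}_n,\infty)}=\infty$. The non-scattering of $\xi$ on $(t_n,\infty)$ is therefore \emph{consistent} with $j_0>0$; no contradiction can be extracted along this line. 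Also note that replacing $z$ by $0$ does not help: the pure cubic NLS solution with initial data $e^{-\im s^{j_0}_n\Delta}\varphi^{j_0}$ and arbitrary $H^1$ norm is not known a priori to scatter (indeed, nothing prevents it from blowing up), so there is no valid reference solution to perturb around.

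\textbf{What the paper does instead.} The missing ingredient is to run the profile decomposition \emph{in both time directions}. Since $\|\xi\|_{\st(\R_+)}=\infty$ and $t_n\to+\infty$, one has both $\|\xi\|_{\st(t_n,\infty)}=\infty$ and $\|\xi\|_{\st(0,t_n)}\to\infty$; in other words, viewed from time $t_n$, the solution fails to scatter \emph{forward and backward}. Applying Proposition~\ref{prop:cor} on $[0,\infty)$ and on $(-t_n,0]$ for the shifted sequence, one obtains a minimal forward non-scattering profile index $l_1$ and a minimal backward one $l_0$. If either index is $0$, the argument you already wrote gives strong convergence of $\xi(t_n)$. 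If both $l_0>0$ and $l_1>0$, then the zeroth profile $(\xi^0,z^0)$ scatters in \emph{both} directions, hence $\mathbb E(\xi^0,z^0)\ge0$; moreover $l_0\neq l_1$ (they live at opposite infinities) and each saturates $(M_*,E_*)$, so by \eqref{induct10} each contributes $\ge E_*+o(1)$ to the energy Pythagorean \eqref{eq:exp1}. This yields $E_*\ge 2E_*+o(1)$, i.e.\ $E_*=0$, contradicting $E_*>0$ (Theorem~\ref{thm:1}). That double-counting of the energy via the two-sided decomposition is the argument your proof is missing, and without it the case $0<j_0<L$ is not excluded.
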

\proof  We consider a sequence $0<t_n \stackrel{n\to \infty}{\rightarrow} +\infty$ and
 we apply the above argument based on Proposition \ref{prop:cor} to $(\xi _n ,z_n) :=(  \xi (\cdot +t_n),z(\cdot +t_n)  ) $    on $(-t_n, 0]$ and on $[0, \infty )$.
Notice that we have $\|\xi_n\|_{\st(-t_n,0)}\to \infty$ as $n\to \infty$ and $\|\xi_n\|_{\st(0,\infty)}=\infty$.
   If in one of the two cases, we have $l=0$ then $\xi _n(0)= \xi ( t_n) $ is strongly convergent  in $H^1$.
   If in both cases $l=l_0>0$ on $(-t_n, 0]$ and  $l=l_1>0$ on $[0, \infty )$ then $(\xi ^0,z^0) $ scatters
   and thus $\mathbb{E}(\xi ^0,z^0) \geq 0$ because if the energy is negative, it cannot scatter. Then using \eqref{eq:exp1} and \eqref{induct10} we have
\begin{equation} \begin{aligned} &    E_*\ge    \mathbb{E}(\xi ^0,z^0 )+2^{-1}\| \nabla \lambda ^{l_0}_n \| ^2 _{L^2}
 + 2^{-1}\| \nabla \lambda ^{l_1}_n \| ^2 _{L^2}+o(1)\ge 2E_* +o(1)
\end{aligned}\nonumber
\end{equation}
so that $E_*\lesssim o(1)$, and since here $o(1)\stackrel{n\to \infty}{\rightarrow}0$, this implies $E_*=0$, in contradiction with
  Theorem \ref{thm:1}  which implies $E_*>0$. As a consequence, up to a subsequence, $\xi _n(0)= \xi ( t_n) $ is strongly convergent  in $H^1$ for any $t_n  \stackrel{n\to \infty}{\rightarrow} +\infty$. \qed

We now prove the following claim, which completes the proof of Theorem \ref{thm:main}.
\begin{claim}\label{claim:contradiction}There are no $( \xi ,z )\in \text{GS}(M_*,E_*)$ with $\| \xi \|  _{\st (\R _+)} =+\infty$.
\end{claim}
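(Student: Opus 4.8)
Proof proposal for Claim~\ref{claim:contradiction}.

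The plan is to argue by contradiction: suppose $(\xi,z)\in\text{GS}(M_*,E_*)$ with $\|\xi\|_{\st(\R_+)}=\infty$. By Claim~\ref{claim:relcomp} the forward orbit $\{(\xi(t),z(t)):t\ge0\}$ is relatively compact in $H^1_{rad}\times\C$, so its $\omega$-limit set $\omega$ is nonempty, compact, and invariant under the flow of \eqref{1}--\eqref{2} (forward, and by precompactness also backward). Fix $(\xi_*,z_*)\in\omega$; it is a global solution whose entire bi-infinite orbit lies in $\omega$ and is therefore precompact, and by weak lower semicontinuity together with minimality of the level $(M_*,E_*)$ one has $\mathbb M(\xi_*,z_*)=M_*$ and $\mathbb E(\xi_*,z_*)\le E_*$. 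First I claim $\|\xi_*\|_{\st(\R_+)}=\infty$. If not, then $(\xi_*,z_*)$ scatters forward by Lemma~\ref{lem:12}. Writing $(\xi_*,z_*)=\lim_n(\xi(\cdot+t_n),z(\cdot+t_n))$ strongly in $C_{loc}(\R,H^1\times\C)$ with $t_n\to+\infty$ (using compactness of the orbit and continuous dependence on data), the linear profile decomposition of $\xi(t_n)\to\xi_*(0)$ is trivial — a single profile at $s^0_n=0$ with remainder $\gamma^J_n\to0$ — so Proposition~\ref{prop:17} and Corollary~\ref{lem:boundst} give $\sup_n\|\xi(\cdot+t_n)\|_{\st(0,\infty)}<\infty$, i.e.\ $\|\xi\|_{\st(t_n,\infty)}$ bounded. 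Since $\|\xi\|_{\st(0,t_n)}<\infty$ by local theory on bounded intervals, this contradicts $\|\xi\|_{\st(\R_+)}=\infty$. In particular $(\xi_*,z_*)\neq0$, so $M_*=\mathbb M(\xi_*,z_*)>0$.

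The core of the argument is then to prove $z_*\equiv0$, and here the $L^4_t$ Fermi Golden Rule estimates of Section~\ref{sec:l4} enter. Running the radiation-damping identity \eqref{10} for $(\xi_*,z_*)$ over $\R$ and using boundedness of $|z_*(t)|^2$, the definite-sign drift $-\tfrac\Gamma2|z_*|^6$ must be balanced by the remainder, which is governed by the dispersive part $g_*=\xi_*+|z_*|^2z_*R_+(1)G$ of $\xi_*$ through $\|g_*\|_{L^2L^{2,-\sigma}}$; if $z_*\not\equiv0$, balancing the drift forces $(G|\xi_*)$ to carry a nontrivial component resonant at frequency $1$, which via the source term $|z_*|^2z_*G$ in \eqref{1} and assumption \eqref{eq:FGR} ($\Gamma>0$) radiates mass into the continuous spectrum, forcing $\xi_*$ to disperse — incompatible with precompactness of its orbit. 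Hence $z_*\equiv0$. But then \eqref{1} reduces to the defocusing cubic NLS $\im\dot\xi_*=-\Delta\xi_*+|\xi_*|^2\xi_*$ in $H^1_{rad}(\R^3)$, whose solutions all scatter and hence satisfy $\|\xi_*\|_{\st(\R_+)}<\infty$ — contradicting the claim $\|\xi_*\|_{\st(\R_+)}=\infty$ established above. This contradiction proves the Claim, and with it Theorem~\ref{thm:main}.

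The step $z_*\equiv0$ is the expected main obstacle. The estimates of Sections~\ref{sec:l4}--\ref{sec:np} are sharp exactly when $\xi$ is dispersive — so that $g=\xi-Y$ is small in $L^2L^{2,-\sigma}$ and $\|\xi\|_{\st}$ is small — whereas on the minimal orbit $\xi_*$ is by construction non-dispersive, so these estimates cannot be applied naively. The way around this is to feed the precompactness of the bi-infinite orbit back into the profile-decomposition/minimality scheme of Section~\ref{sec:it}, so that the $\xi_*$-contribution to the remainder in \eqref{10} is itself controlled by Proposition~\ref{prop:17} and Corollary~\ref{lem:boundst}, combined with the $\st$-based bookkeeping; the smallness $\|z_*\|_{L^\infty}\lesssim M_*^{1/2}$ is what allows the feedback of the ODE part to be absorbed, exactly as in the $\Gamma$-dependent smallness arguments of Proposition~\ref{prop:1}.
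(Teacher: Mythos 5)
Your proposal takes a genuinely different route from the paper and, as written, leaves its crucial step unproved. The paper's own proof is a direct localized virial inequality: it first observes the uniform coercivity bound $\|\nabla\xi\|_{L^2}^2+\tfrac{3}{4}\|\xi\|_{L^4}^4\gtrsim 1 \gg \mu_0$ on all of $\R_+$ (otherwise Theorem~\ref{thm:1} would apply at some time and force $\|\xi\|_{\st(\R_+)}<\infty$), then computes $\partial_t\langle Rf_R\xi,\im\partial_r\xi\rangle$ for a radial cutoff $f_R$ and uses the precompactness from Claim~\ref{claim:relcomp}, together with the rapid decay of $G$ and the smallness of $|z|$ coming from mass conservation, to make the localization errors and the $\langle|z|^2zG,Rf_R\partial_r\xi\rangle$ term negligible for $R\gg1$. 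This yields $\partial_t\langle Rf_R\xi,\im\partial_r\xi\rangle\ge C>0$, contradicting boundedness of the virial quantity along the compact orbit. Your proposal instead passes to an element $(\xi_*,z_*)$ of the $\omega$-limit set and tries to show $z_*\equiv0$ so that the equation reduces to the defocusing cubic NLS.

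The genuine gap is the step $z_*\equiv0$: you correctly flag it as ``the expected main obstacle'' but only describe it qualitatively, without exhibiting a mechanism. The $L^4_t$ estimates of Section~\ref{sec:l4} and the perturbation lemmas of Section~\ref{sec:np} all require $\|\xi\|_{\st}$ to be small on the interval under consideration; for the compact non-dispersive orbit $(\xi_*,z_*)$ this fails by construction, and it is not shown how the radiation-damping identity~\eqref{10} can yield a vanishing conclusion for $z_*$ without that smallness. Moreover, your closing appeal to global $H^1$ scattering for the defocusing cubic NLS in $\R^3$ is a substantial external theorem that the paper neither cites nor needs: the virial argument applies directly to $(\xi,z)$ on its own compact orbit and requires no such input. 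The $\omega$-limit template is a reasonable general philosophy, but as written the decisive step is an unfilled hole, and the paper's direct virial route is both shorter and self-contained.
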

\proof We proceed by contradiction assuming the existence of such a solution. By Claim \ref{claim:relcomp}
we know that $\xi (\R _+)\subset H ^{1}_{rad}$ is relatively compact. On the other hand we know that
\begin{equation}\label{eq:contrad1}
   \| \nabla \xi \| _{L^2}^{2}+\frac{3}{4}\|   \xi \| _{L^4}^{4}\ge 2C \gtrsim 1\gg \mu _0
\end{equation}
 because otherwise by Theorem \ref{thm:1} we can show that $\| \xi \|  _{\st (\R _+)} <+\infty$.

\noindent We now consider the Virial Inequality. We consider a smooth function $f(x)=f(|x|)$ with
\begin{equation*}
 f(r) = \left\{\begin{matrix}
     r \text{ for $r\le 1$}\\
 \frac{3}{2} \text{ for  $r\ge 2$ }.
\end{matrix}\right.
\end{equation*}
Then for $f_R(x):=f(x/R)$  and $f_{jR}(x)= f_j(x/R)$ with
\begin{equation*}
 f_0=1-\partial _rf  \, , \quad f_1=\Delta (\partial _r+1/r)f\, , \quad f_2=-3/2+ (\partial _r+1/r)f
\end{equation*}
we have, see \cite{NakanishiJMSJ},
\begin{equation*}\begin{aligned}
 \partial _t \langle Rf_R\xi , \im \partial _r \xi \rangle  &= \| \nabla \xi \| _{L^2}^{2}+\frac{3}{4}\|   \xi \| _{L^4}^{4}-\int _{\R ^3}\left (  2|\partial _r\xi |^2f_{0R}+ R ^{-2}|\xi |^2f_{1R}-|\xi |^4f_{2R} \right ) dx \\& +\langle |z|^2z G, Rf_R\partial _r\xi \rangle  .
\end{aligned}\end{equation*}
Taking $R\gg 1$  by \eqref{eq:contrad1} we obtain the following, which contradicts  $\langle Rf_R\xi , \im \partial _r \xi \rangle \in L^\infty (\R _+)$:
\begin{equation*}\begin{aligned} &
 \partial _t \langle Rf_R\xi , \im \partial _r \xi \rangle  \ge C >0.
\end{aligned}\end{equation*}
  Since
its denial has led to a contradiction, it follows that Claim  \ref{claim:contradiction} is true. \qed

\section*{Acknowledgments}
 S.C. was partially funded  by   a grant   FRA 2015 from the University of Trieste.
M.M. was supported by the JSPS KAKENHI Grant Numbers JP15K17568, JP17H02851 and JP17H02853.
M.M.   thanks Kenji Nakanishi for many valuable advices.
We are grateful for the anonymous referees for giving us a valuable suggestions to improve and simplify the proof.
Especially, for the simplification of the definition of $\Gamma^{J,j,n}_\tau$.

Department of Mathematics and Geosciences,  University
of Trieste, via Valerio  12/1  Trieste, 34127  Italy. {\it E-mail Address}: {\tt scuccagna@units.it}
\\

Department of Mathematics and Informatics,
Faculty of Science,
Chiba University,
Chiba 263-8522, Japan.
{\it E-mail Address}: {\tt maeda@math.s.chiba-u.ac.jp}

\end{document}